\newcommand{\IN}{\mathbb{N}}
\newcommand{\IP}{\mathbb{P}}
\newcommand{\IE}{\mathbb{E}}
\newcommand{\IB}{\mathbb{B}}
\newcommand{\IR}{\mathbb{R}}
\newcommand{\IZ}{\mathbb{Z}}
\newcommand{\cA}{\mathcal{A}}
\newcommand{\cC}{\mathcal{C}}
\newcommand{\cD}{\mathcal{D}}
\newcommand{\cE}{\mathcal{E}}
\newcommand{\cF}{\mathcal{F}}
\newcommand{\cP}{\mathcal{P}}
\newcommand{\cG}{\mathcal{G}}
\newcommand{\cR}{\mathcal{R}}
\newcommand{\cM}{\mathcal{M}}
\newcommand{\cL}{\mathcal{L}}
\newcommand{\bfC}{\boldsymbol{\eta}}
\newcommand{\bfB}{\mathbf{B}}
\newcommand{\bfH}{\mathbf{H}}
\newcommand{\bfG}{\mathbf{G}}
\newcommand{\bfK}{\mathbf{K}}
\newcommand{\bfP}{\mathbf{P}}
\newcommand{\bfeta}{\boldsymbol{\eta}}
\newcommand{\seteta}{\boldsymbol{\eta}}
\newcommand{\smallbox}{\mathcal{S}}
\newcommand{\bigbox}{\mathcal{H}}
\newcommand{\bfxi}{\boldsymbol{\xi}}
\newcommand{\bfzeta}{\boldsymbol{\zeta}}
\newcommand{\dV}{V}
\newcommand{\btilde}[1]{#1'}
\newcommand{\1}{\mathds{1}}
\newcommand{\zero}{\mathbf{0}}
\newcommand{\origin}{\mathbf{0}}
\newcommand{\szero}{\underline{0}}
\newcommand{\B}{\mathds{B}}
\newcommand{\PC}{\overline{\mathbb{P}}^\xi}
\newcommand{\PO}{\overline{\mathbb{P}}^{\underline{0}}}
\newcommand{\bW}{\mathbf{W}}
\newcommand{\unaryminus}{\scalebox{0.7}[1.0]{\( - \)}}
\theoremstyle{definition}
\newtheorem{definition}{Definition}[section]
\newtheorem{example}[definition]{Example}
\newtheorem{remark}[definition]{Remark}
\newtheorem{observation}[definition]{Observation}
\theoremstyle{plain}
\newtheorem{theorem}[definition]{Theorem}
\newtheorem{lemma}[definition]{Lemma}
\newtheorem{corollary}[definition]{Corollary}
\newtheorem{proposition}[definition]{Proposition}
\newtheorem{condition}[definition]{Condition}
\newtheorem{assumption}[definition]{Assumption}
\DeclareFontFamily{U}{mathx}{\hyphenchar\font45}
\DeclareFontShape{U}{mathx}{m}{n}{
	<5> <6> <7> <8> <9> <10>
	<10.95> <12> <14.4> <17.28> <20.74> <24.88>
	mathx10
}{}
\DeclareSymbolFont{mathx}{U}{mathx}{m}{n}
\DeclareMathAccent{\widecheck}{0}{mathx}{"71}
\DeclareMathAccent{\wideparen}{0}{mathx}{"75}
\title{Shape Theorem for the Contact Process in a Dynamical Random Environment}
\author{Michel Reitmeie$\text{r}^*$ and Marco Seiler\footnote{Goethe University Frankfurt, Robert-Mayer-Straße 10, 60486 Frankfurt am Main, Germany, \texttt{reitmeie@math.uni-frankfurt.de}, \texttt{seiler@math.uni-frankfurt.de}}}
\begin{document}
\maketitle
\begin{abstract}
    
    We study the contact process in a dynamical random environment defined on the vertices and edges of a graph. For a broad class of processes, we establish an asymptotic shape theorem for the set $\bfH_t$, which represents the vertices that have been infected up to time $t$. More precisely, we show that this asymptotic shape is characterized---similar to the basic contact process---by a cone spanned by a convex set $U$, provided certain growth conditions are satisfied. Notably, we find that the asymptotic shape is independent of the initial configuration of the environment. Furthermore, we verify the growth conditions for various types of random environments, such as the contact process on a dynamical graph or a system with switching vertex states, where the monotonicity of the entire process is not guaranteed.
\end{abstract}
{\footnotesize\textit{2020 Mathematics Subject Classification --} Primary 60K35; Secondary 82C22, 92D30.\\
\textit{Keywords --}  Contact Process; Evolving Random Environment; Asymptotic Shape Theorem; Dynamical Random Graphs; Switching States}
\section{Introduction}
One of the most well-established mathematical models for studying the spread of an epidemic in a spatially structured population is the contact process, initially introduced by Harris~\cite{harris1974contact}. The population is typically represented as a graph, where vertices correspond to individuals and edges indicate which individuals are considered neighbours. Individuals are either infected or healthy, and an infected individual infects a neighbour with a certain infection rate and recovers independently with a certain recovery rate. Despite being a simplistic toy model, the contact process exhibits remarkably rich behaviour and has been extensively studied. An overview of classical results on lattices and regular trees can be found in the book by Liggett \cite{liggett1999stochastic}. More recently, the contact process on random graphs has been intensively studied. Results of this research direction are discussed in depth in the book by Valesin~\cite{valesin2024contact}. 

In reality, one can observe that population structures are not static but change over time and the same individual is sometimes more or less infectious. Thus, incorporating time-varying population structures or switching states of individuals are one of the most natural variations of the contact process. Therefore, we consider a contact process in a time-evolving random environment. As in the classical setup, individuals are represented by vertices and are either infected or healthy. Additionally, background states are assigned to edges and vertices, which influence the individual rates of recovery and infection. Furthermore, these background states evolve according to Markovian dynamics as time progresses. The model we consider can be seen as a generalisation of the contact process on time-evolving graphs, as studied by Seiler and Sturm~\cite{seiler2023contact}, and the contact process with switching individual activity states on the vertices, as proposed by Blath, Hermann and Reitmeier~\cite{blath2023switching}.

 Even though the basic contact process was introduced more than 50 years ago, techniques that make it accessible for studying the process in such dynamical random environments have only been developed in the last two decades. To our knowledge, the first to propose such a variant was Broman~\cite{Broman2007domination} and since then, it has become a thriving research area (see for example \cite{steif2008critical}, \cite{jacob2017scale}, \cite{linker2020contact}, \cite{baptista2024dynamical}, \cite{cardona2024dynamical}).

An important question for variants of the contact process is 
the asymptotic limiting shape of the infection area and the behaviour of the coupled region. Already Richardson~\cite{richardson1973random} studied the limiting shape of a certain stochastic growth model 50 years ago and this research direction remains highly relevant until today. Typically, one employs the Hammersley-Kingman subadditivity theory developed in \cite{hammersley1965first} and \cite{kingman1973subadditive}. For the contact process Durrett and Griffeath \cite{durrett1982several} used these techniques to prove the asymptotic shape results for sufficiently large infection rates. Later the results of Bezuidenhout and Grimmett \cite{bezuidenhout1990contact} enabled verification of the necessary estimates for the entire supercritical regime, thus providing a complete picture for the basic contact process. Since then, this question has been studied for many variations of the contact process and other growth models. For example, Garet and Marchand \cite{garet2012shape} extended the results of \cite{durrett1982several} to the contact process in a static random environment and Deshayes \cite{deshayes2014contact} extended them to the contact process with aging.

The aim of this article is to study the asymptotic limiting shape of the infection region conditioned on survival for the contact process in various dynamical random environments. First, we show that our process converges to a deterministic asymptotic shape if a collection of exponential estimates regarding the growth speed of the infection region and the extinction time is satisfied. These estimates are comparable to those derived for the basic contact process. Furthermore, we show that the set describing the limiting shape does not depend on the initial configuration of the background process. This is done in Theorem~\ref{Conjecture1}. A significant difference from many other models in the literature is that we do not require monotonicity of the entire system for this result, but only a weaker assumption, which we call \textit{worst-state monotonicity}.

For the special case where the background states are in $\{0,1\}$ and independent for every vertex and edge, the worst-state monotonicity assumption can even be omitted, as shown in Corollary~\ref{thm:ShapeTheoremNonMonoton}. This is achieved by coupling the process to an auxiliary process that satisfies the necessary worst-state monotonicity, which allows us to apply Theorem~\ref{Conjecture1}. To ensure that the auxiliary process satisfies the required exponential estimates, we need to show that the two processes couple sufficiently fast.  However, this indicates that the  worst-state monotonicity assumption is still not optimal, in the sense that it is sufficient but not necessary.

Having established Theorem~\ref{Conjecture1} and its extension, we verify the required exponential estimates across the entire supercritical region when the process is monotone and the vertex and edge updates are independent of each other, see Theorem~\ref{thm:CPUIShapeTheorem}. A key tool  for achieving this is an adaptation and generalisation of the block construction of Bezuidenhout and Grimmett \cite{bezuidenhout1990contact} to our setup, as well as an accurate implementation of a restart procedure similar to the one sketched in \cite{durrett1991}. In Proposition~\ref{prop:CPDREShapeTheorem}, we generalize this result to any contact process in a time-evolving random environment that dominates a supercritical monotone process.
This applies, in particular, to the case where the process is monotone and the environment is described by a spin system, see Corollary~\ref{cor:spin_system}. In this specific case, the background states of edges and vertices are no longer independent of each other.

\section{The Model}
Let $G=(V,E)$ be the $d$-dimensional integer lattice with $d\in \IN$, i.e.\
\begin{equation*}
    V=\IZ^d \quad \text{ and }\quad E=\big\{\{x,y\}\subset V: ||x-y||_1=1\big\},
\end{equation*}
where $||\cdot||_1$ is the $\ell_1$-norm. We denote by $\zero$ the origin of $\IZ^d$  and by $\Vec{E}:=\{(x,y)\in V\times V: \{x,y\}\in E\}$ the set of all directed edges. For $N\in \IN$ let $[N]:=\{0,\dots,N\}$ and with slight abuse of notation we use $\leq$ as the total order on $\{0,1\}$ and $[N]$ as well as the component-wise order on $\{0,1\}^V\times [N]^{V\cup E}$.
 
The \textit{contact process in a dynamical random environment} (CPDRE), denoted by $(\bfeta,\bfxi)=(\bfeta_t,\bfxi_t)_{t\geq 0}$, is a Feller process on $\{0,1\}^V\times [N]^{V\cup E}$.  We call the process $\bfeta=(\bfeta_t)_{t\geq 0}$ the \textit{infection process} and $\bfxi=(\bfxi_t)_{t\geq 0}$ the \textit{background process}. If necessary we indicate the initial configuration $(\eta,\xi)$ by adding a superscript, that is $(\bfeta_t^{\eta,\xi},\bfxi_t^{\xi})_{t\geq 0}$ denotes the process starting in configuration $(\eta,\xi)$. Moreover, we denote the all zero configuration by $\szero$. Sometimes we consider the background with an initial state distributed according to some law $\pi$, which we denote by $(\bfeta_t^{\eta,\pi},\bfxi_t^{\pi})_{t\geq 0}$.

We assume that $\bfxi$ is an (autonomous) Feller process with state space $[N]^{V\cup E}$ and the processes $\bfxi^{\xi}$ are defined on the same probability space for all initial configurations $\xi$. Analogously as in \cite{seiler2023contact}, we define the coupled region of the background at time $t$ by
\begin{equation}\label{DefinitionCoupledRegion}
	\Psi_{t}:=\{a\in V\cup E: \bfxi_t^{\xi_1}(a)=\bfxi_t^{\xi_2}(a)\enskip \forall \xi_1,\xi_2\in [N]^{V\cup E}\}
\end{equation}
and the permanently coupled region at time $t$ through 
\begin{align}\label{DefinitionPermanetlyCoupledRegion}
	\Psi'_{t}:=\{a\in V\cup E: a\in \Psi_{s} \,\forall \, s\geq t\},
\end{align}
where $t\geq 0$. Furthermore, we impose the following assumptions on $\bfxi$.

\begin{assumption}\label{AssumptionBackground}
We assume that $\bfxi$ is a \textit{monotonically representable}, \textit{translation invariant} and \textit{finite range} Feller process, which satisfies the following properties:
	\begin{enumerate}
		\item[$(i)$] $\bfxi$ is \textnormal{ergodic}, i.e.\ there exists a unique invariant law $\pi$ such that $\bfxi^{\xi}_t\Rightarrow \pi$ as $t\to \infty$ for all $\xi\in [N]^{V\cup E}$.
		\item[$(ii)$] There exist constants $T,K,\kappa>0$ such that $\IP(a\notin \Psi'_t)<K\exp(-\kappa t)$ for every $a\in V\cup E$ and for all $t\geq T$.
		\item[$(iii)$] $\bfxi$ is a reversible Feller process.
	\end{enumerate}
\end{assumption}
Here, \textit{monotonically representable} means that $\bfxi$ can be constructed via a monotone random mapping representation, as described in detail in Section~\ref{sec:Construction}. This implies, in particular, that the process $\bfxi^{\xi}$ is a monotone Markov process (i.e.\ the semigroup maps increasing functions to increasing functions) and that $\bfxi_t^{\xi}\leq \bfxi_t^{\xi'}$ holds for all $t\geq 0$ if $\xi\leq \xi'$. The reverse implication -- that monotone Markov processes are monotonically representable -- is in general not true. Note that for processes on a finite and totally ordered state space both concepts are in fact equivalent.

To formally define the dynamics of the infection process we need two non-negative rate functions $\lambda: [N]^{3}\to [0,\infty)$ and $r:[N]\to [0,\infty)$. For all $x,y\in V$ with $(x,y)\in \Vec{E}$ we set
\begin{equation*}
    \lambda_{(x,y)}(\xi):=\lambda(\xi(x),\xi(\{x,y\}),\xi(y)) \quad \text{ and } \quad r_{x}(\xi)=r(\xi(x)),
\end{equation*}
where $\xi\in [N]^{V\cup E}$. We call $\lambda_{(x,y)}(\xi)\geq0$ the \textit{infection rate} from $x$ to $y$ and $r_x(\xi)\geq0$ the \textit{recovery rate} of $x$ given that the environment is in state $\xi$. Note that the value of the infection rate $\lambda_{(x,y)}(\xi)$ depends only on the states of $x,y$ and $\{x,y\}$ and $r_x(\xi)$ only depends on the state of $x$. Given that $\bfxi$ is currently in state $\xi$ the transitions of the infection process currently in state $\eta$ are for all $x\in V$: 
\begin{align}\label{InfectionRatesWithBackground}
\begin{aligned}
\eta(x)&\to 1	\quad \text{ at rate } \sum_{{y:(y,x)\in \Vec{E}}} \lambda_{(y,x)}(\xi) \eta(y)\text{ and }\\
\eta(x)&\to 0 	\quad\text{ at rate } r_x(\xi).
\end{aligned}
\end{align}
By translation invariance of $\bfxi$ and the choices of the rate functions $\lambda(\,\cdot\,)$ and $r(\,\cdot\,)$ it follows immediately that the entire process $(\bfeta,\bfxi)$ is translation invariant.

\begin{remark}
    For notational convenience we usually interpret $\bfeta_t$ and $\eta$ as subsets of $V$. This is a common notational convention and justified by the fact that there exists a bijection between $\{0,1\}^V$ and $\cP(V)$.
\end{remark}
The framework for the background processes $\bfxi$ is fairly general, allowing for the representation of a broad class of models. However, our primary motivation was guided by the following specific examples.
\begin{example}\label{ex:MainExamples}
\begin{enumerate}
    \item Our leading example is the \textit{contact process with independent updates} (CPIU), where the process $\bfxi=(\bfxi(a))_{a\in V\cup E}$ is an independent family of monotone, ergodic and reversible Markov-chains on a finite state space. Furthermore, we assume that $\bfxi(x)\stackrel{d}{=}\bfxi(y)$ for all $x,y\in V$ and $\bfxi(e)\stackrel{d}{=}\bfxi(e')$ for all $e,e'\in E$.
    
    Since $\bfxi(x)$ has a finite state space we get by \cite[Theorem 4.9]{levin2017Markov} that it is geometrically ergodic, i.e.\ there exist constants $A,B>0$ such that 
    \begin{equation*}
        \max_{i}|\IP(\bfxi_t(x)=i)-\pi(i)|\leq A e^{-Bt} \quad \text{ for all } \quad t\geq 0.
    \end{equation*}
    In particular this choice of $\bfxi$ satisfies Assumption~\ref{AssumptionBackground}

    \item An important special case of the previous example is $N=1$, which results in the \textit{contact process on a dynamical percolation} (CPDP). In this case the process $\bfxi$ currently in state $\xi$ has for every $a\in V\cup E$ the transitions
    \begin{align*}
        \begin{aligned}
            \xi(a)&\to 1	\quad \text{ at rate } \alpha_V\1_{\{a\in V\}}+\alpha_E\1_{\{a\in E\}},\\
            \xi(a)&\to 0 	\quad\text{ at rate }\beta_V\1_{\{a\in V\}}+\beta_E\1_{\{a\in E\}},
        \end{aligned}
    \end{align*}
    where $\alpha_V,\alpha_E, \beta_V,\beta_E>0$.

    \item An example for $\bfxi$ which can have some spatial dependencies is a monotonically representable, ergodic and reversible spin system of \textit{finite range} $L\geq 0$. In this case $N=1$ and $\bfxi$ has a generator of the form
    \begin{align*}
        \cA_{\text{Spin}} f(\xi)=\sum_{a\in V\cup E} q(a,\xi)\big(f(\xi^{a})-f(\xi)\big),
        \end{align*} 
    where $q(a,\xi)$ is called the flip rate and $\xi^x$ denotes  the configuration flipped at $x$, i.e.\ $\xi^a(a)=1-\xi(a)$ and $\xi^a(a')=\xi(a')$ for $a\neq a'$. \textit{Finite range} here means that the rate $q(a,\xi)$ only depends on the values of $\xi$ on the edges and vertices within distance $L$ from $a$. In the context of edges, this means that the vertices at both ends need to be closer than $L$.
    
    In such systems only one vertex or edge changes its state at a time and there are no coordinated changes. This class of processes obviously contains the dynamical percolation seen in (2.) and also more interesting choices, as for example several types of the \textit{ferromagnetic stochastic Ising model}.
\end{enumerate}
\end{example}

Suppose we consider one of the background processes $\bfxi$ mentioned in the previous example, then we can model many interesting infection dynamics by specifying the infection and recovery rate functions $\lambda$ and $r$.  Our main motivation comes from the following two examples which are covered by our framework.

\begin{example}\label{ex:LeadingExamples}
    \begin{enumerate}
    \item Let us consider a background $\bfxi$ chosen as in Example~\ref{ex:MainExamples}~(3.) restricted to dynamics on edges, i.e.\ with state space $\{0,1\}^E$. Then by setting
    \begin{equation*}
        r_x(\xi)\equiv r>0 \quad\text{and}\quad \lambda_{(x,y)}(\xi)=\lambda\cdot\xi(\{x,y\})\text{ for some } \lambda>0
    \end{equation*}
    the process $(\bfeta,\bfxi)$ can be seen as a contact process on a dynamical graph (see \cite{seiler2023contact}). The interpretation is that the background process indicates whether the edges are present or not at a given time. If an edge is present, an infection can be transmitted with rate $\lambda>0$ otherwise it is blocked.
    \item Our framework also covers the contact process with switching (see \cite{blath2023switching}). Here the background $\bfxi$ is chosen to be the dynamical percolation on all vertices, i.e.\ with state space $\{0,1\}^V$, (see Example~\ref{ex:MainExamples}~(2.)) and the rate functions are
    \begin{align*}
        \lambda_{(x,y)}(\xi)= \lambda_{\xi(x)\xi(y)}\quad \text{ and } \quad   r_{x}(\xi)= r_{\xi(x)}
    \end{align*}
    with $\lambda_{11},\lambda_{10}, \lambda_{01},\lambda_{00}\geq 0$  and $r_0 \geq r_1\geq 0$.
    In this model, individuals have a state that influences how infectious they are or how fast they recover. Note that, depending on the choice of $\lambda(\,\cdot\,)$, the whole process $(\bfeta,\bfxi)$ is not necessarily monotone. This is only the case if $\lambda_{11}\geq \lambda_{10}, \lambda_{01}\geq\lambda_{00}\geq 0$.

    Of course, here one could also choose $\bfxi$ to be another spin system acting on vertices, see Example~\ref{ex:MainExamples}~(3.)).
    \end{enumerate}
\end{example}

Although $\bfxi$ is a monotonically representable process, depending on the choices of the rate functions $\lambda$ and $r$, the whole process $(\bfeta,\bfxi)$ is not necessarily monotone as mentioned in Example~\ref{ex:LeadingExamples}. In fact, we do not even need such a strong assumption, but we need the following: 

\begin{assumption}[worst-case monotonicity]\label{Ass:WS-Monotonicity}
        We we call $(\bfeta,\bfxi)$ worst-case monotone if the processes $(\bfeta^{\eta,\xi},\bfxi^{\eta,\xi})$ are defined on the same probability space for all initial configurations $(\eta,\xi)$ and $\bfeta^{\zero,\xi}_{t}\supset \bfeta^{\zero,\szero}_{t}$ holds almost surely for all $\xi\in[N]^{V\cup E}$ and all $t\geq 0$.
\end{assumption}
This property states that the \textit{smallest} background state $\szero$ is the worst configuration for survival of the infection. In particular, it implies that 
\begin{equation*}
    \min_{a\in [N]^3}\lambda(a)=\lambda((0,0,0))\quad\text{and}\quad \max_{a\in [N]}r(a)=r(0).
\end{equation*}
Clearly, if the rate functions $\lambda(\,\cdot\,)$ and $r(\,\cdot\,)$ are appropriately chosen such that $(\bfeta,\bfxi)$ is monotonically representable, then it is also worst-case monotone. See Section~\ref{sec:Construction} for more details on this.

\section{Main Results}
Before we state our main result we introduce some more notation. We denote by
\begin{align*}
    \tau^{\eta,\xi}:=\inf\{t\geq 0:\bfeta^{\eta,\xi}_t= 
    \emptyset\}\quad \text{and}\quad t^{\eta,\xi}(x):=\inf\{t\geq 0: x\in \bfeta_t^{\eta,\xi}\}
\end{align*}   the \textit{extinction time} and the \textit{first hitting time} of $x$, respectively. If $\eta=\delta_z$ we also write $\tau^{z,\xi}$ and $t^{z,\xi}(x)$. We omit $z$ in case $z=\origin$, i.e.\ $\tau^{\xi}=\tau^{\origin,\xi}$ and $t^\xi(x)=t^{\origin,\xi}(x)$. For any initial state $(\eta, \xi)$ we denote the set of all vertices which were infected at least once until time $t$ by
\begin{equation*}
    \bfH^{\eta,\xi}_t:=\bigcup_{s\leq t}\{x\in V:x\in\bfeta^{\eta,\xi}_s\}+\big[\unaryminus\tfrac{1}{2},\tfrac{1}{2}\big]^d.
\end{equation*}
If initially only the origin $\eta=\{\zero\}$ is infected, we write $\bfH^{\xi}_t$. 
We denote by
\begin{equation*}
\bfK_{t}^{\xi}:=\{x\in V: \bfeta_t^{\origin,\xi}(x)=\bfeta_t^{V,\xi}(x) \}\quad \text{and}\quad \overline{\bfK}_{t}^{\xi}:=\{x\in V: x\in \bfK_{s}^{\xi} ~~ \, \forall \, s\geq t\}+\big[\unaryminus\tfrac{1}{2},\tfrac{1}{2}\big]^d
\end{equation*}
the coupled and the permanently coupled region of the infection process $\bfeta$. Note that the expansion of adding $\big[\unaryminus\tfrac{1}{2},\tfrac{1}{2}\big]^d$ to the two sets $\bfH^{\xi}_t$ and $\overline{\bfK}^{\xi}_{t}$ is only for technical reasons.

Finally, we introduce two notions of balls of radius $r>0$ around $x\in V$ by
\begin{equation*}
    \B_{r}(x):=\{y\in V:||x-y||_1\leq r \}\quad \text{and}\quad
    \B^{E}_r(x):=\big\{\{y,z\}\in E: \{y,z\}\cap\B_r(x)\neq \emptyset\big\},
\end{equation*}
where $\B_{r}(x)$ contains all vertices within distance $r$ from $x$ and $\B^{E}_r(x)$ includes all edges adjacent to such vertices. To simplify notation, we write $\B_r:=\B_r(\origin)$ and $\B^E_r:=\B^E_r(\origin)$. Now we are able to state our first main result.
\begin{theorem}[Asymptotic Shape Theorem]\label{Conjecture1}
	Let $(\bfeta,\bfxi)$ be a CPDRE which satisfies Assumption~\ref{Ass:WS-Monotonicity}. Suppose that $
    \IP(\tau^{\szero}=\infty)>0$ and there exist constants $A,B,M,c>0$ such that for all $\xi\in[N]^{V\cup E}$ and all $x\in V$
	\begin{align}
    \IP(\bfH_t^{\xi}\not\subset \B_{Mt})&\leq A \exp(-B t),\label{ConjectureAML}\\
	\IP(t< \tau^{\xi}<\infty)&\leq A \exp(-B t),\label{ConjectureSC}\\
    \IP(t^{\xi}(x)\geq \frac{||x||}{c}+t,\tau^{\xi}=\infty)&\leq A \exp(-B t),\label{ConjectureALL}\\
	\IP(\origin \notin \overline{\bfK}^{\xi}_{t},\tau^{\xi}=\infty)&\leq A \exp(-B t),\label{ConjectureFC}\\
    \IP(\origin\in \bfeta_s^{\origin,\xi}~\forall s\in[0,t])&\leq A \exp(-B t),\label{ConjectureTEC}
	\end{align}
	then there exists a bounded and convex set $U\subset \IR^d$ such that for every $\varepsilon>0$ and every $\xi$
	\begin{equation}\label{desired_result}
	\IP\big(\exists s\geq0 :  t(1-\varepsilon)U\subset (\overline{\bfK}^{\xi}_t\cap \bfH_t^\xi)\subset \bfH_t^\xi\subset t(1+\varepsilon)U \,\,\forall t\geq s\big|\tau^\xi=\infty)=1.
	\end{equation}
\end{theorem}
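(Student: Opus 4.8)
The plan is to run the Hammersley--Kingman subadditivity program, in the form developed for the contact process by Durrett and Griffeath~\cite{durrett1982several} and carried over to a random environment by Garet and Marchand~\cite{garet2012shape}, with two new ingredients forced by our setting: the environment moves in time, and $(\bfeta,\bfxi)$ is only \emph{worst-case} monotone. \textbf{Step 1 (essential hitting time).} The true hitting time $t^\xi(x)$ is useless for subadditivity because the infection may die out, so for each $x$ and each initial background $\xi$ I would first build an \emph{essential hitting time} $\sigma^\xi(x)$ by a restart procedure à la Garet--Marchand: iteratively wait until $x$ is infected, then check whether the infection restarted from $(x,\bfxi_\cdot)$ at that moment survives, and stop at the first success. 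On $\{\tau^\xi=\infty\}$ this terminates almost surely, $t^\xi(x)\le\sigma^\xi(x)<\infty$, and \eqref{ConjectureSC} (doomed excursions die fast), \eqref{ConjectureALL} (between successive infections of $x$ the delay is at most linear, on survival) and \eqref{ConjectureTEC} (control of how long $x$ stays infected before a restart attempt) together yield a uniform exponential moment $\IE[\exp(\gamma\,\sigma^\xi(x))]\le C\exp(C\|x\|)$ for some small $\gamma>0$, all $x$ and all $\xi$; the same estimates give that $\sigma^\xi(x)-t^\xi(x)$ has an exponential moment bounded uniformly in $x$ and $\xi$, so $\sigma$ and $t$ are asymptotically interchangeable.

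\textbf{Step 2 (subadditivity and the shape).} Worst-case monotonicity enters here. After the restart defining $\sigma^\xi(x)$, the background is in some state $\bfxi_{\sigma^\xi(x)}\ge\szero$, so by Assumption~\ref{Ass:WS-Monotonicity} the continued infection dominates the one continued from $(x,\szero)$; hence the extra time to reach $x+y$ is stochastically below a fresh copy of $\sigma^\szero(y)$ translated by $x$, up to a remainder $r(x,y)$ from the restart bookkeeping controlled via \eqref{ConjectureSC} and \eqref{ConjectureFC}. This gives the almost-subadditive inequality $\sigma^\szero(x+y)\le\sigma^\szero(x)+\widetilde\sigma_x(y)+r(x,y)$ together with translation invariance and ergodicity of the family (the constant configuration $\szero$ and the dynamics are translation invariant), and \eqref{ConjectureAML} forces $\sigma^\szero(x)\ge\|x\|/M$, so the limit is non-degenerate. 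Applying Kingman's subadditive ergodic theorem~\cite{kingman1973subadditive} along each ray (in the approximate form suited to the error $r$, cf.~\cite{garet2012shape}) yields $\sigma^\szero(nx)/n\to g(x)$ a.s.\ and in $L^1$, where $g$ is finite, positive, homogeneous and subadditive, hence extends to a norm on $\IR^d$; put $U:=\{v\in\IR^d:g(v)\le1\}$, a bounded convex set. Convexity of $g$, the linear bound \eqref{ConjectureAML} to interpolate between lattice lines, and Borel--Cantelli against the exponential tails upgrade this to $\sigma^\szero(x)/g(x)\to1$ as $\|x\|\to\infty$.

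\textbf{Step 3 (the two inclusions for a fixed background).} Combining $\sigma^\szero(x)/g(x)\to1$ with $|\sigma^\szero(x)-t^\szero(x)|=o(\|x\|)$ gives $t^\szero(x)/g(x)\to1$ on $\{\tau^\szero=\infty\}$, i.e.\ $t(1-\varepsilon)U\subset\bfH^\szero_t\subset t(1+\varepsilon)U$ eventually; here \eqref{ConjectureAML} is used again to rule out that a vertex far outside $\B_{Mt}$ is reached during a doomed phase, the delicate point in conditioning everything on $\{\tau^\szero=\infty\}$. To replace $\bfH^\szero_t$ by $\overline{\bfK}^\szero_t\cap\bfH^\szero_t$ in the lower inclusion, \eqref{ConjectureFC} applied after a restart at $x$ shows that shortly after $x$ is hit it also enters the permanently coupled region, so the part of $t(1-\varepsilon)U$ reached by time $(1-\varepsilon')t$ lies in $\overline{\bfK}^\szero_t$ for $t$ large. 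This establishes \eqref{desired_result} for the initial background $\szero$.

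\textbf{Step 4 (independence of the initial background).} For arbitrary $\xi$ the lower inclusion follows from worst-case monotonicity, which gives $t^\xi(x)\le t^\szero(x)\lesssim g(x)$ by Step~3, together with \eqref{ConjectureFC} (valid uniformly in $\xi$) to place $x$ in $\overline{\bfK}^\xi_t$ shortly after it is infected, so $t(1-\varepsilon)U\subset\overline{\bfK}^\xi_t\cap\bfH^\xi_t$. For the upper inclusion I would use Assumption~\ref{AssumptionBackground}~$(ii)$: by a union bound over the $O(t^d)$ vertices and edges within distance $Mt$ of the origin, with probability $1-o(1)$ the backgrounds started from $\xi$ and from $\szero$ coincide there for all times $\ge\varepsilon' t$, while by \eqref{ConjectureAML} both infections stay in $\B_{Mt}$ up to time $t$; on this event the $\xi$-infection after time $\varepsilon' t$ is governed by exactly the same graphical data as the $\szero$-infection, hence contained in the $\szero$-evolution started at time $\varepsilon' t$ from $\bfeta^{\origin,\xi}_{\varepsilon' t}\subset\B_{M\varepsilon' t}\subset C\varepsilon' t\,U$, which by Step~3 and attractiveness of the contact process for a fixed background realisation stays inside $C\varepsilon' t\,U+(1+\varepsilon')t\,U\subset t(1+\varepsilon)U$ once $\varepsilon'$ is small. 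A Borel--Cantelli argument over $t\in\IN$ then gives $\bfH^\xi_t\subset t(1+\varepsilon)U$ eventually, completing the proof. \textbf{The main obstacle} is Step~2: engineering the essential hitting time so that it is simultaneously almost subadditive, translation covariant, and equipped with a uniform exponential moment while the background evolves underneath it and only worst-case---not full---monotonicity is available, so that each restarted increment can honestly be dominated by a clean $\szero$-background copy; the transfer to arbitrary $\xi$ in Step~4 via the exponential coupling of the environment is the other point with no counterpart in the static-environment literature.
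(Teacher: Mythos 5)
Your Steps 1--3 follow essentially the paper's route for the fixed worst-case background: essential hitting times, almost-subadditivity glued through worst-case monotonicity, an extended Kingman theorem, and control of $\sigma^{\szero}-t^{\szero}$. One caveat there: the regeneration test must be run with the background reset to $\szero$ (survival of $\tau^{x,\szero}\circ\theta_{u_k}$), not ``restarted from $(x,\bfxi_\cdot)$'' as you write in Step 1; only with the $\szero$-reset do the post-regeneration increments have the clean law of $\sigma^{\szero}$ and the independence/stationarity needed for the subadditive theorem (this is the content of Lemma~\ref{Lemma:invariance_2} and Corollary~\ref{Corollary:invariance_2}, and it is exactly where Assumption~\ref{Ass:WS-Monotonicity} is used to glue paths).

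The genuine gaps are in Step 4, which is precisely the novel part of Theorem~\ref{Conjecture1}. For the lower inclusion you argue $t^{\xi}(x)\le t^{\szero}(x)\lesssim g(x)$ by worst-case monotonicity, but the theorem is stated under $\PC=\IP(\cdot\,|\,\tau^{\xi}=\infty)$, while Step 3 is an almost sure statement under $\PO$. On $\{\tau^{\xi}=\infty\}\setminus\{\tau^{\szero}=\infty\}$ the $\szero$-process dies out, $t^{\szero}(x)=\infty$ for all but finitely many $x$, and the domination is vacuous; even on $\{\tau^{\szero}=\infty\}$ the conditional measures differ. The paper instead proves the invariance identity $\IP(A\cap(\hat\theta^\xi_x)^{-1}(E)\,|\,\tau^{\origin,\xi}=\infty)=\IP(A\,|\,\tau^{\origin,\xi}=\infty)\,\IP(E\,|\,\tau^{\origin,\szero}=\infty)$ and reruns the subadditive theorem under $\PC$ for the shifted times $s^{\xi}(x)=\sigma^{\szero}(x)\circ\hat\theta^{\xi}_{\origin}$ (Corollary~\ref{corollar:ConvergenceForAllBackgrounds}); some such transfer mechanism is indispensable and is missing from your argument. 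For the upper inclusion, your plan applies ``Step 3'' to the process restarted at time $\varepsilon' t$ from $\bfeta^{\origin,\xi}_{\varepsilon' t}$ with background $\bfxi_{\varepsilon' t}$; but Step 3 only covers the initial condition $(\{\origin\},\szero)$ at time $0$, so you are invoking an upper shape bound for a random, non-$\szero$ initial background --- the very statement being proved --- which makes the argument circular. Moreover the Borel--Cantelli step needs a summable quantitative bound on the probability that such a restarted process ever leaves the cone, and Step 3 provides only a qualitative almost sure statement. The paper breaks this circularity differently (Proposition~\ref{ub_all_bg}): it first upgrades the $\szero$-result to restarts at arbitrary fixed times $s$ via the independence trick with the no-recovery event $C_s$ (giving $\IP(A(s,\varepsilon))=1$ for every $s$), then proves the bound for stationary initial background, and finally handles arbitrary $\xi$ by additivity and the event $D_s$ that the maximal infection stays inside the permanently coupled region, using $\IP(D_s)\to1$ rather than any Borel--Cantelli over $t$. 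Without these two mechanisms (the shift-invariance transfer for the lower bound and the stationary/coupled-region bootstrap for the upper bound), Step 4 does not go through.
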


Note that the assumptions~\eqref{ConjectureAML}-\eqref{ConjectureFC} are widely recognized in the literature pertaining to the asymptotic shape of infection models (cf. \cite{durrett1982several}). The bounds \eqref{ConjectureAML} and \eqref{ConjectureALL} are often referred to as the \textit{at most linear} and \textit{at least linear} growth, respectively. Equation \eqref{ConjectureSC} reflects the behaviour of dying out fast or to survive with high probability and \eqref{ConjectureFC} guarantees that the infection couples sufficiently fast.

In this context, the novelty of our approach lies in accommodating a dynamical random environment and asking for Assumption~\ref{Ass:WS-Monotonicity} instead of monotonicity. Condition~\eqref{ConjectureTEC}, however, is primarily technical and is trivially satisfied by the majority of non-permanent models, i.e.\ models in which infected sites are able to recover.

Surprisingly, it seems that for the asymptotic shape theorem to be valid, it is not a necessary condition that the process $(\bfeta,\bfxi)$ is worst-case monotone. This is shown by our next result, where we consider $(\bfeta,\bfxi)$ to be a CPDP. We already pointed out in Example~\ref{ex:LeadingExamples} that this process is not necessarily monotone and in some cases it is not even worst-case monotone.
\begin{corollary}\label{thm:ShapeTheoremNonMonoton}
     Let $(\bfeta,\bfxi)$ be a supercritical CPDP, i.e.\ $\IP(\tau^{\szero}=\infty)>0$ and suppose \eqref{ConjectureAML}-\eqref{ConjectureTEC} are satisfied, then \eqref{desired_result} holds true.
\end{corollary}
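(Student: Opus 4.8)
The plan is to deduce the statement from Theorem~\ref{Conjecture1}. That theorem already produces \eqref{desired_result} out of \eqref{ConjectureAML}--\eqref{ConjectureTEC}, the only extra ingredient being Assumption~\ref{Ass:WS-Monotonicity}, which a CPDP may fail. So I would construct an auxiliary CPDRE $(\tilde\bfeta,\tilde\bfxi)$ which \emph{does} satisfy Assumption~\ref{Ass:WS-Monotonicity}, apply Theorem~\ref{Conjecture1} to it, and transport the resulting limiting shape back to $(\bfeta,\bfxi)$. For the transport, $(\tilde\bfeta,\tilde\bfxi)$ must be coupled to $(\bfeta,\bfxi)$ so tightly that the two infection processes agree up to time $t$ inside $\B_{Mt}$ except on an event of probability at most $Ce^{-ct}$; building such an auxiliary process is the core of the argument.

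For the construction I would exploit the two features special to a CPDP: the background is a family of \emph{independent} two-state chains, so it admits an explicit graphical representation coupling all initial configurations at once, and, by Assumption~\ref{AssumptionBackground}(ii), each vertex and edge permanently couples after an exponentially distributed delay. Out of these one should obtain a process $(\tilde\bfeta,\tilde\bfxi)$ whose mechanism is ``repaired'' exactly where the CPDP violates worst-case monotonicity --- for instance by lowering, respectively raising, the offending infection and recovery rates towards the extremal values $\lambda((0,0,0))$ and $r(0)$ --- so that $\szero$ genuinely becomes the worst background and $\tilde\bfeta^{\zero,\xi}\supset\tilde\bfeta^{\zero,\szero}$ holds a.s.; simultaneously, run on the common graphical data, $\tilde\bfeta$ and $\bfeta$ from any common initial state should differ only at space--time points controlled by the permanent-coupling delay and by the non-permanence quantified in \eqref{ConjectureTEC}, so that the discrepancy inside $\B_{Mt}$ up to time $t$ is exponentially unlikely. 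Reconciling these two requirements --- worst-case monotonicity of $(\tilde\bfeta,\tilde\bfxi)$ together with exponentially fast coupling to $(\bfeta,\bfxi)$ --- is the main obstacle, and it is exactly here that the product structure of the background and \eqref{ConjectureTEC} must be used; this is also why the result is confined to CPDPs.

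Granting this, the remaining steps are routine. First, $(\tilde\bfeta,\tilde\bfxi)$ is supercritical: on an event of probability $\ge 1-Ce^{-ct}$ it agrees with $(\bfeta,\bfxi)$ long enough that $\IP(\tilde\tau^{\szero}=\infty)\ge\IP(\tau^{\szero}=\infty)-Ce^{-ct}>0$ for $t$ large, since $\IP(\tau^{\szero}=\infty)>0$. Second, each of \eqref{ConjectureAML}--\eqref{ConjectureTEC} holds for $(\tilde\bfeta,\tilde\bfxi)$ (with the obvious tilde notation): the relevant event is contained in the corresponding event for $(\bfeta,\bfxi)$, exponentially unlikely by hypothesis, together with a coupling failure inside the relevant window, exponentially unlikely by the previous point; for \eqref{ConjectureSC} and \eqref{ConjectureFC} one additionally uses the coupling to identify $\{\tilde\tau^\xi=\infty\}$ with $\{\tau^\xi=\infty\}$ and the permanently coupled region of $\tilde\bfeta$ with $\overline{\bfK}^\xi$, up to exponentially small error. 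Theorem~\ref{Conjecture1} then produces a bounded convex $\tilde U\subset\IR^d$ and the almost sure sandwich \eqref{desired_result} for $(\tilde\bfeta,\tilde\bfxi)$.

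Finally, set $U:=\tilde U$. Conditionally on $\{\tau^\xi=\infty\}$ --- which, by the coupling and Borel--Cantelli applied to the probabilities $Ce^{-ct}$, differs from $\{\tilde\tau^\xi=\infty\}$ only on a null event --- Borel--Cantelli also gives that, for every fixed $\delta>0$ and all large $t$, the sets $\bfH^\xi_t$ and $\overline{\bfK}^\xi_t\cap\bfH^\xi_t$ coincide with $\tilde\bfH^\xi_t$ and its $\tilde\bfeta$-analogue outside $\B_{\delta t}$. Feeding this into the sandwich \eqref{desired_result} for $(\tilde\bfeta,\tilde\bfxi)$ and enlarging $\varepsilon$ by an amount proportional to $\delta$ yields $t(1-\varepsilon)U\subset(\overline{\bfK}^\xi_t\cap\bfH^\xi_t)\subset\bfH^\xi_t\subset t(1+\varepsilon)U$ for all large $t$, almost surely on $\{\tau^\xi=\infty\}$. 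Since $\varepsilon>0$ was arbitrary, \eqref{desired_result} follows. The one genuinely delicate point, as stressed above, is the construction of $(\tilde\bfeta,\tilde\bfxi)$ in the second paragraph together with the fast-coupling bound, in particular the verification of \eqref{ConjectureSC} for the auxiliary process.
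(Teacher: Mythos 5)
Your overall strategy -- build an auxiliary worst-case monotone CPDRE, apply Theorem~\ref{Conjecture1} to it, and transfer the shape back to the CPDP -- is indeed the route the paper takes, but your concrete construction has a genuine gap. You propose to repair the model by \emph{changing the offending infection and recovery rates on the existing states} towards the extremal values $\lambda((0,0,0))$ and $r(0)$, and then to argue that the repaired process agrees with the original CPDP up to time $t$ inside $\B_{Mt}$ except on an event of probability $Ce^{-ct}$. This cannot work: the background coordinates revisit the offending states $0,1$ for a positive fraction of time forever, so a rate modification on those states changes the law of the infection process at \emph{all} times, not just during an initial transient. No coupling can make the two infection processes coincide on $\B_{Mt}$ up to time $t$ outside an exponentially small event; in general the repaired process is strictly dominated and would have a strictly smaller asymptotic shape. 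Consequently the transfer of supercriticality, of \eqref{ConjectureAML}--\eqref{ConjectureTEC}, and of the limiting set $U$ all break down, and the ``delicate point'' you flag is not merely technical -- it is where the argument fails.

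The paper's fix is different in exactly this respect: instead of altering the rates on $\{0,1\}$, it \emph{enlarges the state space} by a designated worst state $-1$ that can only be exited (each coordinate leaves $-1$ at the original flip rates and never re-enters it), with infection rate $0$ and recovery rate $\max\{r(0),r(1)\}$ while a relevant coordinate is at $-1$. This makes the modified process worst-case monotone, yet for every initial background $\xi\in\{0,1\}^{V\cup E}$ the modified and original processes \emph{coincide identically}, so no approximate coupling or Borel--Cantelli transfer is needed for such $\xi$; the only remaining work is to verify \eqref{ConjectureSC}--\eqref{ConjectureFC} for initial configurations containing $-1$'s, which is done by restarting at time $t$ on the event $D_t$ (infection confined to $\B_{Mt}$ and contained in the permanently coupled region) and invoking the assumed estimates for the CPDP. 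Note also that this only yields \eqref{ConjectureFC} at rate $\exp(-B\sqrt{t})$ for the auxiliary process, and one must check (as the paper does, via \eqref{eq:ExponetialControlIntersectionTime}) that this weaker bound still suffices in the proof of Theorem~\ref{Conjecture1}; your claim of a full exponential rate for \eqref{ConjectureFC} via the coupling is likewise not substantiated.
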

Having established the asymptotic shape theorem for a broad class of models, we check that the sufficient conditions \eqref{ConjectureAML}-\eqref{ConjectureTEC} are satisfied for several choices of the background process $\bfxi$ and the rate functions $\lambda$ and $r$. In particular, if we consider independent updates as in Example~\ref{ex:MainExamples}~(1.) and choose $\lambda$ and $r$ so that $(\bfeta,\bfxi)$ is monotone, then we can show that conditioned on survival the asymptotic shape always exists.
\begin{theorem}\label{thm:CPUIShapeTheorem}
    Let $(\bfeta,\bfxi)$ be a monotone supercritical CPIU, then there exists a bounded and convex subset $U\subset \IR^d$ such that for every $\varepsilon>0$ and every $\xi\in [N]^{V\cup E}$ it holds that
    \begin{equation*}
	   \IP\big(\exists s\geq0 :  t(1-\varepsilon)U\subset (\overline{\bfK}^{\xi}_t\cap \bfH_t^\xi)\subset\bfH_t^{\xi}\subset t(1+\varepsilon)U \,\,\forall t\geq s\big| \tau^{\xi}=\infty\big)=1.
    \end{equation*}
\end{theorem}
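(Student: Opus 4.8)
The plan is to deduce Theorem~\ref{thm:CPUIShapeTheorem} from Theorem~\ref{Conjecture1}. By definition of ``supercritical'' we have $\IP(\tau^{\szero}=\infty)>0$, and since $(\bfeta,\bfxi)$ is monotone it is in particular worst-case monotone (Assumption~\ref{Ass:WS-Monotonicity}), as noted below that assumption. Hence everything reduces to verifying the five exponential estimates \eqref{ConjectureAML}--\eqref{ConjectureTEC}, \emph{uniformly} in the initial background $\xi$. Two of them are elementary. For the at-most-linear bound \eqref{ConjectureAML}, set $\bar\lambda:=\max_{a\in[N]^3}\lambda(a)<\infty$ (finite since $[N]$ is finite); in the graphical construction every infected site transmits along each of its $2d$ outgoing edges at rate at most $\bar\lambda$, so $\bfH_t^\xi$ is dominated by the time-$t$ range of a first-passage/branching process with rates $\leq\bar\lambda$, and a standard large-deviation estimate for that process yields $\IP(\bfH_t^\xi\not\subset\B_{Mt})\leq Ae^{-Bt}$ for $M$ large, uniformly in $\xi$. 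For \eqref{ConjectureTEC}, note that $\origin\in\bfeta_s^{\origin,\xi}$ for all $s\in[0,t]$ forces the absence of any recovery mark at $\origin$ during $[0,t]$; conditionally on the background path these marks form a Poisson process of intensity $r(\bfxi_s(\origin))$, so the left-hand side of \eqref{ConjectureTEC} equals $\IE\bigl[\exp(-\int_0^t r(\bfxi_s(\origin))\,ds)\bigr]$, and since the model is non-permanent we have $\sum_i\pi(i)r(i)>0$. Geometric ergodicity of the finite chain $\bfxi(\origin)$ (Assumption~\ref{AssumptionBackground}$(ii)$ for the CPIU) gives an exponential concentration bound for the additive functional $\tfrac1t\int_0^t r(\bfxi_s(\origin))\,ds$, and \eqref{ConjectureTEC} follows.

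The substance is \eqref{ConjectureSC}, \eqref{ConjectureALL} and \eqref{ConjectureFC}, which I would obtain from a block construction in the spirit of Bezuidenhout and Grimmett~\cite{bezuidenhout1990contact}, adapted to the dynamical environment, together with a restart argument as in \cite{durrett1991}. The first step is a finite space-time analogue of supercriticality: there exist $L,T>0$ and, for each $\delta>0$, an $n=n(\delta)$ such that, started from a single infected site and from the \emph{worst} background state $\szero$, with probability at least $1-\delta$ the infection occupies at least $n$ sites in each of the $2d$ ``boundary regions'' of the space-time box $[-L,L]^d\times[0,T]$, positioned so as to seed a further block. The reason to work from $\szero$ is that worst-case monotonicity then transfers this lower bound to \emph{every} initial background $\xi$ (and every background configuration that has already become coupled to $\szero$); here Assumption~\ref{AssumptionBackground}$(ii)$ — for the CPIU simply the geometric ergodicity of the independent components — is used to pay the negligible price for the environment being out of equilibrium and for blocks built over overlapping time windows sharing background randomness, each block event being, up to an exponentially small error, measurable with respect to the background in a bounded space-time region. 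Gluing these block events along a renormalised lattice and invoking the domination theorem of Liggett, Schonmann and Stacey (cf.\ \cite{liggett1999stochastic}) shows that, restricted to the block lattice, $\bfeta$ dominates a supercritical oriented percolation with parameter arbitrarily close to $1$.

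The three remaining estimates then follow by now-classical arguments from this comparison, exactly as in \cite{durrett1982several, garet2012shape}. For \eqref{ConjectureSC} one uses the restart procedure: from the current ``tip'' one attempts to establish a surviving dominating percolation; each attempt takes time $O(1)$ and succeeds with probability at least $1-\delta$ (up to exponentially small corrections from the shared background), and once an attempt succeeds the process survives forever, so $\{t<\tau^\xi<\infty\}$ forces of order $t$ failed restart attempts, giving the exponential bound. For \eqref{ConjectureALL}, on $\{\tau^\xi=\infty\}$ the dominating oriented percolation survives, and the standard linear-growth estimate for supercritical oriented percolation (exponential control of the time to reach distance $\|x\|$) passes through the block map to give $\IP(t^\xi(x)\geq\|x\|/c+t,\tau^\xi=\infty)\leq Ae^{-Bt}$ for a suitable speed $c>0$. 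For \eqref{ConjectureFC} one combines the fact that a fully occupied block forgets the configuration outside it — so that $\bfeta^{\origin,\xi}$ and $\bfeta^{V,\xi}$ agree there — with the linear spreading of the occupied region and with Assumption~\ref{AssumptionBackground}$(ii)$ for the background coordinates, to conclude that $\origin\in\overline{\bfK}^\xi_t$ off an event of probability $\leq Ae^{-Bt}$ on $\{\tau^\xi=\infty\}$.

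The main obstacle is the block construction itself with a \emph{dynamical}, non-restarted background: one cannot refresh the environment between blocks, so distinct block and restart events are not independent. The resolution is that the CPIU background has independent coordinates and is geometrically ergodic, so each block event is, up to exponentially small error, determined by the background in a bounded space-time window, which restores the finite-range dependence required both for the Liggett--Schonmann--Stacey comparison and for the near-independence of restart attempts. A secondary point is that the finite space-time survival estimate must be proved from the worst state $\szero$ so that worst-case monotonicity applies for arbitrary $\xi$; this is facilitated by the observation that, conditionally on the background path, $\bfeta$ admits a Harris graphical construction and is an additive (time- and space-inhomogeneous) contact process, so the graphical-representation arguments of \cite{bezuidenhout1990contact, durrett1991} carry over essentially verbatim.
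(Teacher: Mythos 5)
Your overall route is the same as the paper's: reduce to Theorem~\ref{Conjecture1} via worst-case monotonicity, get \eqref{ConjectureAML} by domination with a maximal (first-passage) process, dismiss \eqref{ConjectureTEC} by a recovery-mark computation, and obtain the remaining estimates from a Bezuidenhout--Grimmett block construction with an $M$-dependent-to-independent percolation comparison and a restart procedure. Up to details you gloss over (e.g.\ that hitting the macroscopic box of $x$ does not yet infect the microscopic site $x$, which forces the repeated-hitting/independent-trials argument behind \eqref{ConjectureALL}, and that failed restart attempts cost a random time with exponential tails rather than $O(1)$), this matches Sections~\ref{sec:BlockConstruction} and \ref{sec:RestartingProcedure}.

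The genuine gap is your treatment of \eqref{ConjectureFC}. The claim that ``a fully occupied block forgets the configuration outside it, so that $\bfeta^{\origin,\xi}$ and $\bfeta^{V,\xi}$ agree there'' is not correct as an argument: after the block has been crossed by the $\origin$-process, the process started from $V$ can still be infected inside that region through paths entering from outside, so full infection of a block at some earlier time does not force equality of the two configurations at later times, and hence does not put $\origin$ into $\overline{\bfK}^{\xi}_t$. The only route available (and the one the paper takes) goes through duality: $x\in\bfeta_t^{V,\xi}\setminus\bfeta_t^{\origin,\xi}$ exactly when the time-reversed (dual) infection started at $(x,t)$ is alive at time $0$ but avoids the forward cluster, so one must show that a surviving forward process and a surviving backward process intersect with probability $1-Ae^{-Bt}$. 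In the dynamical environment this is genuinely delicate: the duality holds only in distribution, for a mirrored-rate CPDRE, when the reversible background is started stationary (Proposition~\ref{prop:DualityStationary}); one needs that this dual is itself supercritical, which is not automatic absent self-duality and requires the equivalence of global and local survival (Lemma~\ref{lem:EquivalenzOfSurivalOfDual}, Corollary~\ref{cor:FiniteVSInfinteSurv}); the conditionally dual process shares the background with the forward process, so independence must be restored by coupling the reversed background with a fresh stationary one (Lemma~\ref{lem:ControlDualProcess}); and the intersection itself is forced by running the restart procedure both forward from $\origin$ and backward from $(x,2t)$ and using density estimates for the two macroscopic percolation clusters in a common slab (Lemmas~\ref{lemma:_density_up_inv}, \ref{Lemma:d-dim-percolation_density_x}, \ref{lem:forwad_backward_intersection}). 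None of this machinery appears in your proposal, so as written the proof of \eqref{ConjectureFC} — arguably the hardest of the three block-construction estimates here — is missing.
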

This result can be generalised to a worst-case monotone CPDRE or a non-monotone CPDP in the following way.
\begin{proposition}\label{prop:CPDREShapeTheorem}
     Let $(\bfeta,\bfxi)$ be a worst-case monotone CPDRE which satisfies \eqref{ConjectureTEC} or a CPDP. If $(\bfeta,\bfxi)$ can be coupled with a supercritical monotone CPIU $(\underline{\bfeta},\underline{\bfxi})$ such that $(\underline{\bfeta}_t,\underline{\bfxi}_t)\leq (\bfeta_t,\bfxi_t)$ holds for all $t\geq 0$, then \eqref{desired_result} follows.
\end{proposition}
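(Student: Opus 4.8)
The plan is to verify the exponential estimates \eqref{ConjectureAML}--\eqref{ConjectureTEC} for $(\bfeta,\bfxi)$ and then to apply Theorem~\ref{Conjecture1} in the worst-case monotone CPDRE case and Corollary~\ref{thm:ShapeTheoremNonMonoton} in the CPDP case. Supercriticality is immediate: starting both processes from $(\origin,\szero)$, the coupling gives $\underline{\bfeta}_t\subseteq\bfeta_t$, hence $\tau^{\szero}\geq\underline{\tau}^{\,\szero}$ (with $\underline{\tau}$ the extinction time of $\underline{\bfeta}$), and so $\IP(\tau^{\szero}=\infty)\geq\IP(\underline{\tau}^{\,\szero}=\infty)>0$. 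Condition \eqref{ConjectureTEC} is part of the hypotheses in the CPDRE case; in the CPDP case it holds as soon as infected sites recover at positive rate in some background state, since then the cumulative recovery rate at $\origin$ over $[0,t]$ is at least $\delta t$ with probability $1-O(e^{-ct})$ by ergodicity of the finite-state background chain at $\origin$ -- see the discussion following Theorem~\ref{Conjecture1}.

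The at-most-linear bound \eqref{ConjectureAML} does not use the domination hypothesis. Since $[N]$ is finite, the infection rates are bounded by $\lambda_{\max}:=\max_{a\in[N]^3}\lambda(a)<\infty$, so in the graphical construction $\bfeta^{\origin,\xi}$ is contained in the range of a contact process on $\IZ^d$ with constant infection rate $\lambda_{\max}$ and recovery rate $0$. The classical finite-speed-of-propagation estimate for this pure-growth process (see e.g.\ \cite{liggett1999stochastic}) yields an $M$ depending only on $d$ and $\lambda_{\max}$ and constants $A,B>0$ with $\IP(\bfH_t^{\xi}\not\subset\B_{Mt})\leq Ae^{-Bt}$ for all $\xi$.

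The three remaining estimates \eqref{ConjectureSC}, \eqref{ConjectureALL} and \eqref{ConjectureFC} are where the domination enters, through the coarse-grained geometry inherited from the supercritical CPIU. The proof of Theorem~\ref{thm:CPUIShapeTheorem} rests on an adaptation of the Bezuidenhout--Grimmett block construction~\cite{bezuidenhout1990contact}, which shows that from a suitable finite seed the CPIU $\underline{\bfeta}$ stochastically dominates a supercritical oriented percolation on a space-time coarse-grained lattice, the associated ``block events'' depending only on bounded space-time windows and being increasing in the infection. Because the coupling keeps $\underline{\bfeta}_t\subseteq\bfeta_t$, the process $\bfeta$ realises each such block event whenever $\underline{\bfeta}$ does, so $\bfeta$ also dominates a supercritical oriented percolation. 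Together with the finite-speed bound above, this coarse-grained percolation is exactly the input required by the restart procedure of \cite{durrett1991} as carried out for the CPIU; re-running it for $(\bfeta,\bfxi)$ produces \eqref{ConjectureSC}, \eqref{ConjectureALL} and \eqref{ConjectureFC}. With \eqref{ConjectureAML}--\eqref{ConjectureTEC} established, Theorem~\ref{Conjecture1} applies in the worst-case monotone CPDRE case and gives \eqref{desired_result}, and feeding the same estimates into Corollary~\ref{thm:ShapeTheoremNonMonoton} gives it in the CPDP case.

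The main obstacle is that \eqref{ConjectureSC}, \eqref{ConjectureALL} and \eqref{ConjectureFC} cannot be obtained for $(\bfeta,\bfxi)$ as a black-box consequence of the corresponding CPIU estimates, because the survival event $\{\tau^{\xi}=\infty\}$ of the dominating process is strictly larger than $\{\underline{\tau}^{\,\xi}=\infty\}$: e.g.\ $\IP(t^{\xi}(x)\geq ||x||/c+t,\ \tau^{\xi}=\infty)$ is not controlled by the analogous probability for $\underline{\bfeta}$. One therefore has to rerun the restart argument for $(\bfeta,\bfxi)$ directly and verify that its ``die out fast or complete a block'' dichotomy and the ensuing regeneration rely only on the dominated coarse-grained percolation, on finite speed of propagation, and on the (automatic) monotonicity of the infection in its initial condition for a fixed background trajectory -- all available here -- and that the background at the restart times is handled through its ergodicity exactly as in the proof of Theorem~\ref{thm:CPUIShapeTheorem}. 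Worst-case monotonicity (in the CPDRE case) and the auxiliary-process argument of Corollary~\ref{thm:ShapeTheoremNonMonoton} (in the CPDP case) then enter only to pass from the estimates to \eqref{desired_result}. Carrying out this bookkeeping, rather than any new probabilistic idea, is where the work lies.
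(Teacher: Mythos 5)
Your proposal is correct and follows essentially the same route as the paper: \eqref{ConjectureAML} via the finite-speed bound for the rate-$\lambda_{\max}$ pure-growth process, \eqref{ConjectureSC}, \eqref{ConjectureALL} and \eqref{ConjectureFC} by running the block-construction/restart machinery for $(\bfeta,\bfxi)$ under the domination by the supercritical monotone CPIU (which is precisely how Section~\ref{sec:RestartingProcedure} is set up, under Assumption~\ref{ass:coupling_exists}), and then Theorem~\ref{Conjecture1} resp.\ Corollary~\ref{thm:ShapeTheoremNonMonoton} to conclude. Your observation that the estimates concern the survival event of the dominating process and therefore cannot be imported as a black box from the CPIU is exactly the point the paper's restart procedure is designed around.
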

Unfortunately, to verify \eqref{ConjectureSC}-\eqref{ConjectureFC} we need monotonicity, and thus we can only prove that there exists an asymptotic shape for a CPDP $(\bfeta,\bfxi)$ if we can couple it from below with a supercritical monotone CPDP $(\underline{\bfeta},\underline{\bfxi})$ as in the above proposition. However, if the rate functions of $(\bfeta,\bfxi)$ are $\lambda(\cdot)$ and $r(\cdot)$ and a CPDP  $(\underline{\bfeta},\underline{\bfxi})$ with infection rates
\begin{equation*}
    \underline{\lambda}_{(x,y)}(\xi)=\min_{a\in \{0,1\}^3}\lambda(a)\quad\text{for all }\xi~\text{ and }~(x,y)\in \vec{E}
\end{equation*} and the same recovery rates as $(\bfeta,\bfxi)$ is supercritical, then this coupling trivially exists. Moreover, one can also use the coupling techniques given in \cite[Theorem~3.7]{blath2023switching} to couple the CPDRE with a basic contact process. If this basic CP is supercritical, the assumptions of Proposition~\ref{prop:CPDREShapeTheorem} are obviously satisfied.

Next we apply our results in the situation where $\bfxi$ is a spin system on $\{0,1\}^{V\cup E}$ of finite range $L\geq 0$, as described in Example~\ref{ex:MainExamples}~(3.).
In order to conclude that \eqref{desired_result} holds we will need a comparison argument with a (monotone) CPDP. Therefore, let
\begin{equation*}
    \underline{\alpha}_V:=\min_{\xi:\xi(x)=0}q(x,\xi)\quad\text{and}\quad \underline{\beta}_V:=\max_{\xi:\xi(x)=1}q(x,\xi)
\end{equation*}
be the minimal up-flip and the maximal down-flip rates for vertices and accordingly $\underline{\alpha}_E$ and $\underline{\beta}_E$ the minimal up-flip and maximal down-flip rates for edges.
\begin{corollary}\label{cor:spin_system}
	Suppose $(\bfeta,\bfxi)$ is a monotone CPDRE and $\bfxi$ a spin system with finite range $L\geq 0$ and 
    flip rate $q(\cdot,\cdot)$. Let $(\underline{\bfeta},\underline{\bfxi})$ denote a CPDP with the same infection and recovery rate functions as $(\bfeta,\bfxi)$ and  rates $\underline{\alpha}_V,\underline{\alpha}_E, \underline{\beta}_V,\underline{\beta}_E>0$ for the background process. If $\IP\big(\underline{\bfeta_t}^{\origin,\szero}\neq \emptyset~\forall t\geq 0\big)>0$, then $(\bfeta,\bfxi)$ satisfies the conditions \eqref{ConjectureAML}-\eqref{ConjectureTEC} of Theorem~\ref{Conjecture1} and \eqref{desired_result} holds.
\end{corollary}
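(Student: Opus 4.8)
The plan is to reduce everything to Proposition~\ref{prop:CPDREShapeTheorem} by constructing an explicit coupling of $(\bfeta,\bfxi)$ from below with a supercritical monotone CPDP, and then to check that the technical hypothesis \eqref{ConjectureTEC} of that proposition is met (which for a non-permanent contact process is routine). Since $(\bfeta,\bfxi)$ is assumed monotone, it is in particular worst-case monotone, so the only real work is the comparison with a CPDP.

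First I would build the dominating CPDP $(\underline{\bfeta},\underline{\bfxi})$ on the same probability space as $(\bfeta,\bfxi)$ via a joint graphical representation. The key observation is that for the spin system $\bfxi$ of finite range $L$, the flip rate $q(a,\xi)$ at a site $a$ that is currently in state $0$ is always at least $\underline\alpha_V$ (resp.\ $\underline\alpha_E$), and when $a$ is in state $1$ it is always at most $\underline\beta_V$ (resp.\ $\underline\beta_E$), uniformly over the surrounding configuration. Hence one can couple so that whenever $\underline{\bfxi}$ has an up-flip at $a$, so does $\bfxi$ (using independent extra up-flip clocks for $\bfxi$ to make up the excess rate), and whenever $\bfxi$ has a down-flip at $a$, so does $\underline{\bfxi}$ (again with extra down-flip clocks for $\underline{\bfxi}$). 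A standard induction on the jump times of the combined Poisson clocks then gives $\underline{\bfxi}_t\leq\bfxi_t$ for all $t\geq0$, provided $\underline{\bfxi}_0\leq\bfxi_0$. For the infection layer, since $(\bfeta,\bfxi)$ is monotone and the two processes use the same infection and recovery rate functions $\lambda(\cdot),r(\cdot)$, the ordering $\underline{\bfxi}_t\leq\bfxi_t$ together with monotonicity of $\lambda$ and anti-monotonicity of $r$ lets me drive both infection processes from the same infection and recovery Poisson clocks and conclude $\underline{\bfeta}_t\subset\bfeta_t$ whenever $\underline{\bfeta}_0\subset\bfeta_0$; concretely, every infection arrow used by $\underline{\bfeta}$ (open because $\underline{\lambda}_{(x,y)}(\underline{\bfxi}_t)>0$ along with $x\in\underline{\bfeta}_t$) is also usable by $\bfeta$, and every recovery mark affecting $\bfeta$ at a site also affects $\underline{\bfeta}$. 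So $(\underline{\bfeta}_t,\underline{\bfxi}_t)\leq(\bfeta_t,\bfxi_t)$ for all $t$.

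The hypothesis $\IP(\underline{\bfeta}_t^{\origin,\szero}\neq\emptyset~\forall t\geq0)>0$ says exactly that $(\underline{\bfeta},\underline{\bfxi})$ is a supercritical CPDP, and by Theorem~\ref{thm:CPUIShapeTheorem} (a CPDP is a special case $N=1$ of a CPIU, and monotonicity of a CPDP is automatic) it satisfies all of \eqref{ConjectureAML}--\eqref{ConjectureTEC}. It remains to verify \eqref{ConjectureTEC} for $(\bfeta,\bfxi)$ itself, since Proposition~\ref{prop:CPDREShapeTheorem} requires it in the ``worst-case monotone CPDRE'' case (it is not part of the CPDP alternative). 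But \eqref{ConjectureTEC} only asserts that the probability the origin stays infected throughout $[0,t]$ decays exponentially, and this holds for any contact process in which $r(\cdot)$ is not identically zero on the relevant states: the recovery rate at the origin is bounded below by some $r_{\min}>0$ uniformly in the environment (here $r_x(\xi)\equiv$ a constant, or more generally $\max_a r(a)=r(0)$ and the origin does recover), so the origin experiences a recovery mark before time $t$ with probability at least $1-e^{-r_{\min}t}$, giving \eqref{ConjectureTEC} with $A=1$, $B=r_{\min}$. With \eqref{ConjectureTEC} in hand and the coupling established, Proposition~\ref{prop:CPDREShapeTheorem} applies verbatim and yields \eqref{desired_result}; the claim that $(\bfeta,\bfxi)$ satisfies \eqref{ConjectureAML}--\eqref{ConjectureTEC} then follows because these estimates are inherited upward through the domination (larger infection region, larger coupled region, shorter extinction time on the event of dying out) from the CPDP, exactly as in the proof of Proposition~\ref{prop:CPDREShapeTheorem}.

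The main obstacle I anticipate is the bookkeeping in the joint graphical construction of the two background processes: one must carefully split each site's flip dynamics into a ``shared'' rate and independent ``excess'' rates so that both $\bfxi$ and $\underline{\bfxi}$ are genuinely Markov with their correct generators while the pathwise ordering is preserved through every flip, and one must check that the finite-range structure does not create pathological correlations that break the monotone coupling (it does not, because the bounds $\underline\alpha_\bullet\leq q(\cdot,\cdot)\leq$ relevant maxima hold configuration-by-configuration). Verifying that $\lambda(\cdot)$ being monotone and $r(\cdot)$ anti-monotone is exactly what is needed to push the ordering through the infection layer is conceptually straightforward but should be stated precisely, since ``monotone CPDRE'' is the standing assumption that licenses it.
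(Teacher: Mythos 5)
Your proposal is correct and follows essentially the same route as the paper: reduce to Proposition~\ref{prop:CPDREShapeTheorem} by coupling $(\bfeta,\bfxi)$ above the supercritical monotone CPDP built from the extremal flip rates $\underline{\alpha}_V,\underline{\alpha}_E,\underline{\beta}_V,\underline{\beta}_E$ (the paper simply cites this coupling from \cite[Proposition~2.10]{seiler2023contact}), observe that \eqref{ConjectureTEC} holds, and let the proposition supply \eqref{ConjectureAML}--\eqref{ConjectureFC} and \eqref{desired_result}. Two small caveats: your parenthetical ``monotonicity of a CPDP is automatic'' is false in general (it holds here only because the CPDP carries the same monotone rate functions $\lambda,r$ as $(\bfeta,\bfxi)$), and your verification of \eqref{ConjectureTEC} really uses $\min_a r(a)>0$ rather than just $r(0)=\max_a r(a)$ --- the same implicit non-permanence assumption the paper makes --- while the closing claim that \eqref{ConjectureAML} and \eqref{ConjectureSC} are ``inherited upward through the domination'' slightly misdescribes the mechanism, since in the paper these follow from Lemma~\ref{lem:InfContainedExponetialSpeed} and the restart procedure applied to $(\bfeta,\bfxi)$ itself, not from monotone comparison alone.
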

\begin{proof}
    By definition $(\underline{\bfeta},\underline{\bfxi})$ is monotone and supercritical. Moreover, $(\bfeta,\bfxi)$ satisfies condition \eqref{ConjectureTEC}. Therefore, by Proposition~\ref{prop:CPDREShapeTheorem} it suffices to  find a coupling such that $(\bfeta,\bfxi)$ dominates  $(\underline{\bfeta},\underline{\bfxi})$ to prove \eqref{desired_result}. This coupling, however, is straightforward and given in \cite[Proposition~2.10]{seiler2023contact}.
\end{proof}

\textbf{Discussion and future research directions:}
The process $(\bfeta, \bfxi)$ is, in general, not a monotone Markov process. However, the infection process $\bfeta$ itself is always additive (see Proposition~\ref{prop:Monotonicity}). Consequently, one might be inclined to believe that the techniques developed in \cite{bezuidenhout1990contact} could be extended to the process $(\bfeta, \bfxi)$ if vertex and edge updates are independent. Unfortunately, in Section~\ref{sec:BlockConstruction}, we were able to adapt these techniques only under the assumption that the entire system is monotone, as some of the technical results rely on this property. Nevertheless, we have the impression, that these result should still hold true without this assumption. To prove this, it would be necessary to show that the system is almost monotone or asymptotically monotone on macroscopic scales. However, the precise meaning of this remains somewhat unclear and still needs to be rigorously defined.

Furthermore, if one could identify this condition, it might also help in determining the optimal condition required to prove Theorem~\ref{Conjecture1}. As we have already pointed out, worst-state monotonicity turns out to be sufficient but not necessary.

A natural next question would be to study the effect of the evolving environment on the expansion speed of the infection area. As a first step, one could analyse the behaviour of a CPDP, where vertex and edge updates are independent of each other. In this case, it is more convenient to consider
\begin{equation*}
    \alpha_V=v_Vp_V,\,\, \beta_V=v_V(1-p_V),\,\, \alpha_E=v_Ep_E\,\, \text{ and }\,\, \beta_E= v_E(1-p_E)
\end{equation*}
where $v_V,v_E>0$ and $p_V,p_E\in (0,1)$. In this parametrisation, the constants $v_V,v_E$ indicate the update speed of vertices and edges, respectively, while $p_V$ and $p_E$ are the opening probabilities. Now two similar questions arise. First, how does the expansion speed vary for different infection rates $\lambda$ and opening probabilities $p_V$ and $ p_E$ if $\lambda p_V$ and $\lambda p_E$ remain constant? Second, how is the expansion speed affected by a faster update speed, i.e. as one increases $v_V$ or $v_E$, while keeping all other parameters constant?\\

\textbf{Outline of the article.} The remaining paper is structured as follows.  In Section~\ref{sec:Construction}, an explicit Poisson construction for the CPDRE and a graphical interpretation are given. This is followed by Section~\ref{Sec:Preliminaries}, where we briefly discuss and state some results proven in \cite{seiler2023contact} and \cite{blath2023switching}, which we will need in the proof sections. Section~\ref{sec:RestartingAndEssentialHitting} is devoted to the proof of Theorem~\ref{Conjecture1} and Corollary~\ref{thm:ShapeTheoremNonMonoton}. Section~\ref{sec:ShowingAsympShapeCPDRE} is dedicated to showing Theorem~\ref{thm:CPUIShapeTheorem} and Proposition~\ref{prop:CPDREShapeTheorem}, 
i.e.\ the conditions \eqref{ConjectureAML}-\eqref{ConjectureTEC} are verified for the respective situations.

\section{Construction of the Process}\label{sec:Construction}

In this section, we provide an explicit construction of the CPDRE. This is done via a so-called random mapping representation, which is a Poissonian construction of a jump process on a general product space $S^\Lambda$ and was introduced by Sturm and Swart in \cite{sturm2018monotone}. Here $\Lambda$ is usually called the \textit{lattice} and $S$ the \textit{local state space}. See \cite[Section~2\&4]{swart2022course} for all details of this type of Poisson construction and especially Section~4 for the case where $\Lambda$ is given by an infinite graph $G$. Note that this random mapping representation is closely related to the well-known graphical representation discussed in Liggett's book \cite{liggett1999stochastic} and is, in fact,  an equivalent construction for some additive systems, see \cite[Subsection~6.1]{swart2022course}.

To carry out the construction we first choose a countable set $\cM$ of maps $m:S^\Lambda \to S^\Lambda$, which describe every possible transformation of the system and corresponding jump rates $(h_m)_{m\in \cM}$. Then the process is defined via a Poisson point process $\Delta$ on $\cM\times \IR$ with intensity measure $h_m\mathsf{d}t$ in the way described below. Heuristically, one chooses an initial configuration and then orders all Poisson points $(m,s)$ according to the arrival times, i.e.\ the second component. Then all maps $m$ are applied successively until a given time $t$.

To make the construction rigorous we introduce some further notation.
For any map $m:S^{\Lambda}\to S^{\Lambda}$ and $a \in \Lambda$ we define the map $m[a]: S^{\Lambda}\to S$ by $m[a](\zeta):= m(\zeta)(a)$.  Further, we denote by
\begin{equation*}
    \cD(m):=
        \{a\in \Lambda :  \exists \zeta \text{ s.t. } m[a] (\zeta) \neq \zeta(a)\},
\end{equation*}
the set which contains all lattice points, whose state could possibly be changed by $m$. 

Furthermore, for $a\in \Lambda$  we call a point $a_1\in \Lambda$ \textit{$m[a]$-relevant} if there exist $ \zeta_1,\zeta_2\in S^{\Lambda}$ such that $m[a](\zeta_1)\neq m[a](\zeta_2)$ and  $\zeta_1(a_2)=\zeta_2(a_2)$ for all $a_2\neq a_1$. Denote the set of all relevant lattice points for the map $m[a]$ by
\begin{equation*}
    \cR(m[a]):=\{ a'\in  \Lambda : a' \text{ is } m[a]\text{-relevant}\}.
\end{equation*}
In order to guarantee  that the construction is well-defined and the resulting process is a Feller process, we impose the following assumptions on $\cM$ and $(h_m)_{m\in \cM}$:
\begin{enumerate}
    \item $|\cD(m)|<\infty$ for all $m\in \cM$,
    \item $m$ is a continuous map with respect to the product topology,
    \item the rates satisfy \begin{equation}\label{totalratebound}
    \sup_{a\in \Lambda}\sum_{m\in \cM:\, \cD(m)\ni a }h_m(|\cR(m[a])|+1)<\infty.
\end{equation}
\end{enumerate}
Note that continuity of $m$ implies that $|\cR(m[a])|<\infty$ for all $a\in S$, see \cite[Lemma~4.13]{swart2022course}. 
In \cite{swart2022course} it is shown that if \eqref{totalratebound} is satisfied, then the constructed  process is a Feller process $\bfzeta=(\bfzeta_t)_{t\geq 0}$ with state space $S^{\Lambda}$, which has a generator of the form
\begin{equation*}
    \cA f(\zeta)=\sum_{m\in \cM} h_m\big(f(m(\zeta))-f(\zeta)\big).
\end{equation*}
\subsection{Assumptions on the Background Process}\label{sec:AssBackground}
We assume that there exists an explicit random mapping representation of the background process $(\bfxi_t)_{t\geq 0}$ on $S^\Lambda=[N]^{V\cup E}$ and denote the countable set of maps $m:[N]^{V\cup E} \to [N]^{V\cup E}$ by $\cM_{BG}^*$. The Poisson point process to construct the process is denoted by $\Delta^{BG}$ on $\cM_{BG}^*\times \IR$ with intensity measure $h_m\mathsf{d}t$, where $(h_m)_{m\in \cM_{BG}^*}$ are the corresponding jump rates, which satisfy \eqref{totalratebound}. The corresponding generator is denoted by
\begin{align}\label{eq:generator_bg}
    \cA_{\textnormal{BG}} f(\zeta)=\sum_{m\in \cM} h_m\big(f(m(\zeta))-f(\zeta)\big).
\end{align}
Let $R_i$ be the reflection at the $i$th coordinate, that is
\begin{equation*}
    R_i(x)=(x_1\dots,-x_i,\dots,x_d)  \text{ and } R_i(\{x,y\})=\{R_i(x),R_i(y)\}
\end{equation*}
for any  $i\in \{1,\dots,d\}$, $x\in V$ and every $\{x,y\}\in E$. We denote by $T_z$ with $z\in V$ the spatial shift on the lattice $\IZ^d$, i.e.\ $T_z(x)=x-z$ for $z\in V$ and $T_z(\{x,y\})=\{x-z,y-z\}$ for $\{x,y\}\in E$. Now the reflection $R_i$ and the spatial shift $T_z$ act on $\xi$ such that $R_i\xi(a)=R_i(\xi(a))$ and $T_z\xi(a)=\xi(T_z(a))$ for every $a\in V\cup E$. Moreover, on maps $m\in \cM$ these operators are defined as 
\begin{equation*}
    R_im(a)(\xi)=m(a)(R_{i}\xi) \quad \text{and} \quad  T_zm(a)(\xi)=m(a)(T_{z}\xi).
\end{equation*}
We impose some assumptions on the maps $m\in\cM_{BG}^*$ that ensure that the constructed process is indeed monotonically representable, translation invariant, and of finite range. Recall that $\leq$ is the component-wise total order on $[N]^{V\cup E}$.
\begin{assumption}\label{Ass:BackgroundMaps}
    We assume that $\cM_{BG}^*$ and $(h_m)_{m\in \cM_{BG}^*}$ satisfies the following three properties:
    \begin{enumerate}
	   \item $\cM_{BG}^*$ is set of monotone maps, i.e\ for all maps $m$ holds that if $\xi\leq \xi'$, then $m(\xi)\leq m(\xi')$. (monotonically representable)
	   \item For every map $m\in \cM_{BG}^*$ and every reflection $R_i$ or translation $T_z$, there exists a map $m'\in \cM_{BG}^*$ such that $R_im=m'$ or respectively $T_zm=m'$ and $h_m=h_{m'}$.
	   \item There exists $L\geq 0$ such that for all $m\in \cM_{BG}^*$ there exists $x\in V$ and
       \begin{equation*}
           \cD(m)\cup \bigcup_{a\in V\cup E}\cR(m[a])\subset \B_L(x)\cup \B^{E}_{L}(x).
       \end{equation*}
    \end{enumerate}    
\end{assumption}
\begin{remark}\label{remark:bg_finite_range}
    Note that 1.) implies that $\bfxi$ is a \textit{monotone} Feller process, see \cite[Lemma~5.3]{swart2022course}, and 2.) yields that $\bfxi$ is \textit{symmetric} and \textit{translation invariant}. Lastly 3.) implies that changes of $\bfxi$ are of \textit{finite range}, i.e.\ there exists an $L\geq 0$ such that a change in the state of $a\in V\cup E$ depends only on the states $b\in V\cup E$ which are less than distance $L$ away. In the context of edges, this means that the vertices at both ends need to be closer than $L$.
\end{remark}

\begin{example}
Let us consider our main examples stated in Example~\ref{ex:MainExamples}.
\begin{enumerate}
    \item If $\bfxi=(\bfxi(a))_{a\in V\cup E}$ is a family of independent monotone Markov processes on $[N]^{V\cup E}$, then \cite[Proposition~12]{sturm2018monotone} shows that every single $\bfxi(a)$ is monotonically representable, since $[N]$ is totally ordered. Then, one can extend the maps $m:[N]\to [N]$ in these Poisson constructions to $[N]^{V\cup E}$.
    \item In the special case $N=1$ we can straightforwardly state this construction. Let us define the two maps $\mathbf{up}_{a}$ and $\mathbf{down}_{a}$ for $a\in V\cup E$, by setting
    \begin{align*}
        \mathbf{up}_{a}(\xi)(a'):=
        \begin{cases}
            1& \text{ if } a=a'\\
            \xi(a_1)& \text{ otherwise},
        \end{cases}\quad \text{ and }\quad 
        \mathbf{down}_{a}(\xi)(a'):=
        \begin{cases}
            0& \text{ if } a=a'\\
            \xi(a_1)& \text{ otherwise},
        \end{cases}
    \end{align*}
    for all $\xi\in \{0,1\}^{V\cup E}$ and $a'\in V\cup E$. Now we construct our leading example the CPDP by choosing
    \begin{equation*}
        \cM_{BG}^*:=\{\mathbf{up}_{a}: a\in V\cup E\}\cup \{\mathbf{down}_{a}: a\in V\cup E\}
    \end{equation*}
    Furthermore, we set $h_{\mathbf{up}_{x}}=\alpha_V$ and $h_{\mathbf{down}_{x}}=\beta_V$ for all $x\in V$ and  $h_{\mathbf{up}_{e}}=\alpha_E$ and $h_{\mathbf{down}_{e}}=\beta_E$ for all $e\in E$. By plugging in the maps and rates, one can see that the generator \eqref{eq:generator_bg} corresponds to a system of independent vertex and edge updates with the correct transition rates.
\item A random mapping representation for a nearest neighbour ferromagnetic Ising model can be found in \cite[Section~4.6]{swart2022course}. For nearest neighbour spin systems this construction can be adapted if they are monotone and translation invariant.
\end{enumerate}

\end{example}

\subsection{Random Mapping Representation of the CPDRE}\label{subsec:RandomMapping}
We will now construct the CPDRE $(\bfeta,\bfxi)$. Therefore we need to extend the random mapping representation of $(\bfxi_t)_{t\geq 0}$. Let us first extend the maps given in $\cM_{BG}^*$ in a natural way, that is, for $m\in \cM_{BG}^*$ define the map $m^B:\{0,1\}^V\times [N]^{V \cup E}\to \{0,1\}^V\times [N]^{V\cup E}$ as $m^B(\eta,\xi)=(\eta,m(\xi))$. We denote by $\cM_{BG}:=\{m^B: m\in \cM_{BG}^*\}$ the collection of these maps and set the corresponding rates to be $h_{m^B}:=h_{m}$. 

Let $\{a_k\}_{1\leq k\leq (N+1)^3}$ be an enumeration of the elements in $[N]^3$ which is ascending with respect to the infection rate $\lambda$, i.e. $\{a_k\}_{1\leq k\leq (N+1)^3}=[N]^3$ and $\lambda(a_{k-1})\leq \lambda(a_{k})$ for all $k\leq (N+1)^3$. Moreover, let $F:[N]^3\to \{1,\dots,(N+1)^3\}$ be the function defined by $F(a_k)=k$.
Now we define maps $\mathbf{inf}^{*}_{a,(x,y)}:\{0,1\}^V\times [N]^{V\cup E}\to \{0,1\}^V$, where $a\in[N]^3$ and $(x,y)\in \vec{E}$ by setting
\begin{align*}
\mathbf{inf}^{*}_{a,(x,y)}(\eta,\xi)(z)&:=
\begin{cases}
1& \text{if } F\big((\xi(x),\xi(\{x,y\}),\xi(y))\big)
\geq F(a), \eta(x)=1 \text{ and } y=z, \\
\eta(z)& \text{otherwise},
\end{cases}
\end{align*}
for every $z\in V$.In words, the map $\mathbf{inf}^{*}_{a,(x,y)}$ changes the configuration $\eta$ at site $y$ to $1$, if the neighbouring site $x$ is infected, i.e.\ $\eta(x)=1$, and the background at $(x,\{x,y\},y)$ is in a greater or equal state than $a$ according to the order given by $F$.
Analogously, let $\{b_k\}_{k\leq N+1}=[N]$ be an enumerate such that $r(b_{k})\geq r(b_{k+1})$ for all $ k \leq N$ and $G:[N]\to \{1,\dots,N+1\}$ be the function defined by $G(b_k)=k$. For $b\in[N]$ and $x\in V$ we define $\mathbf{rec}^{*}_{b,x}:\{0,1\}^V\times [N]^{V\cup E}\to \{0,1\}^V$ by
\begin{align*}
\mathbf{rec}^{*}_{b,x}(\eta,\xi)(z)&:=\begin{cases}
0& \text{if }  G(\xi(x))\leq G(b) \text{ and } z=x, \\
\eta(z)& \text{otherwise}.
\end{cases}
\end{align*}

We again extend the maps to the full state space $\{0,1\}^V\times [N]^{V\cup E}$ by setting 
\begin{align*}
    \mathbf{inf}_{a,(x,y)}(\eta,\xi):=(\mathbf{inf}^{*}_{a,(x,y)}(\eta,\xi),\xi) \quad \text{ and }\quad \mathbf{rec}_{b,x}(\eta,\xi):=(\mathbf{rec}^{*}_{b,x}(\eta,\xi),\xi).
\end{align*}
Given our infection and recovery maps we define the corresponding rates as
\begin{equation*}
    h_{\mathbf{inf}_{a_1,(x,y)}}:=\lambda(a_1)\quad\text{and}\quad h_{\mathbf{inf}_{a_k,(x,y)}}:=\lambda(a_k)-\lambda(a_{k-1})\quad\text{for }k\geq 2,
\end{equation*}
as well as
\begin{equation*}
    h_{\mathbf{rec}_{b_{N+1},x}}:=r(b_{N+1})\quad\text{and}\quad h_{\mathbf{rec}_{b_{k},x}}:=r(b_{k})-r(b_{k+1})\quad\text{for }k\leq N.
\end{equation*}
We define
\begin{equation*}
    \cM^{a}_{\inf}:=\{\mathbf{inf}_{a,(x,y)}: (x,y)\in \Vec{E}\}\quad\text{and}\quad\cM^b_{\text{rec}}:=\{\mathbf{rec}_{b,x}: x\in V\}
\end{equation*}
and denote the collections of these maps by
\begin{equation*}
    \cM_{CP}:=\bigcup\limits_{a\in [N]^3}\cM^{a}_{\inf}\cup\bigcup\limits_{b\in [N]}\cM^b_{\text{rec}}.
\end{equation*}
Let $\Delta^{\inf,a}$ and $\Delta^{\text{rec},b}$ be Poisson point processes on $\cM^{a}_{\inf}\times \IR$ and $\cM^{b}_{\text{rec}}\times \IR$, respectively, with the corresponding intensity measures $h_{\mathbf{inf}_{a,x,y}}\mathsf{d}t$ and $h_{\mathbf{rec}_{b,x}}\mathsf{d}t$ and set
\begin{equation*}
    \Delta:=\bigcup_{a\in [N]^3}\Delta^{\inf,a}\cup \bigcup_{b\in [N]} \Delta^{\text{rec},b}\cup \Delta^B.
\end{equation*}
Clearly, $\Delta$ is a Poisson point process on $\cM\times \IR$ with $\cM:=\cM_{CP}\cup \cM_{BG}$. Moreover, $|\cD(m)|<\infty$ is true for all maps $m\in \cM$ and every map is also continuous with respect to the product topology. Since $\cM$ together with $(h_m)_{m\in \cM}$ satisfy \eqref{totalratebound} the random mapping representation yields a Feller process $(\bfeta,\bfxi)$ with generator
\begin{equation*}
    \cA f(\eta,\xi)=\sum_{m\in \cM_{CP}} h_m\big(f(m(\eta,\xi))-f(\eta,\xi)\big)+\cA_{\textnormal{BG}}f(\eta,\,\cdot\,)(\xi),
\end{equation*}
where $\cA_{\textnormal{BG}}$ was defined in \eqref{eq:generator_bg}. Plugging in the maps and rates shows that this process $(\bfeta,\bfxi)$ has the same transition rates as described in \eqref{InfectionRatesWithBackground}.

As a direct consequence of the random mapping representation, we get a the following monotonicity criterion. 
\begin{proposition}\label{prop:Monotonicity}
    The infection process $\bfeta$ is additive in the initial infection configuration, i.e.\ for $\eta_1,\eta_2\in\{0,1\}^V$ it holds 
    \begin{equation*}
        \bfeta_t^{\eta_1\vee\eta_2,\xi}=\bfeta_t^{\eta_1,\xi} \vee\bfeta_t^{\eta_2,\xi}\quad \text{for all }t\geq 0, ~\xi\in[N]^{V\cup E}.
    \end{equation*}
     Furthermore, if $\lambda(\,\cdot\,)$ is an increasing and $r(\,\cdot\,)$ a decreasing function with respect to the component wise order, then the CPDRE $(\bfeta,\bfxi)$ is monotonically representable, and thus 
    \begin{equation*}
        (\bfeta_t^{\eta_1,\xi_1},\bfxi_t^{\xi_1})\leq (\bfeta_t^{\eta_2,\xi_2},\bfxi_t^{\xi_2})\quad\text{holds for all }t\geq 0\quad\text{a.s.}~\text{ if }(\eta_1,\xi_1)\leq(\eta_2,\xi_2).
    \end{equation*}
    In particular, $(\bfeta,\bfxi)$ also satisfies Assumption~\ref{Ass:WS-Monotonicity}, i.e.\ is worst-case monotone.
\end{proposition}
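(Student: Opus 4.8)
The plan is to prove the three assertions in turn, using throughout that $\bfeta$ and $\bfxi$ are built from the single Poisson point process $\Delta$ of Section~\ref{subsec:RandomMapping}, so that all initial configurations are automatically coupled on one probability space. For \textbf{additivity}, I would first note that, since $\bfxi$ is autonomous and every map in $\cM_{BG}$ fixes the first coordinate, fixing $\xi$ and conditioning on $\Delta^{B}$ freezes the background trajectory without touching the infection dynamics. Reading off the definitions, each $\mathbf{inf}_{a,(x,y)}$ then acts on $\eta$ by (possibly) overwriting $\eta(y)$ with $\eta(x)\vee\eta(y)$, and each $\mathbf{rec}_{b,x}$ by (possibly) overwriting $\eta(x)$ with $0$; in either case the induced self-map of $\{0,1\}^V$ fixes $\zero$ and commutes with $\vee$, i.e.\ is additive. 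Hence, conditionally on $\Delta^B$, the infection process is a (time-inhomogeneous) additive system in the sense of the random-mapping construction, so $\bfeta_t^{\eta_1\vee\eta_2,\xi}=\bfeta_t^{\eta_1,\xi}\vee\bfeta_t^{\eta_2,\xi}$ holds pathwise for every realisation of $\Delta$, all $t\ge0$ and all $\xi$ (this is the random-mapping form of the classical additivity of the graphical representation, cf.\ \cite[Subsection~6.1]{swart2022course}), and therefore unconditionally. No hypothesis on $\lambda$ or $r$ enters here.

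For \textbf{monotone representability}, assume now that $\lambda(\,\cdot\,)$ is increasing and $r(\,\cdot\,)$ decreasing. The one delicate point is the choice of the enumeration $\{a_k\}$ of $[N]^3$: monotonicity of $\lambda$ does \emph{not} make the values $\{\lambda(a)\}$ totally ordered, so the enumeration must be chosen compatibly with the componentwise order. I would introduce the relation $a\sqsubseteq a'$ iff $a\le a'$ or $\lambda(a)<\lambda(a')$, check that it is a partial order (antisymmetry and transitivity both follow from monotonicity of $\lambda$), and take $\{a_k\}$ to be a linear extension of $\sqsubseteq$; then $k\mapsto\lambda(a_k)$ is non-decreasing, as required, and in addition $a\le a'\Rightarrow F(a)\le F(a')$. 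On the totally ordered set $[N]$ one simply takes $b_k=k-1$, so that $G$ is order-preserving and $r(b_k)$ non-increasing. With these choices every map in $\cM_{CP}$ is monotone on $\{0,1\}^V\times[N]^{V\cup E}$ with the product order: the second coordinate is unchanged, and monotonicity of the first coordinate is a short case check using $\eta\le\eta'$ together with $F\big((\xi(x),\xi(\{x,y\}),\xi(y))\big)\le F\big((\xi'(x),\xi'(\{x,y\}),\xi'(y))\big)$ for $\mathbf{inf}_{a,(x,y)}$ and $G(\xi(x))\le G(\xi'(x))$ for $\mathbf{rec}_{b,x}$. Since the maps in $\cM_{BG}$ are monotone by Assumption~\ref{Ass:BackgroundMaps}(1), the whole family $\cM$ consists of monotone maps, so $(\bfeta,\bfxi)$ is monotonically representable; by \cite[Lemma~5.3]{swart2022course} this makes the coupling on the common $\Delta$ monotone, i.e.\ $(\eta_1,\xi_1)\le(\eta_2,\xi_2)$ forces $(\bfeta_t^{\eta_1,\xi_1},\bfxi_t^{\xi_1})\le(\bfeta_t^{\eta_2,\xi_2},\bfxi_t^{\xi_2})$ for all $t\ge 0$ almost surely.

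Finally, \textbf{worst-case monotonicity} is immediate: specialising the monotone coupling to $\eta_1=\eta_2=\zero$ and $\xi_1=\szero\le\xi_2=\xi$ yields $\bfeta_t^{\zero,\szero}\subset\bfeta_t^{\zero,\xi}$ almost surely for all $t\ge 0$, which is exactly Assumption~\ref{Ass:WS-Monotonicity}. I expect the main obstacle to be precisely the enumeration issue in the second paragraph---one must resist ordering $[N]^3$ purely by $\lambda$-value, since two $\le$-comparable configurations with equal $\lambda$ could then end up in the order that breaks monotonicity of an individual infection map---after which the remaining work is a routine unpacking of the definitions of $\mathbf{inf}_{a,(x,y)}$ and $\mathbf{rec}_{b,x}$ and an appeal to the standard theory of additive and monotone random mapping representations in \cite{swart2022course}.
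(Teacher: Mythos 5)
Your proof is correct and follows essentially the same route as the paper: pathwise additivity of the $\mathbf{inf}$ and $\mathbf{rec}$ maps for a frozen background gives the first claim, monotonicity of every map in $\cM$ together with \cite[Lemma~5.3]{swart2022course} gives the monotone coupling, and worst-case monotonicity is the specialisation $(\delta_{\zero},\szero)\leq(\delta_{\zero},\xi)$. Your extra care with the enumeration of $[N]^3$ (taking a linear extension compatible with the componentwise order rather than an arbitrary $\lambda$-ascending one) is a precision the paper leaves implicit; it is worth noting that with an arbitrary $\lambda$-ascending enumeration any map that fails monotonicity would necessarily carry rate zero, so both resolutions lead to a monotone representation.
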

\begin{proof}
    The additivity in the infection configuration is a direct consequence of the construction, since the $\mathbf{inf}(\cdot, \xi)$ and $\mathbf{rec}(\cdot, \xi)$ maps are additive for every fixed $\xi$. Note that a map $m$ is additive if $m(\eta_1\vee \eta_2)=m(\eta_1)\vee m(\eta_2)$.
    The assumptions on the rate functions imply that all $\mathbf{inf}$ and $\mathbf{rec}$ maps are monotone with respect to pointwise order $\leq$. Now the statement follows directly from the fact that $\cM$ contains only monotone maps (see \cite[Lemma~5.3]{swart2022course}). Assumption~\ref{Ass:WS-Monotonicity} is a direct consequence of this monotone coupling.
\end{proof}

As previously mentioned, the random mapping representation is closely related to the classical graphical representation. In our case, we can also define an infection path given the background process $\bfxi$, and thus obtain a graphical interpretation for the spread of the infection. One interprets a point $(\mathbf{inf}_{a,(x,y)},t)$ as an infection arrow pointing from $x$ to $y$, which is only usable if the background $\bfxi$ around $\{x,y\}$ is currently in some state 
$a'=\bfxi_t(x,\{x,y\},y)$ with $F(a')\geq F(a)$. In words, every infection arrow corresponds to some type $a\in [N]^3$ and can only be used if the background at $(x,\{x,y\},y)$ is in state $a$ or in a higher state with respect to the order given by $F$.
Similarly, $(\mathbf{rec}_{b,x},t)$ is identified as a recovery event, which is only usable if $G(\bfxi_t(x))\leq G(b)$.

\begin{definition}[$\xi$-infection path]\label{def:InfectionPath}
\label{InfectionPath}
    Let  $(y, s)$ and $(x, t)$ with $s < t$ be space-time points and  $\bfxi$ the background process starting with initial configuration $\xi$. We say that there is a \textit{$\xi$-infection path} from $(y, s)$ to $(x, t)$ if there is a sequence
	of times $s = t_0 < t_1 < \dots < t_n \leq t_{n+1} = t$ and space points $y = x_0,x_1,\dots, x_n = x$ such that for all  $k \in\{ 0, \dots , n\}$ we have
	$( \mathbf{inf}_{a,(x_{k},x_{k+1})},t_k)\in \Delta$ for some $a$ with $F(a)\leq F(\bfxi_{t_k}(x_{k},\{x_{k},x_{k+1}\},x_{k+1})) $ 	and $\Delta\cap\{ (\mathbf{rec}_{b,x_k},t): b\leq \bfxi_t(x_k), t\in[t_k , t_{k+1} )\}\big)=\emptyset$. 
    We write $(y, s)\stackrel{\xi}{\longrightarrow} (x, t)$ if there exists a $\xi$-infection path. 
\end{definition}

Now, we can also characterise the infection process $\bfeta$ with initial configuration $(\eta,\xi)$ via these infection paths. We use the background process $(\bfxi_t)_{t\geq 0}$ with $\bfxi_0=\xi$ and then set $\bfeta_0:=\eta\in \{0,1\}^\dV$ as well as
\begin{equation}\label{DefinitionCPDLP}
	\bfeta^{\eta,\xi}_t(x):=\begin{cases}
	    1 &\text{ if there exists } y\in V~\text{with}~\bfeta_0(y)=1 \text{ and } (y,0)\stackrel{\xi}{\longrightarrow} (x,t)\\
        0 &\text{ otherwise}.   
	\end{cases}
\end{equation}

We conclude this section by introducing additional notation required for Section~\ref{sec:RestartingAndEssentialHitting}, where we extensively utilise the self-similarity properties of $\IZ^d$, alongside the translation invariance and symmetry of the process. Consequently, the concepts of shifts in both spatial and temporal directions are fundamental. We abuse notation and write for $\Delta\subset \cM\times \IR$,
\begin{equation*}
    T_x(\Delta)=\{( T_x m,t): (m,t)\in \Delta \}\subset \cM\times \IR.
\end{equation*}
Analogously for a temporal shift $\theta_s$ with $s\geq 0$, i.e.\ $\theta_s(t)=t+s$, we write
\begin{equation*}
\theta_s(\Delta)=\{( m,\theta_s(t)): (m,t)\in \Delta \}\subset \cM\times \IR.
\end{equation*}
Furthermore, we set $\cF_t=\sigma(\Delta\cap [0,t])$ for every $t\geq 0$  and  $\cF:=\cF_{\infty}$.

\section{Basic Properties for the CPDRE}\label{Sec:Preliminaries}
\subsection{Independence of Criticality of the Initial State}\label{Sec:CritValPermanetlyCoupled}
One of the main results in \cite{seiler2023contact} is that the critical value $\lambda_c$ is independent of the initial configuration $(\eta,\xi)\in \{0,1\}^V\times[N]^{V\cup E}$ as long as the initial configuration for the infection process is finite, i.e.\ $|\eta|<\infty$. This result can be extended to our setting and can be proven in an analogous manner. Therefore, we do not give a proof in full detail, but provide a sketch of the arguments. Note that we will also need some of the auxiliary objects and results later on.
\begin{theorem}[{\cite[Theorem~2.1]{seiler2023contact}}]\label{thm:CritValueIndependence}
	Suppose Assumption~\ref{AssumptionBackground} (i) and (ii) are satisfied. If there exists a configuration $(\eta,\xi)\in \{0,1\}^V\times[N]^{V\cup E}$ with $|\eta|<\infty$ such that $\IP(\tau^{\eta,\xi}=\infty)>0$, then it follows that $\IP(\tau^{\eta,\xi}=\infty)>0$ for all $(\eta,\xi)\in \{0,1\}^V\times[N]^{V\cup E}$ with $|\eta|>0.$
\end{theorem}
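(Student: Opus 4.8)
The plan is to adapt the proof of \cite[Theorem~2.1]{seiler2023contact} to the present setup, so I will describe the argument at the level at which the background-process complications actually enter. The statement reduces, by the additivity of the infection process (Proposition~\ref{prop:Monotonicity}) and translation invariance, to showing that if survival has positive probability from \emph{some} finite configuration with \emph{some} background initial state, then it has positive probability from a single site with \emph{every} background initial state $\xi$. Additivity immediately gives monotonicity in $\eta$: if $|\eta'|>0$ then $\eta'$ contains some site $z$, and $\IP(\tau^{z,\xi}=\infty)\le \IP(\tau^{\eta',\xi}=\infty)$, so it suffices to treat single-site initial infections; and by translation invariance we may take that site to be the origin. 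Thus the crux is: $\IP(\tau^{\eta_0,\xi_0}=\infty)>0$ for one pair $(\eta_0,\xi_0)$ with $|\eta_0|<\infty$ $\Longrightarrow$ $\IP(\tau^{\origin,\xi}=\infty)>0$ for every $\xi$.

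First I would dispose of the dependence on the \emph{background} initial state. Fix $\xi$ arbitrary and fix the ``good'' pair $(\eta_0,\xi_0)$. The coupled-region machinery from Assumption~\ref{AssumptionBackground}(ii), together with the finite range and the Poisson construction, lets one show that after a fixed time the background started from $\xi$ agrees, on any fixed finite window, with the background started from $\xi_0$ with positive probability; more precisely, $\IP(\Lambda \subset \Psi'_t)>0$ for $t\ge T$ large enough and any finite $\Lambda\subset V\cup E$, since $\IP(a\notin\Psi'_t)<Ke^{-\kappa t}$ and a union bound over the finitely many $a\in\Lambda$ makes this probability positive once $t$ is large. On the event that the relevant finite window of the background becomes permanently coupled, the infection dynamics inside that window no longer ``sees'' the initial condition $\xi$ versus $\xi_0$. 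So the remaining task is a restart/localisation argument showing that from $(\eta_0,\xi_0)$ one can, with positive probability, reach in finite time a configuration which is a (space-translate of a) single infected site together with a background that is already permanently coupled on the region that matters — and then invoke the Markov property.

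The standard way to package this is via the graphical/Poisson construction of Section~\ref{subsec:RandomMapping}: on $\{\tau^{\eta_0,\xi_0}=\infty\}$, which has positive probability, the infection survives forever; by a Borel--Cantelli / pigeonhole argument along a sequence of times, there is positive probability that at some finite (random) time the infection has ``restarted'' from a single vertex, i.e.\ $|\bfeta^{\eta_0,\xi_0}_{t}|\ge 1$ and one can select one infected vertex $x$ whose future infection, run on an independent portion of the Poisson noise, still survives with positive conditional probability — here one uses that the event of survival is increasing in the infection (additivity again) and a FKG/second-moment-free argument of the Bezuidenhout–Grimmett restart flavour. Combining this ``restart from a single site'' with the background-coupling step above, and translation invariance to move $x$ back to the origin, yields $\IP(\tau^{\origin,\xi}=\infty)>0$.

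I expect the main obstacle to be the \emph{simultaneous} control of the background coupling and the infection restart: one needs the time at which the finite background window becomes permanently coupled to be compatible (in the sense of conditional independence under the Poisson construction) with the time at which one isolates a single surviving infected vertex, so that the strong Markov property can be applied cleanly at a single stopping time to splice the two positive-probability events together. In \cite{seiler2023contact} this is handled by an explicit finite-space-time-box construction; here I would mirror that, using the finite range of both the infection maps and (by Assumption~\ref{Ass:BackgroundMaps}(3)) the background maps so that only a finite window of Poisson points is involved, and then the positivity of a finite intersection of events each of positive probability (coupling of finitely many background sites, plus a finite ``seed'' of infection arrows) does the job. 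The translation invariance and symmetry built into Assumptions~\ref{AssumptionBackground} and \ref{Ass:BackgroundMaps} are exactly what is needed to re-centre everything at the origin. Since this is precisely the argument of \cite[Theorem~2.1]{seiler2023contact} with $[N]^{V\cup E}$ in place of the two-state background and with Assumption~\ref{AssumptionBackground}(ii) replacing its analogue there, I would present it as a sketch and refer to \cite{seiler2023contact} for the routine box-construction bookkeeping.
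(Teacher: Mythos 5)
Your reduction to single-site initial infections via additivity and translation invariance is fine, but the core of your argument has a genuine gap where the real difficulty of the theorem sits. You propose to neutralise the dependence on the initial background by waiting until the background is permanently coupled on a \emph{fixed finite window} (positive probability by a union bound over $\Lambda$) and then arguing that the infection ``no longer sees'' $\xi$ versus $\xi_0$ inside that window. This does not suffice: after the splicing time the infection keeps spreading linearly and exits any fixed window, entering regions where the backgrounds started from $\xi$ and $\xi_0$ may still disagree, and since no monotonicity in the background is assumed in this theorem (worst-case monotonicity is a separate assumption, not in force here), those discrepancies cannot be dismissed. The paper's proof hinges precisely on a growth race that your sketch never sets up: the maximal process $\widetilde{\bfeta}$ (rate $\lambda_{max}$, no recoveries) dominating the infection from \emph{any} initial condition is a.s.\ eventually contained in $\B_{Mt}$ for a fixed $M$, while the permanently coupled neighbourhood $\Phi_t$ of \eqref{eq:PermantlyCoupledNeighbourhood} a.s.\ eventually contains $\B_{Mt}$ for \emph{every} $M$; hence $\widetilde{\bfeta}^{\eta}_t\subset\B_{Mt}\subset\Phi_t$ for all $t$ beyond an a.s.\ finite time $s$, uniformly over the initial configuration. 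That containment, not a fixed-window coupling, is what makes the initial background irrelevant after time $s$.

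The second half of your argument is also not what is needed and is, as stated, circular. You want to restart from a single infected vertex $x$ ``whose future infection still survives with positive conditional probability'' via an FKG/Bezuidenhout--Grimmett-flavoured step; but that conditional survival probability is taken from the (random, uncontrolled) background configuration at the restart time, and positivity of survival uniformly over background configurations is exactly the statement being proved, while FKG-type monotonicity in the background is unavailable. The paper avoids this by a different device: after establishing the containment above, it introduces the auxiliary processes $\underline{\bfeta}$ (no spreading before time $s$, the worst case) and $\overline{\bfeta}$ (no recoveries before time $s$, the best case), both driven by the same graphical representation after $s$, and shows $\IP(\underline{\bfeta}\text{ survives})>0\Leftrightarrow\IP(\overline{\bfeta}\text{ survives})>0$, i.e.\ an advantage confined to a finite initial time window cannot decide survival; sandwiching the true process between these settles independence of $\xi$. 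Your proposal would need both of these ingredients (the linear-growth race and the best/worst-case comparison, or substitutes for them) to close; as written, ``mirror the finite-space-time-box bookkeeping of \cite{seiler2023contact}'' does not identify them.
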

\begin{proof}[Sketch of proof]
    First, we introduce $(\widetilde{\bfeta}_t)_{t\geq 0}$, which is a contact process with infection rate $\lambda_{max}:=\max_{a\in [N]^3}\lambda(a)$ and no recoveries. In the literature this process is also called first passage percolation or Richardson model. This process is defined on the same graphical representation as the CPDRE. In the construction we use every infection point, i.e.\ we use the PPP 
    \begin{equation*}
        \widetilde{\Delta}^{\inf}_{\{x,y\}}:=\bigcup_{a\in[N]^3} \Delta^{\inf,a}_{(x,y)}
    \end{equation*}
    where $\{x,y\}\in \vec{E}$, and ignore all recovery events and all restrictions from the background. With this construction we couple $\widetilde{\bfeta}$ with the infection process $\bfeta$ such that if $\bfeta_0\leq \widetilde{\bfeta}_0$, then $\bfeta_t\leq \widetilde{\bfeta}_t$ holds for all $t\geq 0$. We think of $\widetilde{\bfeta}$ as a maximal infection process. 
    
    Let us assume that the finite range of $\bfxi$ is $L\geq 0$. A key object of the proof is the set
    \begin{equation}\label{eq:PermantlyCoupledNeighbourhood}
        \Phi_{t}:=\{x\in V: x,y,\{x,y\} \in \Psi'_{t} \,\text{ for all } \, y\in \B_L(x)\},
    \end{equation}
    that contains all vertices which are  permanently coupled and for which all adjacent edges, as well as all vertices within distance $L$, are also permanently coupled. The difference here compared to \cite{seiler2023contact} is that we need to incorporate that not only the background state of all adjacent edges but also all neighbouring vertices have already become permanently coupled.

    Let us briefly summarize the proof strategy. Since $|\eta|<\infty$ there exists $M>0$ such that 
    \begin{equation}\label{eq:GrowthboundMaxInf}
        \IP(\exists s\geq 0 : \widetilde{\bfeta}^{\eta}_t\subset \B_{M t} ~\forall\, t\geq s)=1,
    \end{equation}
    where $M$ does not depend on the choice of $\eta$. This is a well known result in the context of first passage percolation. For a proof, see, for example, \cite[Lemma~5.2]{seiler2023contact}. 

    On the other hand for any $M>0$ it holds that 
    \begin{equation}\label{eq:GrowthboundPermanetlyCoupledBack}
        \IP(\exists s\geq 0 : \B_{M t}\subset \Phi_t ~\forall t\geq s)=1.
    \end{equation}
    This can be shown in almost the exact same way as \cite[Proposition~5.3]{seiler2023contact}, since the number of neighbouring vertices is obviously of the same order as of adjacent edges.
    
    Now putting \eqref{eq:GrowthboundMaxInf} and \eqref{eq:GrowthboundPermanetlyCoupledBack} together yields that 
    \begin{equation}\label{eq:InfectionContainedInCouplingRegion}
        \IP(\exists s\geq 0 :  \widetilde{\bfeta}^{\eta}_t\subset \B_{Mt} \subset \Phi_t ~\forall\, t\geq s)=1.
    \end{equation}
    In words, there exists an almost surely finite time $s\geq 0$ such that all vertices $x$ that have been infected until time $t$ will be contained in $\Phi_t$ for all $t\geq s$, regardless of the initial configuration $(\eta,\xi)$. Thus, the set of infected sites is fully contained in the permanently coupled region from $s$ onwards.

    Now the claim can be shown analogously as \cite[Proposition 5.5]{seiler2023contact}. The idea is to first consider $\bfeta_0=\delta_{\zero}$, i.e.\ that only the origin $\origin$ is initially infected. Then, one defines two auxiliary process $\underline{\bfeta}$ and $\overline{\bfeta}$ with $\underline{\bfeta}_0 =\bfeta_0=\overline{\bfeta}_0$. The process $\underline{\bfeta}$ is only allowed to spread after time $s$ and for the process $\overline{\bfeta}$ vertices can only recover after time $s$. Afterwards both processes use again the exact same graphical representation. Heuristically speaking $\underline{\bfeta}$ can be seen as the worst-case scenario for the infection until time $s$ and $\overline{\bfeta}$ represents the best case. With the help of \eqref{eq:InfectionContainedInCouplingRegion} one then can show that
    \begin{equation*}
        \IP(\underline{\bfeta}_t\neq \szero \,\forall\, t\geq 0)>0 \Leftrightarrow \IP(\overline{\bfeta}_t\neq \szero \,\forall\, t\geq 0)>0.
    \end{equation*}
    Broadly speaking one shows that if an advantage which only comes from an finite time period $[0,s]$ cannot determine if survival is possible or not.
    
    This implies that if we start from one initially infected vertex, positivity of the survival probability does not depend on the starting configuration of the background $\bfxi$. 
\end{proof}
In fact, since we consider $G$ to be the $d$-dimensional integer lattice we can improve \eqref{eq:InfectionContainedInCouplingRegion} with respect to the convergence speed.
    \begin{lemma}\label{lem:InfContainedExponetialSpeed}
     Let $\eta\subset V$ be finite. Then there exist $M,A,B>0$ such that 
     \begin{equation*}
          \IP(  \widetilde{\bfeta}^\eta_t\subset\B_{Mt}\subset \Phi_t \,\forall\, t\geq s)>1- Ae^{-Bs} \quad \text{for all} \quad s\geq 0.
     \end{equation*}
    \end{lemma}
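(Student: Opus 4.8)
The plan is to upgrade the two almost-sure statements \eqref{eq:GrowthboundMaxInf} and \eqref{eq:GrowthboundPermanetlyCoupledBack} to exponential tail bounds on the ``bad time'' and then combine them. More precisely, I will choose $M$ large enough to serve simultaneously for both estimates, and show separately that
\begin{equation*}
    \IP(\exists\, t\geq s: \widetilde{\bfeta}^\eta_t\not\subset \B_{Mt})\leq A_1 e^{-B_1 s}\quad\text{and}\quad \IP(\exists\, t\geq s: \B_{Mt}\not\subset \Phi_t)\leq A_2 e^{-B_2 s},
\end{equation*}
for all $s\geq 0$, and then take a union bound with $A=A_1+A_2$, $B=\min\{B_1,B_2\}$. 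The point of stating the lemma for the integer lattice is precisely that the number of lattice sites and edges in $\B_{Mt}$ grows only polynomially in $t$, so a union bound over them against an exponentially small per-site probability still decays exponentially; this is the quantitative improvement over the soft Borel--Cantelli argument used for \eqref{eq:InfectionContainedInCouplingRegion}.

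For the first estimate I would invoke the standard large-deviations control for first passage percolation / the Richardson model $\widetilde{\bfeta}$, which has bounded edge rate $\lambda_{max}$: there are constants so that $\IP(\widetilde{\bfeta}^{\{z\}}_t\not\subset \B_{Mt}(z))\leq C e^{-ct}$ for $M$ sufficiently large, uniformly in the starting vertex $z$ by translation invariance (this is e.g.\ contained in the proof of \cite[Lemma~5.2]{seiler2023contact} — the a.s.\ statement there is derived from exactly such an exponential bound). Since $\eta$ is finite, $\widetilde{\bfeta}^\eta_t=\bigcup_{z\in\eta}\widetilde{\bfeta}^{\{z\}}_t$ by additivity, so one sums $|\eta|$ such bounds; then to pass from a bound at a fixed time to a bound on the whole half-line $[s,\infty)$ one discretises time into unit intervals $[n,n+1]$, $n\geq s$, uses monotonicity of $\widetilde{\bfeta}$ in time together with the at-most-linear growth within one unit of time (the number of infections in a unit interval has an exponential tail, since infection events arrive at bounded rate), and sums the geometric series $\sum_{n\geq s} Ce^{-cn}$. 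This yields the first displayed inequality.

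For the second estimate, recall $\Phi_t$ from \eqref{eq:PermantlyCoupledNeighbourhood}: $x\in\Phi_t$ iff $x$ and all $y\in\B_L(x)$ and all edges between them lie in $\Psi'_t$. By Assumption~\ref{AssumptionBackground}~(ii), $\IP(a\notin\Psi'_t)\leq K e^{-\kappa t}$ for each single $a\in V\cup E$ and $t\geq T$. For fixed $x$, the event $\{x\notin\Phi_t\}$ is contained in the union over the finitely many (at most $C_L$, a constant depending only on $L$ and $d$) vertices and edges $a$ in the relevant $L$-neighbourhood of $\{a\notin\Psi'_t\}$, hence $\IP(x\notin\Phi_t)\leq C_L K e^{-\kappa t}$ for $t\geq T$. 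Now $\{\exists\, t\geq s:\B_{Mt}\not\subset\Phi_t\}\subset \bigcup_{n\geq \lfloor s\rfloor}\{\exists\, t\in[n,n+1]: \B_{M(n+1)}\not\subset\Phi_t\}$, and for a vertex $x$ to leave $\Phi_t$ at some $t\in[n,n+1]$ it must already have failed the permanent-coupling condition at time $n$ (since $\Psi'_\cdot$ is monotone: $a\in\Psi'_n\Rightarrow a\in\Psi'_t$ for $t\geq n$), so this event is contained in $\{\exists x\in\B_{M(n+1)}: x\notin\Phi_n\}$. A union bound over the $O(n^d)$ vertices in $\B_{M(n+1)}$ gives $\IP\le C_L K\,|\B_{M(n+1)}|\,e^{-\kappa n}$ for $n\geq T$, and summing the series $\sum_{n\geq s} C' n^d e^{-\kappa n}$ — which still decays exponentially, e.g.\ bounded by $A_2 e^{-B_2 s}$ with any $B_2<\kappa$ — finishes it.

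The main obstacle I anticipate is the discretisation step that converts a ``snapshot'' tail bound at integer times into a bound valid for \emph{all} $t\ge s$ simultaneously: one must be careful that nothing bad can happen strictly between integer times, which for $\widetilde{\bfeta}$ requires the (exponential) tail bound on the number of infection events in a unit time window to absorb the growth from $\B_{Mn}$ to $\B_{M(n+1)}$, and for $\Phi_t$ relies crucially on the monotonicity of $t\mapsto\Psi'_t$ so that leaving $\Phi_t$ at a non-integer time forces failure already at the preceding integer time. Everything else is a routine union bound exploiting polynomial volume growth on $\IZ^d$ against the exponential inputs \eqref{eq:GrowthboundMaxInf}-type FPP estimates and Assumption~\ref{AssumptionBackground}~(ii). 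I would remark that $M$ is chosen once and for all large enough for the FPP bound, and $B$ can be taken to be any constant strictly below $\min\{c,\kappa\}$ after absorbing the polynomial factors.
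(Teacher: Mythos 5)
Your proposal is correct and follows essentially the same route as the paper: the paper's proof simply defers to \cite[Lemmas~5.2--5.4]{seiler2023contact}, whose arguments are exactly the quantitative inputs you spell out (exponential at-most-linear-growth bound for the Richardson model, Assumption~\ref{AssumptionBackground}~(ii) for the permanently coupled region), combined with union bounds over the polynomially many sites and edges of $\B_{Mt}$ in $\IZ^d$ and a time discretisation using monotonicity of $\widetilde{\bfeta}$ and of $t\mapsto\Psi'_t$. No gaps; your treatment is just the written-out version of the citation.
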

    \begin{proof}
        The proof follows exactly as in \cite[cmp. 5.2, 5.3 and 5.4]{seiler2023contact} exploiting that  $G$ is the $d$-dimensional integer lattice.
    \end{proof}

\subsection{Upper Invariant Law and Duality}\label{InvariantLawAndDuality}
In this section, we follow the works of \cite{seiler2023contact} and \cite{blath2023switching}, where most results have already been proven in special cases and the proofs also apply to our context.
\begin{lemma}\label{lem:ExistenceUpperInv}
    There exists a probability measure $\nu$ such that $(\bfeta^{V,\pi}_t,\bfxi^{\pi}_t)\Rightarrow\nu$ as $t\to \infty$. Furthermore, $\nu$ is the upper invariant measure of the CPDRE $(\bfeta,\bfxi)$, i.e.\ if $\mu$ is another invariant distribution, then $\mu\preceq \nu$, where $\preceq$ denotes the stochastic order.
\end{lemma}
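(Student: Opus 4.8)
\noindent\textbf{Proof proposal.} The plan is to run the classical argument for the upper invariant measure of an attractive system, but to exploit that here only the \emph{infection} coordinate needs to be monotone: by Proposition~\ref{prop:Monotonicity} the infection is additive in its initial configuration for a fixed background realisation, while the background is autonomous and ergodic and will be handled through the stationarity of $\pi$. Concretely, I would work on a single probability space carrying the Poisson system $\Delta$ of Section~\ref{subsec:RandomMapping} together with an independent initial background drawn from $\pi$, so that the background path $(\bfxi^\pi_t)_{t\ge 0}$ is stationary, and write $S(r)$ for the Feller semigroup of $(\bfeta,\bfxi)$.

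The first step is to show that $t\mapsto\mathcal{L}(\bfeta^{V,\pi}_t,\bfxi^\pi_t)$ is non-increasing in the stochastic order $\preceq$ on $\{0,1\}^V\times[N]^{V\cup E}$. Fix $s,t\ge 0$ and compare, on the same realisation of $\Delta$ and the same background path, the infection started from $V$ at time $0$ with an auxiliary infection $\bfeta^B$ restarted from $V$ at time $s$. At time $s$ the former is contained in $V=\bfeta^B_s$, so additivity (Proposition~\ref{prop:Monotonicity}) propagates this containment up to time $s+t$, while the two background coordinates agree pathwise; hence $(\bfeta^{V,\pi}_{s+t},\bfxi^\pi_{s+t})\le(\bfeta^B_{s+t},\bfxi^\pi_{s+t})$ a.s. By the Markov property at time $s$ and stationarity of $\pi$ one has $(\bfeta^B_{s+t},\bfxi^\pi_{s+t})\stackrel{d}{=}(\bfeta^{V,\pi}_t,\bfxi^\pi_t)$, which gives $\mathcal{L}(\bfeta^{V,\pi}_{s+t},\bfxi^\pi_{s+t})\preceq\mathcal{L}(\bfeta^{V,\pi}_t,\bfxi^\pi_t)$. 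Consequently the probabilities of upward-closed cylinder events are monotone in $t$, hence converge, pinning down a limiting law $\nu$ to which $\mathcal{L}(\bfeta^{V,\pi}_t,\bfxi^\pi_t)$ converges weakly (compactness of the state space; cf.\ \cite{liggett1999stochastic}). Invariance of $\nu$ then follows in the usual way: $\mathcal{L}(\bfeta^{V,\pi}_t,\bfxi^\pi_t)S(r)=\mathcal{L}(\bfeta^{V,\pi}_{t+r},\bfxi^\pi_{t+r})\Rightarrow\nu$ as $t\to\infty$, while the Feller property gives $\mathcal{L}(\bfeta^{V,\pi}_t,\bfxi^\pi_t)S(r)\Rightarrow\nu S(r)$, so $\nu S(r)=\nu$.

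For the domination statement, let $\mu$ be any invariant law of $(\bfeta,\bfxi)$. Its background marginal is invariant for the autonomous process $\bfxi$, hence equals $\pi$ by Assumption~\ref{AssumptionBackground}(i). Drawing $(\eta^*,\xi^*)\sim\mu$, coupling it with $(V,\xi^*)$ (same background, $\eta^*\subseteq V$) and running both processes forward under the same $\Delta$, additivity once more gives a coupling witnessing $\mu S(t)=\mu\preceq\mathcal{L}(\bfeta^{V,\pi}_t,\bfxi^\pi_t)$ for every $t$; letting $t\to\infty$ yields $\mu\preceq\nu$. The step I expect to require the most care is the monotonicity of $t\mapsto\mathcal{L}(\bfeta^{V,\pi}_t,\bfxi^\pi_t)$: one must arrange the coupling of the two time-shifted infections so that they are driven by the \emph{same} Poisson data and share the \emph{same} background continuation, which is precisely what makes additivity applicable while keeping the background marginals literally equal on the two sides; the remaining compactness and Feller bookkeeping is routine.
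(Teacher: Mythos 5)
Your argument is correct and follows essentially the same route as the paper, which simply defers this proof to \cite[Theorem~2.4]{blath2023switching}: start the infection from the fully infected state with the background stationary in $\pi$, use additivity of the infection coordinate together with autonomy and stationarity of the background to get stochastic monotonicity of the law in $t$, then conclude by compactness and the Feller property, and dominate any invariant $\mu$ after noting its background marginal must be $\pi$. No gaps; the coupling with the time-$s$ restarted process and the identification of the background marginal are exactly the points the cited proof also relies on.
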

\begin{proof}
    This can be shown analogously as in \cite[Theorem~2.4]{blath2023switching}.
\end{proof}

We will briefly show that if the background is started stationary, i.e.\ $\bfxi_0\sim \pi$, then the CPDRE $(\bfeta_,\bfxi)$ is dual in distributional sense to a CPDRE $(\widecheck{\bfeta},\widecheck{\bfxi})$ with mirrored infection rates, i.e.\ $y$ infects $x$ with rate $\lambda_{(x,y)}(\cdot)$ instead of $\lambda_{(y,x)}(\cdot)$ and $\widecheck{\lambda}_{(x,y)}=\lambda_{(y,x)}$. Thus $\widecheck{\bfxi}$ is distributed as $\bfxi$ and, if $\widecheck{\bfxi}$ is currently in state $\xi$ the transitions of $\widecheck{\bfeta}$ currently in state $\eta$ are for all $x\in V$, 
\begin{equation}\label{InfectionRatesForDualProcess}
    \begin{aligned}
   \eta(x)&\to 1	\quad \text{ at rate } &&\sum_{{y:(y,x)\in \Vec{E}}} \lambda_{(x,y)}(\xi) \eta(y)\quad\text{ and }\\
    \eta(x)&\to 0 	\quad\text{ at rate } &&r_x(\xi).
    \end{aligned}
\end{equation}

\begin{proposition}\label{prop:DualityStationary}
	Let $\eta, \eta'\in \cP(V)$, $\bfxi$ be reversible and $\bfxi_0\sim \pi$. Then it holds that 
	\begin{equation}\label{eq:DualityStationary}
		\IP(\bfeta^{\eta,\pi}_{t}\cap \eta' \neq \emptyset)=\IP(\widecheck{\bfeta}^{\eta',\pi}_{t}\cap \eta \neq \emptyset)\quad\text{for all }t\geq 0.
	\end{equation}
    In particular if $\lambda_{(x,y)}(\cdot)=\lambda_{(y,x)}(\cdot)$ for all $(x,y)\in \vec{E}$, then \eqref{eq:DualityStationary} is a self duality relation.
\end{proposition}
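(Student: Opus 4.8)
The plan is to establish the duality relation via the graphical/random‑mapping construction, by reversing the Poisson points in time and exploiting reversibility of $\bfxi$. The natural duality function is $H((\eta,\xi),(\eta',\xi')) = \1\{\eta\cap\eta'\neq\emptyset\}\,\1\{\xi=\xi'\}$, but since the background is autonomous and started from its reversible stationary law $\pi$, it is cleaner to condition on a realisation of the background and argue pathwise. First I would fix a time horizon $t>0$ and consider the infection paths of Definition~\ref{def:InfectionPath} restricted to the time window $[0,t]$. The key observation is the standard reversal of the graphical representation: there is a $\xi$‑infection path from $(y,0)$ to $(x,t)$ in the forward picture if and only if there is a "reversed" infection path from $(x,0)$ to $(y,t)$ in the time‑reversed picture, where every infection arrow $(\mathbf{inf}_{a,(u,v)},s)$ is replaced by $(\mathbf{inf}_{a,(v,u)},t-s)$ and every recovery mark $(\mathbf{rec}_{b,u},s)$ is kept at time $t-s$. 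The arrow reversal is exactly what turns $\lambda_{(y,x)}$ into $\widecheck\lambda_{(x,y)}=\lambda_{(y,x)}$, i.e. it produces the mirrored infection rates appearing in \eqref{InfectionRatesForDualProcess}; the recovery marks are symmetric under reversal since $r_x$ depends only on the single site $x$.

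The next step is to handle the background. The crucial input is Assumption~\ref{AssumptionBackground}(iii): $\bfxi$ is reversible, and $\bfxi_0\sim\pi$. Consequently the law of the background trajectory $(\bfxi_s)_{0\le s\le t}$ is invariant under time reversal $s\mapsto t-s$; that is, $(\bfxi_s)_{s\in[0,t]}\stackrel{d}{=}(\bfxi_{t-s})_{s\in[0,t]}$. Therefore, after the time‑reversal of both the infection Poisson points and the background trajectory, the reversed infection dynamics run against $(\bfxi_{t-s})_s$ has exactly the same law as the process $(\widecheck\bfeta,\widecheck\bfxi)$ started from $\widecheck\bfxi_0\sim\pi$ with transition rates \eqref{InfectionRatesForDualProcess}. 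Combining this with the pathwise reversal identity for infection paths yields
\begin{equation*}
  \IP\big(\exists\, y\in\eta,\ x\in\eta':\ (y,0)\stackrel{\xi}{\longrightarrow}(x,t)\big)
  \;=\;\IP\big(\exists\, x\in\eta',\ y\in\eta:\ (x,0)\stackrel{\widecheck\xi}{\longrightarrow}(y,t)\big),
\end{equation*}
where on the left $\xi\sim\pi$ and on the right $\widecheck\xi\sim\pi$; by the characterisation \eqref{DefinitionCPDLP} of the infection process through infection paths, the left side is $\IP(\bfeta_t^{\eta,\pi}\cap\eta'\neq\emptyset)$ and the right side is $\IP(\widecheck\bfeta_t^{\eta',\pi}\cap\eta\neq\emptyset)$, which is \eqref{eq:DualityStationary}. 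The final sentence of the statement is then immediate: if $\lambda_{(x,y)}(\cdot)=\lambda_{(y,x)}(\cdot)$ for all directed edges, then $\widecheck\lambda=\lambda$, so $(\widecheck\bfeta,\widecheck\bfxi)$ has the same law as $(\bfeta,\bfxi)$ and \eqref{eq:DualityStationary} becomes a self‑duality.

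The main obstacle I anticipate is making the time‑reversal argument fully rigorous at the level of the joint (infection $+$ background) graphical construction, rather than just for the infection arrows conditionally on a fixed background. One has to be careful that (a) the infection Poisson process is independent of the background construction, so that reversing the infection points and reversing the background trajectory can be done simultaneously and consistently, and (b) the reversed background trajectory, which is a priori only a process with the same finite‑dimensional distributions, can be realised as an honest Markovian random‑mapping background of the same type — here one uses that $\bfxi$ is monotonically representable, translation invariant, finite range and reversible, so its time reversal is again such a process with the same generator (up to the mirroring of the infection rates, which does not touch the background). A secondary technicality is justifying that only finitely many Poisson points in $[0,t]$ are relevant for the events in question — this follows from the finite‑range structure and the rate bound \eqref{totalratebound}, exactly as in the construction of Section~\ref{sec:Construction}, together with the fact that $\eta$ is finite or, if $\eta$ is infinite, by a monotone approximation using the additivity in Proposition~\ref{prop:Monotonicity}. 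Modulo these points, the argument is the classical self‑duality proof for the contact process, adapted to carry the reversible autonomous environment along.
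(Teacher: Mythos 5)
Your proposal is correct and follows essentially the same route as the paper: the paper also fixes the horizon $t$, reverses the graphical representation conditionally on the background trajectory (arrows $(\mathbf{inf}_{a,(x,y)},u)\mapsto(\mathbf{inf}_{a,(y,x)},t-u)$, recoveries kept) to get the conditional duality \eqref{eq:ConditinalDuality}, and then uses reversibility plus the stationary start $\bfxi_0\sim\pi$ to identify the time-reversed pair with a CPDRE with mirrored rates before averaging over $\pi$. Your remarks on realising the reversed background as a genuine process of the same type and on the self-duality consequence match the paper's argument (which defers details to \cite[Proposition~6.1]{seiler2023contact}).
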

\begin{example}
    Let us briefly discuss the resulting duality relations in our leading examples stated in Example~\ref{ex:LeadingExamples}.
\begin{enumerate}
    \item For the contact process on a dynamical graph the duality relation \eqref{eq:DualityStationary} is indeed a self-duality as already shown in \cite[Propostion~6.1]{seiler2023contact}.
    \item The contact process with switching is not always self-dual. In \cite[Theorem 2.3]{blath2023switching} it is shown that the dual process is again a contact process with switching, but the rates $\lambda_{10}$ and $\lambda_{01}$ are being swapped. Thus, if $\lambda_{10}=\lambda_{01}$, then the process is self-dual otherwise not.
    \end{enumerate}
\end{example}

To prove Proposition~\ref{prop:DualityStationary} we proceed analogously as in \cite[Section~6.1]{seiler2023contact}. For fixed $t>0$ we define $\widehat{\bfxi}^{\xi,t}_s:=\bfxi^{\xi}_{(t-s)}$  for $s\in [0,t]$ and we fix a realisation of the background $\bfxi^{\xi}$ in the time interval $[0,t]$ by conditioning on the background.  We then define a dual process $(\widehat{\bfeta}^{\eta',\xi,t}_s)_{0\leq s\leq t}$ with $\widehat{\bfeta}^{\eta',\xi,t}_0=\eta'\subset V$ by reversing the time flow and starting at the fixed time $t>0$. This process 
is conditionally dual to $(\bfeta^{\eta}_{s})_{s\leq t}$ in the sense that
\begin{equation}\label{eq:ConditinalDuality}
    \IP(\bfeta^{\eta,\xi}_{t}\cap \eta' \neq \emptyset |\cG)=\IP(\bfeta^{\eta,\xi}_{s}\cap \widehat{\bfeta}^{\eta',\xi,t}_{t-s}\neq \emptyset |\cG)=\IP(\eta \cap \widehat{\bfeta}^{\eta',\xi,t}_{t} \neq \emptyset |\cG) 
\end{equation}
holds almost surely for all $s\leq t$, where  $\cG:=\sigma(\bfxi_s:0\leq s\leq t)$ is the $\sigma$-algebra generated by the background process until time $t$.

More precisely, we define $\widehat{\bfeta}$ analogously to $\bfeta$ with the help of the graphical representation using the same infection and recovery events just backwards in time and the direction of the infection is reversed, i.e.\
\begin{equation*}
	(u,\mathbf{inf}_{a,(x,y)})\to (t-u,\mathbf{inf}_{a,(y,x)}) \text{ and } (u,\mathbf{rec}_{b,x}) \to (t-u,\mathbf{rec}_{b,x}),
\end{equation*}
Obviously, $(\widehat{\bfeta},\widehat{\bfxi})$ is in general not a CPDRE. However, by assumption we know that $\bfxi$ is reversible. This implies, in particular, if we start the process stationary, 
then $(\widehat{\bfeta}_s^{\pi,t},\widehat{\bfxi}_s^{\pi,t})_{s\leq t}$ has the same distribution as a CPDRE with mirrored infection rates.

\begin{proof}[Proof of Proposition~\ref{prop:DualityStationary}]
By observing that $(\widehat{\bfeta}_s^{\pi,t},\widehat{\bfxi}_s^{\pi,t})_{s\leq t}$ and $(\widecheck{\bfeta}^\pi_s,\widecheck{\bfxi}^\pi_s)_{s\leq t}$ have the same distribution and averaging \eqref{eq:ConditinalDuality} with $\bfxi_0\sim\pi$ we obtain a duality relation if the background is initially stationary. See \cite[Proposition 6.1]{seiler2023contact} for a detailed proof.
\end{proof}

Let $\widecheck{\tau}^{\eta}$ denote the extinction time of $\widecheck{\bfeta}^{\eta}$ and $\widecheck{\nu}$ the associated upper invariant law of the process $(\widecheck{\bfeta},\widecheck{\bfxi})$. A direct consequences of Proposition~\ref{prop:DualityStationary} is the following.
\begin{corollary}\label{cor:FiniteVSInfinteSurv}
    Let $\bfxi$ be reversible, then it holds that
    \begin{equation*}
        \IP(\tau^{\zero}=\infty)>0 \Leftrightarrow \widecheck{\nu}\neq \delta_{\emptyset}\otimes \pi \quad and \quad \IP(\widecheck{\tau}^{\zero}=\infty)>0 \Leftrightarrow \nu\neq \delta_{\emptyset}\otimes \pi.
    \end{equation*}
\end{corollary}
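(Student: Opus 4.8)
The plan is to derive Corollary~\ref{cor:FiniteVSInfinteSurv} as a fairly direct consequence of the stationary duality in Proposition~\ref{prop:DualityStationary} together with the characterization of the upper invariant law in Lemma~\ref{lem:ExistenceUpperInv}. I would prove the first equivalence; the second follows by symmetry, simply exchanging the roles of $(\bfeta,\bfxi)$ and $(\widecheck{\bfeta},\widecheck{\bfxi})$ and noting that the dual of $\widecheck{\bfeta}$ is $\bfeta$ again (mirroring the infection rates twice returns the original rates), and that $\widecheck{\bfxi}$ is distributed as $\bfxi$ with the same stationary law $\pi$.

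\textbf{Step 1 (reduce the upper invariant law to hitting probabilities).} By Lemma~\ref{lem:ExistenceUpperInv}, $\widecheck{\nu}$ is the weak limit of $(\widecheck{\bfeta}^{V,\pi}_t, \widecheck{\bfxi}^{\pi}_t)$ as $t\to\infty$. The marginal of $\widecheck\nu$ on the infection coordinate is trivial (equals $\delta_\emptyset$) if and only if $\IP_{\widecheck\nu}(\bfeta\cap \eta'\neq\emptyset)=0$ for every finite $\eta'\subset V$; since the background marginal of any such limit is automatically $\pi$ (as $\bfxi^\pi$ is stationary), we get $\widecheck\nu = \delta_\emptyset\otimes\pi$ iff $\lim_{t\to\infty}\IP(\widecheck{\bfeta}^{V,\pi}_t\cap\eta'\neq\emptyset)=0$ for all finite $\eta'$. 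By additivity/monotonicity it suffices to check this for $\eta'=\{x\}$, and by translation invariance for $\eta'=\{\zero\}$. So $\widecheck\nu\neq\delta_\emptyset\otimes\pi$ iff $\lim_{t\to\infty}\IP(\zero\in\widecheck{\bfeta}^{V,\pi}_t)>0$ (the limit exists by monotonicity of $t\mapsto \IP(\widecheck{\bfeta}^{V,\pi}_t\cap\eta'\neq\emptyset)$ along the stationary background, arguing as in the basic contact process / as in the cited works).

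\textbf{Step 2 (apply duality).} Apply Proposition~\ref{prop:DualityStationary} with $\eta = V$ and $\eta' = \{\zero\}$: one obtains
\begin{equation*}
    \IP(\widecheck{\bfeta}^{V,\pi}_t\cap\{\zero\}\neq\emptyset) = \IP(\bfeta^{\zero,\pi}_t\cap V\neq\emptyset) = \IP(\bfeta^{\zero,\pi}_t\neq\emptyset) = \IP(\tau^{\zero,\pi}>t).
\end{equation*}
Letting $t\to\infty$ gives $\lim_{t\to\infty}\IP(\zero\in\widecheck{\bfeta}^{V,\pi}_t) = \IP(\tau^{\zero,\pi}=\infty)$. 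Combining with Step 1, $\widecheck\nu\neq\delta_\emptyset\otimes\pi$ iff $\IP(\tau^{\zero,\pi}=\infty)>0$.

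\textbf{Step 3 (remove the stationarity of the background).} It remains to replace $\IP(\tau^{\zero,\pi}=\infty)>0$ by $\IP(\tau^{\zero}=\infty)>0$, i.e. with the background started from an arbitrary (deterministic) configuration $\xi$. This is exactly the content of Theorem~\ref{thm:CritValueIndependence} (independence of criticality from the initial configuration): since $|\{\zero\}|=1<\infty$, positivity of the survival probability for one initial background configuration is equivalent to positivity for all of them, and integrating against $\pi$ shows it is also equivalent to positivity under $\bfxi_0\sim\pi$. This closes the first equivalence, and the second is obtained by running the same argument with $(\widecheck{\bfeta},\widecheck{\bfxi})$ in place of $(\bfeta,\bfxi)$, using that $\widecheck{\bfxi}\stackrel{d}{=}\bfxi$ is reversible with the same stationary law and that the dual of the dual is the original process.

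The main obstacle is the soft monotone-convergence/limit-existence argument underlying Step 1: one must justify that $\lim_{t\to\infty}\IP(\zero\in\widecheck{\bfeta}^{V,\pi}_t)$ exists and that its positivity is equivalent to $\widecheck\nu\neq\delta_\emptyset\otimes\pi$, which hinges on the attractiveness/additivity of the infection coordinate for fixed background realisations and on the stationarity of $\bfxi^\pi$. This is standard for the basic contact process and is essentially already carried out in \cite{seiler2023contact} and \cite{blath2023switching}; the only care needed is that, although the full process $(\bfeta,\bfxi)$ need not be monotone, we have assumed monotone representability precisely in the regimes where this corollary is invoked, or, more weakly, the infection coordinate is additive (Proposition~\ref{prop:Monotonicity}), which suffices for the needed monotonicity of $t\mapsto \IP(\widecheck{\bfeta}^{V,\pi}_t\cap\eta'\neq\emptyset)$ after conditioning on the background.
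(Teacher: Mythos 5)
Your proposal is correct and follows the same route the paper indicates: it combines the stationary duality relation of Proposition~\ref{prop:DualityStationary} with Theorem~\ref{thm:CritValueIndependence} to pass between survival from the origin and non-triviality of the (dual) upper invariant law, exactly as in the cited argument of \cite[Proposition~6.3]{seiler2023contact}. The only cosmetic points are the swapped labels of $\eta$ and $\eta'$ when invoking the duality and the superfluous worry about monotonicity in $t$, since the weak convergence of Lemma~\ref{lem:ExistenceUpperInv} (or the identity $\IP(\widecheck{\bfeta}^{V,\pi}_t\cap\{\zero\}\neq\emptyset)=\IP(\tau^{\zero,\pi}>t)$ itself) already yields the limit.
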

\begin{proof}
   This can be proven in the same way as in \cite[Proposition~6.3.]{seiler2023contact} by using \eqref{eq:DualityStationary} and Theorem~\ref{thm:CritValueIndependence}.
\end{proof}

One technical problem is that, unlike in the classical case, the infection processes $(\bfeta_s)_{s\geq t}$ and $(\widehat{\bfeta}^{2t}_s)_{s\leq t}$ are not independent, even though they are defined in disjoint parts of the graphical representation, since we conditioned on the background process in the construction of $\widehat{\bfeta}$. Therefore, we introduce another coupling with an auxiliary process to obtain this property, which will be crucial in later chapters.

Let $(\bfxi^{s/2}_{r} )_{r\geq 0}$ denote the process which is coupled with the original background $(\bfxi_{r})_{r\geq s/2}$ in such a way that it starts at time $s/2$ with an initial distribution $\pi$ which is independent from $(\bfxi^{\xi}_r)_{r\leq s/2}$ and from time $s/2$ onward it uses the same graphical representation as $(\bfxi_r)_{r\geq s/2}$. Let $\widecheck{\bfxi}^{s/2,t+s}_{r}:=\bfxi^{s/2}_{t+s/2-r}$, then $(\widecheck{\bfxi}^{s/2,t+s}_{r} )_{r\leq t+s/2}$ has the same dynamics as the background process $\bfxi$.

Now let $(\widecheck{\bfeta}_{r}^{\eta',s/2,t+s})_{r\leq t+s/2}$ be a process coupled to $\widehat{\bfeta}^{\eta',\xi,t+s}$ by using the same time-reversed infection arrows and recovery symbols from time $t+s$ back to $s/2$, but the environment $(\widecheck{\bfxi}^{s/2,t+s}_r )_{r\leq t+ s/2}$ instead of $(\widehat{\bfxi}^{\xi,t+s}_r)_{r\leq t+ s/2}$.

The following results have already been shown in \cite{seiler2023contact} for special cases of dynamical environments on  arbitrary underlying graphs. Since we consider the $d$-dimensional integer lattices as underlying graph we can improve the result, in the sense that the speed of convergence is in fact exponentially fast, which is again crucial for several proofs. 
\begin{lemma}\label{lem:ControlDualProcess}
	There exist constants $A,B>0$ such that for all $x\in V$ and
    $\xi\in [N]^{V\cup E}$ it holds that
	\begin{equation*}
		\IP(\widehat{\bfeta}_{s}^{x,\xi,2t}=\widecheck{\bfeta}_{s}^{x,t/2,2t}\,\, \forall s\leq t)>1-Ae^{-Bt} \text{ for all } t>0.
	\end{equation*}
\end{lemma}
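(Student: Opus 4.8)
The plan is to compare the two dual processes $\widehat{\bfeta}^{x,\xi,2t}$ and $\widecheck{\bfeta}^{x,t/2,2t}$ by tracking where and when they can possibly disagree. Both are built from the same collection of time-reversed infection arrows and recovery symbols on the time window $[t/2,2t]$ (run backwards), and the only difference between them is the environment that gates which arrows/recoveries are usable: the first uses $\widehat{\bfxi}^{\xi,2t}_r = \bfxi^{\xi}_{2t-r}$, i.e.\ the genuine background started from $\xi$, while the second uses $\widecheck{\bfxi}^{t/2,2t}_r = \bfxi^{t/2}_{2t-r}$, i.e.\ the background that was restarted from an independent $\pi$-distributed configuration at time $t/2$. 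So the two dual processes evolve identically as long as every edge/vertex they actually touch has a background value that agrees between $\bfxi^{\xi}$ and $\bfxi^{t/2}$ over the relevant time interval $[t/2,2t]$.

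First I would reduce to a coupling/containment event for the backgrounds. By Assumption~\ref{AssumptionBackground}(ii) (in the strengthened exponential form already used in the excerpt, cf.\ Lemma~\ref{lem:InfContainedExponetialSpeed}), after a time of order $t/2$ the two backgrounds $\bfxi^{\xi}$ (equivalently $\widehat{\bfxi}$) and $\bfxi^{t/2}$ (equivalently $\widecheck{\bfxi}$) have coupled on a ball $\B_{Mt}$, with the failure probability bounded by $A'e^{-B't}$; concretely, one wants the event that $\B_{Mt}\cup\B^E_{Mt}\subset \Psi'_{t/2}$ for the coupling between $\bfxi^{\xi}$ and $\bfxi^{t/2}$, which holds with probability $\ge 1-A'e^{-B't}$ by the same argument as Lemma~\ref{lem:InfContainedExponetialSpeed} (the key point being that $\bfxi^{t/2}$ is just a fresh copy run from time $t/2$ onward and coupled to $\bfxi^{\xi}$ there). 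Second, I would bound the spatial range of the dual process: starting from the single site $x$ and running for time $\le t$, the dual infection $\widehat{\bfeta}^{x,\xi,2t}_s$ is dominated by a Richardson-type first-passage process $\widetilde{\bfeta}^{x}$ using all infection arrows and no recoveries, and by \eqref{eq:GrowthboundMaxInf} / the exponential version in Lemma~\ref{lem:InfContainedExponetialSpeed} one has $\widehat{\bfeta}^{x,\xi,2t}_s\subset \B_{Mt}(x)$ for all $s\le t$, off an event of probability $\le A''e^{-B''t}$ — here one uses translation invariance to center at $x$, and possibly enlarges $M$. On the intersection of these two good events, every arrow/recovery symbol that could influence the dual process up to time $t$ sits at a lattice point whose background value (over all of $[t/2,2t]$, which is where these symbols live after the time reversal) is identical in the two environments, so an induction on the successive Poisson times in $[t/2,2t]$ shows $\widehat{\bfeta}^{x,\xi,2t}_s=\widecheck{\bfeta}^{x,t/2,2t}_s$ for all $s\le t$. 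A union bound over the (finitely many, at the price of enlarging $A,B$) relevant events then gives the claim with a single pair $A,B>0$ uniform in $x$ (by translation invariance) and in $\xi$.

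The main obstacle I anticipate is handling the interplay between the time reversal and the background coupling cleanly: the dual process on $[0,t]$ (in reversed time) reads the environment over the \emph{forward} window $[t,2t]$, and the restart of $\bfxi^{t/2}$ happens at the forward time $t/2$, so one must be careful that by forward time $t/2$ — i.e.\ before any of the relevant arrows are consulted — the two backgrounds have already permanently coupled on the ball that the dual process will ever visit. This is exactly why the restart time is chosen as $t/2$ rather than, say, $t$: it leaves a coupling window of length $t/2$ before the dual process's "active" window $[t,2t]$ begins, which is what makes the exponential bound $Ae^{-Bt}$ come out. A secondary (but routine) point is that "background value agrees" must be interpreted as agreement on the whole time interval the symbol can matter, not just at a single instant; since $\Psi'$ is defined as permanent coupling from a fixed time onward, once the good coupling event $\B_{Mt}\cup \B^E_{Mt}\subset\Psi'_{t/2}$ holds, agreement persists for all later times automatically, so this causes no real trouble.
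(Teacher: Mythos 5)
Your proposal is correct and takes essentially the same route as the paper: reduce (by translation invariance) to the intersection of the two good events that both dual processes stay in $\B_{Mt}$ up to dual time $t$ (via the maximal/Richardson process and Lemma~\ref{lem:InfContainedExponetialSpeed}) and that the backgrounds $\bfxi^{\xi}$ and $\bfxi^{t/2}$ have permanently coupled on the relevant ball after the length-$t/2$ window following the restart, each failing with exponentially small probability, so that on this event the two duals read identical gating environments and coincide. Only a cosmetic slip: the backgrounds need to be permanently coupled by forward time $t$ (i.e.\ elapsed time $t/2$ after the restart), not "by forward time $t/2$", which your very next sentence about the coupling window $[t/2,t]$ already gets right.
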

\begin{proof}
    The proof is a simple modification of the proofs in \cite[Lemma~6.14~and~Lemma~6.16]{seiler2023contact}.
     By translation invariance of the graphical construction we can assume $x=\origin$ and we observe the following set inclusion
    \begin{equation*}
	    \begin{aligned}
	        \{\widehat{\bfeta}_{s}^{\origin,\xi,2t}\subset \B_{M t},\widecheck{\bfeta}_{s}^{\origin,t/2,2t}&\subset \B_{M t} \,\, \forall s\leq t\}\cap \bigcap_{s\geq t/2}\{\bfxi^{t/2}_s(a)=\bfxi^{\xi}_{s+t/2}(a) \,\, \forall a\in  \B_{M s}\cup\B^E_{Ms}\} \\
		    \subseteq &\,\,\{\widehat{\bfeta}_{s}^{\origin,\xi,2t}=\widecheck{\bfeta}_{s}^{\origin,t/2,2t}\,\, \forall s\leq t\}.
	    \end{aligned}
	\end{equation*}
    Let $A,B,M>0$ be the constants of Lemma~\ref{lem:InfContainedExponetialSpeed} for the initial set $\eta=\{\origin\}$. Then we have
    \begin{equation*}
		\IP(\widehat{\bfeta}_{s}^{\origin,\xi,t}\subset \B_{M t},\widecheck{\bfeta}_{s}^{\origin,t/2,2t}\subset \B_{M t} \,\, \forall s\leq t)\geq \IP(  \widetilde{\bfeta}^\origin_t\subset\B_{Mt}) >1-Ae^{-Bt}\quad\text{for all }t\geq 0.
	\end{equation*}
      Controlling the probability
    \begin{align*}
		\IP\bigg(\bigcap_{s\geq \frac{t}{2}}\{\bfxi^{t/2}_s(a)=\bfxi^{\xi}_{s+\frac{t}{2}}(a) \,\, \forall a\in  \B_{M s}\cup\B^E_{Ms}\} \bigg)&\geq \IP(\B_{Ms}\subset \Phi_s\circ \theta_{t/2} \,\forall\, s\geq t/2)\\
        &\geq \IP(\B_{Ms}\subset \Phi_s\,\forall\, s\geq t/2)
        >1-Ae^{-Bt/2}
	\end{align*}
    and redefining the constants $A$ and $B$ proves the claim.
\end{proof}
\begin{remark}
    Note, if we add the superscript $s/2$ and $t+s$ to $\widecheck{\bfeta}^{\eta' ,s/2,t+s}$ we want to emphasise the coupling with $\bfeta^{\eta}$ on the time interval $[s/2,t+s]$. However, since $\bfxi$ is reversible and $\widecheck{\bfxi}_{s/2}^{s/2,t+s}\sim \pi$, i.e.\ the background is stationary, we can extend the process to all times $u\geq t+s/2$ such that it still has the transitions \eqref{InfectionRatesForDualProcess}. In this case we denote it by $(\widecheck{\bfeta},\widecheck{\bfxi})$ and drop the superscript $s/2$ and $t+s$.
\end{remark}

\section{Conditions for Asymptotic Shape}\label{sec:RestartingAndEssentialHitting}
In this section we prove Theorem~\ref{Conjecture1}. Thus, we assume throughout the section that the CPDRE $(\bfeta,\bfxi)$ is supercritical, worst-case monotone (Assumption~\ref{Ass:WS-Monotonicity}) and satisfies \eqref{ConjectureAML}-\eqref{ConjectureTEC}. Clearly, on the event of extinction $\{\tau^{\origin,\xi}<\infty\}$ we have almost surely $t(1-\varepsilon)U\not \subset  \bfH_t^\xi$ for all non-empty and convex $U$ and $t$ sufficiently large. Therefore it only makes sense to formulate the asymptotic shape theorem under the measure
\begin{equation*}
\PC(\,\cdot\,):=\IP(\,\cdot\,|\tau^{\origin,\xi}=\infty).
\end{equation*}
Under our assumptions we show that the first hitting times $t^\xi(x)$ satisfy 
\begin{equation}\label{eq:conv_hitting}
\lim_{n\to \infty}\frac{t^\xi(nx)}{n}=\mu(x) \quad \text{for all } x\in V~\text{ $\PC$-a.s.,}
\end{equation}
where $\mu$ is a norm on $V$. This can be used to show the desired result \eqref{desired_result} for the unit ball w.r.t.\ $\mu$ denoted by $\B_{\mu}$.

Our approach to prove the asymptotic shape theorem is based on the ergodic theory for subadditive processes, which can be traced back to  Hammersley and Welsh \cite{hammersley1965first}, Kingman \cite{kingman1973subadditive} and an extension of Kingman's paper by Hammersley and Kesten \cite{Hammersley1974postulates}. However, this machinery requires subadditivity, integrability and stationarity for the increments of the hitting times $t^\xi(x)$, which we do not have. It is clear that the hitting time is not integrable under $\IP$ since it can be infinite under the event of extinction.  Under $\PC$ this is the case, however, under this measure the hitting times are not subadditive and neither do they have stationary increments. We are not the first to face this problem, and thus there is a well established approach which goes back to Durrett and Griffeath \cite{durrett1982several} and was further refined by Garet and Marchand \cite{garet2012shape}.
Therefore, we introduce a new sequence of times $\sigma^\xi(x)$, which we call the \textit{essential hitting time}.

We verify that this sequence of times satisfies the requirements for the special case $\xi=\szero$. Then, we can apply an extension of Kingmans subadditive ergodic theorem to get the desired convergence \eqref{eq:conv_hitting} for the essential hitting times $\sigma^{\szero}$ and deduce an asymptotic shape for the set of \textit{essentially hit} vertices  
\begin{equation*}
    \bfG_t^{\szero}:=\{x\in \IZ^d:\sigma^{\szero}(x)\leq t\}+\big[\unaryminus\tfrac{1}{2},\tfrac{1}{2}\big]^d.
\end{equation*}
To deduce the result for the first hitting time $t^{\szero}(x)$ we then only need to control the difference between $\sigma^{\szero}(x)$ and $t^{\szero}(x)$.

The described approach is an adaptation of the work by Garet and Marchand \cite{garet2012shape} to our situation. In our class of models the infection process $\bfeta$ is still defined via a percolative structure, but $(\bfeta,\bfxi)$ is not necessarily monotone, however, it satisfies our assumption of worst-case monotonicity. This property is needed to \textit{glue infection paths together} in the sense that if $x$ is infected from $\origin$ at time $t^\xi(x)$ and the at $(x,t^\xi(x))$ restarted process with initial background $\szero$ reaches $y$ within time $\hat{t}(y)$, then there also exists a $\xi$-infection path $(\origin,0)\stackrel{\xi}{\longrightarrow} (x+y,t^\xi(x)+\hat{t}(y))$.
This path gluing property is crucial to prove that the essential hitting times are approximately subadditive. Most of the statements which we take from \cite{garet2012shape} can be proven in almost the same way, but we need to generalise some of the results. This is also the reason why we do not provide the proofs of most results in full detail, but only for generalised results where we deem it necessary. 

After verifying that \eqref{desired_result} is valid for $\xi=\szero$, we proceed in two steps to show the result for arbitrary $\xi$. Even though we cannot control the difference between $\sigma^{\xi}(x)$ and $t^{\xi}(x)$ for arbitrary $\xi$, we still obtain that the lower bound for the asymptotic shape of $\bfH^{\xi}$ is the same as for $\bfH^{\szero}$, that is
\begin{equation*}
    \PC\big(\exists s\geq0 :  t(1-\varepsilon)\bfB_{\mu}\subset \bfH_t^{\xi}\,\,\forall t\geq s\big)=1.
\end{equation*}
It seems to us that these techniques can only provide the lower bound. In order to obtain the matching upper bound, we need to use a completely different and novel approach. Therefore, we switch gears and use the fact that we already know that \eqref{desired_result} is true for $\xi=\szero$. Then, we heavily exploit the coupling properties of $\bfxi^{\xi}$. This gives us that the upper bound for the asymptotic shape of $\bfH^{\xi}_t$ must be the same as for $\bfH^{\szero}$ and the proof is complete.

\subsection{Definition and First Properties of the Essential Hitting Time}
Let us define the essential hitting time $\sigma^\xi(x)$ of $x$ similar as in \cite{garet2012shape} via a sequence of stopping times $v^\xi_i(x)$, $l^\xi_i(x)$ and $u^\xi_i(x)$ with $v^\xi_0(x)=l^\xi_0(x)=u^\xi_0(x)=0$ as follows:
\begin{enumerate}
    \item Suppose $v^\xi_{k-1}(x)$ is already defined. Set $l^\xi_{k}(x)=\inf\{t\geq v^\xi_{k-1}(x):x\notin\bfeta_{t}^\xi\}$. Note that if $v^\xi_{k-1}(x)$ is finite, then $l^\xi_k(x)$ is finite almost surely.
    \item Suppose $l_k^\xi(x)$ is already defined. Set $u^\xi_{k}(x)=\inf\{t\geq l^\xi_k(x):x\in \bfeta_t^{\xi}\}$. In particular, if $l^\xi_{k}(x)$ is finite, 
    then $u^\xi_{k}(x)$ is the first time after $v^\xi_{k-1}(x)$ when the site $x$ is re-infected again. 
    \item Suppose $u^\xi_k(x)$ is defined, then $v^\xi_k(x)=u^\xi_k(x)+\tau^{x,\szero}\circ \theta_{u^\xi_k(x)}$, where the second summand is the lifetime of the process starting with configuration $\big(\delta_x,\szero\big)$ at time $u^\xi_k(x)$. Recall that $\theta_{u^\xi_k(x)}$ describes the time shift of the graphical construction, i.e.\ a shift of $-u^\xi_k(x)$ for all arrival times given by the Poisson Processes from our construction. 
\end{enumerate}
Now set 
\begin{equation}
	K^\xi(x)=\min\{n\geq 0 :v^\xi_n(x)=\infty~\text{or}~u^\xi_{n+1}(x)=\infty\}
\end{equation}
and $\sigma^\xi(x)=u^\xi_{K^\xi(x)}(x)$. 
Thus, at time $\sigma^\xi(x)$ the site $x$ gets infected and one of the following happens: Either there exists an $\szero$-infection path from space-time point $(x,\sigma^\xi(x))$ up to infinity, or, after all $\szero$-infection path starting from $(x,\sigma^\xi(x))$ have died, the site $x$ gets never infected again. 
Moreover, $K^\xi(x)$ is the number of iterations we need to find the essential hitting time. Figure \ref{figure:ess_hitting_time} illustrates the definition of the essential hitting time once again.\\
\begin{figure}[t]
\centering
						\scalebox{1}{
							\begin{tikzpicture}[darkstyle/.style={circle,inner sep=0pt}]
							
							\draw[->,thick](0,0)--(0,6.3)node[left,xshift=-0.2cm, yshift=-0.2cm] {time};
							
							\draw[-,thick](0,0)--(6,0)node[right,xshift=-3cm, yshift=-1 cm] {$\IZ^d$};
							\node () at (0,-0.3){$0$};
							\draw[fill=black](0,0) circle(1pt);
							\draw[fill=black](4,0) circle(1pt);
							\node () at (4,-0.3){$x$};
							
							\draw [black] plot [smooth] coordinates { (0,0) (1,0.3) (2,0.5) (3,1.3) (4,1.5)};
							\draw[fill=black](0,1.5) circle(1pt)node[left]{$u^\xi_1(x)$};
							\draw[dashed](0,1.5) --(4,1.5);
							
							\draw [blue,thick] plot [smooth] coordinates { (4,1.5) (3.5,1.75) (3,2) };
							\draw [blue,thick] plot [smooth] coordinates { (4,1.5) (4.5,2.6) (5,3)};
							
							\draw[fill=blue](0,3) circle(1pt)node[left]{$v^\xi_1(x)$};
							\draw[dashed](0,3) --(5,3);
							
							\draw [black] plot [smooth] coordinates { (1,0.3) (2,3) (4,3.8)};
							\draw[fill=black](0,3.8) circle(1pt)node[left]{$u^\xi_2(x)$};
							\draw[dashed](0,3.8) --(4,3.8);
							
							\draw [blue,thick] plot [smooth] coordinates { (4,3.8) (5,4.4) (4,4.6)};
							\draw[fill=blue](0,4.6) circle(1pt)node[left]{$v^\xi_2(x)$};
							\draw[dashed](0,4.6) --(4,4.6);
							
							\draw [black] plot [smooth] coordinates { (0,0) (2,4) (4,5.5)};
							\draw[fill=black](0,5.5) circle(1pt)node[left]{$u^\xi_3(x)$};
							\draw[dashed](0,5.5) --(4,5.5);
							
							\draw [black] plot [smooth] coordinates { (0,0) (2,4) (4,5.5)};
							\draw [red] plot [smooth] coordinates { (4,5.5) (5,6)}node[right]{$\infty$};

							\end{tikzpicture}}
                    \caption{Construction of the essential hitting times: The black paths are the $\xi$-infection paths starting from the origin. The blue and red ones correspond to $\szero$-infection paths starting from space-time points $(x,u^\xi_i(x))$. Only the red path is of infinite length. In the specific example $K^\xi(x)=3$, $\sigma^\xi(x)=u^\xi_3(x)$ and $l_i^\xi(x)=v_{i-1}^\xi(x)$ for $i=2,3$.}
                    \label{figure:ess_hitting_time}
					\end{figure}
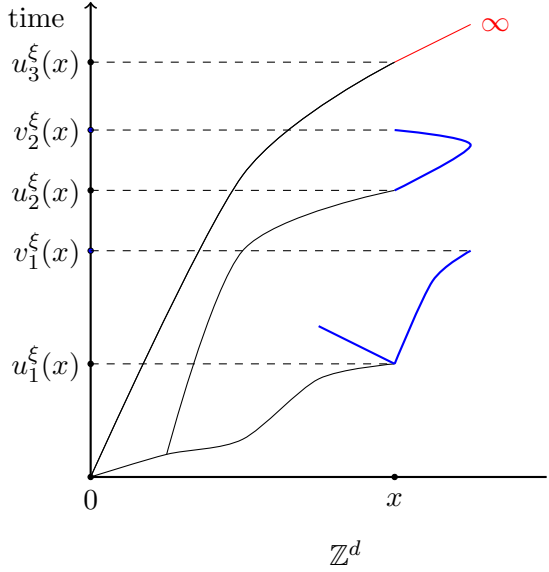

We define the space-time shift $\hat{\theta}_x^\xi$ for subsets $\omega\subset \cM\times \IR$  as
\begin{align*}
    \hat{\theta}^\xi_x(\omega):=\begin{cases}
        T_x\circ \theta_{\sigma^\xi(x)}(\omega)\quad&\text{if }\sigma^\xi(x)<\infty,\\
        T_x(\omega) &\text{otherwise.}
    \end{cases}
\end{align*}
\begin{remark}
    In contrast to \cite{garet2012shape} and \cite{deshayes2014contact}, we do not omit the definition of $l_k(x)$ and directly define $u_{k+1}(x)=\inf\{t\geq v_k(x):x\in \bfeta_t^{\origin}\}$ because we need to ensure that at time $u_{k+1}(x)$ the site $x$ gets infected, which is not necessarily the case in the definition of \cite{garet2012shape} or \cite{deshayes2014contact}.
\end{remark}

First note that $K^{\xi}$ is almost surely finite and has sub-geometric tail-probabilities. Moreover, conditioned on survival, at time $(x,\sigma(x))$ there exists a $\szero$-infection path up to infinity. This two facts are shown in the two subsequent lemmas. 
\begin{lemma}\label{lemma:K_as_finite}
	Assume the conditions \eqref{ConjectureAML}-\eqref{ConjectureTEC} in Theorem~\ref{Conjecture1} hold. Then there exists $\rho>0$ such that we have for all  $x\in\IZ^d$ and $n\in\IN$ that
	\begin{equation*}
		\IP(K^\xi(x)>n)\leq (1-\rho)^n.
	\end{equation*}
\end{lemma}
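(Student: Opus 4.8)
The strategy is to show that at each iteration, independently of the past, there is a probability bounded away from zero of stopping, i.e.\ of having found the essential hitting time. Concretely, fix $x$ and observe that $K^\xi(x) > n$ means that all of $v_1^\xi(x), \dots, v_n^\xi(x)$ as well as $u_1^\xi(x), \dots, u_n^\xi(x)$ are finite, so in particular $u_n^\xi(x)<\infty$ and the $\szero$-infection path restarted at $(x,u_n^\xi(x))$ dies out (that is, $v_n^\xi(x) = u_n^\xi(x) + \tau^{x,\szero}\circ\theta_{u_n^\xi(x)} < \infty$) \emph{and} the site $x$ is re-infected afterwards (that is, $u_{n+1}^\xi(x) < \infty$). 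I would first establish the one-step bound: there exists $\rho > 0$ such that for every realisation up to the stopping time $u_n^\xi(x)$,
\begin{equation*}
    \IP\big(v_n^\xi(x) = \infty \ \text{or}\ u_{n+1}^\xi(x) = \infty \,\big|\, \cF_{u_n^\xi(x)},\, u_n^\xi(x) < \infty\big) \geq \rho.
\end{equation*}
Iterating this via the strong Markov property applied at the stopping times $u_n^\xi(x)$ then yields $\IP(K^\xi(x) > n) \leq (1-\rho)^n$.

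For the one-step bound, the key point is that at time $u_n^\xi(x)$ the site $x$ is infected (by construction of $u_n^\xi(x)$ as a re-infection time and by the Remark preceding Lemma~\ref{lemma:K_as_finite}, which is precisely why $l_k$ is retained). Hence by worst-case monotonicity (Assumption~\ref{Ass:WS-Monotonicity}), the process restarted at $(x, u_n^\xi(x))$ with background reset to $\szero$ is dominated by the actual restarted process; in particular the event $\{v_n^\xi(x) = \infty\}$ — that the $\szero$-restarted process starting from $\delta_x$ survives forever — has probability exactly $\IP(\tau^{\szero} = \infty) =: q > 0$ (using translation invariance to move $x$ to the origin), and this event is measurable with respect to the graphical data strictly after $u_n^\xi(x)$, hence independent of $\cF_{u_n^\xi(x)}$. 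If instead $v_n^\xi(x) < \infty$, I need a uniform lower bound on $\IP(u_{n+1}^\xi(x) = \infty \mid \dots)$, i.e.\ that after the restarted process dies, $x$ is never infected from the origin again. Here one uses condition~\eqref{ConjectureSC} (death happens fast or survival occurs) together with~\eqref{ConjectureTEC} and the finite-speed / coupling estimates: once the $\szero$-restarted process is extinct and the original infection from $\origin$ (if it has died out) cannot reach $x$, one argues via a comparison that re-infection of $x$ has probability bounded away from $1$. More carefully, one conditions on the first restart being finite and bounds the conditional probability of non-reinfection below by a constant — this can be packaged by noting that on $\{v_n^\xi(x) < \infty\}$ one simply starts the whole argument over, but the \emph{initial} probability $q$ of ``$v_n^\xi$ infinite'' is a clean uniform lower bound, so in fact $\rho = q$ works once one checks that $\{v_n^\xi(x) = \infty\}$ is conditionally independent of the past with probability $q$.

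The cleanest formulation: apply the strong Markov property at $u_n^\xi(x)$ on the event $\{u_n^\xi(x) < \infty\}$. The post-$u_n^\xi(x)$ increment $\tau^{x,\szero}\circ\theta_{u_n^\xi(x)}$ depends only on an independent copy of the graphical construction (fresh Poisson points after time $u_n^\xi(x)$, with background freshly set to $\szero$ at $x$ and its neighbourhood as in step~(3) of the definition), and $x$ is occupied at that time. Therefore
\begin{equation*}
    \IP\big(v_n^\xi(x) = \infty \,\big|\, \cF_{u_n^\xi(x)}\big)\mathbf{1}_{\{u_n^\xi(x)<\infty\}} = \IP(\tau^{\origin,\szero} = \infty)\,\mathbf{1}_{\{u_n^\xi(x)<\infty\}} = q\,\mathbf{1}_{\{u_n^\xi(x)<\infty\}},
\end{equation*}
by translation invariance. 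Since $\{v_n^\xi(x) = \infty\} \subseteq \{K^\xi(x) \leq n\}$, we get $\IP(K^\xi(x) \leq n \mid \cF_{u_n^\xi(x)})\mathbf{1}_{\{u_n^\xi(x)<\infty\}} \geq q\,\mathbf{1}_{\{u_n^\xi(x)<\infty\}}$, hence $\IP(K^\xi(x) > n+1) \leq \IP(K^\xi(x)>n, u_{n+1}^\xi(x) < \infty) \leq (1-q)\,\IP(K^\xi(x) > n)$ — wait, I must be careful about the indexing: on $\{K^\xi(x) > n\}$ we have $u_{n+1}^\xi(x)$ finite by definition, and then the restart at $u_{n+1}^\xi(x)$ gives another independent factor $q$ of stopping, so $\IP(K^\xi(x) > n+1 \mid \cF_{u_{n+1}^\xi(x)}) \leq (1-q)$ on $\{K^\xi(x) > n\}$; iterating from $n=0$ yields $\IP(K^\xi(x) > n) \leq (1-q)^n$ with $\rho := q = \IP(\tau^{\origin,\szero}=\infty) > 0$.

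\textbf{Main obstacle.} The delicate point is the conditional independence / strong Markov argument: one must verify that $\tau^{x,\szero}\circ\theta_{u_n^\xi(x)}$ really is independent of $\cF_{u_n^\xi(x)}$ despite the fact that the background $\bfxi$ is \emph{not} reset globally — only $\szero$ is used as the initial background for the \emph{restarted} survival computation, which lives on the shifted graphical construction $\theta_{u_n^\xi(x)}(\Delta)$, so it is genuinely independent of $\cF_{u_n^\xi(x)}$; but one must also make sure the event under consideration ($v_n^\xi(x) = \infty$) is defined purely in terms of that shifted construction, which it is by step~(3) of the definition of $v_k^\xi$. A secondary subtlety is that $u_n^\xi(x)$ is a stopping time for the filtration $(\cF_t)$ only because $x$ being infected is an $\cF_t$-measurable event — this is immediate from the graphical construction~\eqref{DefinitionCPDLP}. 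Once these measurability points are nailed down, the geometric bound is a routine iteration, and worst-case monotonicity is used only implicitly (to know $x$ is occupied at $u_n^\xi(x)$, which is actually true by definition, so monotonicity may not even be needed for this particular lemma).
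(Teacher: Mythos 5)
Your argument is correct and is essentially the proof the paper intends (it defers to Garet--Marchand, Lemma~6, whose argument is exactly this): apply the strong Markov property at the stopping times $u_k^\xi(x)$ and use that each restart with background reset to $\szero$ survives with probability exactly $q=\IP(\tau^{\origin,\szero}=\infty)>0$, independently of $\cF_{u_k^\xi(x)}$, so that $\{K^\xi(x)>n\}\subseteq\bigcap_{k\leq n}\{v_k^\xi(x)<\infty\}$ yields $\IP(K^\xi(x)>n)\leq(1-q)^n$. The detour in your middle paragraph about bounding $\IP(u_{n+1}^\xi(x)=\infty\mid\cdots)$ is unnecessary, as your final ``cleanest formulation'' already shows, and your closing observation that neither worst-case monotonicity nor \eqref{ConjectureAML}--\eqref{ConjectureTEC} is really needed here (only supercriticality) is accurate.
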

\begin{proof}
    This can be proven analogously as \cite[Lemma~6]{garet2012shape}.
\end{proof}
\begin{lemma}\label{Lemma:equivalence_K_2}
	For every $x\in V$ and every background configuration $\xi$ we have almost surely
	\begin{align*}
		(K^\xi(x)=k\text{ and }\tau^{\origin,\xi}=\infty)\Leftrightarrow (u^\xi_k(x)<\infty\text{ and }v^\xi_k(x)=\infty)
	\end{align*}
\end{lemma}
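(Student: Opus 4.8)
The equivalence splits into two implications, and both reduce to the same two ingredients: a bookkeeping identity for the event $\{K^\xi(x)=k\}$ in terms of the stopping times $u^\xi_n(x),v^\xi_n(x)$, and a single domination inequality comparing the $\szero$-infection restarted at $x$ with the true infection started from the origin.

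\emph{Bookkeeping and domination.} From the recursive definitions one checks that $\{u^\xi_k(x)<\infty\}$ coincides with the event that $v^\xi_0(x),\dots,v^\xi_{k-1}(x)$ and $u^\xi_0(x),\dots,u^\xi_k(x)$ are all finite (once some $v^\xi_n(x)$ or $u^\xi_n(x)$ is infinite, all later ones are, using that $l^\xi_{n+1}(x)<\infty$ a.s.\ whenever $v^\xi_n(x)<\infty$), whence
\[\{K^\xi(x)=k\}=\{u^\xi_k(x)<\infty\}\cap\big(\{v^\xi_k(x)=\infty\}\cup\{u^\xi_{k+1}(x)=\infty\}\big);\]
for $k=0$ the right-hand event of the lemma is empty, since $v^\xi_0(x)=0$, so nothing is claimed. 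Next, on $\{u^\xi_k(x)<\infty\}$ we have $x\in\bfeta^{\origin,\xi}_{u^\xi_k(x)}$, and the strong Markov property at the stopping time $u^\xi_k(x)$, additivity of $\bfeta$ in its initial set (Proposition~\ref{prop:Monotonicity}) and worst-case monotonicity (Assumption~\ref{Ass:WS-Monotonicity}, in its spatially translated and time-shifted form) yield, almost surely for all $s\ge0$,
\[\bfeta^{\origin,\xi}_{u^\xi_k(x)+s}\ \supseteq\ \bfeta^{x,\szero}_{s}\circ\theta_{u^\xi_k(x)}.\]
In particular $v^\xi_k(x)=\infty$, i.e.\ $\tau^{x,\szero}\circ\theta_{u^\xi_k(x)}=\infty$, forces $\tau^{\origin,\xi}=\infty$ on $\{u^\xi_k(x)<\infty\}$. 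Combining the two displays gives "$\Leftarrow$" at once: if $u^\xi_k(x)<\infty$ and $v^\xi_k(x)=\infty$, then $K^\xi(x)=k$ and $\tau^{\origin,\xi}=\infty$.

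\emph{The implication "$\Rightarrow$".} On $\{\tau^{\origin,\xi}=\infty\}$ we have $K:=K^\xi(x)<\infty$ almost surely by Lemma~\ref{lemma:K_as_finite}, and by the bookkeeping identity $u^\xi_K(x)<\infty$ while either $v^\xi_K(x)=\infty$ — the desired conclusion — or $v^\xi_K(x)<\infty$ and $u^\xi_{K+1}(x)=\infty$. Since $u^\xi_{K+1}(x)=\infty$ means precisely that $x\notin\bfeta^{\origin,\xi}_t$ for every $t\ge l^\xi_{K+1}(x)$, i.e.\ the set $\{t\ge0:x\in\bfeta^{\origin,\xi}_t\}$ is bounded, it suffices to prove the key claim
\[\IP\big(\tau^{\origin,\xi}=\infty,\ \textstyle\sup\{t\ge0:x\in\bfeta^{\origin,\xi}_t\}<\infty\big)=0,\]
i.e.\ that conditioned on survival $x$ is re-infected at arbitrarily large times almost surely.

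\emph{Proof of the key claim, and the main obstacle.} This is the only genuinely probabilistic step beyond Lemma~\ref{lemma:K_as_finite}, and I would prove it by a restart argument driven by the linear growth bounds. By \eqref{ConjectureAML} and Borel--Cantelli, almost surely on $\{\tau^{\origin,\xi}=\infty\}$ one has $\bfeta^{\origin,\xi}_T\subset\B_{MT}$ for all integers $T$ beyond a random $T_0$. For such $T$ the Markov property and additivity give $\bfeta^{\origin,\xi}_{T+r}=\bigvee_{z\in\bfeta^{\origin,\xi}_T}\bfeta^{z,\bfxi_T}_{r}\circ\theta_T$, so on survival at least one $z\in\bfeta^{\origin,\xi}_T\cap\B_{MT}$ has its restarted infection surviving forever. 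Applying \eqref{ConjectureALL} to this restart (it holds for every background, hence for the random configuration $\bfxi_T$, via the Markov property at the deterministic time $T$), union-bounding over the at most $|\B_{MT}|=O(T^d)$ candidate sites $z$, and taking the deviation of order $\log T$, the probability that on $\{\tau^{\origin,\xi}=\infty,\ \bfeta^{\origin,\xi}_T\subset\B_{MT}\}$ none of these surviving restarts infects $x$ within an additional time $O(T)$ is summable in $T$; Borel--Cantelli then shows that, a.s.\ on $\{\tau^{\origin,\xi}=\infty\}$, $x\in\bfeta^{\origin,\xi}_t$ for some $t$ in an interval of the form $(T,\,(1+M/c)T+O(\log T)]$ for every large integer $T$, and since these times are unbounded the claim follows. (Alternatively one could derive the key claim from the fast-coupling estimate \eqref{ConjectureFC} together with the non-triviality of the upper invariant law, Corollary~\ref{cor:FiniteVSInfinteSurv}, but the restart argument is self-contained.) The remaining points are routine: making the strong Markov restart at $u^\xi_k(x)$ precise together with the translated, time-shifted version of worst-case monotonicity, and, in the last step, coping with the fact that the surviving site $z$ is not $\cF_T$-measurable — handled by union-bounding over all $z\in\B_{MT}$ first and intersecting with $\{z\in\bfeta^{\origin,\xi}_T\}$ afterwards. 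The main obstacle is exactly this key claim that survival forces $x$ to be revisited at arbitrarily large times; everything else is bookkeeping plus the one-line domination inequality.
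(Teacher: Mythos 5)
Your proof is correct, and the backward implication is essentially the paper's: the identity $\{K^\xi(x)=k\}=\{u^\xi_k(x)<\infty\}\cap(\{v^\xi_k(x)=\infty\}\cup\{u^\xi_{k+1}(x)=\infty\})$ plus the pathwise domination $\bfeta^{\origin,\xi}_{u^\xi_k(x)+s}\supseteq\bfeta^{x,\szero}_s\circ\theta_{u^\xi_k(x)}$ obtained from additivity and (translated, time-shifted) worst-case monotonicity is exactly how the paper deduces $\tau^{\origin,\xi}=\infty$ from $v^\xi_k(x)=\infty$. For the forward implication you take a genuinely different route. The paper splits $\{K^\xi(x)=k,\,\tau^{\origin,\xi}=\infty,\,u^\xi_{k+1}(x)=\infty\}$ according to whether $l^\xi_{k+1}(x)$ is finite, kills the infinite case directly with \eqref{ConjectureTEC}, and kills the finite case with a single application of the strong Markov property at $l^\xi_{k+1}(x)$ together with \eqref{ConjectureALL} (letting $t\to\infty$, using that the infected set at that stopping time is finite). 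You instead prove the stronger statement that, on survival, $x$ is reinfected at arbitrarily large times, via restarts at deterministic integer times $T$, \eqref{ConjectureAML}, a union bound over $\B_{MT}$, \eqref{ConjectureALL} uniformly in the background, and Borel--Cantelli; the \eqref{ConjectureTEC} input enters only implicitly, through the definitional fact that $l^\xi_{n+1}(x)<\infty$ a.s.\ on $\{v^\xi_n(x)<\infty\}$, which you invoke in your bookkeeping step just as the paper asserts it in the construction of the stopping times. Your argument is heavier but self-contained and quantitative (it avoids conditioning at the stopping time $l^\xi_{k+1}(x)$ and the slightly glossed-over reduction from a finite initial set to single sites, which you make explicit by the union bound), while the paper's is shorter. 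One small correction: your aside that for $k=0$ "nothing is claimed" misreads the statement --- since $v^\xi_0(x)=0$, the lemma for $k=0$ asserts $\IP(K^\xi(x)=0,\,\tau^{\origin,\xi}=\infty)=0$, i.e.\ that on survival $x$ is eventually hit; this is harmless because your general argument (the branch $v^\xi_K(x)<\infty$, $u^\xi_{K+1}(x)=\infty$ contradicting your key claim) covers $k=0$ as well.
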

\begin{proof} We start by showing the first direction. Therefore, let  $x\in V$ and $k\in \IN$. Note that
    \begin{align}
		\IP(&\tau^{\origin,\xi}=\infty,v^\xi_k(x)<\infty,u^\xi_{k+1}(x)=\infty)\notag\\
        &=\IP(\tau^{\origin,\xi}=\infty,v^\xi_k(x)<\infty,l^\xi_{k+1}(x)=\infty)\notag\\
        &+\IP(\tau^{\origin,\xi}=\infty,l^\xi_{k+1}(x)<\infty,u^\xi_{k+1}(x)=\infty)\notag
	\end{align}
where the first term is zero due to \eqref{ConjectureTEC}. For the second term we apply the strong Markov-property at time $l^\xi_k(x)$ to obtain
	\begin{align*}
		\IP(&\tau^{\origin,\xi}=\infty,l^\xi_{k+1}(x)<\infty,u^\xi_{k+1}(x)=\infty|\cF_{l^\xi_k(x)})\\
        &=\1_{\{l^\xi_k(x)<\infty\}}\IP(\tau^{\cdot}=\infty,t^{\cdot}(x)=\infty)\circ (\bfeta_{l_k^\xi(x)}^{\origin,\xi},\bfxi_{l_k^\xi(x)}^{\origin,\xi}).
	\end{align*}
	Since $\bfeta_{l_k^\xi(x)}^{\origin,\xi}$ is almost surely finite if $l^\xi_k(x)<\infty$, we can use property \eqref{ConjectureALL} to deduce that the last probability is zero which yields the first implication. For the reverse one observe, that the right-hand side directly implies $K^\xi(x)=k$. Moreover, the graphical construction implies a coupling which guarantees that $\tau^{\origin,\xi}=\infty$. Note that it is crucial that at $u_{k}^{\xi}$ we restart the process with the background in the worst-state $\szero$, since otherwise survival of the original process would not be guaranteed.
\end{proof}
\begin{remark}
    By definition of $\sigma^\xi(x)$ we also have
    	\begin{align*}
		(K^\xi(x)=k\text{ and }\tau^{\origin,\xi}=\infty)\Leftrightarrow (\sigma^\xi(x)=u^\xi_k(x)\text{ and }\tau^{\origin,\xi}=\infty)
	\end{align*}
\end{remark}
We now state the ergodic theorem for subadditive process from \cite{deshayes2014contact}, which we will apply later. This result is based on an extension of Kingmans paper \cite{kingman1973subadditive} by Hammersley and Kesten \cite{Hammersley1974postulates}.
\begin{theorem}[{\cite[Theorem 8.2]{deshayes2014contact}}]\label{theorem:deshayes_as_shape}
    Let $(\Omega,\cF,\IP)$ be a probability space. Let $(s(x))_{x\in \IZ^d}$ be random variables with finite second moments and suppose that $s(x)$ and $s(-x)$ have the same distribution, for every $x\in\IZ^d$.
    Let $(v(y,x))_{y,x\in \IZ^d}$ and $(r(x,y))_{x,y\in \IZ^d}$ be collections of random variables such that:
    \begin{enumerate}[label=(AS\arabic*),ref=AS\arabic*]
        \item $\forall x,y \in\IZ^d$, $s(x+y)\leq s(x)+v(y,x)+r(x,y)$ with $v(y,x)$ having the same distribution as $s(y)$, and being independent of $s(x)$. \label{prop:1}
        \item $\forall x,y\in \IZ^d$, $\exists C_{x,y}$ and $\alpha_{x,y}<2$ such that $\IE[r(nx,py)^2]\leq C_{x,y}(n+p)^{\alpha_{x,y}}$.\label{prop:2}
        \item  $\exists C>0$ such that $\forall x\in \IZ^d$, $\IP(s(nx)>Cn||x||)\to 0$ as $n\to \infty.$\label{prop:3}
        \item  $\exists c>0$ such that $\forall x\in \IZ^d$, $\IP(s(nx)<cn ||x||)\to 0$, as $n\to \infty$.\label{prop:5}
        \item  $\exists K>0$ such that for $\forall\varepsilon>0$, $\IP$- almost surely $\exists M$ such that
        \begin{equation*}
            (||x||>M~\text{and}~||x-y||\leq K||x||)\Rightarrow||s(x)-s(y)||\leq \varepsilon ||x||.\label{prop:4}
        \end{equation*}
    \end{enumerate}
    Then there exists $\mu:\IZ^d\to\IR^+$ such that 
    \begin{equation*}
        \lim_{||x||\to\infty}\frac{s(x)-\mu(x)}{||x||}=0~\text{almost surely and in }L^2.
    \end{equation*}
    Moreover, $\mu$ can be extended to a norm on $\IR^d$ and we have the following asymptotic shape theorem: For all $\varepsilon>0$, $\IP$-almost surely, for all $t$ large enough,
    \begin{equation*}
        (1-\varepsilon)\B_\mu\subset\frac{\hat{G}_t}{t}\subset(1+\varepsilon)\B_\mu,
    \end{equation*}
    where $\hat{G}_t:=\{x\in \IZ^d:s(x)\leq t\}+\big[\unaryminus\tfrac{1}{2},\tfrac{1}{2}\big]^d$ and $\B_\mu$ is the unit ball for $\mu$.
\end{theorem}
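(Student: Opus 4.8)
The plan is to follow the classical programme for almost-subadditive processes --- going back to Kingman~\cite{kingman1973subadditive} and Hammersley--Kesten~\cite{Hammersley1974postulates}, and carried out in essentially this axiomatic form by Garet--Marchand~\cite{garet2012shape} --- in four stages: (i) prove convergence of $s(nx)/n$ along every fixed lattice ray; (ii) identify the limit $\mu$ as a norm on $\IR^d$; (iii) upgrade the ray convergence to the uniform statement $\bigl(s(x)-\mu(x)\bigr)/\|x\|\to 0$ with the help of \eqref{prop:4}; and (iv) deduce the asymptotic shape.

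For (i), fix $x\in\IZ^d$ and set $a_n:=\IE[s(nx)]$, which is finite by the second-moment hypothesis. Iterating \eqref{prop:1} and taking expectations, and using that $v(y,x)$ has the law of $s(y)$, yields the almost-subadditive relation $a_{n+p}\le a_n+a_p+\IE[r(nx,px)]$; by Cauchy--Schwarz and \eqref{prop:2} the error term is at most $\sqrt{C_{x,x}}\,(n+p)^{\alpha_{x,x}/2}$ with $\alpha_{x,x}/2<1$, hence $o(n+p)$, so a Fekete-type lemma with sublinear perturbation gives $a_n/n\to\mu(x)$ for some $\mu(x)\in\IR$, and \eqref{prop:3}, \eqref{prop:5} force $c\|x\|\le\mu(x)\le C\|x\|$. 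To pass to almost sure convergence one exploits the independence built into \eqref{prop:1}: along a sparse subsequence $(n_k)_k$ with $n_{k+1}/n_k\to 1$ one controls the second moments of $s(n_kx)$ and applies Chebyshev together with the Borel--Cantelli lemma to obtain $s(n_kx)/n_k\to\mu(x)$ almost surely, and then \eqref{prop:4} --- comparing $s(nx)$ with $s(n_kx)$ for $n_k\le n<n_{k+1}$, which is licit once $n_k$ is large --- fills the gaps and gives $s(nx)/n\to\mu(x)$ almost surely for each fixed $x$; a parallel second-moment estimate, again using the independence in \eqref{prop:1}, gives the corresponding $L^2$ convergence.

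For (ii), homogeneity is immediate from $\mu(mx)=\lim_n\IE[s(nmx)]/n=m\,\mu(x)$, so $\mu$ extends positively homogeneously to $\mathbb{Q}^d$ and, by the two-sided bound just obtained, continuously to $\IR^d$; subadditivity $\mu(x+y)\le\mu(x)+\mu(y)$ follows by taking expectations in \eqref{prop:1} for the pair $nx,ny$, dividing by $n$ and letting $n\to\infty$; and symmetry $\mu(x)=\mu(-x)$ comes from the equality in law of $s(x)$ and $s(-x)$, so $\mu$ is a genuine norm. For (iii), cover the $\mu$-unit sphere by finitely many rational directions, use the ray convergence of stage (i) on each, and apply \eqref{prop:4} --- which bounds the oscillation of $s$ by $\varepsilon\|x\|$ over any ball of radius $K\|x\|$ --- to compare an arbitrary large-norm $x$ with a nearby point on one of these rays; this gives $\lim_{\|x\|\to\infty}\bigl(s(x)-\mu(x)\bigr)/\|x\|=0$ almost surely (and in $L^2$). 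Finally, for (iv), fix $\varepsilon>0$; almost surely there is $R$ with $|s(x)-\mu(x)|\le\tfrac{\varepsilon}{2}\mu(x)$ whenever $\|x\|\ge R$, so for all large $t$ every $x$ with $\mu(x)\le(1-\varepsilon)t$ lies in $\hat{G}_t$ while every $x\in\hat{G}_t$ with $\|x\|\ge R$ satisfies $\mu(x)\le(1+\varepsilon)t$, the finitely many $x$ with $\|x\|<R$ being absorbed by the $[\unaryminus\tfrac12,\tfrac12]^d$ fattening; dividing by $t$ gives $(1-\varepsilon)\B_\mu\subset\hat{G}_t/t\subset(1+\varepsilon)\B_\mu$ for all large $t$.

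The main obstacle is the almost sure convergence in stage (i). Because \eqref{prop:1} is only an inequality, with no matching lower bound, and $s$ is not assumed monotone, $s(nx)$ cannot simply be sandwiched between subsequence values; the argument must genuinely combine the independence in \eqref{prop:1} with the polynomial second-moment control \eqref{prop:2} to run Borel--Cantelli along a sufficiently sparse subsequence and then lean on the regularity hypothesis \eqref{prop:4} to interpolate. This is exactly where all five axioms interact, and where the Hammersley--Kesten scheme --- as implemented in \cite{garet2012shape} and \cite{deshayes2014contact} --- does the real work.
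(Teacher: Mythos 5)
First, note that the paper does not prove this statement at all: it is imported verbatim from Deshayes (cited as [Theorem 8.2]), whose proof in turn adapts the Kesten--Hammersley almost-subadditive ergodic theorem as implemented by Garet and Marchand. So the only meaningful comparison is between your sketch and that standard proof. Your architecture is the right one -- approximate subadditivity of the means along rays via (AS1)--(AS2) and a Fekete-type lemma, identification of $\mu$ and its norm properties from symmetry, subadditivity and (AS3)--(AS4), passage from rays to uniform convergence over directions via the regularity hypothesis (AS5), and the routine deduction of the shape statement from $(s(x)-\mu(x))/\|x\|\to 0$. Stages (ii)--(iv) of your plan are essentially correct modulo minor points (e.g.\ the Fekete step implicitly needs $s\ge 0$ or a lower bound on the means, and the bounds $c\|x\|\le\mu(x)\le C\|x\|$ only follow from (AS3)--(AS4) after convergence of $s(nx)/n$ is known).

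The genuine gap is in stage (i), the almost sure (and $L^2$) convergence along a ray, which you dispatch with ``control the second moments of $s(n_kx)$ and apply Chebyshev together with Borel--Cantelli''. The hypotheses give you no such control: (AS1) is a one-sided inequality, and an upper bound on a random variable does not bound its variance, while (AS2) controls only the remainder $r$, not the fluctuations of $s$ itself; the natural recursion $\mathrm{Var}(s(2nx))\le 2\,\mathrm{Var}(s(nx))+(2a_n-a_{2n})\,O(n)+\dots$ yields only $o(n^2)$ without a summable rate, which is not enough for Borel--Cantelli. In the actual proof this is exactly where the Kesten--Hammersley machinery does the work: the almost sure \emph{upper} bound $\limsup s(nmx)/(nm)\le \IE[s(mx)]/m + o(1)$ is obtained by iterating (AS1) to dominate $s(nmx)$ by a sum of $n$ independent copies of $s(mx)$ plus remainders (SLLN for the copies, (AS2) plus Chebyshev/Borel--Cantelli for the remainders), and the matching \emph{lower} bound $\liminf s(nx)/n\ge\mu(x)$ requires a separate, genuinely nontrivial argument -- it does not follow from a variance bound on $s$, and your sketch offers no mechanism for it at all. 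Since this is the heart of the theorem (you yourself flag it as ``where the real work is'' and defer to the cited papers), the proposal as written does not constitute a proof but a reduction to the very result being quoted.
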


Clearly, we would like to apply the theorem to the essential hitting times $\sigma^\xi(x)$. However, we can only do this in case $\xi=\szero$ otherwise $\sigma^\xi$ lacks some invariance property which is crucial to show assumption \eqref{prop:1}. 
\subsection{Invariance of $\sigma^{\szero}$} 
We will establish an invariance property of the shift $\hat{\theta}_x^{\szero}$ under $\PO$,  which guarantees the stationarity property of the essential hitting times that we need.

\begin{lemma}\label{Lemma:invariance_2}
	Let $x\in V$, $A$ in the $\sigma$-algebra generated by $\sigma^\xi(x)$ and $E\in \cF$ any measurable event with respect to the Poisson construction. Then
	\begin{align}\label{eq:invariance}
		\IP(A\cap (\hat{\theta}^\xi_x)^{-1}(E)|\tau^{\origin,\xi}=\infty)=\IP(A|\tau^{\origin,\xi}=\infty)\IP(E|\tau^{\origin,\szero}=\infty).
	\end{align}
\end{lemma}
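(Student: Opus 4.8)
The plan is to decompose the event $\{\tau^{\origin,\xi}=\infty\}$ according to the value of $K^\xi(x)$ and the trajectory of the infection process up to the essential hitting time, and then apply the strong Markov property of the Poisson construction at the stopping time $\sigma^\xi(x)$. Concretely, by Lemma~\ref{Lemma:equivalence_K_2} and the subsequent remark, on $\{\tau^{\origin,\xi}=\infty\}$ we have $\sigma^\xi(x)=u^\xi_{K^\xi(x)}(x)<\infty$ and $v^\xi_{K^\xi(x)}(x)=\infty$; in particular $\sigma^\xi(x)$ is a stopping time with respect to the filtration $(\cF_t)_{t\geq 0}$ generated by $\Delta$ on the event where it is finite. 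The key structural observation is that, on $\{\tau^{\origin,\xi}=\infty\}$, the event $\{v^\xi_{K^\xi(x)}(x)=\infty\}$ is \emph{exactly} the statement that the process restarted at $(x,\sigma^\xi(x))$ with background $\szero$ survives forever, i.e.\ $\tau^{x,\szero}\circ\hat\theta^\xi_x=\infty$ (where the shift $\hat\theta^\xi_x$ encodes both the spatial shift by $x$ and the temporal shift by $\sigma^\xi(x)$). Since the restarted process reads off a fresh, disjoint portion of the Poisson construction — it uses only Poisson points with arrival times after $\sigma^\xi(x)$ — this survival event is measurable with respect to $\hat\theta^\xi_x(\Delta)$ and, by the strong Markov property together with translation invariance, is independent of $\cF_{\sigma^\xi(x)}$ with the same law as $\{\tau^{\origin,\szero}=\infty\}$.

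The main steps, in order, are as follows. First I would write
\begin{equation*}
\IP\big(A\cap(\hat\theta^\xi_x)^{-1}(E)\cap\{\tau^{\origin,\xi}=\infty\}\big)
=\sum_{k\geq 0}\IP\big(A\cap(\hat\theta^\xi_x)^{-1}(E)\cap\{K^\xi(x)=k\}\cap\{\tau^{\origin,\xi}=\infty\}\big),
\end{equation*}
and on each summand use Lemma~\ref{Lemma:equivalence_K_2} to rewrite $\{K^\xi(x)=k,\tau^{\origin,\xi}=\infty\}=\{u^\xi_k(x)<\infty,\,v^\xi_k(x)=\infty\}$. Second, I would condition on $\cF_{u^\xi_k(x)}$: the event $A$ (being in the $\sigma$-algebra generated by $\sigma^\xi(x)$, which on $\{K^\xi(x)=k\}$ equals $u^\xi_k(x)$) and the events $\{u^\xi_k(x)<\infty\}$, $\{u^\xi_{j}(x)<\infty,v^\xi_j(x)<\infty$ for $j<k\}$ are $\cF_{u^\xi_k(x)}$-measurable, while both $(\hat\theta^\xi_x)^{-1}(E)$ and the survival event $\{v^\xi_k(x)=\infty\}=\{\tau^{x,\szero}\circ\theta_{u^\xi_k(x)}=\infty\}$ depend only on the post-$u^\xi_k(x)$ construction, shifted by $x$. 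By the strong Markov property at $u^\xi_k(x)$ and translation invariance of the Poisson construction,
\begin{equation*}
\IP\big((\hat\theta^\xi_x)^{-1}(E)\cap\{v^\xi_k(x)=\infty\}\,\big|\,\cF_{u^\xi_k(x)}\big)
=\1_{\{u^\xi_k(x)<\infty\}}\,\IP\big(E\cap\{\tau^{\origin,\szero}=\infty\}\big)
\end{equation*}
on the relevant event, because on $\{v^\xi_k(x)=\infty\}$ the shift $\hat\theta^\xi_x$ coincides with $T_x\circ\theta_{u^\xi_k(x)}$. Summing over $k$ and regrouping gives
$\IP(A\cap(\hat\theta^\xi_x)^{-1}(E)\cap\{\tau^{\origin,\xi}=\infty\})
=\IP(A\cap\{\tau^{\origin,\xi}=\infty\})\cdot\IP(E\cap\{\tau^{\origin,\szero}=\infty\})$ — wait, this is not quite the product form we want, so the final step is to take $E=\Omega$ to identify $\IP(E\mid\tau^{\origin,\szero}=\infty)$ and divide; more precisely, applying the identity once with general $E$ and once with $E=\Omega$ and dividing yields \eqref{eq:invariance}.

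\textbf{Main obstacle.} The delicate point is justifying that $\sigma^\xi(x)$ is genuinely a stopping time and that the restarted survival event factorizes cleanly — i.e.\ making rigorous that on $\{K^\xi(x)=k\}$ the map $\hat\theta^\xi_x$ equals $T_x\circ\theta_{u^\xi_k(x)}$ and that $\{v^\xi_k(x)=\infty\}$ depends \emph{only} on $\theta_{u^\xi_k(x)}(\Delta)$ through the restarted process with background $\szero$. This is exactly where the definition of $v^\xi_k(x)=u^\xi_k(x)+\tau^{x,\szero}\circ\theta_{u^\xi_k(x)}$ via the \emph{worst-state} background $\szero$ is essential: because $\szero$ is a deterministic configuration, the restarted process does not inherit any dependence on the earlier background $\bfxi^\xi_{[0,u^\xi_k(x)]}$, so its survival event is a clean function of the fresh Poisson points. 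One must also be careful that the nested structure $\{u^\xi_k(x)<\infty, v^\xi_{k-1}(x)<\infty,\dots\}$ is handled by iterating the strong Markov property at $v^\xi_0(x),u^\xi_1(x),v^\xi_1(x),\dots$ rather than applying it once; this bookkeeping, together with summing the resulting geometric-type series (finiteness of which is guaranteed by Lemma~\ref{lemma:K_as_finite}), is the technical heart of the argument but follows the template of \cite[Lemma~8]{garet2012shape} closely.
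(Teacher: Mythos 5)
Your proposal follows essentially the same route as the paper's proof: decompose over $\{K^\xi(x)=k\}$, rewrite this event via Lemma~\ref{Lemma:equivalence_K_2} as $\{u^\xi_k(x)<\infty,\,v^\xi_k(x)=\infty\}$, apply the strong Markov property at the stopping time $u^\xi_k(x)$ together with the spatial and temporal invariance of the Poisson construction to factor out $\IP(E,\tau^{\origin,\szero}=\infty)$, and finally identify the remaining factor by plugging in $E=\Omega$ and dividing by $\IP(\tau^{\origin,\xi}=\infty)$. One small correction: $\sigma^\xi(x)$ is \emph{not} a stopping time (the paper states this explicitly, since $K^\xi(x)$ depends on the infinite future through survival of the restarted processes), but this does not harm your argument because, exactly as in the paper, the Markov property is actually invoked at the genuine stopping times $u^\xi_k(x)$ rather than at $\sigma^\xi(x)$ itself.
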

\begin{proof}
	We first check that for any $k\in\IN$ the equation
	\begin{align*}
	   \IP(A\cap (\hat{\theta}^\xi_x)^{-1}(E)\cap\{K^\xi(x)=k\}|\tau^{\origin,\xi}=\infty)=\IP(A\cap\{K^\xi(x)=k\}|\tau^{\origin,\xi}=\infty)\IP(E|\tau^{\origin,\szero}=\infty)
	\end{align*}
    holds. Let $A'\subseteq \IR$ be a Borel set such that $A=\{\omega:\sigma^\xi(x)(\omega)\in A'\}$. As in \cite{garet2012shape} we use the fact that the essential hitting time (which is not a stopping time itself) is constructed via stopping times, to derive
	\begin{align}
		\IP(&A\cap(\hat{\theta}^\xi_x)^{-1}(E)\cap\{K^\xi(x)=k\}\cap\{\tau^{\origin,\xi}=\infty\})\notag\\
		&=\IP(A,(\hat{\theta}^\xi_x)^{-1}(E),u_k^\xi(x)<\infty,v^\xi_k(x)=\infty)\label{eq:1}\\
		&=\IP(\sigma^\xi(x)\in A',T_x\circ \theta_{\sigma^\xi(x)}\in E,u^\xi_k(x)<\infty,v^\xi_k(x)=\infty)\label{eq:2}\\
        &=\IP(u^\xi_k(x)\in A',T_x\circ \theta_{u^\xi_k(x)}\in E,u^\xi_k(x)<\infty,\tau^{x,\szero}\circ \theta_{u^\xi_k(x)}=\infty)\label{eq:3}\\
        &=\IE\big[\1_{\{u^\xi_k(x)\in A',u^\xi_k(x)<\infty\}}\IP(T_x\circ \theta_{u^\xi_k(x)}\in E,\tau^{x,\szero}\circ \theta_{u^\xi_k(x)}=\infty|\cF_{u^\xi_k(x)})\big]\notag \\
        &=\IP(u^\xi_k(x)\in A',u^\xi_k(x)<\infty)\IP(E,\tau^{\origin,\szero}=\infty),\label{eq:4}
	\end{align}
 where \eqref{eq:1} follows by Lemma \ref{Lemma:equivalence_K_2}, \eqref{eq:2} by definition of $A'$ and $\hat{\theta}_x^\xi$, \eqref{eq:3} by definition of $v^\xi_k(x)$ and the fact that $\sigma^\xi(x)=u^\xi_k(x)$ if $u^\xi_k(x)<\infty$ and $v^\xi_k(x)=\infty$, and \eqref{eq:4} by the strong Markov-property and the temporal and spatial invariance of the Poisson construction. Dividing by $\IP(\tau^{\origin,\xi}=\infty)>0$ yields
\begin{align*}
	\IP(A\cap &(\hat{\theta}^\xi_x)^{-1}(E)\cap\{K^\xi(x)=k\}|\tau^{\origin,\xi}=\infty)\\
    &=\IP(u_k^\xi(x)\in A',u_k^\xi(x)<\infty)\frac{\IP(\tau^{\origin,\szero}=\infty)}{\IP(\tau^{\origin,\xi}=\infty)}\IP(E|\tau^{\origin,\szero}=\infty).
\end{align*}
Plugging in $E=\Omega$ one easily verifies
\begin{align*}
    \IP(u_k^\xi(x)\in A',u_k^\xi(x)<\infty)\frac{\IP(\tau^{\origin,\szero}=\infty)}{\IP(\tau^{\origin,\xi}=\infty)}=\IP(A\cap\{K^\xi(x)=k\}|\tau^{\origin,\xi}=\infty)
\end{align*} which shows the desired equation and finishes the proof of \eqref{eq:invariance}.
\end{proof}

\begin{corollary}\label{Corollary:invariance_2}
	Let $x,y\in V$ with $x\neq 0$, then:
	\begin{enumerate}	
	\item We have $\IP(\cdot|\tau^{\origin,\szero}=\infty)=\IP(\cdot\circ \hat{\theta}^\xi_y|\tau^{\origin,\xi}=\infty)$.
    \item $\sigma^\xi(y)\circ\hat{\theta}^\xi_x$ and $\sigma^\xi(x)$ are independent. Moreover, $\sigma^\xi(y)\circ\hat{\theta}^\xi_x$ and $\sigma^\xi(y)\circ\hat{\theta}^\xi_\origin$ are identically distributed under $\IP(\cdot|\tau^{\origin,\xi}=\infty)$.
    \item The variables $\sigma^{\szero}(x)\circ\hat{\theta}^\xi_\origin$ and $\sigma^{\szero}(y)\circ\hat{\theta}^{\szero}_x\circ\hat{\theta}^\xi_\origin$ are independent and $\sigma^{\szero}(y)\circ\hat{\theta}^\xi_\origin$ is identically distributed as $\sigma^{\szero}(y)\circ\hat{\theta}^{\szero}_x\circ\hat{\theta}^\xi_\origin$ with respect to the law $\PC$.
    \item The random variables $(\sigma^\xi(x)\circ(\hat{\theta^\xi_x})^j)_{j\geq 0}$ are independent under $\IP(\cdot|\tau^{\origin,\xi}=\infty)$.
	\end{enumerate}
\end{corollary}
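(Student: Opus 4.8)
All four statements are consequences of the invariance identity \eqref{eq:invariance} of Lemma~\ref{Lemma:invariance_2}, which by the standard approximation of bounded measurable functions through simple functions upgrades to the following: for every bounded $\sigma(\sigma^\xi(x))$-measurable $F$ and every bounded $\cF$-measurable $H$,
\begin{equation*}
	\IE\big[F\cdot(H\circ\hat\theta^\xi_x)\,\big|\,\tau^{\origin,\xi}=\infty\big]=\IE\big[F\,\big|\,\tau^{\origin,\xi}=\infty\big]\cdot\IE\big[H\,\big|\,\tau^{\origin,\szero}=\infty\big].
\end{equation*}
Part~(1) is the case $A=\Omega$ of \eqref{eq:invariance} with $x$ replaced by $y$: since $\Omega$ lies in the $\sigma$-algebra generated by $\sigma^\xi(y)$, we obtain $\IP((\hat\theta^\xi_y)^{-1}(E)\mid\tau^{\origin,\xi}=\infty)=\IP(E\mid\tau^{\origin,\szero}=\infty)$ for every $E\in\cF$, which is exactly the claimed identity of laws once one identifies $(\hat\theta^\xi_y)^{-1}(E)$ with $\{\,\cdot\,\circ\hat\theta^\xi_y\in E\}$.

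For part~(2) I would apply \eqref{eq:invariance} with $A=\{\sigma^\xi(x)\in A'\}\in\sigma(\sigma^\xi(x))$ and $E=\{\sigma^\xi(y)\in B'\}$, noting that $(\hat\theta^\xi_x)^{-1}(E)=\{\sigma^\xi(y)\circ\hat\theta^\xi_x\in B'\}$. The right-hand side of \eqref{eq:invariance} then equals $\IP(\sigma^\xi(x)\in A'\mid\tau^{\origin,\xi}=\infty)\cdot\IP(\sigma^\xi(y)\in B'\mid\tau^{\origin,\szero}=\infty)$, and by part~(1) the second factor equals $\IP(\sigma^\xi(y)\circ\hat\theta^\xi_x\in B'\mid\tau^{\origin,\xi}=\infty)$. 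Reading this off gives the independence of $\sigma^\xi(x)$ and $\sigma^\xi(y)\circ\hat\theta^\xi_x$; moreover the second factor does not involve $x$, so $\sigma^\xi(y)\circ\hat\theta^\xi_x$ and $\sigma^\xi(y)\circ\hat\theta^\xi_{\origin}$ are equidistributed under $\IP(\cdot\mid\tau^{\origin,\xi}=\infty)$.

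Part~(3) I would deduce by conjugating through $\hat\theta^\xi_{\origin}$: by part~(1) with $y=\origin$ one has $\PC(\,\cdot\,\circ\hat\theta^\xi_{\origin})=\IP(\cdot\mid\tau^{\origin,\szero}=\infty)$, so the joint law under $\PC$ of $(\sigma^{\szero}(x)\circ\hat\theta^\xi_{\origin},\ \sigma^{\szero}(y)\circ\hat\theta^{\szero}_x\circ\hat\theta^\xi_{\origin})$ coincides with the joint law under $\IP(\cdot\mid\tau^{\origin,\szero}=\infty)$ of $(\sigma^{\szero}(x),\ \sigma^{\szero}(y)\circ\hat\theta^{\szero}_x)$; applying part~(2) with background configuration $\szero$ factorises the latter, and translating the two factors back through part~(1) yields the stated independence. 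The equality in law in part~(3) follows along the same line, using in addition that by part~(1) (now with $\xi=\szero$ and $y=\origin$) the variables $\sigma^{\szero}(y)\circ\hat\theta^{\szero}_{\origin}$ and $\sigma^{\szero}(y)$ are equidistributed under $\PO$, which removes the residual dependence on $x$.

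For part~(4) I would argue by induction on $n$, the statement being that $\sigma^\xi(x)\circ(\hat\theta^\xi_x)^0,\dots,\sigma^\xi(x)\circ(\hat\theta^\xi_x)^n$ are independent. Setting $Y_j:=\sigma^\xi(x)\circ(\hat\theta^\xi_x)^j$ and using $Y_{j+1}=Y_j\circ\hat\theta^\xi_x$, any function $g(Y_1,\dots,Y_n)$ equals $[g(Y_0,\dots,Y_{n-1})]\circ\hat\theta^\xi_x$, so testing the joint law against $f_0(Y_0)\cdot g(Y_1,\dots,Y_n)$ and invoking the upgraded form of \eqref{eq:invariance} with $F=f_0(\sigma^\xi(x))$ reduces the joint law of $(Y_1,\dots,Y_n)$ under $\IP(\cdot\mid\tau^{\origin,\xi}=\infty)$ to that of $(Y_0,\dots,Y_{n-1})$ under $\IP(\cdot\mid\tau^{\origin,\szero}=\infty)$, to which one applies the inductive hypothesis with reference configuration $\szero$. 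The main obstacle, and the only step beyond formal bookkeeping, is precisely this change of conditioning event: each application of a hatted shift replaces the conditioning $\tau^{\origin,\xi}=\infty$ by $\tau^{\origin,\szero}=\infty$, so in parts~(3) and~(4) all subsequent invocations of parts~(1)–(2) must be carried out with the worst state $\szero$ as reference configuration, and one must check, exactly as in Garet and Marchand \cite{garet2012shape}, that $\hat\theta^{\szero}_{\origin}$ acts trivially in law under $\PO$, which is what allows the leading coordinate to be re-synchronised. All remaining manipulations are routine.
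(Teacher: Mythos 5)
Your treatment of parts (1)–(3) is correct and coincides essentially verbatim with the paper's own proof: part (1) is Lemma~\ref{Lemma:invariance_2} with $A=\Omega$, part (2) is the lemma applied to $A=\{\sigma^\xi(x)\in A'\}$, $E=\{\sigma^\xi(y)\in B'\}$ combined with part (1), and part (3) is obtained by transporting the pair through $\hat{\theta}^\xi_{\origin}$ and invoking part (2) with reference configuration $\szero$, exactly as in the paper.

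Part (4), for which the paper itself only cites the analogue in Garet--Marchand, contains a genuine gap for $\xi\neq\szero$. Writing $Y_j:=\sigma^\xi(x)\circ(\hat{\theta}^\xi_x)^j$, one application of Lemma~\ref{Lemma:invariance_2} gives, as you say,
\begin{equation*}
\IP\big(Y_0\in A_0,\,(Y_1,\dots,Y_n)\in B\,\big|\,\tau^{\origin,\xi}=\infty\big)
=\IP\big(Y_0\in A_0\,\big|\,\tau^{\origin,\xi}=\infty\big)\,
\IP\big((Y_0,\dots,Y_{n-1})\in B\,\big|\,\tau^{\origin,\szero}=\infty\big),
\end{equation*}
but the second factor is the law under $\PO$ of the vector still built from $\sigma^\xi(x)$ and the shift $\hat{\theta}^\xi_x$: only the conditioning has changed to $\szero$, the superscript of the random variables has not. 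Your ``inductive hypothesis with reference configuration $\szero$'' concerns the different vector $(\sigma^{\szero}(x)\circ(\hat{\theta}^{\szero}_x)^j)_{j}$ under $\PO$, so it does not apply to this factor. Closing the induction would require a mismatched version of Lemma~\ref{Lemma:invariance_2}, with $A\in\sigma(\sigma^\xi(x))$ and shift $\hat{\theta}^\xi_x$ but conditioning on $\{\tau^{\origin,\szero}=\infty\}$; the lemma's proof does not survive this change, because Lemma~\ref{Lemma:equivalence_K_2} ties the superscript of $K^\xi(x),u^\xi_k(x),v^\xi_k(x)$ to the event $\{\tau^{\origin,\xi}=\infty\}$, and $\{\tau^{\origin,\szero}=\infty\}$ is neither $\cF_{u^\xi_k(x)}$-measurable nor determined by the space--time picture after $(x,u^\xi_k(x))$, so the strong Markov factorization no longer splits. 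Your iteration does prove part (4) in the matched case $\xi=\szero$, and it also yields independence of the mixed sequence $\sigma^\xi(x),\ \sigma^{\szero}(x)\circ\hat{\theta}^\xi_x,\ \sigma^{\szero}(x)\circ\hat{\theta}^{\szero}_x\circ\hat{\theta}^\xi_x,\dots$ under $\IP(\cdot\,|\,\tau^{\origin,\xi}=\infty)$, which is the matched analogue of the Garet--Marchand argument; but as written it does not establish independence of $(\sigma^\xi(x)\circ(\hat{\theta}^\xi_x)^j)_{j\geq 0}$ for a general initial configuration $\xi$.
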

\begin{proof}
    The first point follows from Lemma~\ref{Lemma:invariance_2} by taking $A=\Omega$, which yields
    \begin{equation}\label{eq:ZeroShift}
        \IP((\hat{\theta}^\xi_{y})^{-1}(E)|\tau^{\origin,\xi}=\infty)=\IP(E|\tau^{\origin,\szero}=\infty)\quad\text{for all }E\in \cF.
    \end{equation}For the second, take two Borel sets $A'$ and $B'$ and define $A:=\{\sigma^\xi(x)\in A'\}$ as well as $B:=\{\sigma^\xi(y)\in B'\}$. Now we apply Lemma \ref{Lemma:invariance_2} to the sets $A$ and $B$ to derive
    \begin{align}\label{eq:IndepedenceAndIdentical}
        \IP(\sigma^\xi(x)\in A', \sigma^\xi(y)\circ\hat{\theta}_x^\xi\in B'|\tau^{\origin,\xi}=\infty)&=\IP(A\cap (\hat{\theta}^\xi_x)^{-1}(B)|\tau^{\origin,\xi}=\infty)\notag\\
        &=\IP(A|\tau^{\origin,\xi}=\infty)\IP(B|\tau^{\origin,\szero}=\infty)\\
        &=\IP(\sigma^\xi(x)\in A'|\tau^{\origin,\xi}=\infty) \IP(\sigma^\xi(y) \in B'|\tau^{\origin,\szero}=\infty).\notag
    \end{align}
    By the first property it follows that
    \begin{align*}
        \IP(\sigma^\xi(y) \in B'|\tau^{\origin,\szero}=\infty)=\IP( \sigma^\xi(y)\circ\hat{\theta}_x^\xi\in B'|\tau^{\origin,\xi}=\infty)
    \end{align*}
    which concludes the proof of independence. Moreover, this implies together with \eqref{eq:ZeroShift} that $\sigma^\xi(y)\circ\hat{\theta}^\xi_x$ and $\sigma^\xi(y)\circ\hat{\theta}^\xi_\origin$ are identically distributed.

    For the third point we see that by \eqref{eq:ZeroShift} it follows immediately that independence of $\sigma^{\szero}(x)\circ\hat{\theta}^\xi_\origin$ and $\sigma^{\szero}(y)\circ\hat{\theta}^{\szero}_x\circ\hat{\theta}^\xi_\origin$ under $\PC$ translates to independence of $\sigma^{\szero}(x)$ and $\sigma^{\szero}(y)\circ\hat{\theta}^{\szero}_x$ under $\PO$.
    
    In order to see that $\sigma^{\szero}(y)\circ\hat{\theta}^\xi_\origin$ and $\sigma^{\szero}(y)\circ\hat{\theta}^{\szero}_x\circ\hat{\theta}^\xi_\origin$ are identically distributed  with respect to the law $\PC$ we again note that this corresponds to $\sigma^{\szero}(y)$ and $\sigma^{\szero}(y)\circ\hat{\theta}^{\szero}_{x}$ being identically distributed with respect to $\PO$, which can be seen by \eqref{eq:ZeroShift}. But since $\sigma^{\szero}(y)\circ\hat{\theta}^{\szero}_{\origin}=\sigma^{\szero}(y)$ this follows by the second point.
    The last point follows analogous to \cite[Corollary 9]{garet2012shape}.
\end{proof}
The first property of Corollary~\ref{Corollary:invariance_2}  shows that $\PC$ is only invariant under the shift $\hat{\theta}^\xi_x$ if $\xi=\szero$. 
At the same time it tells us the correct time we need to consider in case $\xi\neq \szero$. This is the shift of the essential hitting times $\sigma^{\szero}(x)$ by $\hat{\theta}^\xi_0$, which we denote by
\begin{equation*}
    s^\xi(x):=\sigma^{\szero}(x)\circ \hat{\theta}^\xi_{\origin}.
\end{equation*}
Therefore, we apply the asymptotic shape theorem to this shifted essential hitting time $s^\xi(x)$ (note that $s^{\szero}=\sigma^{\szero}$). For fixed $\xi$ let $s(x):=s^\xi(x)$, $v(y,x):= \sigma^{\szero}(y)\circ \hat{\theta}^{\szero}_x\circ \hat{\theta}_0^\xi$ and $r(x,y)=r^\xi(x,y):=s(x+y)-(s(x)+v(x,y))$. Clearly, the inequality of assumption~\eqref{prop:1} is fulfilled by definition of the variables. The fact that $v(y,x)$ and $s(y)$ have the same distribution follows by Corollary~\ref{Corollary:invariance_2} as well as the independence of $s(x)$ and $v(y,x)$. Hence assumption \eqref{prop:1} is fulfilled. Out of the remaining assumptions the second moment bound \eqref{prop:2} for the error term
\begin{equation*}
    r^\xi(x,y)=s^\xi(x+y)-[s^\xi(x)+\sigma^{\szero}(y)\circ \hat{\theta}^{\szero}_x\circ \hat{\theta}_0^\xi]
\end{equation*}  (with respect to subadditivity of $s^\xi$) requires the most work. It is comparatively less difficult to show the remaining conditions such as finite second moment of $s^\xi(x)$ and \eqref{prop:3} - \eqref{prop:4}.
\begin{remark}\label{remark:initial_cond}
    By definition and Corollary~\ref{Corollary:invariance_2} the shifted essential hitting time $s^\xi$ and the error term $r^\xi$ have the same distributions under $\PC$ as $\sigma^{\szero}$ and $r^{\szero}$ under $\PO$.
\end{remark}

Note that for our final result we still need to control the difference between the shifted essential hitting time $s^\xi(x)$ and the first hitting time $t^\xi(x)$, which we only achieve in case $\xi=\szero$. Both problems -- controlling the difference $s^{\szero}-t^{\szero}$ and the error term $r^{\szero}$ -- are of similar nature (see Figure \ref{figure:2_problems}) and can be solved by controlling the differences of $v_i^{\szero}(x)-u_i^{\szero}(x)$ and those of $u_{i+1}^{\szero}(x)-v_i^{\szero}(x)$ separately for all $1\leq i< K^{\szero}(x)$.
    \begin{figure}[t]
    \centering
			\scalebox{1}{
				\begin{tikzpicture}[darkstyle/.style={circle,inner sep=0pt}]
				\draw[->,thick](0,0)--(0,6.3)node[left,xshift=-0.2cm, yshift=-0.2cm] {time};
				
				\draw[-,thick](0,0)--(3,0)node[] {};
				\node () at (0,-0.3){$0$};
				\draw[fill=black](0,0) circle(1pt);
				\draw[fill=black](2,0) circle(1pt);
				\node () at (2,-0.3){$x+y$};
				
				\draw [black] plot [smooth] coordinates { (0,0) (0.5,0.3) (1,0.5) (1.5,1.3) (2,1.5)};
				\draw[fill=black](0,1.5) circle(1pt)node[left]{$u_1^{\szero}(x+y)$};
				\draw[dashed](0,1.5) --(2,1.5);
				
				\draw [black,thick] plot [smooth] coordinates {(0,0) (1,1.5) (2,2.5)};
				\draw[fill=black](0,2.5) circle(1pt)node[left]{$\sigma^{\szero}(y)+\sigma^{\szero}(y)\circ \hat{\theta}^{\szero}_x$};
				\draw [red] plot [smooth] coordinates { (2,2.5) (2.5,2.7) (3,3.4)}node[right]{$\infty$};
				\draw[dashed](0,2.5) --(2,2.5);
				
				\draw [blue,thick] plot [smooth] coordinates { (2,1.5) (1.75,1.75) (1.6,1.8) };
				\draw [blue,thick] plot [smooth] coordinates { (2,1.5) (2.5,2.1) (3,3)};
				
				\draw[fill=blue](0,3) circle(1pt)node[left]{$v_1^{\szero}(x+y)$};
				\draw[dashed](0,3) --(3,3);
				
				\draw [black] plot [smooth] coordinates { (0.5,0.3) (1,3) (2,3.8)};
				\draw[fill=black](0,3.8) circle(1pt)node[left]{$u_2^{\szero}(x+y)$};
				\draw[dashed](0,3.8) --(2,3.8);
				
				\draw [blue,thick] plot [smooth] coordinates { (2,3.8) (2.5,4.4) (2,4.6)};
				\draw[fill=blue](0,4.6) circle(1pt)node[left]{$v_2^{\szero}(x+y)$};
				\draw[dashed](0,4.6) --(2,4.6);
				
				\draw [black] plot [smooth] coordinates { (0,0) (1,4) (2,5.5)};
				\draw[dashed](0,5.5) --(2,5.5);
				
				\draw [black] plot [smooth] coordinates { (0,0) (1,4) (2,5.5)};
				\draw [red] plot [smooth] coordinates { (2,5.5) (2.5,6)}node[right]{$\infty$};
				\draw[fill=black](0,5.5) circle(1pt)node[left]{$\sigma^{\szero}(x+y)$};
				\draw[dashed](0,5.5) --(2,5.5);
				\draw[<->,thick,purple] (2,2.5) -- (2,5.5)node[right,yshift=-0.5cm]{$r^{\szero}(x,y)$};

				\begin{scope}[shift={(6.5,0)}]
				\draw[->,thick](0,0)--(0,6.3)node[left,xshift=-0.2cm, yshift=-0.2cm] {time};
				
				\draw[-,thick](0,0)--(3,0)node[] {};
				\node () at (0,-0.3){$0$};
				\draw[fill=black](0,0) circle(1pt);
				\draw[fill=black](2,0) circle(1pt);
				\node () at (2,-0.3){$x+y$};
				
				\draw [black] plot [smooth] coordinates { (0,0) (0.5,0.3) (1,0.5) (1.5,1.3) (2,1.5)};
				\draw[fill=black](0,1.5) circle(1pt)node[left]{$u_1^{\szero}(x+y)$};
				\draw[dashed](0,1.5) --(2,1.5);
				
				\draw [blue,thick] plot [smooth] coordinates { (2,1.5) (1.75,1.75) (1.6,1.8) };
				\draw [blue,thick] plot [smooth] coordinates { (2,1.5) (2.5,2.1) (3,3)};
				
				\draw[fill=blue](0,3) circle(1pt)node[left]{$v_1^{\szero}(x+y)$};
				\draw[dashed](0,3) --(3,3);
				
				\draw [black] plot [smooth] coordinates { (0.5,0.3) (1,3) (2,3.8)};
				\draw[fill=black](0,3.8) circle(1pt)node[left]{$u_2^{\szero}(x+y)$};
				\draw[dashed](0,3.8) --(2,3.8);
				
				\draw [blue,thick] plot [smooth] coordinates { (2,3.8) (2.5,4.4) (2,4.6)};
				\draw[fill=blue](0,4.6) circle(1pt)node[left]{$v_2^{\szero}(x+y)$};
				\draw[dashed](0,4.6) --(2,4.6);
				
				\draw [black] plot [smooth] coordinates { (0,0) (1,4) (2,5.5)};
				\draw[dashed](0,5.5) --(2,5.5);
				
				\draw [black] plot [smooth] coordinates { (0,0) (1,4) (2,5.5)};
				\draw [red] plot [smooth] coordinates { (2,5.5) (2.5,6)}node[right]{$\infty$};
				\draw[fill=black](0,5.5) circle(1pt)node[left]{$\sigma^{\szero}(x+y)$};
				\draw[dashed](0,5.5) --(2,5.5);
				
				\draw[<->,thick,purple] (2,1.5) -- (2,5.5)node[right,yshift=-0.5cm]{$\sigma^{\szero}(x+y)-t^{\szero}(x+y)$};
				\end{scope}

				\end{tikzpicture}}
                \caption{A comparison between both quantities which we want to control. On the left the (positive) error term $r^{\szero}(x,y)$ for the subadditivity of $\sigma^{\szero}(x)$, on the right the difference between the essential hitting time $\sigma^{\szero}$ and the first hitting time $t^{\szero}=u_1^{\szero}$.}
                \label{figure:2_problems}
	\end{figure}
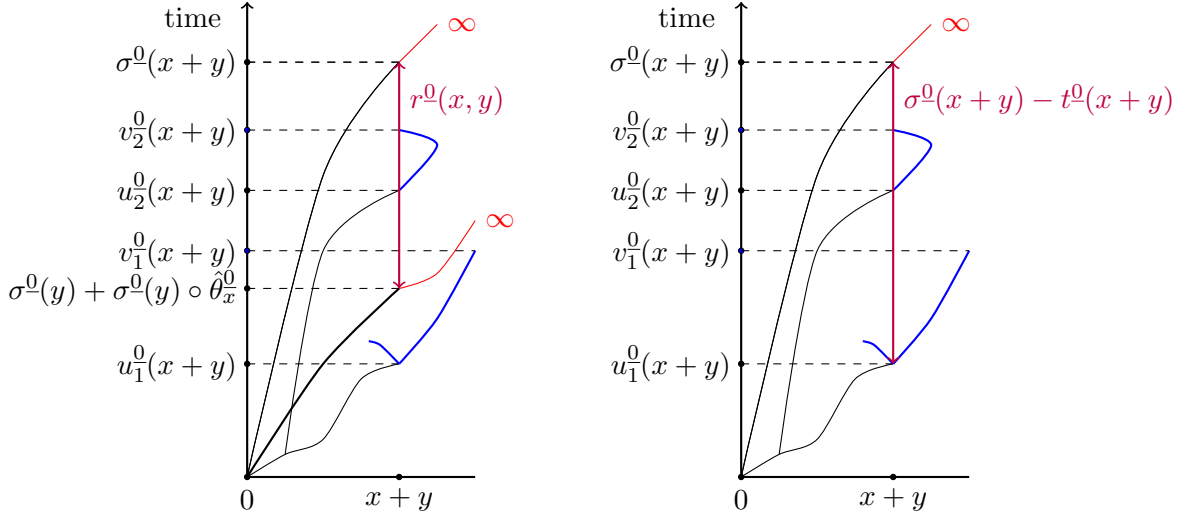

\subsection{Controlling $v_i-u_i$ and $u_{i+1}-v_i$}
From now, we omit in our notation the initial configuration $\szero$ of the background process 
and only indicate if we start with an arbitrary or different initial configuration $\xi$.  In a first step we control the difference  $v_i(x)-u_i(x)$ uniformly for all $i<K(x)$.
\begin{lemma}\label{Lemma:exp_control_diff_vu}
    There exists constants $A$ and $B$ such that for all $x\in V$ and all $t>0$
    \begin{equation*}
        \PO(\exists i< K(x)~\text{such that}~v_i(x)-u_i(x)>t)\leq A\exp{(-B t)}.
    \end{equation*}
\end{lemma}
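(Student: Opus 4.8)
The plan is to bound, uniformly over $i$, the probability that the lifetime $v_i(x) - u_i(x) = \tau^{x,\szero} \circ \theta_{u_i(x)}$ of the $\szero$-restarted process launched at time $u_i(x)$ is large \emph{and} finite (it being finite is exactly what $i < K(x)$ forces), and then sum the geometric-type contributions over $i$. First I would decompose on the value of $K(x)$: writing
\begin{align*}
\PO\big(\exists\, i< K(x): v_i(x)-u_i(x)>t\big)
&\leq \sum_{k\geq 1}\PO\big(K(x)=k,\ \exists\, i<k: v_i(x)-u_i(x)>t\big)\\
&\leq \sum_{k\geq 1}\sum_{i=1}^{k-1}\PO\big(K(x)\geq k,\ u_i(x)<\infty,\ v_i(x)-u_i(x)>t\big).
\end{align*}
On the event $\{i<K(x)\}$ we have $u_i(x)<\infty$ and $v_i(x)<\infty$, and $v_i(x)-u_i(x)=\tau^{x,\szero}\circ\theta_{u_i(x)}$. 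Since $u_i(x)$ is a stopping time for the Poisson construction, the strong Markov property at $u_i(x)$, together with the spatial and temporal invariance of the graphical construction, lets me replace the restarted lifetime by an independent copy of $\tau^{\origin,\szero}$: conditionally on $\cF_{u_i(x)}$ and on $\{u_i(x)<\infty\}$, the event $\{v_i(x)-u_i(x)>t\}$ has probability $\IP(t<\tau^{\origin,\szero}<\infty)+\IP(\tau^{\origin,\szero}=\infty)$, and on the relevant part of the event $v_i$ is finite so only the first term survives; by \eqref{ConjectureSC} this is at most $A e^{-Bt}$.

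The second ingredient is the sub-geometric decay of $K(x)$ from Lemma~\ref{lemma:K_as_finite}: $\IP(K^{\szero}(x)>n)\leq (1-\rho)^n$ for some $\rho>0$, and dividing by $\IP(\tau^{\origin,\szero}=\infty)>0$ gives $\PO(K(x)>n)\leq c_0(1-\rho)^n$. Combining the two bounds, the $(k,i)$-term above is controlled by $\PO(K(x)\geq i+1)\cdot A e^{-Bt}$ once I have argued — via the strong Markov property, conditioning on $\cF_{u_i(x)}$, that $\{v_i(x)-u_i(x)>t,\ v_i(x)<\infty\}$ is asymptotically independent of $\{K(x)\geq i+1\}$ up to the factor $Ae^{-Bt}$. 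Summing the double series,
\begin{align*}
\PO\big(\exists\, i< K(x): v_i(x)-u_i(x)>t\big)
&\leq A e^{-Bt}\sum_{i\geq 1}\PO(K(x)\geq i+1)\\
&\leq A e^{-Bt}\sum_{i\geq 1} c_0(1-\rho)^i
= A' e^{-Bt},
\end{align*}
with $A'=A c_0(1-\rho)/\rho$, which is the claim after renaming constants. (An entirely parallel argument would be used for $u_{i+1}(x)-v_i(x)$ in the companion lemma, using the at-least-linear bound \eqref{ConjectureALL} in place of \eqref{ConjectureSC}.)

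The main obstacle is the bookkeeping of conditional independence at step two: one has to be careful that the event $\{K(x)\geq i+1\}$ — which concerns what happens \emph{after} $v_i(x)$, namely that the site $x$ is reinfected — does not spoil the factorization of $\{v_i(x)-u_i(x)>t\}$. The clean way around this is to condition at the stopping time $u_i(x)$ \emph{only}, bound $\PO(v_i(x)-u_i(x)>t,\ v_i(x)<\infty\mid \cF_{u_i(x)})\leq Ae^{-Bt}$ using \eqref{ConjectureSC} and the Markov property, and then simply use $\{i<K(x)\}\subseteq\{u_i(x)<\infty\}\in\cF_{u_i(x)}$ so that summing $\PO(u_i(x)<\infty)=\PO(K(x)\geq i)$ over $i$ (which is again summable by Lemma~\ref{lemma:K_as_finite}) closes the estimate without ever needing to analyze the post-$v_i$ behavior. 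This mirrors the proof of Lemma~\ref{Lemma:equivalence_K_2}, where the restart with worst-state background $\szero$ at the times $u_k$ is precisely what makes the strong Markov step go through.
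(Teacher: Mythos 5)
Your proposal is correct and takes essentially the same route as the paper, which proves this lemma exactly as in the analogous Lemma~12 of Garet--Marchand: apply the strong Markov property at the stopping times $u_i(x)$ (where the restart uses the deterministic worst state $\szero$, so the restarted lifetime is an independent copy of $\tau^{\origin,\szero}$), bound the finite-lifetime event by \eqref{ConjectureSC}, and sum over $i$ using $\{i<K(x)\}\subseteq\{u_i(x)<\infty\}=\{K(x)\geq i\}$ and the geometric tail from Lemma~\ref{lemma:K_as_finite}. The only small point to tidy up is that the conditional-probability step should be carried out under $\IP$ (dividing by $\IP(\tau^{\origin,\szero}=\infty)>0$ at the end), since the survival event defining $\PO$ is not $\cF_{u_i(x)}$-measurable.
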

\begin{proof}Can be proven analogously to \cite[Lemma 12]{garet2012shape} using the strong Markov property.
\end{proof}

Next we want to bound the reinfection times $u_{i+1}(x)-v_i(x)$ in a similar fashion as in \cite{garet2012shape}. Thus we define 
\begin{equation*}
    \gamma:=3M\big(1+c^{-1}\big)>3
\end{equation*} where $M$ and $c$ are the constants chosen in \eqref{ConjectureAML} and \eqref{ConjectureALL}, respectively, where we assume w.l.o.g.\ that $M>1$. 

We denote by $\Delta_x^{}[0,t)=(\{\mathbf{rec}_{b,x}: b\in [N]\}\times[0,t))\cap\Delta$ all potential recovery  events that arrive at site $x$ within the time interval $[0,t)$ and analogously $\Delta^{}_{(x,y)}[0,t):=(\{\mathbf{inf}_{a,(x,y)}: a\in [N]^3\}\times[0,t))\cap\Delta$ is the random set of all infection events from $x$ to $y$ up to time $t$. 
Moreover, we set
$\Delta_x^{}:=\Delta_x^{}[0,\infty)$ and $\Delta^{}_{(x,y)}:=\Delta^{}_{(x,y)}[0,\infty)$. 
We  consider the \textit{effective} recovery events arriving at $x$ if we start with a given initial background configuration $\xi$, say $\Delta_x^\xi[0,t)$, with 
\begin{equation*}
    \Delta^\xi_x[0,t)=\{(\mathbf{rec}_{b,x},s)\in\Delta_x[0,t): \bfxi^{\xi}_s(x)\leq G(b)\}.
\end{equation*}
For $x,y\in V$ and $t>0$ we define the event of bad growth for $x$ from $(y,0)$ at scale $t$ as
\begin{align*}
    E^{y,\xi}(x,t):=&\{\Delta_y^\xi[0,t/2)=\emptyset\}\cup\{\bfH_t^{y,\xi}\not\subset \B_{Mt}(y)\}\cup\{t/2<\tau^{y,\szero}<\infty\}\\
    &\cup\{t/2<\tau^{y,\xi}<\infty\}\cup\{\tau^{y,\xi}=\infty,\inf\{s\geq 2t:x\in \bfeta_s^{y,\xi}\}>\gamma t\}.
\end{align*}
\begin{remark}
    The definition may look a little bit odd at first glance, in particular  one may ask if we really need both events $\{t/2<\tau^{y,\szero}<\infty\}$ and $\{t/2<\tau^{y,\xi}<\infty\}$. Indeed we require both events, which becomes apparent in the proof of the subsequent Lemma~\ref{Lemma:bound_u}. 
\end{remark}
We count the number of such events at every recovery or infection symbol (and at time $0$ and $L$) in a space-time box of radius $M t+2$ and height $L$ around $x$:
\begin{equation*}
    N_L^\xi(x,t):=\sum_{y\in \B_{Mt+2}(x)}\int_0^L\1_{E^{y,\bfxi^{\xi}_s}(x,t)}\circ\theta_s~d\bigg(\delta_0+\delta_L+\Delta_y+\sum_{e:y\in e}\Delta_e\bigg)(s).
\end{equation*}
\begin{observation}\label{observation:distance_recovery}
    If $N_{L}^{\bfxi_s}(x,t)\circ \theta_s=0$ then  for every $y\in \B_{Mt+2}(x)$ from time $s$ up to time $s+L+t/2$ the effective recovery symbols arriving for $y$ have distances less than $t/2$.
\end{observation}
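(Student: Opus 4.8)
The plan is to unwind the definition of $N_L^{\bfxi_s}(x,t)\circ\theta_s$ and simply read off what its vanishing imposes. After composing the two time shifts (note the clash between the fixed time $s$ in the observation and the integration variable $s$ in the definition of $N_L$, which I would rename), $N_L^{\bfxi_s}(x,t)\circ\theta_s$ is a finite sum over $y\in\B_{Mt+2}(x)$ of integrals of the nonnegative indicator $\1_{E^{y,\bfxi_u}(x,t)}\circ\theta_u$ against a measure whose atoms, after the shift, lie exactly at the times $u=s$, $u=s+L$, at every recovery symbol arriving at $y$ during $[s,s+L]$, and at every infection symbol on an edge incident to $y$ during $[s,s+L]$. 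Here one uses the flow property of the (autonomously constructed) background to identify the process restarted from $\bfxi_u$ at time $u$ with the original background on $[u,\infty)$. Since every summand is nonnegative, $N_L^{\bfxi_s}(x,t)\circ\theta_s=0$ forces, for each such $y$ and each such ``marked'' time $u$, that the event $E^{y,\bfxi_u}(x,t)$ fails on the shifted Poisson configuration $\theta_u(\Delta)$.

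The next step is to use only the first of the five events in the union defining $E^{y,\xi}(x,t)$. Its failure on $\theta_u(\Delta)$ means $\Delta_y^{\bfxi_u}[0,t/2)\circ\theta_u\neq\emptyset$: there is an effective recovery symbol at $y$ inside the window $[u,u+t/2)$. Because a restart at a symbol time does not re-apply the symbol located at the restart instant, when $u$ is itself a recovery time at $y$ this effective recovery lies in $(u,u+t/2)$.

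It remains to turn this into the claimed spacing bound. I would take two consecutive effective recovery times $a<b$ at $y$ inside $[s,s+L+t/2]$, with the convention $a:=s$ for the first one. If $a\le s+L$, then $a$ is a marked time --- it is either $s$ itself or a recovery symbol at $y$ in $[s,s+L]$ --- so by the previous step there is an effective recovery at $y$ in $(a,a+t/2)$; as $b$ is the next one, $b<a+t/2$. If instead $a>s+L$, then $b\le s+L+t/2<a+t/2$ for free. In either case $b-a<t/2$, and the case $a=s$ also shows that the first effective recovery at $y$ lies in $[s,s+t/2)$, which is nonempty since $L\ge 0$. Hence, on the event $\{N_L^{\bfxi_s}(x,t)\circ\theta_s=0\}$, for every $y\in\B_{Mt+2}(x)$ the effective recoveries at $y$ in $[s,s+L+t/2]$ form a nonempty point process all of whose consecutive gaps, as well as the initial gap measured from $s$, are strictly smaller than $t/2$, which is the assertion.

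The only place real care is needed is the bookkeeping just sketched: composing $\theta_s$ with $\theta_u$ correctly, identifying the atoms of the integrating measure, and using the ``open'' restart convention so that a recovery symbol at the restart time produces a genuinely later effective recovery rather than a tautology. Everything else is immediate from the nonnegativity of the integrand.
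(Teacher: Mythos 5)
Your argument is correct, and it is exactly the (omitted) argument the paper has in mind: the paper states this as an Observation without proof, and the only content is the unwinding you perform --- vanishing of the nonnegative integrand at every atom of $\delta_0+\delta_L+\Delta_y+\sum_{e:y\in e}\Delta_e$ in $[s,s+L]$, the flow property identifying the background restarted from $\bfxi_u$ at time $u$ with the original background on $[u,\infty)$, and then chaining via the first set $\{\Delta_y^{\xi}[0,t/2)=\emptyset\}$ in the definition of $E^{y,\xi}(x,t)$, with the anchors $u=s$ and $u=s+L$ handling the two ends of the window. The one point worth emphasizing is the boundary convention you flag: the paper's definitions ($\Delta_y^{\xi}[0,t/2)$ with a closed left endpoint, and $\theta_u$ shifting all arrival times by $-u$) do not literally exclude the symbol sitting at the restart instant, and under the opposite reading the conclusion at an anchor which is itself an effective recovery would be tautological and the observation would in fact fail (one could have a single effective recovery early in $[s,s+L]$ followed by a long symbol-free stretch). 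So your convention is not optional bookkeeping but the reading required for the statement to be true, and it is also the reading the paper implicitly uses later (e.g.\ in the proof of Lemma~\ref{Lemma:bound_u}, where from a time in $[s,s+L]$ one needs the \emph{next} effective recovery within $t/2$). With that reading fixed, your case analysis on consecutive effective recoveries $a<b$, including the convention $a:=s$ for the first gap, is complete and matches the intended argument.
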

We now state a Lemma which gives a useful bound on $u_{i+1}(x)-u_i(x)$ if no bad growth event occurs:
\begin{lemma}\label{Lemma:bound_u}
    If $N_{L+t/2}^{\bfxi_s}(x,t)\circ \theta_s=0$ and $s+t\leq u_i(x)\leq s+L$, then $v_i(x)=\infty$ or $u_{i+1}(x)-u_i(x)\leq \gamma t$.
\end{lemma}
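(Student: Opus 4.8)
The plan is to show that, on the event $v_i(x)<\infty$ (if $v_i(x)=\infty$ there is nothing to prove), the site $x$ is re-infected before time $u_i(x)+\gamma t$. Throughout, the only way the hypothesis is used is this: $N_{L+t/2}^{\bfxi_s}(x,t)\circ\theta_s=0$ means that \emph{none} of the bad-growth events $E^{y,\bfxi_u}(x,t)\circ\theta_u$ may occur for any $y\in\B_{Mt+2}(x)$ and any time $u$ which is an endpoint $s$ or $s+L+t/2$, or an arrival time of a recovery symbol at $y$ or of an infection symbol on an edge incident to $y$; and, via Observation~\ref{observation:distance_recovery} (with $L$ replaced by $L+t/2$), the effective recovery symbols at every such $y$ are $t/2$-dense on $[s,s+L+t]$. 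First I would pin down the two ``vertical'' time gaps. Since $x\in\bfeta_{u_i(x)}$, the re-infection of $x$ at time $u_i(x)$ occurs through an infection arrow incident to $x$, so $(x,u_i(x))$ is one of the probed points (here $x\in\B_{Mt+2}(x)$ and $u_i(x)\in[s+t,s+L]$); hence $E^{x,\bfxi_{u_i(x)}}(x,t)\circ\theta_{u_i(x)}$ fails. Failure of its third component together with $\tau^{x,\szero}\circ\theta_{u_i(x)}<\infty$ (which is exactly $v_i(x)<\infty$) gives $\tau^{x,\szero}\circ\theta_{u_i(x)}\le t/2$, i.e. $v_i(x)\le u_i(x)+t/2$. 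Applying the Observation at $y=x$ produces an effective recovery at $x$ inside $[v_i(x),v_i(x)+t/2)\subset[s,s+L+t]$, which empties $x$ at that time; therefore $l_{i+1}(x)\le v_i(x)+t/2\le u_i(x)+t$.

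Next I would locate a surviving infection source close to $x$. Because $x\in\bfeta_{u_i(x)}$ there is a $\xi$-infection path from the initial configuration to $(x,u_i(x))$. Tracing it backward and combining the at-most-linear-growth component (item two) of the events $E^{\cdot,\cdot}(x,t)$, read off at the visited sites lying in $\B_{Mt+2}(x)$, with the Observation (which forbids the path to rest at such a site for a full $t/2$), one extracts an infection arrow of this path at some time $u\in[u_i(x)-t,u_i(x)-t/2)$ whose head $y$ lies in $\B_{Mt+2}(x)$ and satisfies $y\in\bfeta_u$. The tail of the path runs from $(y,u)$ to $(x,u_i(x))$, so the process restarted at $(y,u)$ is still alive at time $u_i(x)-u>t/2$; as $(y,u)$ is again a probed point, the fourth component of $E^{y,\bfxi_u}(x,t)\circ\theta_u$ cannot hold, which forces $\tau^{y,\bfxi_u}\circ\theta_u=\infty$.

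Finally I would glue the two pieces. Since the process restarted at $(y,u)$ survives forever, failure of the fifth component of $E^{y,\bfxi_u}(x,t)\circ\theta_u$ yields a time $r^\ast\in[u+2t,\,u+\gamma t]$ with $x\in\bfeta^{y,\bfxi_u}_{\,\cdot}\circ\theta_u$ at time $r^\ast$; by additivity of $\bfeta$ (Proposition~\ref{prop:Monotonicity}) and $y\in\bfeta_u$ this gives $x\in\bfeta_{r^\ast}$. Now $r^\ast\ge u+2t\ge u_i(x)+t>l_{i+1}(x)$ by the first step, so $r^\ast$ witnesses $u_{i+1}(x)\le r^\ast$, while $r^\ast\le u+\gamma t<u_i(x)+\gamma t$; hence $u_{i+1}(x)-u_i(x)\le\gamma t$, as desired.

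The \emph{main obstacle} is the middle step: extracting a restart pair $(y,u)$ that is genuinely inside the space--time box probed by $N_{L+t/2}^{\bfxi_s}(x,t)\circ\theta_s$, i.e. controlling how far from $x$ the backward infection path can have strayed and how long it can have lingered on a site. This is precisely what the value $\gamma=3M(1+c^{-1})$ is tuned for: the factor $3M$ bounds, in terms of the growth speed $M$, both the spatial excursion of the path and the delay before a fresh re-entry past its own time $2t$, while the term $3M/c$ accounts for the time the surviving process started at $y$ needs, at the at-least-linear speed $c$, to travel back to $x$ across a distance of order $Mt$.
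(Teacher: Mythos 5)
Your proposal is correct and follows essentially the same route as the paper: probe the re-infection arrow at $(x,u_i(x))$ to force $v_i(x)-u_i(x)\le t/2$ (or $v_i(x)=\infty$), use the at-most-linear-growth component at probed entry vertices plus the $t/2$-density of effective recoveries to produce a probed space--time point on the infection path in $\B_{Mt+2}(x)\times[u_i(x)-t,u_i(x)-t/2)$ whose restart survives longer than $t/2$ and hence forever, and then invoke the fifth component of $E^{y,\xi}$ together with additivity and the recovery density after $v_i(x)$ to conclude $u_{i+1}(x)\le u_i(x)+\gamma t$. The only cosmetic difference is that you select an arrow actually used by the path (rather than the paper's first symbol time $t_2$ at the frozen site $x_0$), which is equally valid since both yield a probed pair with the required properties.
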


\begin{proof}(Adapted from \cite{garet2012shape}) By definition, there exists an $\szero$-infection path from $(\zero,0)$ to $(x,u_i(x))$, which we call $g_i:[0,u_i(x)]\to V$ with $g_i(0)=\zero$ and $g_i(u_i(x))=x$. Moreover, at time $u_i(x)$ there is an infection for $x$, since we have by construction that $v_i\neq u_{i+1}$.
Together with the assumptions $s+t\leq u_i(x)\leq s+L$ and $N_{L+t/2}^{\bfxi_s}(x,t)\circ \theta_s=0$ we can conclude $\1_{E^{x,\bfxi_{u_i(x)}}(x,t)}\circ\theta_{u_i(x)}=0$ which guarantees $\tau^{x,\szero}\circ\theta_{u_i(x)}=\infty$ or $\tau^{x,\szero}\circ\theta_{u_i(x)}\leq t/2$. This is implied by the third set in the definition of $E^{y,\xi}$. The first case implies $v_i(x)=\infty$ and we are done, the second one implies $v_i(x)-u_i(x)\leq t/2$.

Let us define $x_0=g_i(u_i(x)-t)$ as the vertex that the infection path $g_i$ visits at time $u_i(x)-t$. Next, we will prove that $x_0\in \B_{Mt+2}(x)$. For the sake of contradiction, assume $x_0\notin \B_{Mt+2}(x)$ and let $t_1$ be the first time after $u_i(x)-t$ where the infection path enters  $\B_{Mt+2}(x)$ at a vertex, say $x_1$. In particular, this implies $||x-x_1||= Mt+1$. 
Again, by our assumptions, we can conclude that the event $E^{x_1,\bfxi_{t_1}}(x,t)\circ\theta_{t_1}$ does not occur, because $t_1$ is a possible infection time for $x_1$. Hence, $\bfH_t^{x_1,\bfxi_{t_1}}\circ \theta_{t_1}\subset B_{Mt}(x_1)$, which implies that the infection of $x$ from $x_1$ needs more time than $t$, i.e.\ $t_1+t<u_i(x)$, which contradicts $t_1\geq u_i(x)-t$.
Thus, $x_0\in \B_{Mt+2}(x)$.

Let $t_2$ be the first time after $u_i(x)-t$ where some infection or recovery symbol at $x_0$ occurs.  By Observation \ref{observation:distance_recovery} we have $t_2-(u_i(x)-t)< t/2$. Clearly, the infection does not die at time $t_2$, and we set $x_2=g_i(t_2)$. This yields that the infection of $x$ from $(x_2,t_2)$ survived more than $t/2$ time units and $\tau^{x_2,\bfxi_{t_2}}>t/2$. Moreover, $x_2\in \B_{Mt+2}(x)$, by exactly the same argument as for $x_0$. Exploiting once again the non-occurrence of $E^{x_2,\bfxi_{t_2}}(x,t)\circ\theta_{t_2}$, we obtain $\tau^{x_2,\bfxi_{t_2}}=\infty$, which is implied by the fourth set in the definition of $E^{y,\xi}$, and 
\begin{equation*}
    \inf\{s\geq 2t:x\in \bfeta_s^{x_2,\bfxi_{t_2}}\}\circ \theta_{t_2}\leq \gamma t.
\end{equation*}
Together with $x_2\in \bfeta_{t_2}^\origin$ we can conclude that there exists some $t_3\in[t_2+2t,t_2+\gamma t]$ with $x\in \bfeta_{t_3}^\origin$. Now, we can bound
\begin{equation*}
    t_3\geq t_2 +2t\geq u_i(x)-t +2t\geq u_i(x)+t\geq v_i(x)+t/2,
\end{equation*}
where we used $v_i(x)-u_i(x)\leq t/2 $ in the last inequality. Moreover, $v_i(x)-u_i(x)\leq t/2 $ also implies that
$v_i(x)\leq s +L +t/2$, which yields together with our Observation \ref{observation:distance_recovery} that there exists some $t_4\in[v_i(x), v_i(x)+t/2]$ with $x\notin\bfeta^\origin_{t_4}$ and, in particular,
\begin{equation*}
    l_i(x)\leq v_i(x)+t/2\leq t_3. 
\end{equation*}
Therefore, by definition $u_{i+1}(x)\leq t_3$, and finally
\begin{equation*}
    u_{i+1}(x)-u_{i}(x)\leq t_3-u_i(x)\leq t_2+\gamma t -u_i(x) \leq \gamma t.\qedhere
\end{equation*}
\end{proof}
Now we estimate the probability that no event of bad growth occurs in the space time box.
\begin{lemma}\label{Lemma:exp_control_N}
    There exists $A,B>0$ such that for every $\xi$ we have
    \begin{equation*}
        \IP(N_L^{\xi}(x,t)\geq 1)\leq A (1+L)\exp(-Bt)\quad\text{for all } t>0,~x\in V\text{ and }L>0.
    \end{equation*}
\end{lemma}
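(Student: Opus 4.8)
The plan is to bound $\IP(N_L^\xi(x,t)\ge 1)\le\IE[N_L^\xi(x,t)]$ by Markov's inequality and then estimate the expectation by a Campbell-type computation. Writing out the definition of $N_L^\xi(x,t)$, every contribution has the form $\IE\big[\1_{E^{y,\bfxi^\xi_s}(x,t)}\circ\theta_s\big]$, either at one of the deterministic times $s\in\{0,L\}$ or integrated over $s\in[0,L]$ against the Poisson measure $\Delta_y+\sum_{e\ni y}\Delta_e$. The key decoupling is that $\bfxi^\xi_s$ is $\cF_s$-measurable, whereas the shifted construction $\theta_s(\Delta)$ restricted to nonnegative times is independent of $\cF_s$ and has the same law as $\Delta$; hence, conditioning on $\cF_s$,
\[
\IE\big[\1_{E^{y,\bfxi^\xi_s}(x,t)}\circ\theta_s\ \big|\ \cF_s\big]\ =\ \IP\big(E^{y,\zeta}(x,t)\big)\big|_{\zeta=\bfxi^\xi_s}\ \le\ \sup_{\zeta}\IP\big(E^{y,\zeta}(x,t)\big),
\]
the supremum running over all background configurations $\zeta\in[N]^{V\cup E}$. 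For the integrals against $\Delta_y$ and the $\Delta_e$'s I would use the Mecke equation for $\Delta$; adding a single Poisson point at the initial time of the restarted process alters $E^{y,\zeta}(x,t)$ only by one extra arrow at time $0$, which can be absorbed into a slightly enlarged event governed by the same estimates obtained below. Since $\Delta_y+\sum_{e\ni y}\Delta_e$ has intensity bounded by $\big(\max_a r(a)+2d\,\lambda_{max}\big)\,\mathsf dt$ in the time coordinate, the mass it puts on $[0,L]$ together with the two atoms at $0$ and $L$ is at most $c_0(1+L)$ for a constant $c_0=c_0(d,\lambda_{max},\max_a r(a))$; summing over $y\in\B_{Mt+2}(x)$ and using $|\B_{Mt+2}(x)|\le c_1(Mt+2)^d$ yields
\[
\IP\big(N_L^\xi(x,t)\ge 1\big)\ \le\ c_1(Mt+2)^d\,c_0(1+L)\,\sup_{y,\zeta}\IP\big(E^{y,\zeta}(x,t)\big).
\]
Thus it suffices to show $\sup_{y,\zeta}\IP\big(E^{y,\zeta}(x,t)\big)\le A'e^{-B't}$: the polynomial factor $(Mt+2)^d$ is then absorbed into the exponential by lowering $B'$, and for $t$ in a bounded range the claimed bound is trivial after enlarging $A$.

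\textbf{The four easy events.} By translation invariance of the graphical construction one may take $y=\origin$ and write $z:=x-y\in\B_{Mt+2}$, then bound $\IP\big(E^{\origin,\zeta}(z,t)\big)$ by the sum of the probabilities of its five defining events. The event $\{\Delta_\origin^\zeta[0,t/2)=\emptyset\}$ forces $\origin$ to stay infected on $[0,t/2)$, since $\origin$ can only be removed by an effective recovery; hence it is contained in $\{\origin\in\bfeta_s^{\origin,\zeta}\ \forall s\in[0,t/2]\}$, which is controlled by \eqref{ConjectureTEC}. The event $\{\bfH_t^{\origin,\zeta}\not\subset\B_{Mt}\}$ is exactly \eqref{ConjectureAML}. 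The events $\{t/2<\tau^{\origin,\szero}<\infty\}$ and $\{t/2<\tau^{\origin,\zeta}<\infty\}$ are instances of \eqref{ConjectureSC} with $\xi=\szero$ and $\xi=\zeta$, respectively. Each of these four probabilities is therefore at most $Ae^{-Bt/2}$.

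\textbf{The remaining event and the main obstacle.} The delicate term is the fifth event $\{\tau^{\origin,\zeta}=\infty,\ \inf\{s\ge 2t:z\in\bfeta_s^{\origin,\zeta}\}>\gamma t\}$, i.e.\ that on survival $z$ is not re-infected during the window $[2t,\gamma t]$; this is where the value $\gamma=3M(1+c^{-1})$ and worst-case monotonicity enter. My plan is a restart argument. On $\{\tau^{\origin,\zeta}=\infty\}$, \eqref{ConjectureAML} gives $\bfeta_{2t}^{\origin,\zeta}\subset\B_{2Mt}$ with probability $\ge 1-Ae^{-Bt}$, and a ball-version of \eqref{ConjectureFC} --- obtained by summing the bound $\IP\big(w\notin\overline{\bfK}^\zeta_{\|w\|/c+s},\tau^\zeta=\infty\big)\le Ae^{-Bs}$, itself a consequence of \eqref{ConjectureALL}--\eqref{ConjectureFC}, over all $w$ in a ball of radius a fixed fraction of $t$ --- shows that $\bfeta^{\origin,\zeta}$ and the full-started process $\bfeta^{V,\zeta}$ agree on such a ball for all times $\ge 2t$. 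Restarting at time $2t$ from this large, coupled, surviving configuration, now with the worst background $\szero$, one obtains by additivity (Proposition~\ref{prop:Monotonicity}) and worst-case monotonicity a process dominated by $\bfeta^{\origin,\zeta}$; it survives with high probability because $\IP(\tau^{\szero}=\infty)>0$ and the restart is from a large spread-out set, and on its survival \eqref{ConjectureALL} delivers $z$ within additional time $\|z-w\|/c+t$. A bookkeeping check --- using $\|w\|\le 2Mt$, $\|z\|\le Mt+2$ and $M>1$ --- then places the re-infection of $z$ at a time in $[2t,\gamma t]$, the lower endpoint being free since the restart is performed at time $2t$. The step I expect to be hardest is making this quantitative with an \emph{exponentially} small error: a single-point restart survives only with a fixed probability $<1$, so the restart has to be genuinely from a large coupled region (one needs that a supercritical CPDRE started from a large spread-out set survives with probability $1-Ae^{-Bt}$); and targets $z$ lying very close to $\origin$ must be treated separately, combining the direct coupling $\bfeta^{\origin,\zeta}(z)=\bfeta^{V,\zeta}(z)$ with an exponential-tail bound on the healthy intervals of $\bfeta^{V,\zeta}$ at $z$. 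Once $\sup_{y,\zeta}\IP\big(E^{y,\zeta}(x,t)\big)\le A'e^{-B't}$ is established, combining it with the prefactor bound above and re-absorbing the polynomial factor completes the proof.
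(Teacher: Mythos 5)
Your overall reduction---Markov's inequality, the intensity bound for $\Delta_y+\sum_{e\ni y}\Delta_e$, the sum over $y\in\B_{Mt+2}(x)$ with the polynomial factor absorbed into the exponential---and your treatment of the first four defining events of $E^{y,\zeta}(x,t)$ via \eqref{ConjectureTEC}, \eqref{ConjectureAML} and \eqref{ConjectureSC} are exactly the paper's route. The gap is in the fifth event $\{\tau^{y,\zeta}=\infty,\ \inf\{s\ge 2t: x\in\bfeta^{y,\zeta}_s\}>\gamma t\}$, which is the real content of the lemma (the paper imports the argument of Garet--Marchand for this step). Your plan restarts at time $2t$ with the background reset to $\szero$, and you therefore need the unproven statement that a supercritical CPDRE started from a large spread-out set in the worst background survives up to an exponentially small error; this estimate is not among the hypotheses \eqref{ConjectureAML}--\eqref{ConjectureTEC} and does not follow from them by anything you write (progenies of distinct initial sites are not independent, so \eqref{ConjectureSC}-type bounds cannot simply be multiplied). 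Similarly, for targets near the origin you invoke an exponential tail for the healthy intervals of $\bfeta^{V,\zeta}$ at a site, which for a general CPDRE is also not available from the assumptions (duality here holds only in distribution and only for a stationary background, while the bound must be uniform in $\zeta$).

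Both difficulties disappear if you do not reset the background. On $\{\tau^{y,\zeta}=\infty\}$, additivity of the infection process (Proposition~\ref{prop:Monotonicity}) yields, at no probabilistic cost, a site $z\in\bfeta^{y,\zeta}_{2t}$ whose progeny under the actual background $\bfxi_{2t}$ survives forever: if every $z\in\bfeta^{y,\zeta}_{2t}$ had finite progeny, the finitely many progenies would all be extinct at a finite time, contradicting survival. Up to an \eqref{ConjectureAML}-error one may assume $z\in\B_{2Mt}(y)$, hence $\|x-z\|\le 3Mt+2$. Now apply the Markov property at time $2t$, a union bound over the polynomially many admissible $z$, and \eqref{ConjectureALL}---which is assumed uniformly over the initial background configuration, so $\bfxi_{2t}$ is allowed---to get that the restarted process from $(z,2t)$ hits $x$ before absolute time $2t+\|x-z\|/c+s$ except on an event of probability $A e^{-Bs}$. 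Gluing the infection path from $(y,0)$ to $(z,2t)$ with the one from $(z,2t)$ to $x$ (same background realisation) then infects $x$ at a time in $[2t,\gamma t]$ as soon as $2t+(3Mt+2)/c+s\le \gamma t=3Mt+3Mt/c$, which holds for $s$ of order $t$ since $M>1$; small $t$ is handled by enlarging $A$. In particular your worried special case of $x$ close to $y$ is moot, because the hitting time produced this way is automatically at least $2t$. With this replacement the remainder of your argument goes through and coincides with the paper's proof.
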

\begin{proof}
    We first show that there exists constants $A,B>0$ such that for every $\xi$, $x\in V$ and $y\in \B_{Mt+2}(x)$ we have
    \begin{equation}\label{eq:bad_growths_estimate}
        \IP(E^{y,\xi}(x,t))\leq A\exp(-Bt)\quad\text{for all }t\geq0.
    \end{equation}
    First note that 
    \begin{equation*}
        \IP(\Delta_y^\xi[0,t/2)=\emptyset)=\IP(y\in \bfeta^{y,\xi}_s~\forall s\in[0,t/2)),
    \end{equation*}
    which can be controlled uniformly in $\xi$ by \eqref{ConjectureTEC}.
    Moreover, the probabilities of the events $\{\bfH_t^{y,\xi}\not\subset B_{Mt}(y)\}$, $\{t/2<\tau^{y,\szero}<\infty\}$ and $\{t/2<\tau^{y,\xi}<\infty\}$ can be controlled by \eqref{ConjectureAML} and \eqref{ConjectureSC}. The control of the probability of the last term $\{\tau^{y,\xi}=\infty,\inf\{s\geq 2t:x\in \bfeta_s^{y,\xi}\}>\gamma t\}$ can be obtained as in \cite[Lemma 14]{garet2012shape}, which yields \eqref{eq:bad_growths_estimate}. 
    
    As in the proof of \cite[Lemma 14]{garet2012shape}, we use that $\Delta_y+\sum_{e:y\in e}\Delta_e$ is a Poisson point process with intensity $\delta_{max}+ 2d \lambda_{max}$ for every $y\in V$ to show that 
    \begin{equation*}
        \IE[N_L^{\xi}(x,t)]\leq (2+L(\delta_{max}+ 2d \lambda_{max}))A\exp(-Bt).
    \end{equation*}
    Then the claim follows by the Markov inequality.
\end{proof}
\begin{lemma}\label{lemma:set_inklusion}
    For any initial time $s>0$, any scale $t>0$ and every $x\in V$ the following inclusion holds almost surely:
    \begin{align}
        \{\tau^\origin=\infty\}\cap\{\exists &z\in \B_{Mt+2}(x): \tau^{z,\bfxi_s}\circ \theta_s=\infty, z\in \bfeta_s^\origin\}\label{event1}\\
        &\cap\{N^{\bfxi_s}_{K(x)\gamma t}(x,t)\circ \theta_s=0\}\label{event2}\\
        &\cap \bigcap_{1\leq i< K(x)}\{v_i(x)-u_i(x)<t/2\}\label{event3}\\
        &\subseteq \{\tau^\origin=\infty\}\cap \{\sigma(x)\leq s +K(x)\gamma t\}.\label{event4}
    \end{align}
\end{lemma}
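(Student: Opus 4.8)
The plan is to show that, on the event appearing on the left-hand side (call it $\mathcal{E}$), the sequence of times $u_1(x),v_1(x),u_2(x),\dots,u_{K(x)}(x)=\sigma(x)$ defining the essential hitting time advances by at most $\gamma t$ per step once it has passed time $s$, which yields $\sigma(x)=u_{K(x)}(x)\le s+K(x)\gamma t$. Throughout we work on $\mathcal{E}$; in particular $\tau^{\origin}=\infty$, so by Lemma~\ref{Lemma:equivalence_K_2} (and the remark following it) we have $K(x)=k$ with $v_k(x)=\infty$ and $u_i(x),v_i(x)<\infty$ for all $i\le k$. If $K(x)=0$ (e.g.\ $x=\origin$) there is nothing to prove, so assume $K(x)\ge1$.

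\textbf{Step 1: a single good infection event near $x$.} The time $s$ is the $\delta_0$-atom in the definition of $N^{\bfxi_s}_{K(x)\gamma t}(x,t)\circ\theta_s$, so \eqref{event2} forces $E^{z,\bfxi_s}(x,t)\circ\theta_s$ not to occur for the vertex $z\in\B_{Mt+2}(x)$ supplied by \eqref{event1}. Since also $\tau^{z,\bfxi_s}\circ\theta_s=\infty$, the fifth set in the definition of $E^{y,\xi}(x,t)$ cannot occur, which produces $r^\ast\in[2t,\gamma t]$ with $x\in\bfeta^{z,\bfxi_s}_{r^\ast}\circ\theta_s$. By additivity of $\bfeta$ in its initial configuration (Proposition~\ref{prop:Monotonicity}), the process restarted from $z$ at time $s$ is contained in $\bfeta^{\origin}$ from time $s$ onward, hence $x\in\bfeta^{\origin}_{s+r^\ast}$. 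This single fact carries essentially the whole argument.

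\textbf{Step 2: crossing time $s+t$.} Let $j:=\max\{i\ge0:u_i(x)<s+t\}$. If $j=K(x)$ then $\sigma(x)=u_{K(x)}(x)<s+t\le s+K(x)\gamma t$, so assume $j<K(x)$, i.e.\ $u_j(x)<s+t\le u_{j+1}(x)$. The key claim is $u_{j+1}(x)\le s+r^\ast\le s+\gamma t$. Suppose not, so $x$ is infected at $s+r^\ast$ while $s+r^\ast<u_{j+1}(x)$. Using \eqref{event3} when $j\ge1$, and $v_0(x)=0$ when $j=0$, we get $v_j(x)<u_j(x)+t/2<s+3t/2<s+2t\le s+r^\ast$. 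If $x$ is not infected at $v_j(x)$, then $l_{j+1}(x)=v_j(x)$, so $s+r^\ast\in(v_j(x),u_{j+1}(x))$ would force $x\notin\bfeta^{\origin}_{s+r^\ast}$, contradicting Step 1. If $x$ is infected at $v_j(x)$, then $x$ is infected throughout $[v_j(x),l_{j+1}(x))$, and $l_{j+1}(x)>s+r^\ast\ge s+2t$ gives this interval length $>t/2$; let $\alpha$ be the left endpoint of the maximal interval on which $x$ is continuously infected and which contains $v_j(x)$. Either $\alpha<s$, so that $x$ is infected throughout $[s,s+t/2)$, or $\alpha\in[s,v_j(x)]$ is an infection-arrow time into $x$ lying in the space-time box, so that $x$ is infected throughout $[\alpha,\alpha+t/2)$. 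In both cases there is no effective recovery for $x$ in the corresponding length-$t/2$ window, i.e.\ the first set $\{\Delta_x^{\cdot}[0,t/2)=\emptyset\}$ of $E^{x,\cdot}(x,t)$ — evaluated at the $\delta_0$-atom, respectively at $\alpha$ — occurs, contradicting \eqref{event2}. Hence $u_{j+1}(x)\le s+\gamma t$.

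\textbf{Step 3: iteration and conclusion.} Apply Lemma~\ref{Lemma:bound_u} with $L:=K(x)\gamma t-t/2$, so that $L+t/2$ is exactly the box height in \eqref{event2}. By induction on $i\in\{j+1,\dots,K(x)\}$ one shows $u_i(x)\le s+(i-j)\gamma t$: the base case is Step 2, and for $i<K(x)$ this bound gives $u_i(x)\le s+(K(x)-1)\gamma t\le s+L$ while $u_i(x)\ge u_{j+1}(x)\ge s+t$, so Lemma~\ref{Lemma:bound_u} applies, and since $v_i(x)\neq\infty$ for $i<K(x)$ it yields $u_{i+1}(x)-u_i(x)\le\gamma t$. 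Taking $i=K(x)$ gives $\sigma(x)=u_{K(x)}(x)\le s+(K(x)-j)\gamma t\le s+K(x)\gamma t$, which together with $\tau^{\origin}=\infty$ is \eqref{event4}; the inequality $\gamma>3$ provides all the slack needed to absorb the various $t/2$-terms. The main obstacle is Step 2: the $u_i(x)$ may have several terms before time $s+t$, where Lemma~\ref{Lemma:bound_u} is unavailable, and the only way across is to combine the good vertex $z$ from \eqref{event1} with the ``no long effective-recovery gap'' consequence of \eqref{event2} to rule out a long dead period of $x$ just after $v_j(x)$.
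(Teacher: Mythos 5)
Your proof is correct and follows essentially the same route as the paper: identify the last index $j$ with $u_j(x)<s+t$, use the surviving source $z$ from \eqref{event1} together with the non-occurrence of the bad-growth events in \eqref{event2} and the bound \eqref{event3} to force $u_{j+1}(x)\le s+\gamma t$, and then iterate Lemma~\ref{Lemma:bound_u} (with the box height matched as $L+t/2=K(x)\gamma t$). The only difference is organizational — the paper obtains $l_{j+1}(x)\le s+2t$ directly from the effective-recovery-gap consequence of \eqref{event2} rather than via your contradiction/case split, and your aside that $v_i(x)<\infty$ for all $i\le K(x)$ (and that $K(\origin)=0$) should read $i<K(x)$, which is all your argument actually uses.
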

\begin{proof}
    We define 
    \begin{equation*}
        i_0:=\max\{i:u_i(x)\leq s+t\}.
    \end{equation*}
    If for every finite $u_i(x)$ it holds that $u_i(x)\leq s+t$, we are done since then $\sigma(x)=u_{K(x)}(x)\leq s+t\leq s+K(x)\gamma t$. Thus, we can assume that $u_{i_0+1}(x)$ is finite and in particular $v_{i_0}(x)<\infty$ as well as $i_0<K(x)$. Therefore, event \eqref{event3} gives us  $v_{i_0}(x)-u_{i_0}(x)<t/2$, which implies $v_{i_0}(x)< s+\frac{3}{2}t$ by definition of $i_0$. Moreover, the event \eqref{event2} ensures the existence of an effective recovery symbol for site $x$ between the times $v_{i_0}(x)$ and $v_{i_0}(x)+ t/2< s+2t$. Thus, in particular $l_{i_0+1}(x)\leq s+2t$.
    
    Let  $z\in \B_{Mt+2}$ be the \textit{source point} satisfying \eqref{event1}. The event \eqref{event2} implies that $E^{z,\bfxi_s}(x,t)\circ \theta_s$ does not occur. Together with $\tau^{z,\bfxi_s}\circ \theta_s=\infty$, this yields that
    \begin{equation*}
        \inf\{u\geq 2t: x\in \bfeta_u^{z,\bfxi_s}\}\circ \theta_s\leq \gamma t
    \end{equation*}
    holds, which, along with the observation about $l_{i_0+1}(x)$, leads to $u_{i_0+1}(x)\leq s+\gamma t$.
    Note, by definition of $i_0$ we have for all $j\geq 1$ that $u_{i_0+j}(x)> s+t$. Moreover, we have for all $i<K(x)$ that $v_i(x)<\infty$ by definition of $K(x)$. Applying Lemma \ref{Lemma:bound_u} recursively, using the event  \eqref{event2}, we obtain
    \begin{equation*}
        u_{i_0+j}\leq s +j\gamma t\quad\text{for all }j\in\{1,\dots,K(x)-i_0\}.
    \end{equation*}
    In particular, we have $\sigma(x)=u_{K(x)}(x)\leq s+(K(x)-i_0)\gamma t\leq s+K(x)\gamma t$.
\end{proof}

Finally, we can control the error term $r^\xi(x,y)$, which describes the \textit{lack of subadditivity}, by estimating the tail of its distribution and bounding its moments.
\begin{lemma}\label{Lemma_lackSubadd}
    There exists constants $A,B>0$ such that for every $x,y\in V$ and all $\xi$
    \begin{equation}\label{eq:exp_bound_r}
        \PC(r^\xi(x,y)\geq t)\leq A\exp(-B\sqrt{t}) \quad \text{ for all } \quad t\geq 0.
    \end{equation}
\end{lemma}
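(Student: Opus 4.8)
The plan is to reduce, via Remark~\ref{remark:initial_cond}, to proving the bound for $r^{\szero}$ under $\PO$, i.e.\ for
\[
	r^{\szero}(x,y)=\sigma^{\szero}(x+y)-\big[\sigma^{\szero}(x)+\sigma^{\szero}(y)\circ\hat{\theta}^{\szero}_x\big],
\]
and then to run the restart argument of \cite{garet2012shape} with worst-case monotonicity in place of full monotonicity. Write $S:=\sigma^{\szero}(x)+\sigma^{\szero}(y)\circ\hat{\theta}^{\szero}_x$. The idea is to use $S$ as the ``initial time'' and $x+y$ itself as the ``source point'' in Lemma~\ref{lemma:set_inklusion}, applied with centre $x+y$, and to bound the probability that the remaining (bad-growth and reinfection-gap) conditions in that lemma fail on a well-chosen scale.

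First I would show that, on $\{\tau^{\origin,\szero}=\infty\}$, almost surely $x+y\in\bfeta^{\origin,\szero}_S$ and $\tau^{x+y,\bfxi^{\szero}_S}\circ\theta_S=\infty$. By Lemma~\ref{Lemma:equivalence_K_2} applied to the original process, on survival one has $\tau^{x,\szero}\circ\theta_{\sigma^{\szero}(x)}=\infty$, so the process restarted at the finite time $\sigma^{\szero}(x)$ from $x$ with background reset to $\szero$ survives; hence $\sigma^{\szero}(y)\circ\hat{\theta}^{\szero}_x$ is a genuine essential hitting time for this restarted process, and applying Lemma~\ref{Lemma:equivalence_K_2} again to it, at time $S$ the site $x+y$ is infected by the restarted process and issues an $\szero$-infection path up to infinity. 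Gluing these infection paths by additivity of $\bfeta$ and worst-case monotonicity (Assumption~\ref{Ass:WS-Monotonicity}) gives $x+y\in\bfeta^{x,\bfxi^{\szero}_{\sigma^{\szero}(x)}}_S\subseteq\bfeta^{\origin,\szero}_S$, and since $\bfxi^{\szero}_S\ge\szero$ the $\szero$-path from $(x+y,S)$ to infinity yields $\tau^{x+y,\bfxi^{\szero}_S}\circ\theta_S=\infty$. Thus, with $z=x+y$, the source-point event of Lemma~\ref{lemma:set_inklusion} holds automatically on survival, for any scale $t'>0$.

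Next I would fix an integer $k\ge1$ and put $t':=t/(k\gamma)$. Since $N^{\,\cdot}_L$ is nondecreasing in $L$, on $\{K^{\szero}(x+y)\le k\}$ bad growth in the box of height $K^{\szero}(x+y)\gamma t'$ implies bad growth in the box of height $k\gamma t'=t$; combining this with the previous step and Lemma~\ref{lemma:set_inklusion} shows that on
\[
	\{\tau^{\origin,\szero}=\infty\}\cap\{K^{\szero}(x+y)\le k\}\cap\{N^{\bfxi^{\szero}_S}_{t}(x+y,t')\circ\theta_S=0\}\cap\!\!\!\bigcap_{1\le i<K^{\szero}(x+y)}\!\!\!\{v_i^{\szero}(x+y)-u_i^{\szero}(x+y)<t'/2\}
\]
one has $\sigma^{\szero}(x+y)\le S+k\gamma t'=S+t$, i.e.\ $r^{\szero}(x,y)\le t$. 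Taking complements,
\begin{align*}
	\PO(r^{\szero}(x,y)>t)\le\;&\PO\big(K^{\szero}(x+y)>k\big)+\PO\big(N^{\bfxi^{\szero}_S}_{t}(x+y,t')\circ\theta_S\ge1\big)\\
	&+\PO\big(\exists\,i<K^{\szero}(x+y):\,v_i^{\szero}(x+y)-u_i^{\szero}(x+y)\ge t'/2\big).
\end{align*}
The first term is $O\big((1-\rho)^k\big)$ by Lemma~\ref{lemma:K_as_finite}, the third is $O\big(e^{-Bt'/2}\big)$ by Lemma~\ref{Lemma:exp_control_diff_vu}. For the middle term I would decompose over the values of $K^{\szero}(x)$ and $K^{\szero}(y)\circ\hat{\theta}^{\szero}_x$ exactly as in the proof of Lemma~\ref{Lemma:invariance_2}, on each piece of which $S$ is a stopping time, and then apply the strong Markov property at $S$ together with the background-uniform estimate of Lemma~\ref{Lemma:exp_control_N} with range $t$, obtaining $O\big((1+t)e^{-Bt'}\big)$. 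With $t'=t/(k\gamma)$ and the choice $k=\lceil\sqrt{t}\,\rceil$ both $(1-\rho)^k$ and $e^{-(B/\gamma)t/k}$ are of order $e^{-c\sqrt t}$, the polynomial prefactor $1+t$ being absorbed into a slightly smaller exponent; replacing $t$ by $t-1$ handles the non-strict inequality and, via Remark~\ref{remark:initial_cond}, yields \eqref{eq:exp_bound_r} uniformly in $x,y$ and $\xi$.

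The main obstacle is twofold. The first point is the path-gluing in Step~1: this is exactly where full monotonicity of $(\bfeta,\bfxi)$ is unavailable and one must argue using worst-case monotonicity and additivity of the infection process instead of a monotone coupling. The second is that $S$ is not a stopping time and that the height $K^{\szero}(x+y)\gamma t'$ of the space-time box in the definition of $N$ is random; both are circumvented by truncating at $\{K^{\szero}(x+y)\le k\}$, using monotonicity of $N$ in its range together with the geometric tail of $K^{\szero}(x+y)$, and decomposing over the stopping times underlying the essential hitting times as in Lemma~\ref{Lemma:invariance_2}. It is precisely this truncate-and-balance step (optimising $k\asymp\sqrt t$) that degrades the rate from $e^{-Bt}$ to $e^{-B\sqrt t}$.
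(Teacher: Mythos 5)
Your proposal is correct and follows essentially the same route as the paper: reduce to $\xi=\szero$ via Remark~\ref{remark:initial_cond}, apply the contraposition of Lemma~\ref{lemma:set_inklusion} at the source point $x+y$ at time $S=\sigma^{\szero}(x)+\sigma^{\szero}(y)\circ\hat{\theta}^{\szero}_x$, whose existence and infinite progeny you secure—exactly as the paper does—through Lemma~\ref{Lemma:equivalence_K_2}, additivity and worst-case monotonicity, and then balance the estimates from Lemmas~\ref{lemma:K_as_finite}, \ref{Lemma:exp_control_diff_vu} and \ref{Lemma:exp_control_N} with $k\asymp\sqrt{t}$ in the Garet--Marchand fashion to get the stretched-exponential rate. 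In fact your write-up supplies more detail (the stopping-time decomposition handling that $S$ is not a stopping time, and the truncation of the random box height) than the paper's sketch, which simply defers these points to \cite{garet2012shape}.
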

\begin{proof} According to Remark~\ref{remark:initial_cond} it suffices to consider the case $\xi=\szero$. This can be proven analogously to the proof of \cite[Theorem 2]{garet2012shape} using the estimates from Lemma~\ref{Lemma:exp_control_diff_vu} and Lemma~\ref{Lemma:exp_control_N} and the contraposition of Lemma~\ref{lemma:set_inklusion} at a source point $x+y$, which is infected at time $s:= \sigma^{\szero}(x)+\sigma^{\szero}(y)\circ\hat{\theta}^{\szero}_x$. Therefore, it is crucial that this source point $x+y$ at $s$ exists, in \cite{garet2012shape} this follows by monotonicity of the process. However, we do not necessarily have this property, and thus we need to guarantee its existence by other means. This is exactly where the worst-case monotonicity (Assumption~\ref{Ass:WS-Monotonicity} (ii)) comes again into play, which does provide this.
\end{proof}
\begin{corollary}\label{corollary:p-moment_rest_term}
    For any $p\geq 1$ there exists $M_p>0$ such that
    \begin{equation*}
        {\IE}^{\xi}[(r^{\xi}(x,y)^+)^p]\leq M_p\quad\text{for all }x,y\in V~\text{and }\xi.
    \end{equation*}
\end{corollary}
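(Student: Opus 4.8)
The plan is to deduce the uniform moment bound directly from the stretched-exponential tail estimate of Lemma~\ref{Lemma_lackSubadd} by means of the layer-cake representation of the $p$-th moment. Fix $p\geq 1$, vertices $x,y\in V$ and a background configuration $\xi$, and write $Z:=r^{\xi}(x,y)^+$, a nonnegative random variable. Since $\IE^{\xi}$ denotes the expectation with respect to $\PC=\IP(\,\cdot\,|\tau^{\origin,\xi}=\infty)$, Fubini's theorem (the layer-cake formula) gives
\begin{equation*}
    \IE^{\xi}[Z^p]=\int_0^\infty p\,t^{p-1}\,\PC(Z\geq t)\,dt=\int_0^\infty p\,t^{p-1}\,\PC\big(r^{\xi}(x,y)\geq t\big)\,dt,
\end{equation*}
where in the last step we used that $\{Z\geq t\}=\{r^{\xi}(x,y)\geq t\}$ for every $t>0$.

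Next I would insert the bound from Lemma~\ref{Lemma_lackSubadd}, namely $\PC(r^{\xi}(x,y)\geq t)\leq A\exp(-B\sqrt{t})$, valid for all $t\geq 0$ with constants $A,B>0$ that do \emph{not} depend on $x$, $y$ or $\xi$. This yields
\begin{equation*}
    \IE^{\xi}[Z^p]\leq pA\int_0^\infty t^{p-1}\exp(-B\sqrt{t})\,dt.
\end{equation*}
The substitution $u=\sqrt{t}$ (so $t=u^2$, $dt=2u\,du$) transforms the right-hand side into $2pA\int_0^\infty u^{2p-1}\exp(-Bu)\,du=2pA\,\Gamma(2p)/B^{2p}$, which is finite. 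Setting $M_p:=2pA\,\Gamma(2p)/B^{2p}$ then proves the claim, and since $A$ and $B$ are independent of $x$, $y$ and $\xi$, so is $M_p$.

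There is essentially no obstacle here: all the genuine work has already been carried out in Lemma~\ref{Lemma_lackSubadd}. The only point requiring (very minor) care is that the bound $A\exp(-B\sqrt{t})$ may exceed $1$ for small $t$; this is harmless, as it only makes the integral estimate more generous and the integral over $(0,\infty)$ converges regardless, but if one prefers one may replace the integrand by $\min\{1,\,A\exp(-B\sqrt{t})\}$ without affecting the conclusion.
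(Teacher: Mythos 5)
Your argument is correct and is essentially the paper's own proof: the paper likewise deduces the moment bound from Lemma~\ref{Lemma_lackSubadd} via Fubini--Tonelli (following \cite[Corollary~16]{garet2012shape}), and your layer-cake computation with the substitution $u=\sqrt{t}$ and the uniformity of $A,B$ in $x,y,\xi$ is exactly the required calculation.
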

\begin{proof}Follows from Lemma~\ref{Lemma_lackSubadd} exactly as in \cite[Corollary 16]{garet2012shape} with Fubini-Tonelli.
\end{proof}
\subsection{Control of $\sigma^{\szero}(x)-t^{\szero}(x)$}
We now want to bound the difference between the essential hitting time $\sigma^{\szero}$ and the first hitting time $t^{ \szero}(x)$. We tackle the problem in a similar fashion as before, by first defining new boxes of bad growth and counting such boxes. However, the control will not be uniform in $x$ but depend on the distance of $x$ from the origin. This comes from the fact that we do not have a source point of infinite progeny close to $x$ as we had for the lack of subaddivivity (comp. Figure \ref{figure:2_problems}). For $x,y\in V$, $t>0$ and $L>0$ let
    \begin{align*}
        \hat{E}^{y,\xi}(t)&=\bigg\{\tau^{y,\xi}<\infty,\bigcup_{s\geq 0}\bfH_s^{y,\xi}\not\subset B_{Mt}(y)\bigg\},\\
        \hat{N}_L(x,t)&=\sum_{y\in B_{Mt+1}(x)}\int_0^L\1_{\hat{E}^{y,\bfxi_s}(t)}\circ\theta_s~d\bigg(\sum_{e:y\in e}\Delta_e\bigg)(s).
    \end{align*}

  One easily verifies:
  \begin{corollary}\label{corollary:exp_bound_N_hat}
      There exist constants $A,B>0$ such that
        \begin{equation}\label{eq:n_hat_exp}
        \PO(\hat{N}_L(x,t)\geq 1)\leq A (1+L)\exp{(-Bt)}\quad\text{for all }x\in V,~t\geq 0.
    \end{equation} 
  \end{corollary}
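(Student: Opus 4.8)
The plan is to mirror the proof of Lemma~\ref{Lemma:exp_control_N}. The key input will be a \emph{pointwise} estimate that is uniform in the background configuration: there exist constants $A,B>0$ with
\[
    \IP(\hat{E}^{y,\xi}(t))\leq 2A\exp(-Bt)\qquad\text{for every }\xi\in[N]^{V\cup E},\ y\in V,\ t\geq 0 .
\]
Given this, the corollary follows by integrating the estimate over the Poisson symbols counted by $\hat{N}_L(x,t)$ and applying Markov's inequality, exactly as in \cite[Lemma~14]{garet2012shape}.

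To prove the pointwise bound, translation invariance of the CPDRE reduces to $y=\origin$. Since $s\mapsto\bfH_s^{\origin,\xi}$ is nondecreasing and is eventually constant on $\{\tau^{\origin,\xi}<\infty\}$, one has $\bigcup_{s\geq 0}\bfH_s^{\origin,\xi}=\bfH_{\tau^{\origin,\xi}}^{\origin,\xi}$ there, and splitting according to whether $\tau^{\origin,\xi}\leq t$ or $t<\tau^{\origin,\xi}<\infty$ yields
\[
    \hat{E}^{\origin,\xi}(t)\subseteq\{\bfH_t^{\origin,\xi}\not\subset\B_{Mt}\}\cup\{t<\tau^{\origin,\xi}<\infty\},
\]
using on the first event that $\tau^{\origin,\xi}\leq t$ forces $\bfH_{\tau^{\origin,\xi}}^{\origin,\xi}\subseteq\bfH_t^{\origin,\xi}$. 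The two probabilities on the right are bounded by $A\exp(-Bt)$ by \eqref{ConjectureAML} and \eqref{ConjectureSC} respectively, with constants independent of $\xi$ (note that, unlike in $E^{y,\xi}(x,t)$, no term of the form $\{\Delta_y^\xi[0,t/2)=\emptyset\}$ appears, so \eqref{ConjectureTEC} is not needed here); translating back gives the pointwise bound for arbitrary $y$.

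Next I would pass to $\hat{N}_L$. Fix $y\in\B_{Mt+1}(x)$ and note that $\sum_{e:y\in e}\Delta_e$ is a Poisson point process in time of total rate at most a constant $C_d$ depending only on $d$ and $\lambda_{max}$. Applying Campbell's formula to this process, and using that the integrand $\1_{\hat{E}^{y,\bfxi_s}(t)}\circ\theta_s$ depends only on $\bfxi_s$ (which is $\cF_s$-measurable) and the graphical construction after time $s$, the Markov property together with the uniform-in-$\xi$ pointwise bound gives
\[
    \IE\left[\int_0^L\1_{\hat{E}^{y,\bfxi_s}(t)}\circ\theta_s\,d\Big(\sum_{e:y\in e}\Delta_e\Big)(s)\right]\leq 2A\,C_d\,L\exp(-Bt).
\]
Summing over the at most $C(1+Mt)^d$ vertices $y\in\B_{Mt+1}(x)$ and absorbing the polynomial factor into the exponential (at the cost of shrinking $B$) yields $\IE[\hat{N}_L(x,t)]\leq A'(1+L)\exp(-B't)$ with constants independent of $x$. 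Since the process is supercritical, $\IP(\tau^{\origin,\szero}=\infty)>0$, so Markov's inequality under $\PO$ gives
\[
    \PO(\hat{N}_L(x,t)\geq 1)\leq\frac{\IE[\hat{N}_L(x,t)]}{\IP(\tau^{\origin,\szero}=\infty)}\leq A(1+L)\exp(-Bt)
\]
after renaming the constants.

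The only delicate point -- the main obstacle, modest as it is -- is the Poisson symbol sitting exactly at the restart time $s$ when one invokes Campbell's formula: inserting this extra infection arrow can turn a finite-lifetime excursion into an infinite one or let it escape $\B_{Mt}(y)$. One handles this precisely as in the proof of Lemma~\ref{Lemma:exp_control_N} and \cite[Lemma~14]{garet2012shape}, by dominating $\hat{E}^{y,\xi}(t)$ evaluated on the enlarged configuration by the union of two such events (for $y$ and the affected neighbour), which costs only a bounded multiplicative constant and a negligible change of radius. Observe also that, in contrast to the later control of $\sigma^{\szero}(x)-t^{\szero}(x)$, the estimate here is uniform in $x$: we are only counting restarts whose excursion dies out \emph{and} fails to leave a ball of radius $Mt$, an event whose probability decays exponentially in $t$ irrespective of the position of $x$.
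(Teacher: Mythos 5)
Your proposal is correct and follows essentially the same route as the paper: a pointwise bound $\IP(\hat{E}^{y,\xi}(t))\leq A'\exp(-B't)$ uniform in $\xi$ (here obtained from \eqref{ConjectureAML} and \eqref{ConjectureSC} alone, since $\hat E$ has no analogue of the recovery-free event), followed by an expectation estimate for $\hat N_L(x,t)$ via the Mecke/Campbell formula and the Markov property, and finally Markov's inequality together with division by $\IP(\tau^{\origin,\szero}=\infty)$ to pass to $\PO$. Your extra care with the Palm point at the restart time, handled by additivity at the cost of a bounded constant, is a detail the paper leaves implicit but does not change the argument.
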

  \begin{proof}
 Analogous to the proof of Lemma~\ref{Lemma:exp_control_N}, one first shows the existence of constants $A',B'>0$ such that
    \begin{equation*}
        \IP(\hat{E}^{y,\xi}(t))\leq A' \exp{(-B't)}\quad\text{for all }y\in V,~t\geq 0 ~\text{and}~\xi.
    \end{equation*}
    Then, the result follows by estimating the expected value of $\hat{N}_L(x,t)$, where the strong Markov property is required.
    \end{proof}
    
\begin{lemma}\label{lem:limit_diff_hit_ess}
    We have $\PC$-almost surely for all $x\in \IZ^d$
    \begin{equation*}
        \lim_{||x||\to\infty}\frac{|s^\xi(x)-t^{\origin,\szero}(x)\circ\hat{\theta}^\xi_0|}{||x||}=0.
    \end{equation*}
\end{lemma}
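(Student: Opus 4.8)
The plan is to reduce the statement to the case $\xi=\szero$ and then establish a tail bound on $\sigma^{\szero}(x)-t^{\szero}(x)$ that is summable over $\IZ^d$, so that Borel--Cantelli yields the claim. First I would note that, by Corollary~\ref{Corollary:invariance_2}(1) applied with $y=\origin$, the shift $\hat{\theta}^\xi_\origin$ pushes $\PC$ forward to $\PO$ on $\cF$. Since $s^\xi(x)=\sigma^{\szero}(x)\circ\hat{\theta}^\xi_\origin$ and $t^{\origin,\szero}(x)\circ\hat{\theta}^\xi_0$ are obtained from $\sigma^{\szero}(x)$ and $t^{\origin,\szero}(x)=t^{\szero}(x)$ by one and the same composition, the family $\bigl(s^\xi(x)-t^{\origin,\szero}(x)\circ\hat{\theta}^\xi_0\bigr)_{x\in\IZ^d}$ has, under $\PC$, the same law as $\bigl(\sigma^{\szero}(x)-t^{\szero}(x)\bigr)_{x\in\IZ^d}$ under $\PO$ (this is Remark~\ref{remark:initial_cond}). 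As $t^{\szero}(x)=u_1(x)\le u_{K(x)}(x)=\sigma^{\szero}(x)$ for $x\neq\origin$ and both are $\PO$-a.s.\ finite, it suffices to prove that $\PO$-almost surely $\bigl(\sigma^{\szero}(x)-t^{\szero}(x)\bigr)/||x||\to 0$ as $||x||\to\infty$; from now on I drop the superscript $\szero$. Writing $\sigma(x)-t(x)=\sum_{i=1}^{K(x)-1}\bigl(u_{i+1}(x)-u_i(x)\bigr)$ and using that there are $O(||x||^{d-1})$ sites at any fixed distance, it is enough, by Borel--Cantelli (and then $\varepsilon\downarrow 0$ along a sequence), to show that for each $\varepsilon>0$, $\sum_{x\in\IZ^d}\PO\bigl(\sigma(x)-t(x)>\varepsilon||x||\bigr)<\infty$.

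To this end I would fix $x$ with $n:=||x||$ large, pick the scale $t_n:=C\log n$ with $C$ large, and introduce the good event on which all of the following hold: (a)~$n/(2M)\le t(x)\le n/c+t_n$, the lower bound off an event of probability $\le Ae^{-Bn/(2M)}$ by Lemma~\ref{lem:InfContainedExponetialSpeed} (if $\bfeta_{n/(2M)}\subseteq\B_{n/2}$ then $||x||\le n/2$), the upper bound off an event of probability $\le Ae^{-Bt_n}$ by \eqref{ConjectureALL}; (b)~$K(x)\le C'\log n$, off an event of probability $\le(1-\rho)^{C'\log n}$ by Lemma~\ref{lemma:K_as_finite}; (c)~$\max_{1\le i<K(x)}\bigl(v_i(x)-u_i(x)\bigr)\le t_n/2$, off an event of probability $\le Ae^{-Bt_n}$ by Lemma~\ref{Lemma:exp_control_diff_vu}; and (d)~$N_{L_n}(x,t_n)=0$ and $\hat{N}_{L_n}(x,t_n)=0$ with $L_n:=n/c+2C'\gamma(\log n)t_n$, each off an event of probability $\le A(1+L_n)e^{-Bt_n}$ by Lemma~\ref{Lemma:exp_control_N} and Corollary~\ref{corollary:exp_bound_N_hat}.

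On the intersection of this good event with $\{\tau^{\origin}=\infty\}$ I would argue as in the proof of Lemma~\ref{lemma:set_inklusion}: starting the bookkeeping at the time $s:=t(x)-t_n>0$ (so that $u_1(x)=t(x)=s+t_n$), the absence of bad growth boxes in (d) permits a recursive application of Lemma~\ref{Lemma:bound_u}, where (c) supplies the required inclusions $s+t_n\le u_i(x)\le s+L_n$ of the time windows and the auxiliary count $\hat{N}$ excludes the excursions of the finite-lifetime sub-infections issued from the sites near $x$; this gives $u_{i+1}(x)-u_i(x)\le\gamma t_n$ for every $1\le i<K(x)$, whence $\sigma(x)-t(x)\le(K(x)-1)\gamma t_n\le C'\gamma C(\log n)^2=o(n)\le\varepsilon n$ for $n$ large. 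Consequently $\PO\bigl(\sigma(x)-t(x)>\varepsilon n\bigr)$ is at most the sum of the failure probabilities above, namely $\le A''\,n^{1-BC}+A''\,n^{-C'|\log(1-\rho)|}$; choosing $C>(d+1)/B$ and $C'>d/|\log(1-\rho)|$ makes this summable over $x\in\IZ^d$, and the proof is complete.

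The step I expect to be the main obstacle is this last recursive use of Lemma~\ref{Lemma:bound_u}. For the lack-of-subadditivity estimate (Lemma~\ref{Lemma_lackSubadd}, via Lemma~\ref{lemma:set_inklusion}) one disposes of a source point $x+y$ carrying an infinite $\szero$-infection path at the reference time, so the relevant space--time box around $x$ has bounded height and the estimate is uniform in $x$. Here no infinite-progeny vertex in $\B_{Mt_n+2}(x)$ is available at the reference time $t(x)$, and $t(x)$ is moreover not a stopping time, so one is forced to count bad growth boxes starting from time $0$ over a slab whose height $L_n$ is of order $||x||$. This is precisely why the control ceases to be uniform in $||x||$, why the additional count $\hat{N}_L(x,t)$ built from $\hat{E}^{y,\xi}(t)$ is needed to keep the finite-lifetime sub-infections near $x$ from escaping $B_{Mt}$ and thereby breaking the recursion, and why one obtains only an $o(||x||)$ statement rather than a uniform exponential bound as in Lemma~\ref{Lemma_lackSubadd}; the polynomial-in-$||x||$ tails produced this way are nonetheless still summable over $\IZ^d$, which is all that is needed.
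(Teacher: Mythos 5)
Your proposal is correct and takes essentially the same route as the paper: the paper's proof is exactly the reduction to $\xi=\szero$ via Corollary~\ref{Corollary:invariance_2} plus the Garet--Marchand machinery (their Proposition~17, Lemma~18, Corollary~19), whose content is the bad-growth-box counting at scale $t\sim\log||x||$ over a slab of height of order $||x||$, the recursion through Lemma~\ref{Lemma:bound_u}, and the geometric tail of $K(x)$ -- precisely what you spell out. The only differences are bookkeeping: the paper passes through the intermediate tail bound $\PO\big(\sigma(x)\ge t(x)+K(x)(\alpha\log(1+||x||)+u)\big)\le Ae^{-Bu}$ and the $(\log(1+||x||))^p$ moment estimates before deducing almost sure convergence, whereas you apply Borel--Cantelli directly to $\PO\big(\sigma(x)-t(x)>\varepsilon||x||\big)$; moreover, in your direct recursion condition (c) is already produced inside Lemma~\ref{Lemma:bound_u} (via the non-occurrence of the bad event at $u_i(x)$), and applying that lemma with $s=0$, $t=t_n$, $L=L_n-t_n/2$ makes the $\hat N$-count unnecessary, so these ingredients are harmless but redundant.
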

\begin{proof}
    Analogous to \cite[Proposition 17]{garet2012shape} one can show that there exist constants $A,B,\alpha>0$ such that for every $x\in V$ and $u>0$
    \begin{equation*}
        \PO(\sigma(x)\geq t(x)+K(x)(\alpha\log(1+||x||)+u))\leq A\exp{(-B u)}.
    \end{equation*}
    Then one can straightforwardly adapt the proof of \cite[Lemma 18]{garet2012shape} to show that for every $p>0$ there exists some constant $C_p>0$ such that
    \begin{equation*}
        \bar{\IE}(|\sigma^{\szero}(x)-t^{\origin,\szero}(x)|^p)\leq C_p(\log(1+||x||))^p.
    \end{equation*}
    Finally, the statement of the lemma can be shown for $\xi=\szero$ as in \cite[Corollary 19]{garet2012shape}. For general $\xi$ the result follows by applying Corollary~\ref{Corollary:invariance_2}. 
\end{proof}
\subsection{Proof of Theorem~\ref{Conjecture1}}
From our previous results we can now deduce the missing requirements to apply Theorem~\ref{theorem:deshayes_as_shape}.
\begin{corollary}\label{corollary:ALL_essential}
    There exist constants $A,B,C>0$ such that for all $t>0$ and all $x\in V$
    \begin{equation*}
        \PC(s^\xi(x)>C ||x||+t)\leq A\exp(-B\sqrt{t}).
    \end{equation*}
\end{corollary}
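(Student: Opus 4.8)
The plan is to reduce to the case $\xi=\szero$ and then split the essential hitting time into the ordinary first hitting time plus the ``restart overhead''. By Remark~\ref{remark:initial_cond}, $s^\xi(x)$ has the same law under $\PC$ as $\sigma^{\szero}(x)=\sigma(x)$ has under $\PO$, so it suffices to show $\PO(\sigma(x)>C||x||+t)\le A\exp(-B\sqrt t)$ for a suitable $C$. I would fix any $C>1/c$ and set $D:=C-1/c>0$. Since $\sigma(x)\ge t(x):=t^{\szero}(x)$ holds $\PO$-a.s.\ (on survival $t(x)<\infty$ and $K(x)\ge 1$, and the $u_i(x)$ are non-decreasing), the event $\{\sigma(x)>C||x||+t\}$ is contained in
\begin{equation*}
\Big\{t(x)>\tfrac{||x||}{c}+\tfrac t2\Big\}\cup\Big\{\sigma(x)-t(x)>D||x||+\tfrac t2\Big\}.
\end{equation*}
The probability of the first event is at most $A\exp(-Bt/2)/\IP(\tau^{\origin,\szero}=\infty)$ by \eqref{ConjectureALL} together with $\PO(\,\cdot\,)\le\IP(\,\cdot\,)/\IP(\tau^{\origin,\szero}=\infty)$, which is even exponentially small in $t$. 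The real work is to bound $\PO(\sigma(x)-t(x)>D||x||+t/2)$, and for this I would distinguish two regimes according to whether $||x||$ is small or large compared with $\exp(\sqrt t)$.

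When $||x||>\exp(\sqrt t)$, I would invoke the moment bound $\bar{\IE}\big(|\sigma(x)-t(x)|^2\big)\le C_2(\log(1+||x||))^2$ established inside the proof of Lemma~\ref{lem:limit_diff_hit_ess}. Since $r\mapsto\log(1+r)/r$ is non-increasing, Markov's inequality gives $\PO(\sigma(x)-t(x)>D||x||)\le C_2D^{-2}\big(\log(1+||x||)/||x||\big)^2\le C_2D^{-2}(\sqrt t+1)^2\exp(-2\sqrt t)\le (c_0C_2/D^2)\exp(-\sqrt t)$, using that $(\sqrt t+1)^2\exp(-\sqrt t)$ is bounded. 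When $||x||\le\exp(\sqrt t)$, I would combine the tail estimate $\PO\big(\sigma(x)\ge t(x)+K(x)(\alpha\log(1+||x||)+u)\big)\le A\exp(-Bu)$, $u>0$ (again from the proof of Lemma~\ref{lem:limit_diff_hit_ess}), with the geometric tail $\PO(K(x)>n)\le(1-\rho)^n/\IP(\tau^{\origin,\szero}=\infty)$ from Lemma~\ref{lemma:K_as_finite}. Fix $\beta\in(0,1/(8\alpha))$ and put $n_0:=\lceil\beta\sqrt t\rceil$; on $\{K(x)\le n_0\}$ choose $u^*:=(D||x||+t/2)/n_0-\alpha\log(1+||x||)$, so that $n_0(\alpha\log(1+||x||)+u^*)=D||x||+t/2$. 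Using $\log(1+||x||)\le\sqrt t+1$ in this regime and $n_0\le\beta\sqrt t+1$, one checks $u^*\ge\tfrac{\alpha}{2}\sqrt t>0$ once $t$ exceeds some $t_0$. Since $K(x)\le n_0$ forces $K(x)(\alpha\log(1+||x||)+u^*)\le D||x||+t/2$, splitting on $\{K(x)\le n_0\}$ versus $\{K(x)>n_0\}$ yields $\PO(\sigma(x)-t(x)>D||x||+t/2)\le A\exp(-Bu^*)+(1-\rho)^{n_0}/\IP(\tau^{\origin,\szero}=\infty)\le A'\exp(-B'\sqrt t)$. Finally I would enlarge the constant to absorb $t\le\max(t_0,1)$ and merge the two regimes.

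The main obstacle is the non-uniformity in $||x||$ of the control of $\sigma(x)-t(x)$: in contrast with the lack-of-subadditivity term of Lemma~\ref{Lemma_lackSubadd}, there is no source point of infinite progeny near $x$ on the ray from the origin, which is precisely why the estimate in the proof of Lemma~\ref{lem:limit_diff_hit_ess} retains the extra $\log(1+||x||)$ factor. This is what forces the case split at $||x||=\exp(\sqrt t)$ and is also why only the stretched-exponential rate $\exp(-B\sqrt t)$ (rather than $\exp(-Bt)$) comes through: in the regime $||x||\le\exp(\sqrt t)$ one must take $n_0$ of order $\sqrt t$ so that the restart overhead becomes comparable to $t$, while in the regime $||x||>\exp(\sqrt t)$ the factor $\exp(-\sqrt t)$ comes straight from the threshold. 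Ensuring that $t_0$, $\beta$ and all constants can be fixed once and for all, uniformly in $x$ and $t$, is the only point demanding some care.
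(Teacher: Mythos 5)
Your proof is correct and follows essentially the same route as the paper: the reduction to $\xi=\szero$ via Remark~\ref{remark:initial_cond} (i.e.\ Corollary~\ref{Corollary:invariance_2}) is exactly the paper's step, and the rest reconstructs the argument of Garet--Marchand's Corollary~20 that the paper cites, using \eqref{ConjectureALL}, the geometric tail of $K(x)$ from Lemma~\ref{lemma:K_as_finite}, and the two estimates recorded in the proof of Lemma~\ref{lem:limit_diff_hit_ess}. Your explicit case split at $\|x\|=\exp(\sqrt{t})$ with the Chebyshev bound in the far regime is only a minor bookkeeping variation, and the constants work out as you indicate.
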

\begin{proof}
    The case $\xi=\szero$ follows as in \cite[Corollary 20]{garet2012shape}. For the general case the claim follows by Corollary~\ref{Corollary:invariance_2}(2) since $s^\xi$ under $\PC$ is distributed as $\sigma^{\szero}=s^{\szero}$ under $\PO$.
\end{proof}
We further have that all moments of the essential hitting time exist.
\begin{corollary}\label{corollary:p-moment}
    For every $p\geq 1$ there exists a constants $C'_p>0$ such that
    \begin{equation*}
        \Bar{\IE}^\xi[s^\xi(x)^p]\leq C'_p(1+||x||)^p.
    \end{equation*}
\end{corollary}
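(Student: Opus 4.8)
The plan is to obtain the claimed moment bound by integrating the tail estimate of Corollary~\ref{corollary:ALL_essential}. By Remark~\ref{remark:initial_cond} the law of $s^\xi(x)$ under $\PC$ coincides with the law of $\sigma^{\szero}(x)$ under $\PO$, so it suffices to bound the $p$-th moment of $s^{\szero}(x)=\sigma^{\szero}(x)$ under $\PO$; the resulting constant then depends on $p$ only and is in particular uniform in $\xi$ and in $x\in V$. Note also that $s^\xi(x)$ is $\PC$-a.s.\ finite: by Corollary~\ref{corollary:ALL_essential}, $\PC(s^\xi(x)=\infty)\le\PC(s^\xi(x)>C||x||+t)\le Ae^{-B\sqrt t}\to0$ as $t\to\infty$, so the moments are well defined. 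Using the layer-cake representation,
\[
\Bar{\IE}^\xi[s^\xi(x)^p]=\int_0^\infty p\,t^{p-1}\,\PC(s^\xi(x)>t)\,dt,
\]
I would split this integral at $t_0:=C||x||$, where $C$ is the constant from Corollary~\ref{corollary:ALL_essential}. On $[0,t_0]$ I bound $\PC(s^\xi(x)>t)\le1$, so that part contributes at most $(C||x||)^p$.

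For the range $t>t_0$ I substitute $t=C||x||+u$ with $u>0$ and apply Corollary~\ref{corollary:ALL_essential} in the form $\PC(s^\xi(x)>C||x||+u)\le Ae^{-B\sqrt u}$, which yields
\[
\int_{t_0}^\infty p\,t^{p-1}\,\PC(s^\xi(x)>t)\,dt\ \le\ pA\int_0^\infty(C||x||+u)^{p-1}e^{-B\sqrt u}\,du.
\]
The elementary inequality $(a+b)^{p-1}\le2^{p-1}(a^{p-1}+b^{p-1})$, valid for $p\ge1$, bounds the right-hand side by
\[
2^{p-1}pA\,(C||x||)^{p-1}\int_0^\infty e^{-B\sqrt u}\,du\ +\ 2^{p-1}pA\int_0^\infty u^{p-1}e^{-B\sqrt u}\,du.
\]
Both integrals are finite: the substitution $v=\sqrt u$ turns $\int_0^\infty u^{q}e^{-B\sqrt u}\,du$ into $2\int_0^\infty v^{2q+1}e^{-Bv}\,dv=2\,\Gamma(2q+2)B^{-(2q+2)}<\infty$ for every $q\ge0$. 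Hence the tail range contributes at most a constant depending only on $p$ times $(1+||x||)^{p-1}$.

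Combining the two ranges gives $\Bar{\IE}^\xi[s^\xi(x)^p]\le(C||x||)^p+\mathrm{const}_p\,(1+||x||)^{p-1}\le C'_p(1+||x||)^p$ for a suitable $C'_p$ depending only on $p$, as claimed. This argument is essentially routine (it parallels the corresponding step in \cite{garet2012shape}); the only points requiring a little care are keeping the polynomial prefactor $(C||x||+u)^{p-1}$ under control so that the $||x||$-dependence does not exceed the stated order $(1+||x||)^p$, and invoking Remark~\ref{remark:initial_cond} at the outset to reduce to $\xi=\szero$ and thereby secure the uniformity of $C'_p$ in $\xi$.
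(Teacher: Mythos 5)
Your proof is correct and takes essentially the same route as the paper: both deduce the bound directly from the tail estimate of Corollary~\ref{corollary:ALL_essential}, the paper via Minkowski and Fubini--Tonelli as in \cite[Corollary~21]{garet2012shape}, you via the equivalent layer-cake computation split at $C||x||$.
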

\begin{proof}
    This is again a immediate consequences of Corollary \ref{corollary:ALL_essential} and can be deduced by applying the Minkowski inequality and Fubini-Tonelli. See \cite[Corollary~21]{garet2012shape}.
\end{proof}

Lastly, before proving our theorem, we clarify that assumption \eqref{prop:4} also holds. Let $C$ be the constant from Corollary \ref{corollary:ALL_essential}.
\begin{lemma}\label{Lemma:uniform_continuity_essh}
     For every $\varepsilon>0$, $\PC$-almost surely, there exists some $R>0$ such that 
    \begin{equation}\label{eq:continuity_like}
        \big(||x||\geq R\text{ and }||x-y||\leq\varepsilon ||x||\big)\Rightarrow |s^{\xi}(x)-s^{\xi}(y)|\leq 3C\varepsilon||x||\quad\forall x,y\in \IZ^d.
    \end{equation}
\end{lemma}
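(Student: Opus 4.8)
The plan is to mimic the endgame of \cite[Corollary~19 and the lemmas preceding it]{garet2012shape}: first reduce to the case $\xi=\szero$, then combine the approximate subadditivity of $\sigma^{\szero}$ with the stretched-exponential tail bounds already established, and finish with a Borel--Cantelli argument over dyadic annuli. For the reduction: by Corollary~\ref{Corollary:invariance_2}(1) with base point $\origin$ the pushforward of $\PC$ under $\hat\theta^\xi_{\origin}$ is $\PO$, and since $s^\xi(x)=\sigma^{\szero}(x)\circ\hat\theta^\xi_{\origin}$, the whole family $(s^\xi(x))_{x\in\IZ^d}$ has, under $\PC$, the same joint law as $(\sigma^{\szero}(x))_{x\in\IZ^d}$ under $\PO$ (cf.\ Remark~\ref{remark:initial_cond}). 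Hence it suffices to show that $\PO$-almost surely there is $R>0$ with $|\sigma^{\szero}(x)-\sigma^{\szero}(y)|\leq 3C\varepsilon||x||$ whenever $||x||\geq R$ and $||x-y||\leq\varepsilon||x||$, with $C$ the constant of Corollary~\ref{corollary:ALL_essential}.

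Next I would use the approximate subadditivity already exploited to verify \eqref{prop:1}, namely $\sigma^{\szero}(x+z)\leq\sigma^{\szero}(x)+\sigma^{\szero}(z)\circ\hat\theta^{\szero}_x+r^{\szero}(x,z)$ for all $x,z\in V$. Applying it once with $z=x-y$ based at $y$, and once with the roles of $x$ and $y$ interchanged, yields
\[
\sigma^{\szero}(x)-\sigma^{\szero}(y)\leq\sigma^{\szero}(x-y)\circ\hat\theta^{\szero}_y+r^{\szero}(y,x-y),\qquad
\sigma^{\szero}(y)-\sigma^{\szero}(x)\leq\sigma^{\szero}(y-x)\circ\hat\theta^{\szero}_x+r^{\szero}(x,y-x).
\]
Using $||x-y||\leq\varepsilon||x||$ one then checks that $|\sigma^{\szero}(x)-\sigma^{\szero}(y)|\leq 3C\varepsilon||x||$ holds off the event
\[
F(x,y):=\bigl\{\sigma^{\szero}(x-y)\circ\hat\theta^{\szero}_y>C||x-y||+C\varepsilon||x||\bigr\}\cup\bigl\{\sigma^{\szero}(y-x)\circ\hat\theta^{\szero}_x>C||y-x||+C\varepsilon||x||\bigr\}\cup\bigl\{r^{\szero}(y,x-y)>C\varepsilon||x||\bigr\}\cup\bigl\{r^{\szero}(x,y-x)>C\varepsilon||x||\bigr\}.
\]

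Then I would bound $\PO(F(x,y))$. Since $\PO(\,\cdot\,)=\PO(\,\cdot\circ\hat\theta^{\szero}_{z})$ for every $z\in V$ (Corollary~\ref{Corollary:invariance_2}(1) with $\xi=\szero$), the shifted variables $\sigma^{\szero}(x-y)\circ\hat\theta^{\szero}_y$ and $\sigma^{\szero}(y-x)\circ\hat\theta^{\szero}_x$ are distributed under $\PO$ as $\sigma^{\szero}(x-y)$ and $\sigma^{\szero}(y-x)$, so Corollary~\ref{corollary:ALL_essential} bounds the first two events by $A\exp(-B\sqrt{C\varepsilon||x||})$, and Lemma~\ref{Lemma_lackSubadd} bounds the two error-term events by $A\exp(-B\sqrt{C\varepsilon||x||})$, uniformly in the arguments. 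Altogether $\PO(F(x,y))\leq A'\exp(-B'\sqrt{\varepsilon||x||})$ for constants $A',B'>0$ independent of $x,y$.

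The final step is Borel--Cantelli over dyadic scales: for $n\in\IN$ put $F_n:=\bigcup F(x,y)$, the union over all $x$ with $2^n\leq||x||<2^{n+1}$ and all $y$ with $||x-y||\leq\varepsilon||x||$; there are at most $C_d\,2^{2nd}$ such pairs, so $\PO(F_n)\leq C_d\,2^{2nd}A'\exp(-B'\sqrt{\varepsilon\,2^{n}})$, which is summable since the stretched exponential dominates any polynomial in $2^{n}$. Hence $\PO$-almost surely only finitely many $F_n$ occur, and taking $R=2^{N}$ for $N$ beyond the last such $n$ gives \eqref{eq:continuity_like} for $\xi=\szero$; the general case follows from the reduction above. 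I expect the only delicate points to be the precise counting of pairs $(x,y)$ and the careful use of the shift-invariance of $\PO$ to transport the tail estimates to the shifted random variables — everything else is immediate from the estimates already in hand, and indeed this is the most routine of the five hypotheses of Theorem~\ref{theorem:deshayes_as_shape} to verify.
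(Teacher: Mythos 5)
Your proposal is correct and follows essentially the same route as the paper: the paper's (terse) proof handles $\xi=\szero$ by the argument of \cite[Lemma~26]{garet2012shape} -- precisely the combination of approximate subadditivity, the stretched-exponential bounds of Corollary~\ref{corollary:ALL_essential} and Lemma~\ref{Lemma_lackSubadd}, and a Borel--Cantelli argument that you reconstruct -- and then transfers the statement to general $\xi$ via the shift invariance of Corollary~\ref{Corollary:invariance_2}, exactly as in your reduction (you invoke item (1), i.e.\ the full measure-preservation of $\hat\theta^\xi_\origin$, which is indeed what carries the whole almost-sure event over).
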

\begin{proof} Analogously as in \cite[Lemma 26]{garet2012shape} it can be shown that \eqref{eq:continuity_like} holds for $\xi=\szero$. Then the claim follows by Corollary~\ref{Corollary:invariance_2}(2).
\end{proof}

Having prepared all ingredients needed we start by showing our asymptotic shape result for the case $\xi=\szero$.
\begin{theorem}\label{ShapeTheoremForZeroConfig}
     There exists a norm $\mu:\IZ^d\to\IR^+$ such that for every $\varepsilon>0$
\begin{equation*}
	\PO\big(\exists s\geq0 :  t(1-\varepsilon)\IB_{\mu}\subset \bfH_t^{\szero}\subset t(1+\varepsilon)\IB_{\mu} \,\,\forall t\geq s\big)=1,
\end{equation*}
where $\B_\mu$ is the unit ball with respect to $\mu$.
\end{theorem}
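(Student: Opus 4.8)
The plan is to deduce Theorem~\ref{ShapeTheoremForZeroConfig} from the abstract subadditive ergodic theorem, Theorem~\ref{theorem:deshayes_as_shape}, applied to the essential hitting times $s(x):=\sigma^{\szero}(x)=s^{\szero}(x)$ on the probability space equipped with $\PO$, and then to translate the resulting shape result for the set $\bfG_t^{\szero}$ of essentially hit vertices into one for $\bfH_t^{\szero}$.

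First I would verify the five hypotheses of Theorem~\ref{theorem:deshayes_as_shape} with $v(y,x):=\sigma^{\szero}(y)\circ\hat{\theta}^{\szero}_x$ and $r(x,y):=\big(s(x+y)-s(x)-v(y,x)\big)^+$. The inequality in~\eqref{prop:1} then holds by the definition of $r$; that $v(y,x)$ has the law of $s(y)$ and is independent of $s(x)$ under $\PO$ is exactly Corollary~\ref{Corollary:invariance_2}(3) with $\xi=\szero$ (noting that $\hat{\theta}^{\szero}_{\origin}$ acts trivially on $\sigma^{\szero}$); the equality in law of $s(x)$ and $s(-x)$ follows since the Poisson construction, the initial state $\szero$, and the event $\{\tau^{\origin,\szero}=\infty\}$ are all invariant under the coordinate reflections $R_i$ (Assumption~\ref{Ass:BackgroundMaps}(2)) and $-x=R_1\cdots R_d(x)$; finiteness of the second moment of $s(x)$ is Corollary~\ref{corollary:p-moment}. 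For~\eqref{prop:2} I would invoke Corollary~\ref{corollary:p-moment_rest_term}, which bounds the second moment of $r(nx,py)$ by a constant uniformly in $n$ and $p$, so that one may take $\alpha_{x,y}=0$. Condition~\eqref{prop:3} follows from Corollary~\ref{corollary:ALL_essential} by writing $2Cn\|x\|=C\|nx\|+Cn\|x\|$ and using the stretched-exponential tail, and condition~\eqref{prop:5} follows from~\eqref{ConjectureAML}: since $\sigma^{\szero}(nx)\ge u_1^{\szero}(nx)=t^{\szero}(nx)$ and $\{t^{\szero}(nx)\le u\}\subset\{nx\in\bfH_u^{\szero}\}$, for $c>0$ small enough one has $\PO(s(nx)<cn\|x\|)\le\IP(\bfH_{cn\|x\|}^{\szero}\not\subset\B_{Mcn\|x\|})$, which decays exponentially. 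Finally~\eqref{prop:4} is precisely Lemma~\ref{Lemma:uniform_continuity_essh}.

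Theorem~\ref{theorem:deshayes_as_shape} then produces a norm $\mu$ on $\IR^d$ such that for every $\varepsilon'>0$, $\PO$-almost surely $t(1-\varepsilon')\IB_\mu\subset\bfG_t^{\szero}\subset t(1+\varepsilon')\IB_\mu$ for all large $t$, where $\IB_\mu$ denotes the unit ball of $\mu$ and $\bfG_t^{\szero}$ is the set $\hat G_t$ associated with $s=\sigma^{\szero}$. Passing to $\bfH_t^{\szero}$, the lower bound is immediate: $\sigma^{\szero}(x)\le t$ forces $x\in\bfeta_{\sigma^{\szero}(x)}^{\origin,\szero}$, as $\sigma^{\szero}(x)=u_{K^{\szero}(x)}^{\szero}(x)$ is an infection time, so $\bfG_t^{\szero}\subset\bfH_t^{\szero}$. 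For the matching upper bound I would intersect this a.s.\ event (with $\varepsilon'$ small) with the a.s.\ event that $\bfH_t^{\szero}\subset\B_{2Mt}$ for all large $t$ (from~\eqref{ConjectureAML} and Borel--Cantelli along integer times) and the a.s.\ event that $\sigma^{\szero}(x)-t^{\szero}(x)\le\delta\|x\|$ for all $\|x\|$ large, for a fixed $\delta>0$ (Lemma~\ref{lem:limit_diff_hit_ess} with $\xi=\szero$). On the intersection, for $t$ large and $x\in\bfH_t^{\szero}$ with $\|x\|$ large, the growth bound gives $\|x\|\le c_1 t$ with $c_1$ a multiple of $M$, hence $\sigma^{\szero}(x)\le t^{\szero}(x)+\delta\|x\|\le(1+c_1\delta)t$, so $x\in\bfG_{(1+c_1\delta)t}^{\szero}\subset(1+c_1\delta)(1+\varepsilon')t\,\IB_\mu$; the finitely many remaining $x$ lie in $t(1+\varepsilon)\IB_\mu$ once $t$ is large since $\mu$ is a norm. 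Choosing $\varepsilon'\le\varepsilon$ and $\delta,\varepsilon'$ small enough that $(1+c_1\delta)(1+\varepsilon')\le1+\varepsilon$, and letting $s$ be the maximum of the finitely many random thresholds, one obtains $t(1-\varepsilon)\IB_\mu\subset\bfH_t^{\szero}\subset t(1+\varepsilon)\IB_\mu$ for all $t\ge s$.

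The step I expect to be the main obstacle is the upper bound in the last paragraph: one has to exclude that $\bfH_t^{\szero}$ contains vertices infected early but whose essential hitting time lies far in the future, which is handled precisely by the sublinear control $\sigma^{\szero}(x)-t^{\szero}(x)=o(\|x\|)$ of Lemma~\ref{lem:limit_diff_hit_ess} together with the linear a priori bound on $\bfH_t^{\szero}$ from~\eqref{ConjectureAML}; everything else amounts to bookkeeping of the several almost-sure random thresholds so that a single $s$ works for all $t\ge s$.
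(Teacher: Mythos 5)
Your proposal is correct and follows essentially the same route as the paper: apply Theorem~\ref{theorem:deshayes_as_shape} to $\sigma^{\szero}$ under $\PO$, verifying (AS1)--(AS5) via Corollaries~\ref{Corollary:invariance_2}, \ref{corollary:p-moment_rest_term}, \ref{corollary:ALL_essential}, \ref{corollary:p-moment}, Lemma~\ref{Lemma:uniform_continuity_essh} and assumption~\eqref{ConjectureAML}, and then transfer the shape from $\bfG_t^{\szero}$ to $\bfH_t^{\szero}$ using $\sigma^{\szero}\geq t^{\szero}$ for the lower inclusion and Lemma~\ref{lem:limit_diff_hit_ess} together with the linear a priori bound for the upper inclusion. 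The only cosmetic difference is that the paper obtains the upper inclusion by contradiction from the convergence $|t^{\origin,\szero}(x)-\mu(x)|/\|x\|\to 0$, whereas you argue directly with explicit thresholds; the content is the same.
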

\begin{proof}We first show the result for $\sigma(\,\cdot\,)$, i.e.\ that there exists a norm $\mu:\IZ^d\to\IR^+$ such that 
    \begin{equation*}
	\PO\big(\exists s\geq0 :  t(1-\varepsilon)\IB_{\mu}\subset \bfG_t^{\szero}\subset t(1+\varepsilon)\IB_{\mu} \,\,\forall t\geq s\big)=1,
\end{equation*}
with $\bfG_t^{\szero}:=\{x\in \IZ^d:\sigma^{\szero}(x)\leq t\}+\big[\unaryminus\tfrac{1}{2},\tfrac{1}{2}\big]^d$. Thus, we only have to check that the assumptions of Theorem \ref{theorem:deshayes_as_shape} are satisfied under the probability measure $\PO$ for the essential hitting times $(\sigma^{\szero}(x))_{x\in \IZ^d}$, the error terms $(r^{\szero}(x,y))_{x,y\in \IZ^d}$ and $(v(x,y))_{x,y\in \IZ^d}$ with $v(y,x)=\sigma^{\szero}(y)\circ\hat{\theta}^{\szero}_x$. Clearly, by symmetry, $\sigma^{\szero}(x)$ and $\sigma^{\szero}(-x)$ have the same distribution and Corollary \ref{corollary:p-moment} guarantees finite second moment of the essential hitting times. The first hypothesis \eqref{prop:1} follows by the definition of $r^{\szero}(x,y)$ and Corollary \ref{Corollary:invariance_2}. The second property follows by Corollary \ref{corollary:p-moment_rest_term}. The at least linear growth of hypothesis \eqref{prop:3} is given by Corollary \ref{corollary:ALL_essential} and the at most linear growth (hypothesis \eqref{prop:5}) follows by the fact that $t^{\origin,\szero}(x)\leq\sigma^{\szero}(x)$ and the at most linear growth of the hitting time $t$, i.e\ assumption~\eqref{ConjectureAML}. Finally, hypothesis \eqref{prop:4} follows by Lemma \ref{Lemma:uniform_continuity_essh}.

Having established the shape theorem for the essential hitting time $\sigma^{\szero}(x)$ it follows for the first hitting time $t^{\origin,\szero}(x)$ by Lemma~\ref{lem:limit_diff_hit_ess} that
    \begin{equation}\label{eq:ConvOfHittingTimes}
        \lim_{||x||\to\infty}\frac{|\mu(x)-t^{\origin,\szero}(x)|}{||x||}=0 \qquad \PO\text{-almost surely for all } x\in \IZ^d.
    \end{equation}
    Since by definition $\sigma^{\szero}(x)\geq t^{\origin,\szero}(x)$ it only remains to show that the upper inclusion is true. We show this by contradiction. 
    
    Assume that there exists an increasing sequence of times $(t_n)_{n\geq 1}$, with $t_n \to\infty$ and $t_{n}^{-1}\bfH_{t_n}\not\subset (1 + \varepsilon)\B_{\mu}$. This means that for any $n\geq 1$ there exists a $x_n\in V$ such that $t^{\origin,\szero}(x) \leq t_n$ and $\mu(x_n)>(1 + \varepsilon)t_n$. This implies that $\mu(x_n)> (1 + \varepsilon)t^{\origin,\szero}(x_n)$
    for all $n\geq 1$. Furthermore, the second inequality $\mu(x_n)>(1 + \varepsilon)t_n$ implies that $||x_n||\to \infty$, since $\mu$ is a norm. This results in a contradiction of \eqref{eq:ConvOfHittingTimes}.
\end{proof}
\begin{corollary}\label{corollar:ConvergenceForAllBackgrounds} For all $\xi\in[N]^{V\cup E}$ and all $x\in V$ it holds that
    \begin{equation}\label{ConvergenceForAllBackgrounds}
         \lim_{n\to\infty}\frac{s^{\xi}(nx)}{n}=\lim_{n\to\infty}\frac{\sigma^{\szero}(nx)\circ\hat{\theta}^{\xi}_{\origin}}{n}=\mu(x)
    \end{equation}
    almost surely with respect to $\PC$. This implies in particular that for every $\varepsilon>0$,
    \begin{equation*}
        \PC\big(\exists s\geq0 :  t(1-\varepsilon)\IB_{\mu}\subset \bfH_t^{\xi}\,\,\forall t\geq s\big)=1.
    \end{equation*} 
\end{corollary}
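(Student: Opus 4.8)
The plan is to transport the $\xi=\szero$ shape result of Theorem~\ref{ShapeTheoremForZeroConfig} to an arbitrary initial background $\xi$ by means of the invariance identity of Corollary~\ref{Corollary:invariance_2}(1), $\PO(\,\cdot\,)=\PC(\,\cdot\,\circ\hat{\theta}^\xi_\origin)$, and then to pass from the shifted essentially hit vertices back to $\bfH_t^\xi$ via a restart argument relying on worst-case monotonicity. For the first assertion: the proof of Theorem~\ref{ShapeTheoremForZeroConfig} shows, via Theorem~\ref{theorem:deshayes_as_shape}, that $\PO\big(\sigma^{\szero}(nx)/n\to\mu(x)\text{ for all }x\in\IZ^d\big)=1$; pulling this full-measure event back under $\hat{\theta}^\xi_\origin$ and using $s^\xi(nx)=\sigma^{\szero}(nx)\circ\hat{\theta}^\xi_\origin$ gives $\PC\big(s^\xi(nx)/n\to\mu(x)\text{ for all }x\big)=1$. (Alternatively one may apply Theorem~\ref{theorem:deshayes_as_shape} directly to $(s^\xi(x))_{x\in\IZ^d}$ under $\PC$ --- the hypotheses being supplied by Corollaries~\ref{corollary:p-moment_rest_term}, \ref{corollary:ALL_essential}, \ref{corollary:p-moment} and Lemma~\ref{Lemma:uniform_continuity_essh} --- and then identify the resulting norm with $\mu$ by Remark~\ref{remark:initial_cond}.)

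For the inclusion, Theorem~\ref{ShapeTheoremForZeroConfig} also yields $\PO\big(\exists s\geq0: t(1-\varepsilon)\IB_\mu\subset\bfH_t^{\szero}\ \forall t\geq s\big)=1$; pulling this back under $\hat{\theta}^\xi_\origin$ shows that, $\PC$-almost surely, there is a finite random $s_0$ with $t(1-\varepsilon)\IB_\mu\subset\bfH_t^{\szero}\circ\hat{\theta}^\xi_\origin$ for all $t\geq s_0$, where $\bfH_t^{\szero}\circ\hat{\theta}^\xi_\origin$ is the range up to time $t$ of the infection process restarted at time $\sigma^\xi(\origin)$ from $(\delta_\origin,\szero)$. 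On $\{\tau^{\origin,\xi}=\infty\}$ one has $\sigma^\xi(\origin)<\infty$ almost surely (Lemma~\ref{lemma:K_as_finite} with Lemma~\ref{Lemma:equivalence_K_2}) and $\origin\in\bfeta_{\sigma^\xi(\origin)}^{\origin,\xi}$, since $\sigma^\xi(\origin)$ is a (re)infection time of $\origin$. The crucial step is then
\[
    \bfH_t^{\szero}\circ\hat{\theta}^\xi_\origin\ \subseteq\ \bfH_{\sigma^\xi(\origin)+t}^\xi\qquad\text{for all }t\geq0:
\]
restarting the CPDRE at time $\sigma^\xi(\origin)$ from $(\delta_\origin,\bfxi_{\sigma^\xi(\origin)}^\xi)$, monotonicity of $\bfeta$ in the infection configuration (a consequence of additivity, Proposition~\ref{prop:Monotonicity}) together with $\delta_\origin\leq\bfeta_{\sigma^\xi(\origin)}^{\origin,\xi}$ shows that this restarted process stays below $\bfeta^{\origin,\xi}$ shifted by $\sigma^\xi(\origin)$, while worst-case monotonicity (Assumption~\ref{Ass:WS-Monotonicity}) shows that lowering the restart background to $\szero$ only shrinks the infection further.

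Finally, combining the two displays gives, $\PC$-almost surely, $t(1-\varepsilon)\IB_\mu\subset\bfH_{\sigma^\xi(\origin)+t}^\xi$ for all $t\geq s_0$; writing $t'=\sigma^\xi(\origin)+t$ and using $\sigma^\xi(\origin)<\infty$ we get $t'(1-2\varepsilon)\IB_\mu\subset\bfH_{t'}^\xi$ for all large $t'$, and since $\varepsilon>0$ was arbitrary the stated inclusion follows. I expect the main obstacle to be the displayed set inclusion: it is precisely where worst-case monotonicity is needed, and since $\sigma^\xi(\origin)$ is not a stopping time one must first condition on $\{K^\xi(\origin)=k\}$ --- on which $\sigma^\xi(\origin)=u_k^\xi(\origin)$ is a stopping time --- and then invoke the strong Markov property and the monotone coupling, exactly as in the proof of Lemma~\ref{Lemma:invariance_2}.
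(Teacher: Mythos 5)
Your proposal is correct, but it takes a genuinely different route from the paper's. The paper proves the corollary by running Theorem~\ref{theorem:deshayes_as_shape} a second time, now applied to the shifted times $s^\xi$ under $\PC$ (with exactly the inputs you list: Corollaries~\ref{corollary:p-moment_rest_term}, \ref{corollary:ALL_essential}, \ref{corollary:p-moment} and Lemma~\ref{Lemma:uniform_continuity_essh}), which produces a norm $\mu^\xi$; it then identifies $\mu^\xi=\mu$ via the accompanying $L^1$/$L^2$-convergence together with the distributional identity of Remark~\ref{remark:initial_cond}, and settles the inclusion in one line by a hitting-time comparison ``by construction''. You instead bypass the second application of the subadditive machinery: pulling the two full-measure events of Theorem~\ref{ShapeTheoremForZeroConfig} back under $\hat\theta^\xi_\origin$ via Corollary~\ref{Corollary:invariance_2}(1) gives convergence to the \emph{same} $\mu$ with no identification step, and you obtain the inclusion by translating the pulled-back shape statement for the restarted process into one for $\bfH^\xi$ through additivity (Proposition~\ref{prop:Monotonicity}) plus worst-case monotonicity, absorbing the $\PC$-a.s.\ finite offset $\sigma^\xi(\origin)$ (finite by Lemmas~\ref{lemma:K_as_finite} and \ref{Lemma:equivalence_K_2}) by passing from $\varepsilon$ to $2\varepsilon$. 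The substance is shared---both arguments rest on the invariance of Lemma~\ref{Lemma:invariance_2} and on path gluing enabled by Assumption~\ref{Ass:WS-Monotonicity}---but your version is more economical on the convergence claim and more explicit on the inclusion: you spell out the comparison $t^{\origin,\xi}(x)\leq\sigma^\xi(\origin)+s^\xi(x)$, including the fact that $\sigma^\xi(\origin)$ is not a stopping time (handled by conditioning on $K^\xi(\origin)=k$ and the strong Markov property), which the paper compresses into its terse inequality between $s^\xi$ and $t^{\origin,\xi}$. What the paper's route buys in exchange is the $L^1$/$L^2$ convergence of $s^\xi(nx)/n$ under $\PC$, which a pullback of an almost-sure event alone does not deliver.
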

\begin{proof}
    We use again the Theorem~\ref{theorem:deshayes_as_shape} from above with the shifted essential hitting time $s^\xi(x)$, the error terms $r^\xi(x,y)$ and $v(y,x)=\sigma^{\szero}(y)\circ\hat{\theta}^{\szero}_x\circ\hat{\theta}^\xi_\origin$. The assumptions \eqref{prop:1}-\eqref{prop:4} as well as the symmetry and finite second moments of $s^\xi(x)$ follow as in Theorem~\ref{ShapeTheoremForZeroConfig}. 
    Hence, we can deduce the existence of some norm $\mu^\xi$ with
    \begin{equation*}
         \lim_{n\to\infty}\frac{s^{\xi}(nx)}{n}=\lim_{n\to\infty}\frac{\sigma^{\szero}(nx)\circ\hat{\theta}^{\xi}_{\origin}}{n}=\mu^\xi(x)
    \end{equation*} $\PC$-almost surely. However, by Theorem~\ref{theorem:deshayes_as_shape}, the convergence also holds in $L^1$, which implies that $\mu^{\xi}=\mu$ for all $\xi$. Finally, the second claim regarding the set inclusion, follows again by the fact that $s^\xi(x)\leq t^{\origin,\xi}(x)$ by construction.
\end{proof}
\begin{proposition}\label{ub_all_bg}
    Let $\mu:\IZ^d\to\IR^+$ be given by \eqref{ConvergenceForAllBackgrounds}. Then it follows that for every $\xi\in[N]^{V\cup E}$ and every $\varepsilon>0$ it holds that
    \begin{equation*}
	    \IP\big(\exists s\geq0 :  \bfH_t^{\xi}\subset t(1+\varepsilon)\B_\mu \,\,\forall t\geq s\big)=1.
    \end{equation*}
\end{proposition}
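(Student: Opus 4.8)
The plan is to combine the confinement of the infection inside a linear cone with the coupling of the background, thereby reducing the statement to the shape theorem for the all-zero environment, Theorem~\ref{ShapeTheoremForZeroConfig}. On the event $\{\tau^{\xi}<\infty\}$ the set $\bfH_t^{\xi}$ is, for $t$ large, equal to the finite set $\bfH^{\xi}_{\tau^{\xi}}$, so the inclusion holds trivially; since $\IP(\tau^{\xi}=\infty)>0$ by Theorem~\ref{thm:CritValueIndependence}, it suffices to work under $\PC$, and it is enough to prove that $\IP(\exists t\geq s:\bfH_t^{\xi}\not\subset t(1+\varepsilon)\B_\mu)\to 0$ as $s\to\infty$. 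First I would invoke Lemma~\ref{lem:InfContainedExponetialSpeed} with $\eta=\{\origin\}$ to obtain a random time $T_0$ with $\IP(T_0>m)\leq Ae^{-Bm}$ such that $\widetilde{\bfeta}^{\origin}_t\subset\B_{Mt}\subset\Phi_t$ for all $t\geq T_0$; since $\bfeta^{\origin,\xi}_t\subset\widetilde{\bfeta}^{\origin}_t$ for every $\xi$, the infection $\bfeta^{\origin,\xi}$ is trapped inside $\B_{Mt}\subset\Phi_t$ for all $t\geq T_0$ as well.

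The second ingredient is that on $\Phi_t$ every initial environment produces the same configuration, so in particular $\bfxi^{\xi}$ and $\bfxi^{\szero}$ coincide there (enlarging $\Phi_t$ if necessary so as to also cover the edges governing the infection dynamics). Hence, on $\{T_0\leq m\}$, from time $m$ onward $\bfeta^{\origin,\xi}$ evolves in a region where the environment agrees with $\bfxi^{\szero}$; by the Markov property and the additivity of the infection process (Proposition~\ref{prop:Monotonicity}) this gives, for $t\geq m$,
\begin{equation*}
    \bfeta^{\origin,\xi}_t=\bigcup_{y\in\bfeta^{\origin,\xi}_m}\bfeta^{y}_{[m],t}\subset\bigcup_{y\in\B_{Mm}}\bfeta^{y}_{[m],t},
\end{equation*}
where $\bfeta^{y}_{[m],\cdot}$ denotes the infection started from $\{y\}$ at time $m$ driven by the (coupled) trajectory $\bfxi^{\szero}$. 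Consequently, on $\{T_0\leq m\}$ and for $t\geq m$, the footprint satisfies $\bfH_t^{\xi}\subset\B_{Mm}\cup\bigcup_{y\in\B_{Mm}}\bfH^{y}_{[m],t}$, where $\bfH^{y}_{[m],t}$ is the region visited by $\bfeta^{y}_{[m],\cdot}$ between times $m$ and $t$, enlarged by $[\unaryminus\tfrac12,\tfrac12]^d$.

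The heart of the matter is to show that each single-source footprint $\bfH^{y}_{[m],\cdot}$ inherits the $\mu$-shape. Here I would use that $\bfeta^{y}_{[m],\cdot}$ stays in a linear cone around $y$ contained in $\Phi$, and couple the trajectory $\bfxi^{\szero}$ (continued past time $m$) with a background $\bfxi^{(m)}$ started afresh from $\szero$ at time $m$: by Assumption~\ref{AssumptionBackground}(ii) these agree, with probability at least $1-Ae^{-Bm}$, on the whole space-time cone occupied by this infection, so that $\bfH^{y}_{[m],\cdot}$ coincides there with the footprint of the corresponding infection in the fresh environment. Since $\bfxi^{(m)}$ starts from the translation-invariant configuration $\szero$ and the Poisson construction is invariant under space and time shifts, this last footprint has the same law as $\bfH^{\szero}$, translated by $y$ and with the time origin at $m$; Theorem~\ref{ShapeTheoremForZeroConfig} then forces it to lie eventually in $t(1+\varepsilon)\B_\mu$. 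Summing the resulting estimate over the $O(m^{d})$ sources $y\in\B_{Mm}$, against the tail $\IP(T_0>m)\leq Ae^{-Bm}$, and letting $s\to\infty$ yields the claim; the matching lower bound is already furnished by Corollary~\ref{corollar:ConvergenceForAllBackgrounds}.

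The main obstacle is precisely this transfer step. One cannot time-shift directly, because the environment seen by $\bfeta^{y}_{[m],\cdot}$ at its birth time $m$ is $\bfxi^{\szero}_m$, which is not the all-zero configuration, and an upper bound on the shape that is uniform over the initial environment is exactly the statement being proved. The device that breaks this apparent circularity is the coupling of $\bfxi^{\szero}_m$ with a genuinely fresh $\szero$-configuration placed at time $m$, which is exponentially accurate by Assumption~\ref{AssumptionBackground}(ii); the delicate point is that this coupling must be made to hold, uniformly in the random and growing family of source points $y\in\B_{Mm}$ and along the \emph{entire} occupied cone rather than pointwise, which forces one to take the confining neighbourhood wide enough to absorb both the finite range $L$ of $\bfxi$ and the unit range of the infection, and to track the exponential constants carefully so that the error still survives the union bound over $\B_{Mm}$ and the summation over $m$.
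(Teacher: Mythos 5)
Your reduction to sources $y\in\B_{Mm}$ inside the permanently coupled region (via Lemma~\ref{lem:InfContainedExponetialSpeed} and additivity) matches the second half of the paper's argument, but the transfer step on which your proof rests has a genuine gap. You claim that the continued trajectory $\bfxi^{\szero}$ after time $m$ can be coupled, ``by Assumption~\ref{AssumptionBackground}(ii), with probability at least $1-Ae^{-Bm}$,'' with a background started afresh from $\szero$ at time $m$, on the whole space-time cone occupied by the restarted infection. Assumption~\ref{AssumptionBackground}(ii) does not give this: it controls the coupling of two copies of the background run from a \emph{common} starting time, after a long time has elapsed since that common start. If you restart a fresh $\szero$-copy at time $m$, its configuration and $\bfxi^{\szero}_m$ genuinely differ at time $m$ (the latter is close in law to $\pi$, not to $\szero$), and they only merge after an \emph{additional} burn-in of order $O(1)$ in probability, uniformly in $m$ --- the agreement probability on the part of the cone just above time $m$ does not tend to $1$ as $m\to\infty$. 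Since the restarted infection interacts with the environment immediately after time $m$ and near its source, the law of its footprint is not that of $\bfH^{\szero}$ up to an exponentially small error, so you cannot invoke Theorem~\ref{ShapeTheoremForZeroConfig} by a space-time shift. Nor can you patch this by domination: the only monotonicity available is worst-case monotonicity, which says the $\szero$-environment is \emph{worst} for the infection, i.e.\ the fresh-$\szero$ footprint is the smaller one --- the wrong direction for an upper bound. This is exactly the circularity you flag, and the proposed device does not break it.

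The paper avoids this by never time-shifting the $\szero$-start. It first proves the upper bound for the \emph{stationary} initial background $\pi$: it considers the event $C_s$ that no recovery symbol arrives at the origin before time $s$, notes that on $C_s$ the footprint of the process restarted at the origin at time $s$ in environment $\bfxi^{\szero}_s$ is dominated by the original footprint (so Theorem~\ref{ShapeTheoremForZeroConfig} with $\varepsilon/2$ applies), and exploits the independence of $C_s$ from the restarted event $A(s,\varepsilon)$ to conclude $\IP(A(s,\varepsilon))=1$ for every $s$; combined with the event $D_s$ this yields the claim for $\pi$, whose time-stationarity is what legitimises the restart. Only then is a general $\xi$ reduced, on $D_s$ and via exactly your decomposition over sources in $\B_{Ms}$, to the $\pi$-started bound. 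If you want to salvage your route, you should replace your fresh-$\szero$ comparison at time $m$ by a comparison with the stationary law (or reproduce the $C_s$-independence trick); as written, the key estimate of your transfer step is false.
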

\begin{proof}
    We proceed in two steps. First, we show that the claim is true if the background is started stationary, i.e.\ for every $\varepsilon>0$ we have
    \begin{equation}\label{eq:upperbound_stat}
            \IP\big(\exists t_0\geq0 :  \bfH_t^{\pi}\subset t(1+\varepsilon)\B_\mu \,\,\forall t\geq t_0\big)=1.
    \end{equation}
    Afterwards, we use the stationary case as a reference point to show the claim for arbitrary initial background  configurations $\xi\in [N]^{V\cup E}$.

    To show \eqref{eq:upperbound_stat} let $\varepsilon>0$ be arbitrary but fixed. For every $s\geq 0$ let $C_s:=\{\Delta_\origin[0,s)=\emptyset\}$ denote the event that no recovery occurs up to time $s$ for the origin. Furthermore, let
       \begin{equation*}
            A(s,\varepsilon):=\{ \exists t_0: \bfH_t^{\xi^{\szero}_s}\circ \theta_s\subset t(1+\varepsilon)\B_\mu~\forall t\geq t_0\}.
    \end{equation*}
    Note that $\bfH_t^{\xi^{\szero}_s}\circ \theta_s$ is the set of ever infected sites of the restarted process at time $s$ with initial configuration $(\{\origin\},\bfxi_s^{\szero})$ and $A(s,\varepsilon)$ is the event that this set is eventually contained within the cone $t(1+\varepsilon)\B_{\mu}$ starting with its tip at $s$. We already know by Theorem~\ref{ShapeTheoremForZeroConfig} that $\IP(A(0,\varepsilon^*))=1$ for all $\varepsilon^*>0$, and thus, in particular, $\IP(A(0,\varepsilon/2))=1$. 

    On $C_s$ the origin is infected at time $s$, which implies that $C_s\cap A(0,\varepsilon/2)\subset A(s,\varepsilon)$. This can be shown using the graphical representation: if the origin is infected at time $s$, then the set of infected vertices of the at the origin at time $s$ restarted process (with initial background $\bfxi^{\szero}_s$) is always a subset of the infected vertices of the original process. This fact now implies that
    \begin{align*}
        \IP(A(s,\varepsilon)\cap C_s)=\IP(A(s,\varepsilon)\cap C_s\cap A(0,\varepsilon/2))=\IP(C_s\cap A(0,\varepsilon/2))=\IP(C_s).
    \end{align*}
    Furthermore, $C_s$ and $A(s,\varepsilon)$ are independent, as they depend on disjoint parts of the graphical representation, which implies that
    \begin{equation*}
        \IP(A(s,\varepsilon))\cdot\IP(C_s)=\IP(A(s,\varepsilon)\cap C_s)=\IP(C_s).
    \end{equation*}
    Since $\IP(C_s)\in (0,1)$ for all $s>0$ it follows that $\IP(A(s,\varepsilon))=1$ for all $s>0$.

    Recall from Section~\ref{Sec:CritValPermanetlyCoupled} the maximal infection process $(\widetilde{\bfeta}^\eta_t)_{t\geq 0}$ whose infection rate is $\lambda_{max}$ and has initial state $\eta\subset V$. This processes has no recoveries and is coupled with the infection process via the graphical representation such that $\bfeta_t\subset \widetilde{\bfeta}_t$ for all $t>0$ if $\bfeta_0\subset \widetilde{\bfeta}_0$. Furthermore, recall the process $(\Phi_{t})_{t\geq0}$ from \eqref{eq:PermantlyCoupledNeighbourhood}. Now let $M>0$ be chosen as in Lemma~\ref{lem:InfContainedExponetialSpeed}
    and define the event
    \begin{equation*}
    D_s:=\{\widetilde{\bfeta}^\origin_t\subset \B_{Mt}\subset \Phi_t~\forall t\geq s\}.
    \end{equation*} 
    By Lemma~\ref{lem:InfContainedExponetialSpeed} we know, in particular, that $\IP(D_s)\to 1$ as $s\to \infty$
    and by definition of $D_s$ it follows that 
    \begin{equation*}
         \IP(\{ \exists t_0\geq 0: \bfH_t^{\pi}\circ \theta_s\subset t(1+\varepsilon)\B_\mu~\forall t\geq t_0\}\cap D_s)=\IP(A(s,\varepsilon)\cap D_s).
    \end{equation*}
Now this allows us to conclude for all $s\geq 0$ that
    \begin{align*}
    \IP\big(\exists t_0\geq0 :  \bfH_t^{\pi}\subset t(1+\varepsilon)\B_\mu \,\,\forall t\geq t_0\big)&=\IP\big( \exists t_0: \bfH_t^{\pi}\circ \theta_s\subset t(1+\varepsilon)\B_\mu~\forall t\geq t_0\big)\\
    &\geq \IP\big(\{ \exists t_0: \bfH_t^{\pi}\circ \theta_s\subset t(1+\varepsilon)\B_\mu~\forall t\geq t_0\}\cap D_s \big)\\
    &= \IP(A(s,\varepsilon)\cap D_s )=\IP(D_s),
\end{align*}
where we used the invariance of the graphical construction in the first equality. Taking the limit proves \eqref{eq:upperbound_stat}. 

To finish the proof fix for some arbitrary initial configuration $\xi\in[N]^{V\cup E}$ and $\varepsilon>0$. We define the event
    \begin{equation*}
        A^\xi(\varepsilon):=\{\exists t_0: \bfH_t^{\xi}\subset (1+\varepsilon) t \B_\mu ~\forall t\geq t_0\}
    \end{equation*}
and aim to show $\IP(A^\xi(\varepsilon))=1$.
Note that by the coupling via the graphical representation we have $\bfH_{t+s}^{\origin,\xi}\subseteq \bfH_t^{\widetilde{\bfeta}_s,\bfxi_s}\circ \theta_s$ for all $s,t\geq 0$ and therefore
\begin{equation}\label{eq:SplittingInMultipleStationaryCones}
    \begin{aligned}
        A^\xi(\varepsilon)\cap D_s&=\{\exists t_0: \bfH_{t+s}^{\xi}\subseteq (1+\varepsilon)(t+s)~\B_\mu ~\forall t\geq t_0\}\cap D_s\\
        &\supseteq \{\exists t_0: \bfH_t^{\widetilde{\bfeta}_s,\bfxi_s}\circ \theta_s\subseteq (1+\varepsilon)(t+s)~\B_\mu ~\forall t\geq t_0\}\cap D_s\\
        &\supseteq \{\exists t_0: \bfH_t^{\B_{Ms},\bfxi_s}\circ \theta_s\subseteq (1+\varepsilon)(t+s)~\B_\mu ~\forall t\geq t_0\}\cap D_s\\
        &\supseteq \{\exists t_0: \bfH_t^{x,\pi}\circ \theta_s\subseteq (1+\varepsilon/2)t~\B_\mu\circ T_x ~\forall t\geq t_0,\forall x\in \B_{Ms}\}\cap D_s.
    \end{aligned}
\end{equation}
For the last inclusion we used additivity and the fact that on $D_s$ the infection never leaves the permanently coupled region after time $s$. Thus, the initial background configuration $\xi$ at time $0$ does not have any influence on the behaviour of the at time $s$ restarted process.

We already showed that \eqref{eq:upperbound_stat} holds for $\varepsilon/2$. Therefore, by translation invariance and additivity of the infection process we get that
\begin{equation*}
    \IP(\exists t_0: \bfH_t^{x,\pi}\circ \theta_s\subseteq (1+\varepsilon/2)t~\B_\mu\circ T_x ~\forall t\geq t_0,\forall x\in \B_{Ms})=1.
\end{equation*}
Using this fact and \eqref{eq:SplittingInMultipleStationaryCones} we can conclude that
\begin{equation*}
    \IP(A^\xi(\varepsilon))\geq \IP(A^\xi(\varepsilon)\cap D_s)=\IP(D_s).
\end{equation*}
Exploiting again the fact that $\IP(D_s)\to 1$ as $s\to \infty$ implies $\IP(A^\xi(\varepsilon))=1$ completes the proof.
\end{proof}
\begin{proof}[Proof of Theorem~\ref{Conjecture1}]
Note, Corollary  \ref{corollar:ConvergenceForAllBackgrounds} and Proposition \ref{ub_all_bg} already imply that 
       \begin{equation*}
	   \PC\big(\exists s\geq0 :  t(1-\varepsilon)\B_{\mu}\subset \bfH_t^{\xi}\subset t(1+\varepsilon)\B_\mu \,\,\forall t\geq s\big)=1,
    \end{equation*} where $\B_\mu$ is the unit ball with respect to the norm $\mu$ defined in \eqref{ConvergenceForAllBackgrounds}. We therefore only need to show the lower bound for $\bfH_t^\xi\cap \overline{\bfK}_t^\xi$ in a similar way as in \cite{garet2012shape}. Thus, let
    \begin{equation*}
    \bfG_t^\xi:=\{x\in \IZ^d:s^{\xi}(x)\leq t\}+\big[\unaryminus\tfrac{1}{2},\tfrac{1}{2}\big]^d\quad\text{and}\quad 
        \hat{t}^\xi(x):=\inf\{t\geq 0: x\in \overline{\bfK}_t^\xi\cap \bfG_t^\xi\}.
    \end{equation*} 
    By Corollary~\ref{corollar:ConvergenceForAllBackgrounds} it suffices to show
    \begin{equation*}
        \lim_{||x||\to\infty }\frac{|s^\xi(x)-\hat{t}^\xi(x)|}{||x||}=0\quad\text{$\PC$-almost surely},
    \end{equation*}
    since the statement of the inclusion of sets follows then analogously as in in the proof of Theorem~\ref{ShapeTheoremForZeroConfig}. 
    Moreover, by definition $s^\xi(x)\leq \hat{t}^\xi(x)$ and showing
    \begin{equation}\label{eq:ExponetialControlIntersectionTime}
        \PC(\hat{t}^\xi(x)-s^\xi(x)>t)\leq A\exp({-Bt})\quad\forall x\in V,t\geq 0,
    \end{equation} 
    for some constants $A,B>0$ is sufficient. 
    
    Let $x\in \IZ^d$ and $\xi\in [N]^{V\cup E}$ be fixed. We  first show that 
    \begin{equation*}
        \bfK_{s^\xi(x)+t}^\xi\supset x+\bfK_t^{\xi_x}\circ \hat{\theta}^{\szero}_x\circ \hat{\theta}_0^\xi,
    \end{equation*}
    where $\xi_x:=\bfxi_{s^\xi(x)}$. For $z\in x+\bfK_t^{\xi_x}\circ \hat{\theta}^{\szero}_x\circ \hat{\theta}_0^\xi$ we consider the case $z\notin \bfeta_{s^\xi(x)+t}^{\IZ^d,\xi}$. Clearly, by additivity $z\notin \bfeta_{s^\xi(x)+t}^{\origin,\xi}$ and therefore $z\in \bfK_{s^\xi(x)+t}^\xi$. We now consider $z\in \bfeta_{s^\xi(x)+t}^{\IZ^d,\xi}$. 
    The set inclusion $\bfeta_{s^\xi(x)+t}^{\IZ^d,\xi}\subset x+\bfeta_{t}^{\IZ^d,\xi_x}\circ \hat{\theta}^{\szero}_x\circ \hat{\theta}_0^\xi $ holds trivially for $t=0$ and the coupling of our graphical construction ensures that it holds for all $t\geq 0.$
    This, together with our initial assumption, implies that $z\in x+ \bfeta_{t}^{\origin,\xi_x}\circ \hat{\theta}^{\szero}_x\circ \hat{\theta}_0^\xi$. By definition of the space-time shifts and our graphical construction we furthermore get $z\in \bfeta_{s^\xi(x)+t}^{\origin,\xi}$ which proves the claim. Fixing $s\geq 0$ we therefore have
        \begin{equation}\label{eq:perm_couple}
        \bigcap_{t\geq s}\bfK_{s^\xi(x)+t}^\xi\supset x+ \bigcap_{t\geq s}\bfK_t^{\xi_x}\circ \hat{\theta}^{\szero}_x\circ \hat{\theta}_0^\xi
    \end{equation} and in particular
    $\overline{\bfK}_{s^\xi(x)+s}^\xi\supset x+\overline{\bfK}_s^{\xi_x}\circ \hat{\theta}^{\szero}_x\circ \hat{\theta}_0^\xi$. Together with the shift invariance of Corollary~\ref{Corollary:invariance_2} and the fact that $x\in\bfG_{s^\xi(x)+t}^\xi $ we have 
    \begin{align*}
        \PC\big(\hat{t}^\xi(x) >s^\xi(x)+t\big)
        &=\PC\big(x\notin \overline{\bfK}_{s^\xi(x)+t}^\xi\big)
        \leq
        \PO\big(\origin \notin \overline{\bfK}_t^{\xi_x}\big).
    \end{align*} Finally, with assumption~\eqref{ConjectureFC} we can conclude \eqref{eq:ExponetialControlIntersectionTime}.
\end{proof}
From the proof above we extract the following observation which we will use later.
\begin{corollary}\label{Corollary:x_couples}
    Under the assumptions of Theorem \ref{Conjecture1} there exist constants $A,B,C>0$ such that     \begin{equation*}
        \PC\big(x \notin \overline{\bfK}_{t+C||x||}^\xi\big)\leq A\exp{(-B\sqrt{t})}\quad\text{for all }\xi,~ t\geq 0~\text{and}~x\in V.
    \end{equation*}
\end{corollary}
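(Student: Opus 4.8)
The plan is to deduce this immediately from the two exponential estimates already obtained in the proof of Theorem~\ref{Conjecture1}: the at-least-linear growth bound $\PC(s^\xi(x)>C||x||+t)\leq Ae^{-B\sqrt t}$ of Corollary~\ref{corollary:ALL_essential} (whose linear coefficient $C$ we take as the constant $C$ appearing in the present statement), and the tail bound $\PC(\hat t^\xi(x)-s^\xi(x)>t)\leq Ae^{-Bt}$ from \eqref{eq:ExponetialControlIntersectionTime}, where $\hat t^\xi(x)=\inf\{t\geq 0:x\in\overline{\bfK}_t^\xi\cap\bfG_t^\xi\}$ as in that proof. The first observation I would record is that the family $(\overline{\bfK}_t^\xi)_{t\geq 0}$ is non-decreasing in $t$, since being permanently coupled from time $t$ onward is a stronger requirement than being permanently coupled from a later time onward; consequently $\{x\notin\overline{\bfK}_s^\xi\}\subseteq\{x\notin\overline{\bfK}_u^\xi\}$ for all $u\leq s$, and moreover $x\in\bfG_u^\xi$ as soon as $u\geq s^\xi(x)$, by definition of $\bfG_u^\xi$.

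Next I would split according to the size of the essential hitting time. On the event $\{x\notin\overline{\bfK}_{t+C||x||}^\xi\}\cap\{s^\xi(x)\leq C||x||+t/2\}$ we have $s^\xi(x)+t/2\leq t+C||x||$, so monotonicity gives $x\notin\overline{\bfK}_{s^\xi(x)+t/2}^\xi$, while $s^\xi(x)+t/2\geq s^\xi(x)$ forces $x\in\bfG_{s^\xi(x)+t/2}^\xi$; hence $\hat t^\xi(x)>s^\xi(x)+t/2$ on this event. Therefore
\begin{equation*}
\PC\big(x\notin\overline{\bfK}_{t+C||x||}^\xi\big)\leq\PC\big(s^\xi(x)>C||x||+t/2\big)+\PC\big(\hat t^\xi(x)-s^\xi(x)>t/2\big),
\end{equation*}
and the two summands are bounded, uniformly in $x$ and $\xi$, by $Ae^{-B\sqrt{t/2}}$ and $Ae^{-Bt/2}$ via Corollary~\ref{corollary:ALL_essential} and \eqref{eq:ExponetialControlIntersectionTime}, respectively. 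Absorbing the factors $1/2$ into fresh constants and using $e^{-Bt/2}\leq e^{-(B/2)\sqrt t}$ for $t\geq 1$ (the range $t\in[0,1]$ being trivial after enlarging $A$) yields the claimed bound $Ae^{-B\sqrt t}$ with suitably redefined $A,B$.

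I do not expect any genuine obstacle here: all the substantive work — the uniformity over $\xi$, the coupling inclusion $\overline{\bfK}_{s^\xi(x)+s}^\xi\supseteq x+\overline{\bfK}_s^{\xi_x}\circ\hat{\theta}_x^{\szero}\circ\hat{\theta}_0^\xi$, the appeal to assumption~\eqref{ConjectureFC} giving the $e^{-Bt}$ rate for $\hat t^\xi(x)-s^\xi(x)$, and the $e^{-B\sqrt t}$ rate for $s^\xi(x)$ tracing ultimately back to the lack-of-subadditivity estimate of Lemma~\ref{Lemma_lackSubadd} — was already carried out inside the proof of Theorem~\ref{Conjecture1}. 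The only point deserving a moment's care is the choice of the constant $C$: taking it equal to the linear coefficient supplied by Corollary~\ref{corollary:ALL_essential} is exactly what makes the term $C||x||$ cancel the linear-in-$||x||$ part of the tail of $s^\xi(x)$, which is what keeps the final bound uniform in $x$.
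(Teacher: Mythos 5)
Your argument is correct and is essentially the paper's own proof: both split on whether $s^\xi(x)\leq C\lVert x\rVert + t/2$ and bound the two pieces by Corollary~\ref{corollary:ALL_essential} and by the coupling estimate, the only cosmetic difference being that you invoke the packaged bound \eqref{eq:ExponetialControlIntersectionTime} while the paper re-derives it on the spot from \eqref{eq:perm_couple}, the shift invariance, and assumption~\eqref{ConjectureFC}. The constant bookkeeping (absorbing the factors of $2$ and passing from $e^{-Bt}$ to $e^{-B\sqrt t}$) is handled correctly.
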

\begin{proof}
    By \eqref{eq:perm_couple} we have as above
    \begin{equation*}
        \PC\big(x \notin \overline{\bfK}_{2t+c||x||}^\xi\big)
        \leq \PO\big(\origin \notin \overline{\bfK}_t^{\xi_x}\big)+\PC(s^\xi(x)>c||x||+t)
    \end{equation*}
    and assumption \eqref{ConjectureFC} and Corollary \ref{corollary:ALL_essential} give the desired controls.
\end{proof}
We close this section by proving Corollary~\ref{thm:ShapeTheoremNonMonoton} where we extend Theorem~\ref{Conjecture1} to the particular setup where our process is not worst-case monotone.
\begin{proof}[Proof of Corollary~\ref{thm:ShapeTheoremNonMonoton}]
    In order to apply Theorem~\ref{Conjecture1} we modify the CPDP such that it is worst-case monotone and still satisfies \eqref{ConjectureAML}-\eqref{ConjectureTEC}. Therefore, we introduce a designated worst-case state $-1$ and extend our state space of the background to $\{-1,0,1\}$. We define the modified background $\btilde{\bfxi}$ with the same transitions as before, namely for every $a\in V\cup E$ we have the transitions
    \begin{align*}
        \begin{aligned}
            \btilde{\xi}(a)&\to 1	\quad \text{ at rate } \alpha_V\1_{\{a\in V\}}+\alpha_E\1_{\{a\in V\}}\\
            \btilde{\xi}(a)&\to 0 	\quad\text{ at rate }\beta_V\1_{\{a\in V\}}+\beta_E\1_{\{a\in V\}}.
        \end{aligned}
    \end{align*}
    where $\alpha_V,\alpha_E, \beta_V,\beta_E>0$. In particular, at every site the process can only leave the state $-1$ and never enter it again. Moreover, to define $\btilde{\bfeta}$, we use the same infection and recovery rates $\lambda_{(x,y)}(i,j,k)$ and $r_x(i)$ as before for $i,j,k\in \{0,1\}$ and specify 
    \begin{equation*}
        \lambda_{(x,y)}(\xi)(-1,j,k)=\lambda_{(x,y)}(\xi)(i,-1,k)=\lambda_{(x,y)}(\xi)(i,j,-1)=0\quad\text{for all }i,j,k\in\{-1,0,1\}
    \end{equation*}
    and $r_x(-1)=\max\{r_{x}(0),r_{x}(1)\}$. Clearly, by construction, $\btilde{\bfxi}$ is monotonically representable and satisfies the Assumption~\ref{AssumptionBackground}. Furthermore, $\btilde{\bfeta}$ is worst-case monotone and for all $\xi\in \{0,1\}^{V\cup E}$ both processes $\bfeta^\xi$ and $\btilde{\bfeta}^\xi$ coincide. The last observation implies in particular that our process is supercritical for all initial background configurations $\xi$ by Theorem~\ref{thm:CritValueIndependence}. It remains to show that the necessary conditions \eqref{ConjectureAML}-\eqref{ConjectureTEC} hold for the process $\btilde{\bfeta}$. The two conditions \eqref{ConjectureAML} and \eqref{ConjectureTEC} hold by construction. As before, let
    \begin{equation*}
        D_t:=\{\widetilde{\bfeta}^\eta_u\subset\B_{Mu}\subset \Phi_u \,\forall\, u\geq t\}
    \end{equation*} where $M>0$ is choose according to Lemma~\ref{lem:InfContainedExponetialSpeed} such that $\IP(  D_t)>1- Ae^{-Bt}$ for some constants $A,B>0$ and all $t\geq 0$.
    We start with proving \eqref{ConjectureSC} by noting that 
    \begin{align*}
        \IP(\{2t\leq \btilde{\tau}^{\zero,\xi}<\infty\} \cap D_t )&\leq \IP(\{2t\leq \btilde{\tau}^{\btilde{\bfeta}^{\xi}_t,\btilde{\bfxi}^{\xi}_t}\circ \theta_t+t<\infty\}\cap D_t )\\
        &\leq \sup_{\xi\in\{0,1\}^V}\sup_{\eta\subset \B_{M_t}}\IP(t\leq \tau^{\eta,\xi}<\infty ).
    \end{align*}
    Note that in the second inequality it suffices to consider the configurations $\xi\in\{0,1\}^V$ since we are on $D_t$.
    Moreover, by additivity in the infection set we have for any $\eta \subset \B_{Mt}$ and any $\xi$ that 
    \begin{align*}
        \IP(t\leq \tau^{\eta,\xi}<\infty )\leq \sum_{x\in \IB_{Mt}}\IP(t\leq \tau^{x,\xi}<\infty )\leq A\exp(-Bt)
    \end{align*}
    since assumption \eqref{ConjectureSC} holds for $\bfeta$ and $|\B_{Mt}|\in O((Mt)^d)$.\\
    
   We continue with proving \eqref{ConjectureALL}. Therefore, let $c$ be the constant such that $\eqref{ConjectureALL}$ holds for $\bfeta$. Then we have
    \begin{align*}
        &\IP\Big(\Big\{\btilde{t}^{\origin,\xi}(x)\geq \frac{||x||}{c}+\Big(\frac{M}{c}+2\Big)t\Big\} \cap \{\btilde{\tau}^{\origin,\xi}=\infty\}\cap D_t\Big)\\
        \leq &\IP\bigg(\bigcup_{y\in \B_{Mt}}\Big\{\btilde{t}^{y,\btilde{\bfxi}_t}(x)\circ\theta_t \geq \frac{||x||}{c}+\Big(\frac{M}{c}+1\Big)t\Big\} \cap \{\btilde{\tau}^{y,\btilde{\bfxi}^{\xi}_t}\circ \theta_t=\infty\}\cap D_t\bigg)\\
        \leq &\sum_{y\in \B_{Mt}}\sup_{\xi\in \{0,1\}^V} \IP\Big(\Big\{t^{y,\xi}(x) \geq \frac{||x||}{c}+\Big(\frac{M}{c}+1\Big)t\Big\} \cap \{\tau^{y,\xi}=\infty\}\cap D_t\Big)\\
        \leq &|\B_{Mt}| \sup_{y\in \B_{Mt}}\sup_{\xi\in \{0,1\}^V}\IP\Big(t^{\origin,\xi}(x-y) \geq \frac{||x||}{c}+\Big(\frac{M}{c}+1\Big)t,\tau^{y,\xi}=\infty\Big).
    \end{align*}
    Now we have for all $y\in \B_{Mt}$ that $||x||+Mt\geq||x-y||$ which gives the desired bound using property \eqref{ConjectureALL} for the original process $\bfeta$.
    
    We finish with proving a weaker statement than \eqref{ConjectureFC}, namely that there exist some constants $A,B>0$ such that for all $\xi$ we have
    \begin{equation}\label{Conj:tec3_alt}
        \IP(\origin \notin \btilde{\overline{\bfK}}^{\xi}_{t},\tau^{\origin,\xi}=\infty)\leq A\exp(-B \sqrt{t})\quad\text{for all } t\geq 0.
    \end{equation}
    However, this conditions is also adequate since the weaker bound of $A\exp{(-B\sqrt{t})}$ in \eqref{eq:ExponetialControlIntersectionTime} is sufficient to prove Theorem \ref{Conjecture1}.
    Let $C$ be the constant from Corollary~\ref{Corollary:x_couples} and $M>0$ be the constant from Lemma~\ref{lem:InfContainedExponetialSpeed} such that the probability of the event $D_t$ decays exponentially. For $t>0$ let $t_0:=\frac{t}{MC}$ and $\xi_{t_0}:=\btilde{\bfxi}_{t_0}\vee 0$. By definition, we have
    \begin{align*}
    	&\{\zero \notin \btilde{\overline{\bfK}}^{\xi}_{t+t_0}\}\cap\{\tau^{\origin,\xi}=\infty\}\cap D_{t_0}\\
    	&\subseteq\{\exists x\in \B_{t/C}, \exists s\geq t:\tau^{x,\xi_{t_0}}\circ\theta_{t_0}=\infty,\origin\notin \bfeta_s^{x,\xi_{t_0}}\circ\theta_{t_0}, \origin\in \bfeta_s^{V,\xi_{t_0}}\circ\theta_{t_0}\}.
    \end{align*}
    Hence, using a union bound and translation invariance gives us
        \begin{equation*}
    	\IP(\{\zero \notin \btilde{\overline{\bfK}}^{\xi}_{t+t/C+t_0}\}\cap\{\tau^{\origin,\xi}=\infty\}\cap D_{t_0})
    	\leq \frac{|\B_{t/C}|}{\IP(\tau^{\origin,\szero}=\infty)}\sup_{\xi\in\{0,1\}^V}\sup_{x\in \B_{t/C}} \PC(x\notin \overline{\bfK}_{t+C||x||}^\xi).
    \end{equation*}
    The usual cardinality estimate for $\B_{t/C}$ together with Corollary~\ref{Corollary:x_couples} yields \eqref{Conj:tec3_alt}.
    \end{proof}
 
\section{Asymptotic Shape of the CPDRE}\label{sec:ShowingAsympShapeCPDRE}
Now that we have established Theorem~\ref{Conjecture1} and Corollary~\ref{thm:ShapeTheoremNonMonoton} we are in the position to show the asymptotic shape results for the CPDRE. This section is dedicated to prove Theorem~\ref{thm:CPUIShapeTheorem} and Proposition~\ref{prop:CPDREShapeTheorem} by verifying the estimates \eqref{ConjectureAML}-\eqref{ConjectureTEC}.  We will see that \eqref{ConjectureTEC} is trivially satisfied for the specific background processes we consider, and the estimate \eqref{ConjectureAML} is a direct consequence of Lemma~\ref{lem:InfContainedExponetialSpeed}.
\begin{corollary}\label{corollary:AML}
     There exist constants $A,B,M>0$ such that for all $\xi$ and all $x\in V$
	\begin{align}
	\IP(\bfH_t^{\xi}\not\subset \B_{Mt})&\leq A \exp(-B t).\tag{\ref{ConjectureAML}}
	\end{align}
\end{corollary}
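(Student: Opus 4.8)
The statement involves only $\bfH_t^\xi$, which is generated by the infection process started from the single site $\{\origin\}$, and the asserted bound is in fact independent of $x$; so the plan is to dominate $\bfH_t^\xi$ from above by a cube-thickening of the maximal infection process and then quote Lemma~\ref{lem:InfContainedExponetialSpeed}.

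First I would recall the maximal infection process $(\widetilde{\bfeta}_t^\eta)_{t\geq0}$ from the proof sketch of Theorem~\ref{thm:CritValueIndependence}: the first passage percolation (Richardson) model with infection rate $\lambda_{max}$ and no recoveries, built on the same graphical representation as $(\bfeta,\bfxi)$ but using every infection event $\widetilde{\Delta}^{\inf}_{\{x,y\}}=\bigcup_{a}\Delta^{\inf,a}_{(x,y)}$ and ignoring all recoveries and all background restrictions. Two properties are needed. First, this process does not depend on $\xi$, and the shared-graphical-construction coupling gives $\bfeta_s^{\origin,\xi}\subseteq\widetilde{\bfeta}_s^{\origin}$ for all $s\geq0$ and every $\xi$. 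Second, since $\widetilde{\bfeta}$ has no recoveries it is nondecreasing in time, so $\widetilde{\bfeta}_s^{\origin}\subseteq\widetilde{\bfeta}_t^{\origin}$ for $s\le t$. Combining the two, $\bigcup_{s\le t}\{x\in V:x\in\bfeta_s^{\origin,\xi}\}\subseteq\widetilde{\bfeta}_t^{\origin}$, and hence $\bfH_t^\xi\subseteq\widetilde{\bfeta}_t^{\origin}+[\unaryminus\tfrac12,\tfrac12]^d\subseteq\widetilde{\bfeta}_t^{\origin}+\B_{d/2}$, using that the unit cube has $\ell_1$-radius $d/2$.

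Next I would apply Lemma~\ref{lem:InfContainedExponetialSpeed} with $\eta=\{\origin\}$ and read it at the choice $s=t$: this produces constants $M,A,B>0$ with $\IP(\widetilde{\bfeta}_t^{\origin}\not\subseteq\B_{Mt})\leq Ae^{-Bt}$ for all $t\geq0$. On the complementary event the previous paragraph yields $\bfH_t^\xi\subseteq\B_{Mt+d/2}$, which is contained in $\B_{(M+d/2)t}$ as soon as $t\geq1$; for $0\leq t<1$ the desired exponential estimate is trivial after enlarging the prefactor $A$. Redefining $M$ as $M+d/2$ and $A$ accordingly then gives \eqref{ConjectureAML}, uniformly in $\xi$ (and trivially in $x$, which does not appear in the conclusion).

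The main point to be careful about, rather than a genuine obstacle, is that the maximal process and therefore the resulting bound must be taken independent of $\xi$ — i.e.\ the domination must come from the coupling on the common Poissonian construction, where the effect of the background is only to remove infection arrows and add recovery events, both of which can only shrink $\bfeta$. Everything else is the routine absorption of the $[\unaryminus\tfrac12,\tfrac12]^d$-thickening into $M$ and of the small-$t$ regime into $A$; no new estimate beyond Lemma~\ref{lem:InfContainedExponetialSpeed} is required.
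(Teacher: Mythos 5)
Your proposal is correct and is essentially the paper's own argument: the paper simply declares \eqref{ConjectureAML} to be a direct consequence of Lemma~\ref{lem:InfContainedExponetialSpeed}, and the domination $\bfeta_s^{\origin,\xi}\subseteq\widetilde{\bfeta}_s^{\origin}$ by the ($\xi$-independent, recovery-free, hence time-monotone) maximal process on the common graphical construction is exactly the mechanism the paper uses elsewhere (e.g.\ in Theorem~\ref{thm:CritValueIndependence} and Proposition~\ref{ub_all_bg}). Your additional bookkeeping — reading the lemma at $s=t$, absorbing the $[\unaryminus\tfrac12,\tfrac12]^d$-thickening into $M$ and the small-$t$ regime into $A$ — just makes explicit what the paper leaves implicit.
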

The rest of this section is devoted to showing \eqref{ConjectureSC}-\eqref{ConjectureFC}. For that we establish in Subsection~\ref{sec:BlockConstruction} a coupling of the infection process with an oriented percolation on a macroscopic grid. In Subsection~\ref{sec:RestartingProcedure} we will use this coupling together with a restarting procedure to finally show the remaining estimates.
\subsection{Block Construction}\label{sec:BlockConstruction}
In this subsection a coupling between a CPIU and an oriented percolation on $\IZ^d\times \IN$ is established. This is based on the block construction described in the work of Bezuidenhout and Grimmett \cite{bezuidenhout1990contact}, which couples a contact process on a slab $[a,a]^{d-1}\times \IZ$ with an oriented percolation on $\IZ\times \IN$. We are not the first to adapt and generalise this block construction; see, for example \cite{steif2008critical}, \cite{deshayes2014contact} and \cite{seiler2023contact}. Therefore, we summarise the construction and results, providing further details only if we deem them necessary. We closely follow \cite{liggett1999stochastic} and \cite{seiler2023contact} or rather the more detailed description in the corresponding dissertation \cite{Diss_Marco}.
However, there are two remarkable differences in our coupling compared to the other generalisations. First of all, we construct a coupling with an (independent) oriented edge percolation rather than a site percolation, to apply some results shown in \cite{garet2014}. Furthermore, our variation of the block construction leads to a coupling between a contact process on $\IZ^d$ with an oriented percolation on $\IZ^d\times \IN$ and not only on a slab. We have not found such a variation elsewhere, which is another reason why we believe it is necessary to provide a brief summary and to highlight differences for the sake of completeness. We need this specific variation to show several results in Section~\ref{sec:RestartingProcedure}.

We first introduce a truncated version $(\prescript{}{L}\bfeta,\prescript{}{L}\bfxi)$ of the CPIU on a finite space-time box for arbitrary but fixed $L\in\IN$. For that let us set
\begin{align*}
	V_L:=[\unaryminus L,L]^d\cap \IZ^d \text{ and } E_L:= \{e: e\cap V_L\in E\}.
\end{align*}
Now we define the process $(\prescript{}{L}\bfeta,\prescript{}{L}\bfxi)$ via a restriction of the random mapping construction which is given in Section~\ref{sec:Construction} , where only changes inside $V_L\cup E_L$ are allowed.
This can be achieved by only considering maps in 
\begin{equation*}
    \cM_L:=\Big\{m\in\cM:\cD(m)\cup \bigcup_{a\in \cD(m)} \cR(m[a]) \subset V_L\cup E_L\Big\}.
\end{equation*}
In words, this means that we only consider infection paths that are restricted to $V_L$. As before, we also write $\prescript{}{L}\seteta_t^{I,\xi}$ to be the set of infected vertices at time $t$ that are reachable via an $\xi$-infection path originating from $I\subseteq V$ and staying in $V_L$.
We now start with some auxiliary results which we need for the block construction.\\
\begin{lemma}Assume the process is supercritical, i.e.\
$\IP(\tau^\origin=\infty)>0$, then
\begin{align*}
    \lim_{n\to\infty}\IP(\seteta^{[- n,n]^d,\szero}\neq \emptyset\, \forall\, t\geq 0)=1.
\end{align*}
\begin{proof}
In the classical case, this is typically shown using the self-duality of the contact process. In our situation, however, this is not always true.
Thus, we need to alternatively use the Poisson construction of the process and Birkhoff's Ergodic Theorem. This approach is sketched in the work of Steif and Warfheimer \cite{steif2008critical}. 
\end{proof}
\end{lemma}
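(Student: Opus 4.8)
The plan is to avoid self-duality altogether and argue directly from the Poisson construction of Section~\ref{sec:Construction}, exploiting the ergodicity of spatial translations in the spirit of Steif and Warfheimer~\cite{steif2008critical}. Since here the background is started from $\szero$, the hypothesis reads $\IP(\bfeta^{\origin,\szero}_t\neq\emptyset\ \forall t\geq 0)>0$ (Theorem~\ref{thm:CritValueIndependence} makes positivity independent of any auxiliary witnessing configuration), and by translation invariance of the construction the common value $p:=\IP(\bfeta^{x,\szero}_t\neq\emptyset\ \forall t\geq 0)$ does not depend on $x\in\IZ^d$ and equals this positive number.

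First I would isolate the ergodicity input. The driving Poisson point process $\Delta$ on $\cM\times\IR$ has an intensity invariant under the spatial shifts $(T_z)_{z\in\IZ^d}$ acting on the $\cM$-coordinate as in Section~\ref{sec:Construction} -- this is Assumption~\ref{Ass:BackgroundMaps}(2) together with the translation invariance of $\lambda$ and $r$. Since the maps in $\cM$ have uniformly locally finite supports (by \eqref{totalratebound} and Assumption~\ref{Ass:BackgroundMaps}(3)), the $T_z$-translates of any event depending on $\Delta$ only inside a bounded space-time region become asymptotically independent as $\|z\|\to\infty$, so the $\IZ^d$-action is mixing and in particular ergodic. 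Because $\bfxi^{\szero}$ and the whole family $(\bfeta^{x,\szero})_{x\in\IZ^d}$ are deterministic translation-equivariant functionals of $\Delta$ -- the initial background $\szero$ being fixed by every $T_z$ -- the underlying probability space carries an ergodic $\IZ^d$-action.

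Next I would set $A_x:=\{\bfeta^{x,\szero}_t\neq\emptyset\ \forall t\geq 0\}$ for $x\in\IZ^d$; each $A_x$ is the measure-preserving image of $A_{\origin}$ under a spatial shift, so $\IP(A_x)=p$. The event $B:=\bigcup_{x\in\IZ^d}A_x$ is shift-invariant, so ergodicity forces $\IP(B)\in\{0,1\}$, and since $B\supseteq A_{\origin}$ with $\IP(A_{\origin})=p>0$ we get $\IP(B)=1$. Additivity of the infection process (Proposition~\ref{prop:Monotonicity}) gives $\bfeta^{[-n,n]^d,\szero}_t=\bigcup_{x\in[-n,n]^d}\bfeta^{x,\szero}_t\supseteq\bfeta^{x,\szero}_t$ for every $x\in[-n,n]^d$, hence $\bigcup_{x\in[-n,n]^d}A_x\subseteq\{\bfeta^{[-n,n]^d,\szero}_t\neq\emptyset\ \forall t\geq 0\}$. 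These sets increase to $B$ as $n\to\infty$, so by continuity of measure
\begin{equation*}
\IP\big(\bfeta^{[-n,n]^d,\szero}_t\neq\emptyset\ \forall t\geq 0\big)\;\geq\;\IP\Big(\bigcup_{x\in[-n,n]^d}A_x\Big)\;\longrightarrow\;\IP(B)=1\qquad(n\to\infty),
\end{equation*}
which is the assertion.

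The hard part will be the ergodicity step: one has to be careful that the acting group is the spatial lattice $\IZ^d$ only (the time coordinate is not shifted) and that $\Delta$, built from the countable map family $\cM$ with its translation-structured supports, is genuinely ergodic rather than merely invariant. I would settle this via the standard mixing property of Poisson processes under translations, using the uniformly locally finite support structure of $\cM$ guaranteed by \eqref{totalratebound} and Assumption~\ref{Ass:BackgroundMaps}(3); as an alternative, one can replace the $0$--$1$ law by Birkhoff's pointwise ergodic theorem applied to $(\1_{A_x})_{x\in\IZ^d}$, concluding that $\IP$-a.s.\ a positive density of sites satisfy $A_x$, so that $[-n,n]^d$ contains such a site for all large $n$ and hence $\bfeta^{[-n,n]^d,\szero}$ survives forever with probability tending to $1$.
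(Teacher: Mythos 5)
Your argument is correct and follows essentially the same route the paper intends: it replaces self-duality by the translation invariance and ergodicity of the Poisson construction, exactly the Steif--Warfheimer strategy the paper cites, with your invariant-event $0$--$1$ argument being a minor variant of the Birkhoff-theorem version you also mention. The additivity step via Proposition~\ref{prop:Monotonicity} and the monotone limit over boxes are used correctly, so no gap remains beyond the standard verification of mixing for the shifted Poisson noise, which you sketch adequately.
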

\begin{lemma}\label{lemma:double_limit}
	For any $n,N\geq 1$ we have 
	\begin{equation*}
		\lim\limits_{t\to\infty}\lim\limits_{L\to\infty}\IP(|\prescript{}{L}\seteta_{t}^{[- n,n]^d,\szero}|\geq N)=\IP(\seteta_t^{[- n,n]^d,\szero}\neq \emptyset\, \forall\, t\geq 0)
	\end{equation*}
\end{lemma}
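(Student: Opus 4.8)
The plan is to split the double limit into the inner limit $L\to\infty$ and the outer limit $t\to\infty$, and handle them separately.

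\textbf{Inner limit.} Fix $t$ and $N$. For $L\ge n$ the truncated process $\prescript{}{L}\seteta_t^{[-n,n]^d,\szero}$ is non-decreasing in $L$: enlarging $L$ only adds infection and recovery maps acting outside $V_L$, and an infection path realising a site of $\seteta_t$ cannot be blocked by adding recovery marks off its trajectory, so every $\szero$-infection path admissible for parameter $L$ is admissible for every $L'\ge L$. Moreover, by finite speed of propagation — couple with the maximal (Richardson) process $\widetilde{\bfeta}$ as in Section~\ref{Sec:CritValPermanetlyCoupled} and invoke \eqref{eq:GrowthboundMaxInf} — the set $\seteta_t^{[-n,n]^d,\szero}$ together with every infection path realising it lies a.s.\ in a (random, a.s.\ finite) box $\B_R$, so $\prescript{}{L}\seteta_t^{[-n,n]^d,\szero}=\seteta_t^{[-n,n]^d,\szero}$ for all $L\ge R$. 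Hence $\{|\prescript{}{L}\seteta_t^{[-n,n]^d,\szero}|\ge N\}$ increases to $\{|\seteta_t^{[-n,n]^d,\szero}|\ge N\}$ and, by continuity of measure,
\[
\lim_{L\to\infty}\IP(|\prescript{}{L}\seteta_t^{[-n,n]^d,\szero}|\ge N)=\IP(|\seteta_t^{[-n,n]^d,\szero}|\ge N).
\]
It remains to prove $\lim_{t\to\infty}\IP(|\seteta_t^{[-n,n]^d,\szero}|\ge N)=\IP(\seteta_t^{[-n,n]^d,\szero}\ne\emptyset\ \forall t\ge0)$.

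\textbf{Outer limit, upper bound.} Since a dead infection stays dead, $\{\seteta_t^{[-n,n]^d,\szero}\ne\emptyset\}$ is non-increasing in $t$ with intersection $\{\seteta_t^{[-n,n]^d,\szero}\ne\emptyset\ \forall t\}$, so $\IP(\seteta_t^{[-n,n]^d,\szero}\ne\emptyset)$ decreases to the survival probability; as $\{|\seteta_t|\ge N\}\subseteq\{\seteta_t\ne\emptyset\}$ this gives $\limsup_{t\to\infty}\IP(|\seteta_t^{[-n,n]^d,\szero}|\ge N)\le\IP(\seteta_t^{[-n,n]^d,\szero}\ne\emptyset\ \forall t)$.

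\textbf{Outer limit, lower bound.} It suffices to show $\IP(1\le|\seteta_t^{[-n,n]^d,\szero}|\le N-1)\to0$, and by dominated convergence this follows once we know that, almost surely on $\{\seteta_t^{[-n,n]^d,\szero}\ne\emptyset\ \forall t\}$, one has $|\seteta_t^{[-n,n]^d,\szero}|\to\infty$. Fix $M\in\IN$ and put $G_M:=\{\seteta_t^{[-n,n]^d,\szero}\ne\emptyset\ \forall t\}\cap\{|\seteta_t^{[-n,n]^d,\szero}|\le M\ \text{for arbitrarily large }t\}$; it is enough that $\IP(G_M)=0$ for every $M$. The key (standard) uniform estimate is that there are $T^*>0$ and $\delta_M>0$ with
\[
\IP(\seteta_{T^*}^{\eta,\xi}=\emptyset)\ge\delta_M\qquad\text{for all }\xi\in\{0,1\}^{V\cup E}\text{ and all }\eta\text{ with }1\le|\eta|\le M,
\]
which uses that recoveries actually occur (i.e.\ $r(0)=\max_a r(a)>0$; if $r\equiv0$ the process is the Richardson model, for which survival is certain and $|\seteta_t|\to\infty$ trivially, so we may assume $r\not\equiv0$) together with the ergodicity of the finite-state, site-wise background, which makes a recovery available within a fixed horizon with probability bounded below over the finitely many relevant local background states. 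Now, on $G_M$ the set $\{t:1\le|\seteta_t|\le M\}$ is unbounded, so the stopping times $\sigma_0:=\inf\{t\ge0:1\le|\seteta_t|\le M\}$ and $\sigma_{k}:=\inf\{t\ge\sigma_{k-1}+T^*:1\le|\seteta_t|\le M\}$ are all finite. Applying the strong Markov property (for both the infection and the environment) at $\sigma_k$ and using $\sigma_k+T^*\le\sigma_{k+1}$ gives $\IP(\text{not extinct by }\sigma_k+T^*\mid\cF_{\sigma_k})\le1-\delta_M$ on $\{\sigma_k<\infty\}$; since $\{\sigma_{k+1}<\infty\}\subseteq\{\sigma_k<\infty\}\cap\{\text{not extinct by }\sigma_k+T^*\}$, iterating yields $\IP(\sigma_k<\infty)\le(1-\delta_M)^{k-1}\to0$, so $\IP(G_M)\le\IP(\sigma_k<\infty\ \forall k)=0$. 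Combining with the upper bound proves the outer limit, and together with the inner limit the lemma.

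\textbf{Main obstacle.} The only non-routine point is the lower bound of the outer limit — equivalently, that conditioned on survival $|\seteta_t|$ cannot remain bounded — which rests on the uniform extinction estimate above. The subtlety is that, because the dynamical environment can momentarily switch off all relevant infection (and recovery) rates, this uniformity has to be extracted from the ergodicity of the background rather than from a single one-step bound, and it genuinely fails in the degenerate no-recovery case, which must therefore be isolated. Everything else is monotone/dominated convergence, finite speed of propagation (Lemma~\ref{lem:InfContainedExponetialSpeed}), and the strong Markov property, exactly as in the corresponding step of the classical block construction.
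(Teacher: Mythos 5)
Your proof is correct and follows essentially the same route as the paper, whose own proof is just a citation of the classical argument (Liggett, Proposition~2.2): monotone convergence of the truncated process in $L$, plus the fact that on survival $|\bfeta_t|\to\infty$, which rests on a uniform extinction estimate for configurations with at most $M$ infected sites. The only cosmetic difference is that you replace the martingale-convergence (L\'evy $0$--$1$ law) step by an iterated strong-Markov, geometric-trials argument, and you correctly isolate the one point where the dynamic environment matters --- the extinction estimate must be uniform in the background configuration --- which your appeal to ergodicity of the finite-state per-site background supplies.
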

\begin{proof}
     Follows as in the classical case via a martingale argument. See \cite[Proposition~2.2]{liggett1999stochastic} for the classical case.
\end{proof}
For $L\in\IN$ and $T>0$ let $S(L,T)$ be the union of all lateral faces of the space-time box $[\unaryminus L,L]^d\times[0,T]$ and $S_+(L,T)$ the intersection of $S(L,T)$ with the first orthant and the hyperplane given by $x_1=L$ , i.e.\
\begin{align*}
	S(L,T)&:=\{(x,t)\in\IZ^d\times[0,T]:||x||_\infty=L\}\text{ and }\\
	S_+(L,T)&:=\{(x,t)\in S(L,T):x_1=L,x_i\geq 0\text{ for }2\leq i\leq d\}.
\end{align*}
Given $L,T>0$ and $I\subseteq(\unaryminus L,L)^d$, we define the random variables $N^I(L,T)$ and $N_+^I(L,T)$ representing the maximal number of points in $S(L,T)$ or $S_+(L,T)$, respectively, which are reachable via an $\szero$-infection path from $I$ contained within the space-time cube $[\unaryminus L,L]\times[0,T]$ and have either space distance greater than zero or time distance greater or equal to one.

\begin{lemma}\label{lemma:pos_korrelation}
	For every $n,N,M\geq 1$, $L>n$, $T\geq 0$ we have 
    \begin{align}\label{pos_cor_1}
	    \IP(|\prescript{}{L}\seteta_{T}^{[-n,n]^d,\szero}\cap[0,L)^d|\leq N)\leq \IP(|\prescript{}{L}\seteta_{T}^{[-n,n]^d,\szero}|\leq 2^d N)^{2^{-d}}
	\end{align}
	and 
	$$
	\IP(|N_{+}^{[-n,n]^d}(L,T)|\leq M)\leq \IP(|N_{}^{[-n,n]^d}(L,T)|\leq Md2^d)^{d^{-1}2^{-d}}.	
	$$
\end{lemma}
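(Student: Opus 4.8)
The plan is to derive both estimates from the classical \emph{square-root trick}: one covers the large set appearing on the right by finitely many congruent copies of the small set appearing on the left, observes that the corresponding sub-events are equidistributed because of the lattice symmetries of the model, and glues them together with the FKG inequality. The key structural input is that, for the monotone CPIU considered here (the standing assumption in Theorem~\ref{thm:CPUIShapeTheorem}, i.e.\ $\lambda(\cdot)$ increasing and $r(\cdot)$ decreasing), the truncated system $(\prescript{}{L}\bfeta,\prescript{}{L}\bfxi)$ is produced by a \emph{monotone} random mapping representation from independent Poisson processes — the infection arrows of each type, the recovery symbols, and the background up/down updates. Hence for every space- or space-time window $R$ the functionals $|\prescript{}{L}\seteta_T^{[-n,n]^d,\szero}\cap R|$ and the reachability count defining $N_R$ are coordinatewise monotone in these Poisson processes, all in one and the same direction, so that every event $\{|\prescript{}{L}\seteta_T\cap R|\le N\}$ and $\{N_R\le M\}$ is monotone in a single fixed direction; by the FKG (Harris) inequality for this Poisson construction — obtained by the usual finite-dimensional approximation — any finite family of such events is positively correlated. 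Furthermore, by Assumption~\ref{Ass:BackgroundMaps}(2), the translation invariance of $\lambda,r$, and the invariance of the initial data $[-n,n]^d$ and $\szero$ under each reflection $R_i$, the whole construction is invariant under the $R_i$; consequently two windows related by such a symmetry carry equidistributed occupation counts.

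With this in hand, \eqref{pos_cor_1} follows by covering $V_L$ with its $2^d$ orthants $O_\epsilon$, $\epsilon\in\{\pm1\}^d$, each a reflected copy of $[0,L)^d$ (their overlap on the coordinate hyperplanes is harmless, since it only keeps $|\prescript{}{L}\seteta_T|\le\sum_\epsilon|\prescript{}{L}\seteta_T\cap O_\epsilon|$): on the event $\bigcap_\epsilon\{|\prescript{}{L}\seteta_T\cap O_\epsilon|\le N\}$ one has $|\prescript{}{L}\seteta_T|\le 2^dN$, so
\[
\IP(|\prescript{}{L}\seteta_T|\le 2^dN)\;\ge\;\IP\Big(\bigcap_\epsilon\{|\prescript{}{L}\seteta_T\cap O_\epsilon|\le N\}\Big)\;\ge\;\prod_\epsilon\IP(|\prescript{}{L}\seteta_T\cap O_\epsilon|\le N)\;=\;\IP(|\prescript{}{L}\seteta_T\cap[0,L)^d|\le N)^{2^d},
\]
and raising to the power $2^{-d}$ gives the first bound. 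The second bound is identical in form: $S(L,T)$ is the union of its $2d$ lateral faces, and each face breaks, along the orthants of its $d-1$ transverse coordinates, into $2^{d-1}$ pieces, so $S(L,T)$ is covered by $d2^d$ regions each congruent — through a coordinate permutation and reflections — to $S_+(L,T)$. A maximal separated family of $\szero$-reachable points of $S(L,T)$, intersected with each such region, is a separated family of $\szero$-reachable points of that region, whence $N^{[-n,n]^d}(L,T)\le\sum_{\text{regions}}N_{\text{region}}$; running the same FKG-and-symmetry computation with $M$ in place of $N$ and $d2^d$ in place of $2^d$ produces $\IP(N_+^{[-n,n]^d}(L,T)\le M)^{d2^d}\le\IP(N^{[-n,n]^d}(L,T)\le Md2^d)$, which is the claim.

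The step demanding the most care is the FKG input. Although the background, even started from $\szero$, keeps evolving and the infection rates depend on it, monotonicity of the CPIU guarantees that each of the events above is monotone in a single consistent direction in \emph{all} of the driving Poisson processes simultaneously — this is exactly where the monotonicity hypothesis is used, and it is the reason a purely conditional-on-the-background argument would not close — and one must also make sure the Harris inequality is legitimately invoked for the present Poisson construction, which is standard. The remaining points — the overlaps of the orthants and of the faces of $S(L,T)$, and the discrepancy between $[0,L)^d$ and the exact orthant — are mere bookkeeping: since we only ever use coverings, not partitions, they cost nothing beyond the inequalities already written.
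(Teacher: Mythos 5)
Your proposal is correct and follows essentially the same route as the paper: the square-root trick (cover the box by $2^d$ reflected copies of the orthant, respectively $S(L,T)$ by $d2^d$ congruent copies of $S_+(L,T)$, use equidistribution under the lattice symmetries, and glue the decreasing events together with FKG). The only substantive difference is where the justification of the FKG input goes. You invoke the Harris inequality directly on the driving Poisson processes, observing that all the counting functionals are increasing in the infection arrows and background up-updates and decreasing in the recovery symbols and down-updates, and you defer the proof of positive association to ``the usual finite-dimensional approximation''; the paper carries out exactly this approximation explicitly: it discretizes time on the finite box, applies Liggett's Corollary~B.18 (Harris' theorem for monotone chains whose jumps are between comparable states --- this is where monotonicity of \emph{all} maps, including the background ones, enters) to the discretized chain, and then passes to the limit. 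So the step you treat as standard is precisely the step the paper writes out, because incorporating the evolving background is the only non-classical point, while the covering/symmetry bookkeeping that you spell out is what the paper delegates to ``as in the basic case'' in Liggett's book. Two small remarks: the face decomposition uses coordinate permutations in addition to the reflections of Assumption~\ref{Ass:BackgroundMaps}, which is fine for the CPIU (its law is invariant under the full symmetry group of $\IZ^d$) but should be stated; and with the paper's closed truncation box $V_L=[\unaryminus L,L]^d$ the $2^d$ sign-flip copies of $[0,L)^d$ miss the sites having a coordinate equal to $\pm L$, a convention wrinkle inherited from the classical formulation (there the truncated process is confined to the open box, where the cover is exact); one either adjusts the truncation or replaces $[0,L)^d$ by the closed orthant, which is harmless for the way the lemma is used in Lemma~\ref{lemma:limsup_cor} and Theorem~\ref{theorem:fst_fullfilled}.
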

\begin{proof}
    We start with the first equation and consider the process $\prescript{}{L}\seteta_t$. Note that the event of having $N$ infected individuals at some fixed time $T$ in a fixed orthant is increasing in the occurrences of monotone increasing maps and decreasing in the occurrences of monotone decreasing maps. Since 
    all maps are monotone  by our assumptions, the assumption (2.19) immediately following \cite[Theorem~2.14]{liggett1985interacting} -- 
    that every jump of the process is only between comparable states -- is satisfied. In particular, discretizing our model yields the result as in the basic case \cite[cmp. p.11]{liggett1999stochastic}. Incorporating the background process is not completely trivial, and thus, for the sake of completeness, we sketch the procedure here. First, fix $L,T,n>0$ and  some initial configuration $(\eta,\xi)$.     For every map $m\in \cM_L$ we introduce $n$ Bernoulli random variables  $Y_m^i$, $1\leq i \leq n$ (with $p=e^{ \frac{-r_m T}{n}}$), which indicate if the Poisson clock of the map $m$ rings in the interval $\big[\tfrac{(i-1)T}{n},\tfrac{iT}{n}\big]$, i.e.\
    \begin{align*}
        Y_m^i=\1_{\big\{\big|\{m\}\times \big[\frac{(i-1)T}{n},\frac{iT}{n}\big]\cap \Delta\big|\geq 1\big\}}.
    \end{align*}
    By $m^i$ we denote the random map 
    \begin{align}
        m^i(\eta,\xi)=\begin{cases}
            m(\eta,\xi)\quad&\text{if}\quad Y^i_m=1,\\
            (\eta,\xi) &\text{else}.
        \end{cases}
    \end{align}
    Moreover, we enumerate all maps in $\cM_L=\{m_1,\dots,m_l\}$, which gives a fixed global order on $\cM_L$. This allows us to define the discrete process $(\bfeta^n_i,\bfxi^n_i)_{0\leq i\leq n}$ starting in  $(\bfeta^n_0,\bfxi^n_0)=(\eta,\xi)$ with transition from time $i-1$ to $i$ as follows
    \begin{equation}
        (\bfeta^n_i,\bfxi^n_i)=m^i_l\circ \cdots \circ m_1^i(\bfeta^n_{i-1},\bfxi^n_{i-1}).
    \end{equation}
    Clearly, if in every time step of size $\frac{T}{n}$ there is at most one map not identical to the identity, then the discrete processes $(\bfeta^n_i,\bfxi^n_i)_{0\leq i\leq n}$ and $(\prescript{}{L}\bfeta_{\frac{iT}{n}},\prescript{}{L}\bfxi_\frac{iT}{n})_{0\leq i\leq n}$ coincide.
    Moreover, since we only consider finitely many sites and maps and all maps are applied according to Poisson marks, we can unify all Poisson marks to one Poisson process $N$ on $[0,T]$ with rate $C_{Max}$. For this process we have $\IP(N([\frac{i T}{n},\frac{(i+1)T}{n}])>1~\text{for some   }1\leq i\leq n )\leq \frac{c}{n}$ for some constant $c$ and therefore 
    \begin{equation*}
        \IP((\bfeta^n_i,\bfxi^n_i)\neq(\prescript{}{L}\bfeta_{\frac{iT}{n}},\prescript{}{L}\bfxi_\frac{iT}{n})~\text{for some }0\leq i\leq n)\leq\frac{c}{n}. 
    \end{equation*}
    Note that \eqref{pos_cor_1} holds for the discrete process by applying   \cite[Corollary B.18]{liggett1999stochastic}. Letting $n$ go to $\infty$ yields \eqref{pos_cor_1} for $\prescript{}{L}\bfeta_t$. For details concerning the convergence we also refer to \cite[Section~2]{bezuidenhout1991exponetial}. 
    The second inequality follows in the same way.
\end{proof}

\begin{lemma}\label{lemma:limsup_cor}
	Suppose $L_j\nearrow\infty$ and $T_j\nearrow\infty$. Then for any $N,M\geq 1$ and any finite set $I$ we have
	\begin{equation*}
		\limsup\limits_{j\to \infty}\IP(N^I(L_j,T_j)\leq M)\IP(|\prescript{}{L}\seteta_{T_j}^{I,\szero}|\leq N)\leq \IP(\tau^{I,\szero}<\infty).
	\end{equation*}
\end{lemma}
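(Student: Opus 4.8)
The plan is to reproduce, in the CPIU setting, the corresponding step of the Bezuidenhout--Grimmett block construction (see \cite[Section~2]{bezuidenhout1990contact} and \cite[Section~I.2]{liggett1999stochastic}), reducing the statement to two ingredients: a positive--correlation inequality, which is precisely what produces the \emph{product} of probabilities, and a limiting argument identifying the limit with an extinction event.

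First I would establish positive correlations. With respect to the Poisson construction, the event $\{N^I(L,T)\le M\}$ is nonincreasing in the infection marks and nondecreasing in the recovery marks (extra infection arrows can only create additional $\szero$-infection paths reaching the lateral surface $S(L,T)$, while extra recovery symbols can only destroy such paths), and likewise $\{|\prescript{}{L}\seteta_T^{I,\szero}|\le N\}$ is nonincreasing in the infection marks and nondecreasing in the recovery marks, since under our standing monotonicity assumption $\prescript{}{L}\seteta_T^{I,\szero}$ is an increasing functional of the infection marks in $V_L\times[0,T]$ and a decreasing functional of the recovery marks there. Discretising the Poisson construction restricted to the finite box $V_L\times[0,T]$ exactly as in the proof of Lemma~\ref{lemma:pos_korrelation} and applying Harris' inequality (\cite[Corollary~B.18]{liggett1999stochastic}) to the resulting product measure, then letting the mesh tend to zero, yields
\begin{equation*}
    \IP\big(N^I(L,T)\le M\big)\,\IP\big(|\prescript{}{L}\seteta_T^{I,\szero}|\le N\big)\le \IP\big(N^I(L,T)\le M,\ |\prescript{}{L}\seteta_T^{I,\szero}|\le N\big)\qquad\text{for all }L,T,M,N.
\end{equation*}

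It then suffices to prove $\limsup_j\IP(G_j)\le\IP(\tau^{I,\szero}<\infty)$ with $G_j:=\{N^I(L_j,T_j)\le M\}\cap\{|\prescript{}{L_j}\seteta_{T_j}^{I,\szero}|\le N\}$, and by reverse Fatou this follows from the almost sure inclusion $\limsup_j G_j\subseteq\{\tau^{I,\szero}<\infty\}$. To prove the latter I would fix a realisation with $\tau^{I,\szero}=\infty$ and show that $G_j$ fails for all large $j$, using that $\prescript{}{L}\seteta_t^{I,\szero}$ is nondecreasing in $L$ with $\prescript{}{L}\seteta_t^{I,\szero}\uparrow\seteta_t^{I,\szero}$, that the first exit time $\rho_L$ of $V_L$ by an $\szero$-infection path from $I$ tends to infinity (Lemma~\ref{lem:InfContainedExponetialSpeed}), and the classical leak-or-fill dichotomy: if $\rho_{L_j}>T_j$ the box is irrelevant up to time $T_j$, so $\prescript{}{L_j}\seteta_{T_j}^{I,\szero}=\seteta_{T_j}^{I,\szero}$ and, since on survival $|\seteta_t^{I,\szero}|\to\infty$, we get $|\prescript{}{L_j}\seteta_{T_j}^{I,\szero}|>N$ eventually; while if $\rho_{L_j}\le T_j$ then infection paths cross $S(L_j,T_j)$, and iterating the growth/renewal estimate along the excursions of the infection outside $V_{L_j}$ forces the number of well-separated crossing points to exceed $M$ for large $j$. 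In either case $\omega\notin G_j$ for $j$ large.

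The main obstacle is exactly this last dichotomy: because $L_j$ and $T_j$ tend to infinity along \emph{arbitrary} sequences, one cannot simply iterate the limits ``$L\to\infty$, then $T\to\infty$'' as in Lemma~\ref{lemma:double_limit}, and it is precisely to bridge these two regimes that the auxiliary counting variables $N^I(L,T)$ are introduced; making the excursion/renewal bookkeeping rigorous is the substantive part. This is carried out in full in \cite[Section~2]{bezuidenhout1990contact} (see also \cite{deshayes2014contact}, \cite{seiler2023contact} and the detailed exposition in \cite{Diss_Marco}), and since $\bfeta$ is additive and, under our standing assumptions, monotone, that argument transfers to the CPIU without change once the positive correlations above are available.
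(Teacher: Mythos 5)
Your first step (positive correlations via the discretisation of the Poisson construction and Harris' inequality, reducing the product to the joint probability $\IP(N^I(L_j,T_j)\le M,\ |\prescript{}{L_j}\seteta_{T_j}^{I,\szero}|\le N)$) is exactly the paper's first ingredient. The gap is in how you handle the joint event $G_j$. You reduce to the almost-sure inclusion $\limsup_j G_j\subseteq\{\tau^{I,\szero}<\infty\}$ and propose to prove it pathwise by a ``leak-or-fill'' dichotomy; but the leak case of that dichotomy is not a pathwise truth. The variable $N^I(L,T)$ only counts lateral points reachable by $\szero$-infection paths from $I$ that stay \emph{inside} the space-time box; on survival the infection can exit $V_{L_j}$ through a single lateral point, live entirely outside the box thereafter, and the truncated process inside can die out, so that $N^I(L_j,T_j)=1$ and $\prescript{}{L_j}\seteta_{T_j}^{I,\szero}=\emptyset$ while $\tau^{I,\szero}=\infty$. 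Excursions outside the box that re-enter do not add to $N^I$, so nothing deterministic forces more than $M$ crossings for large $j$; the point is only that surviving through at most $M+N$ exit ``sources'' is conditionally unlikely. Your fill case likewise leans on ``$|\seteta_t^{I,\szero}|\to\infty$ a.s.\ on survival'', which in this generality is itself proved by the very argument you are omitting.

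That omitted argument is the paper's second ingredient: condition on the $\sigma$-algebra generated by the graphical representation in the box, observe that on $G_j$ every infinite path from $I$ must pass through one of at most $M$ lateral unit segments or $N$ top sites, deduce a uniform lower bound $\varepsilon_{M,N}>0$ (independent of $j$) on the conditional extinction probability on $G_j$, and then use martingale convergence $\IP(\tau^{I,\szero}<\infty\mid\cF_{T_j})\to\1_{\{\tau^{I,\szero}<\infty\}}$ to get $\IP(G_j\cap\{\tau^{I,\szero}=\infty\})\to0$, hence the claimed $\limsup$ bound. (This also yields, as a by-product, the a.s.\ inclusion you wanted, but not conversely.) Finally, the references you defer to --- Bezuidenhout--Grimmett, Liggett's Proposition~2.8, and their adaptations --- prove precisely this conditional/martingale statement, not the pathwise excursion bookkeeping you describe, so the deferral does not close the gap.
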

\begin{proof}
    Follows as in the basic case, that is \cite[Proposition~2.8]{liggett1999stochastic}, by an martingale argument and using positive correlation again.
\end{proof}
We come to the finite space time conditions and therefore define the events
\begin{align*}
	\cE_1&=\cE_1(n,L,T):=\{\prescript{}{L+n}\seteta_{T+1}^{[-n,n]^d,\szero}\supset x+[\unaryminus n,n]^d\text{ for some }x\in[0,L)^d\},\\
	\cE_2&=\cE_2(n,L,T):=\{\prescript{}{L+2n}\seteta_{t+1}^{[-n,n]^d,\szero}\supset x+[\unaryminus n,n]^d\text{ for some }0\leq t < T\\
    &\hspace{17em}\text{ and }x\in\{L+n\}\times[0,L)^{d-1}\}.
\end{align*}

\begin{condition}[Finite space-time conditions
]\label{fi-st-condition}
	For all $\varepsilon>0$ there exists $n,L\geq 1$ and $T>0$ such that
	\begin{equation*}
	    \IP(\cE_1)>1-\varepsilon\quad\text{and}\quad \IP(\cE_2) > 1-\varepsilon.
	\end{equation*}
\end{condition}

\begin{theorem}\label{theorem:fst_fullfilled}
    If
    $\IP(\tau^{\origin, \szero}=\infty)>0$
    then the finite space-time conditions, i.e.\ Condition~\ref{fi-st-condition}, are satisfied. 
\end{theorem}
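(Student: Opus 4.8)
The plan is to carry out the block--construction scheme of Bezuidenhout and Grimmett \cite{bezuidenhout1990contact}, in the form presented in \cite[Section~I.2]{liggett1999stochastic} (see also \cite{seiler2023contact} and \cite{Diss_Marco}), feeding in the auxiliary statements of this subsection in place of their counterparts for the basic contact process. Fix $\varepsilon>0$. First I would fix the block radius $n$: since $\IP(\tau^{\origin,\szero}=\infty)>0$, the survival lemma above gives $\IP(\seteta^{[-n,n]^d,\szero}_t\neq\emptyset~\forall t\geq 0)\to 1$ as $n\to\infty$, so $n$ can be chosen with $\IP(\tau^{[-n,n]^d,\szero}<\infty)<\delta$, where $\delta=\delta(\varepsilon,d)>0$ is a small parameter to be fixed at the very end after all error terms have been collected.

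\textbf{Many infected vertices inside the box and on the lateral face.} With $n$ fixed, Lemma~\ref{lemma:double_limit} shows that for any prescribed $N$ one can choose $T$ and then $L$ (large, with $L$ large relative to $T$ and $n$) so that $\IP(|\prescript{}{L}\seteta^{[-n,n]^d,\szero}_{T}|\geq N)>1-\delta$. The positive--correlation estimate of Lemma~\ref{lemma:pos_korrelation}, applied with $2^dN$ in place of $N$, then upgrades this to: with probability at least $1-\delta^{2^{-d}}$ the truncated process has at least $N$ infected vertices inside a single orthant, say $[0,L)^d$, at time $T$. To control the lateral boundary I would use the escape--or--grow dichotomy of Lemma~\ref{lemma:limsup_cor}, exactly as in \cite[Prop.~2.11]{liggett1999stochastic}: this is the step where one balances $L$ against $T$ so that, at the \emph{same} scales, the truncated process additionally produces at least $M$ qualifying infection--path crossings of the lateral face $S(L,T)$ with probability $>1-\delta^{1/2}$, and the second inequality in Lemma~\ref{lemma:pos_korrelation} localizes a positive fraction of these crossings to the single face $S_+(L,T)$ with probability $>1-\delta^{(d2^d)^{-1}}$.

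\textbf{Fattening and conclusion.} From ``many infected vertices in $[0,L)^d$ at time $T$'' and ``many crossings contained in $S_+(L,T)$'' I would run the process for one further unit of time and apply the standard fattening argument: a single infected vertex at $y$ fills the block $y+[-n,n]^d$ within one time unit with probability at least $p_0(n)>0$, uniformly over the background configuration on the finite neighbourhood that is relevant during that interval; grouping a positive fraction of the witnesses into mutually well--separated blocks and using additivity of $\bfeta$ in the initial infection (Proposition~\ref{prop:Monotonicity}) together with the FKG inequality for the Poisson construction, the probability that none of the attempts succeeds is at most $(1-p_0)^{c\min(N,M)}$. Choosing $N,M$ large makes this $<\delta$, which yields $\cE_1$ (a filled translate of $[-n,n]^d$ inside $[0,L)^d$ at time $T+1$) and $\cE_2$ (a filled translate on $\{L+n\}\times[0,L)^{d-1}$ at some time $<T$). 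Picking $\delta$ small enough and collecting the error terms gives $\IP(\cE_1)>1-\varepsilon$ and $\IP(\cE_2)>1-\varepsilon$, i.e.\ Condition~\ref{fi-st-condition}.

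\textbf{Main obstacle.} The genuinely new work is twofold. First, every ``positive probability to do something in bounded time'' input (filling a block, surviving a bounded time, reaching the boundary) must be made uniform over the current background configuration; since the CPIU has independent, finite--state, ergodic background components this costs only a positive factor --- one conditions the finitely many relevant coordinates of $\bfxi$ to follow a favourable pattern for a bounded time --- but one has to keep track that all events entering the FKG/independence arguments remain measurable with respect to disjoint parts of the Poissonian representation, which is not automatic once the evolving environment is entangled with the infection. Second, the Bezuidenhout--Grimmett scheme is usually run on a slab and produces an oriented \emph{site} percolation, whereas here we need it on all of $\IZ^d$ and as an oriented \emph{edge} percolation (so that the results of \cite{garet2014} can be quoted in Section~\ref{sec:RestartingProcedure}); adapting the comparison events $\cE_1,\cE_2$ and the $L$-versus-$T$ balancing to this setting is where the bookkeeping has to be redone carefully, although no new probabilistic idea beyond \cite{liggett1999stochastic} and \cite{seiler2023contact} is required.
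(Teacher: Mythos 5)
Your proposal follows essentially the same route as the paper, which proves this theorem simply by assembling the preceding auxiliary results (the survival lemma, Lemma~\ref{lemma:double_limit}, Lemma~\ref{lemma:pos_korrelation}, Lemma~\ref{lemma:limsup_cor}) into the classical Bezuidenhout--Grimmett scheme as presented in \cite[Section~I.2]{liggett1999stochastic} and carried out in detail in \cite[Theorem~7.5]{seiler2023contact}. (Only a small remark: the slab-versus-$\IZ^d$ and site-versus-bond percolation issues you flag as the main obstacle belong to the later coupling result, Theorem~\ref{theorem:coupling_independent}, not to the verification of Condition~\ref{fi-st-condition} itself.)
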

\begin{proof}
    As in the basic case, by using the results from above. See \cite[Theorem 7.5]{seiler2023contact}
\end{proof}
We now come to the coupling with an oriented percolation process. As an intermediate result we will establish a coupling with an $M$-depend oriented percolation model on $\IZ^d\times\IN$. Before providing a rigorous definition, we note that $\Vec{E}$
depends on the dimension $d$ and we therefore write $\Vec{E}^d:=\Vec{E}$ when we want to emphasize the dimension.
\begin{definition}
    Let $d\geq 1$ and $M\geq0 $ be positive integers, $p\in(0,1)$ and $(\Omega,\cF,\IP)$  a probability space with a filtration $(\cG_n)_{n\geq 1}$. An adapted random field $W=(W_n^e)_{n\geq 1, e\in\Vec{E}^d}$ with values in $\{0,1\}^{\IN\times \vec{E}^d}$ is called $M$-dependent oriented (bond) percolation on $\IZ^d\times\IN$ with density at least $p$ if 
    \begin{equation*}
        \IP(W_{n+1}^e|\cG_n\cup \sigma(W_{n+1}^{e'},d(e,e')>M))\geq p.
    \end{equation*}
    Hereby $d(e,e')$ is the $\ell_1$-distance between the centres of $e$ and $e'$. In case $M=0$ we also call it independent oriented (bond) percolation.
    In accordance to \cite[Definition 1]{garet2014} we denote the class of $M$-dependent oriented percolation on $\IZ^d\times\IN$ with density at least $p$ by $\cC_d(M,p)$.
\end{definition}
For $x,y\in \IZ^d$ and $0\leq i<j$ we write $(x,i)\to_W (y,j)$ if there exists a path of adjacent edges $e_{i+1},\dots ,e_j$ with $W_k^{e_k}=1$ for all $k=i+1,\dots, j$ and $e_{i+1}=(x,v)$ as well as $e_j=(w,y)$ for some $v,w\in \IZ^d$. Furthermore, for $n\geq 1$ and $x\in \IZ^d$ we define
\begin{equation*}
    \bfP_n^{W,x}:=\{y\in \IZ^d: (x,0)\to_W (y,n) \}\quad\text{and}\quad
    \tau^{W,x}:=\inf\{n\geq 0: \bfP_n^{W,x}=\emptyset\}.
\end{equation*}
We use our usual shorthand notation $\bfP_n^{W}:=\bfP_n^{W,\origin}$ and $\tau^{W}:=\tau^{W,\origin}$, and we suppress $W$ from the superscript whenever it is clear from the context.
\begin{remark}
    Note, by definition, all visited space-time points lie on the grid $\cL^d$, i.e.\
    \begin{equation*}
        \bigcup_{n=0}^{\infty}\bigcup_{x\in\bfP_n^{W}}(x,n)\subset \cL^d:=\{(x,n)\in \IZ^d\times \IN_0: ||x||_1+n\mod 2=0 \}.
    \end{equation*} 
    \end{remark}
Before we state our coupling result, we recall that due to \cite{stacey_liggett} we can always couple a $M$-dependent percolation with an independent one which is dominated by the $M$-dependent one.
\begin{lemma}[{\cite[Lemma 2]{garet2014}}]\label{lemma:dominance}
    Let $d,M\geq1$. Then there exists a function $g_M: [0,1]\to [0,1]$ with $\lim_{p\to 1}g_M(p)=1$ such that we can couple every $\hat{W}\in \cC_p(M,d)$ with an independent percolation $W$ with density $g_M(p)$ satisfying
    \begin{equation*}
        W_n^e \leq \hat{W}_n^e\quad\text{a.s. for all }n\geq 1, e\in \Vec{E}^d.
    \end{equation*}
\end{lemma}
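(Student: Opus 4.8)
The plan is to deduce this from the classical stochastic-domination theorem of Liggett, Schonmann and Stacey \cite{stacey_liggett}; the formulation stated here is \cite[Lemma~2]{garet2014}, and I would follow that proof. The overall idea is to reveal the random field $\hat W$ one time-slice at a time, and to couple each slice from below with a fresh Bernoulli slice of the appropriate density.

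First I would fix a level $n$ and take a regular version $\mu_\omega$ of the conditional law of the slice $(\hat W_{n+1}^e)_{e\in\vec{E}^d}$ given $\cG_n$. The defining inequality of $\cC_d(M,p)$ says exactly that, under $\mu_\omega$, the conditional probability that $\hat W_{n+1}^e=1$ given the coordinates $\hat W_{n+1}^{e'}$ with $d(e,e')>M$ is at least $p$ (for $\mu_\omega$-a.e.\ configuration and for $\IP$-a.e.\ $\omega$). Viewing $\mu_\omega$ as the law of a $\{0,1\}$-valued field on the graph with vertex set $\vec{E}^d$ and an edge between $e$ and $e'$ whenever $0<d(e,e')\le M$ -- a graph whose degree is bounded by some $\Delta=\Delta(M,d)$ -- this is precisely the hypothesis of the Liggett--Schonmann--Stacey theorem. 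That theorem then yields that $\mu_\omega$ stochastically dominates the product measure $\mathrm{Ber}(g_M(p))^{\otimes\vec{E}^d}$, where $g_M$ depends only on $\Delta$, hence only on $M$ and $d$, and satisfies $g_M(p)\to 1$ as $p\to 1$; importantly, the bound is uniform in $\omega$ since all the fields $\mu_\omega$ share the same $\Delta$ and the same $p$.

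With this conditional domination in hand I would construct $W$ on an enlarged probability space by induction on $n$. Suppose $\cG_n$ and the Bernoulli slices $W_1,\dots,W_n$ have been revealed. By Strassen's theorem there is a monotone coupling of $\mu_\omega$ with $\mathrm{Ber}(g_M(p))^{\otimes\vec{E}^d}$, and it can be chosen measurably in $\omega$; realising its two marginals using fresh randomness gives $(\hat W_{n+1}^e)_e$ together with a slice $(W_{n+1}^e)_e$ with $W_{n+1}^e\le\hat W_{n+1}^e$ for every $e$. Because the second marginal is the \emph{same} product measure regardless of $\omega$, the slice $W_{n+1}$ is independent of $\cG_n$ and hence of $W_1,\dots,W_n$; replacing $\cG_n$ by $\cG_n\vee\sigma(W_1,\dots,W_n)$ keeps $\hat W$ adapted and lets the recursion continue. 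The resulting field $W=(W_n^e)$ is then an independent oriented bond percolation of density $g_M(p)$ with $W_n^e\le\hat W_n^e$ a.s.\ for all $n$ and $e$, which is the claim. The only genuinely delicate ingredient is the Liggett--Schonmann--Stacey domination theorem itself -- a one-coordinate-at-a-time revelation argument exploiting the bounded degree $\Delta$, together with a careful tracking of error terms for the quantitative statement $g_M(p)\to1$ -- and I would cite it rather than reprove it; the remaining steps (the slicing, the Strassen coupling, and the bookkeeping that makes the Bernoulli slices independent) are routine, and are carried out in detail in \cite{garet2014}.
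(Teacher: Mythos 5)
Your proposal is correct and matches the paper's treatment: the paper does not reprove this statement but simply quotes it as \cite[Lemma~2]{garet2014}, whose proof is exactly the slice-by-slice application of the Liggett--Schonmann--Stacey domination theorem (cited in the paper as \cite{stacey_liggett}) that you outline. The conditional-law/Strassen bookkeeping you describe is the standard and intended argument, so there is nothing to add.
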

The idea behind the coupling between the infection process and the oriented percolation is to switch from the microscopic grid $\IZ^d$ to a macroscopic one by subdividing the original grid into boxes of side length $2a$ where $a$ will be specified later in the proof of Lemma~\ref{lemma:advanced_condition}. More precisely, we identify every macroscopic site $\hat{x}\in \hat{\IZ}^d$ with the box
\begin{equation*}
    B_{\hat{x},a}:=\hat{x}\cdot 2a+[\unaryminus a,a]^d
\end{equation*} 
and the boxes split up our entire microscopic space. Our oriented percolation process  $(W_k^e)_{k\geq 1, e\in \Vec{E}^d}$ will then live on the macroscopic space and the occurrence of $W_k^{(\hat{x},\hat{y})}$ will be determined by the graphical representation of our original process. 

\begin{theorem}\label{theorem:coupling_independent}
    If $\IP(\tau^{\origin}=\infty)>0$ then for every $p<1$ there are choices of $n,a,b\in \IN$ with $n<a$ such that we can couple our process with an independent oriented percolation $(W_k^e)_{k\geq 1, e\in \Vec{E}^d}$ with density at least $p$ in the following way: 
    \begin{equation*}
        \hat{x} \in \bfP_k^W\Rightarrow \seteta_t^{[-n,n]^d,\szero}\supset x+[\unaryminus n,n]^{d}
    \end{equation*}
    for some $(x,t)\in B_{\hat{x},a}\times[5kb,(5k+1)b].$
\end{theorem}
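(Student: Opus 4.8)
The plan is to run a Bezuidenhout--Grimmett block construction, in the form carried out in \cite{bezuidenhout1990contact} and adapted in \cite{liggett1999stochastic,seiler2023contact,Diss_Marco}, in two stages: first couple the \emph{truncated} infection process with an $M$-dependent oriented bond percolation whose density can be pushed arbitrarily close to $1$, and then invoke Lemma~\ref{lemma:dominance} to dominate that field from below by a genuinely independent percolation of density at least $p$. So fix $p<1$. The dependence range $M$ that the construction produces depends only on $d$ --- it is the macroscopic $\ell_1$-diameter of the space-time slab whose graphical data one inspects to decide whether a given macroscopic bond is open --- so fix this $M$ first, and then pick $\varepsilon>0$ small enough that $g_M(1-C\varepsilon)\geq p$, where $C=C(d)$ is the (fixed) number of elementary $\cE_1$/$\cE_2$-type sub-steps making up one macroscopic step.

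Next I would feed this $\varepsilon$ into the finite space-time conditions. Since $\IP(\tau^{\origin,\szero}=\infty)>0$, Theorem~\ref{theorem:fst_fullfilled} yields $n,L\geq 1$ and $T>0$ with $\IP(\cE_1(n,L,T))>1-\varepsilon$ and $\IP(\cE_2(n,L,T))>1-\varepsilon$. Set the macroscopic half-width $a:=L+2n$ (so $n<a$) and the time unit $b:=T+1$, identify $\hat x\in\IZ^d$ with the box $B_{\hat x,a}=2a\hat x+[\unaryminus a,a]^d$, and call a set a \emph{seed in $B_{\hat x,a}$} if it contains a translate $v+[\unaryminus n,n]^d\subset B_{\hat x,a}$. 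The bond variable $W_k^{(\hat x,\hat y)}$ (for adjacent $\hat x,\hat y$) is declared open precisely when, using \emph{only} the part of the Poisson construction $\Delta$ localized in a bounded space-time neighbourhood of $B_{\hat x,a}\cup B_{\hat y,a}$ during the window $[5kb,5(k+1)b+b]$, an appropriately truncated process $\prescript{}{L'}\seteta^{\,\cdot,\szero}$ started from a seed in $B_{\hat x,a}$ present at some time in $[5kb,(5k+1)b]$ produces a seed in $B_{\hat y,a}$ present at some time in $[5(k+1)b,(5(k+1)+1)b]$. Chaining $\cE_1$ and $\cE_2$ over the $C$ sub-steps (growth along the bond direction, followed by a bounded number of $\cE_1$/$\cE_2$-steps to steer the reappearing seed --- which $\cE_1$ only locates \emph{somewhere} in an orthant of side $L$ --- onto the correct face and into the target box) shows that each $W_k^{(\hat x,\hat y)}$ is open with probability at least $1-C\varepsilon$; truncating the infection to a bounded box $V_{L'}$, so that only finitely many maps and hence a bounded piece of $\Delta$ is ever consulted, together with the fact that in a CPIU the background updates on disjoint regions of $V\cup E$ are independent, shows that bonds indexed by macroscopically distant edges are independent given the natural filtration of the graphical construction, i.e.\ $\hat W=(W_k^e)\in\cC_d(M,1-C\varepsilon)$.

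Finally I would pass to the independent percolation: by Lemma~\ref{lemma:dominance} there is an independent $W$ with density $g_M(1-C\varepsilon)\geq p$ and $W_k^e\leq\hat W_k^e$ a.s.\ for all $k,e$, so every open $W$-path is an open $\hat W$-path. If $\hat x\in\bfP_k^W$, take a sequence of open bonds from $(\origin,0)$ to $(\hat x,k)$ and apply the single-step property along it, using additivity and monotonicity of the infection process (Proposition~\ref{prop:Monotonicity}) to restart from the seed produced at the previous step, $\prescript{}{L'}\seteta^{\,\cdot,\szero}_t\subset\seteta^{\,\cdot,\szero}_t$ to pass from the truncated to the full process, and worst-case monotonicity (Assumption~\ref{Ass:WS-Monotonicity}) to justify restarting with background $\szero$ at each step; this produces a full seed $x+[\unaryminus n,n]^d\subset B_{\hat x,a}$ inside $\seteta^{[\unaryminus n,n]^d,\szero}_t$ for some $t\in[5kb,(5k+1)b]$, which is the assertion.

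\textbf{Main obstacle.} The delicate part is the geometry in the second paragraph: arranging the macroscopic boxes, the sub-steps, and the inspected windows so that simultaneously (i) each macroscopic bond opens with probability $1-O(\varepsilon)$, (ii) the blocks of $\Delta$ consulted for macroscopically distant bonds are disjoint, giving genuine $M$-dependence despite $\bfxi$ being an autonomous process (here the independence of the CPIU updates across $V\cup E$ and the truncation are both essential), and (iii) the consulted time windows telescope to give exactly the level-$k$ window $[5kb,(5k+1)b]$. This bookkeeping is the bulk of the work; conceptually it is the Bezuidenhout--Grimmett scheme, and the two new features here --- coupling to a \emph{bond} percolation (so the results of \cite{garet2014} become available) on all of $\IZ^d\times\IN$ rather than on a slab --- affect only the choice of boxes and the steering of the seed, not the structure of the argument.
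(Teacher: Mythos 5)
Your proposal follows essentially the same route as the paper's proof: the finite space-time conditions (Theorem~\ref{theorem:fst_fullfilled}), a Bezuidenhout--Grimmett block construction producing an $M$-dependent oriented \emph{bond} percolation on $\IZ^d\times\IN$ with density close to $1$ (the paper packages your $\cE_1$/$\cE_2$-chaining into Lemma~\ref{lemma:advanced_condition} and Corollary~\ref{corollary:advanced_condition}, which yields a $5$-dependent field via the disjointness of the boxes $\bigbox$), followed by Lemma~\ref{lemma:dominance} and the gluing along open macroscopic paths with restarts in the worst background state $\szero$. The deviations are only bookkeeping (your $a=L+2n$, $b=T+1$ versus the paper's $a=2L+n$, $b=2T$, and an unspecified $M=M(d)$ versus the explicit $M=5$), so this is in substance the paper's argument.
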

To proof the theorem we follow the classical road and start with collecting some auxiliary results.

\begin{lemma}\label{lemma:one-condition}
	Assume Condition~\ref{fi-st-condition} is satisfied. Then for every $\varepsilon>0$ there exists $n,L,T$ such that $\IP(\cE_3(n,L,T))>1-\varepsilon$ with
	\begin{align*}
		\cE_3(n,L,T):=\big\{&\prescript{}{2L+3n}\seteta_t^{[-n,n]^d,\szero}\supset x+[\unaryminus n,n]^d\text{ for some }t\in[T, 2T),\\
		&\hspace{5em} x\in[L+n,2L+n]\times[0,2L)^{d-1}\big\}.
	\end{align*}
	
\end{lemma}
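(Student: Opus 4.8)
The plan is to obtain $\cE_3$ by \emph{gluing} the two finite space-time events $\cE_1$ and $\cE_2$ furnished by Condition~\ref{fi-st-condition}: one restarts the restricted infection process at an intermediate time and uses the translation invariance of the Poisson graphical construction. Fix $\varepsilon>0$, pick $\varepsilon'>0$ with $(1-\varepsilon')^2>1-\varepsilon$, and take $n,L\geq1$ and $T_0>0$ from Condition~\ref{fi-st-condition} applied to $\varepsilon'$, so that $\IP(\cE_1(n,L,T_0))>1-\varepsilon'$ and $\IP(\cE_2(n,L,T_0))>1-\varepsilon'$. I will show $\IP(\cE_3(n,L,T))>1-\varepsilon$ with $T:=T_0+1$ (this merely shifts the admissible time window of $\cE_3$, not its spatial constraint). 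On $\cE_1(n,L,T_0)$ there is a translate $x_1+[\unaryminus n,n]^d$, with $x_1\in[0,L)^d$ and $x_1$ measurable with respect to the stage-$1$ marks (take the lexicographically smallest one), contained in $\prescript{}{L+n}\seteta_{T_0+1}^{[-n,n]^d,\szero}$; since $\prescript{}{L}\seteta^{I}$ is nondecreasing in $L$ (an $\szero$-infection path confined to $V_L$ is confined to any larger box, and recovery symbols outside $V_L$ are irrelevant to it), this box also lies in $\prescript{}{2L+3n}\seteta_{T_0+1}^{[-n,n]^d,\szero}$.

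Next I restart from that box. By additivity of $\seteta$ in the initial infection set together with the Markov property, $\prescript{}{2L+3n}\seteta_{T_0+1+s}^{[-n,n]^d,\szero}$ dominates the process started at time $T_0+1$ from $x_1+[\unaryminus n,n]^d$ with the current background $\bfxi_{T_0+1}$; by monotonicity of the (monotone) CPIU — equivalently, by worst-case monotonicity — this in turn dominates the same process started with background $\szero$; and, restricting it to $x_1+V_{L+2n}$, which is contained in $V_{2L+3n}$ because $x_1\in[0,L)^d$ (so the ambient restriction $\prescript{}{2L+3n}$ is harmless), this last process is, by translation invariance of the graphical construction in space by $x_1$ and in time by $T_0+1$, an exact distributional copy of $\prescript{}{L+2n}\seteta^{[-n,n]^d,\szero}$ depending only on marks in the time slab $(T_0+1,\infty)$. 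On the corresponding copy $F_{x_1}$ of $\cE_2(n,L,T_0)$, this produces, for some $0\leq t'<T_0$, a box $x_1+x_2+[\unaryminus n,n]^d$ with $x_2\in\{L+n\}\times[0,L)^{d-1}$ contained in $\prescript{}{2L+3n}\seteta_{T_0+t'+2}^{[-n,n]^d,\szero}$. Since $x_1\in[0,L)^d$, the first coordinate of $x_1+x_2$ lies in $[L+n,2L+n)$ and the remaining ones in $[0,2L)$, while $T_0+t'+2\in[T_0+2,2T_0+2)\subseteq[T,2T)$; thus $\cE_1(n,L,T_0)\cap F_{x_1}$ realises $\cE_3(n,L,T)$.

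For the probability bound, for each fixed $y\in[0,L)^d$ the event $F_y$ (the $\cE_2$-copy rooted at $y$) depends only on marks in $(T_0+1,\infty)$ inside $y+V_{L+2n}$, hence is independent of $\cE_1(n,L,T_0)$ and of $\{x_1=y\}$, and by translation invariance $\IP(F_y)=\IP(\cE_2(n,L,T_0))>1-\varepsilon'$. Summing over the countably many possible values of $x_1$,
$$\IP(\cE_3(n,L,T))\;\geq\;\sum_{y}\IP\big(\cE_1(n,L,T_0),\,x_1=y,\,F_y\big)\;\geq\;(1-\varepsilon')\,\IP(\cE_1(n,L,T_0))\;>\;(1-\varepsilon')^2\;>\;1-\varepsilon.$$
This is the standard gluing step of the Bezuidenhout–Grimmett block construction (cf.\ \cite[Section~2]{bezuidenhout1990contact}, \cite{liggett1999stochastic} and \cite[Section~7]{seiler2023contact}); the only twist is that the restart with the background reset to $\szero$ is legitimised by monotonicity (worst-case monotonicity) rather than by self-duality.

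The step I expect to demand the most care is the bookkeeping rather than any probabilistic input: one must check that the shifted restriction window $x_1+V_{L+2n}$ never leaves $V_{2L+3n}$, that the composed ``$+1$'' time increments of $\cE_1$ and $\cE_2$ land inside $[T,2T)$ (which forces the harmless relabelling $T=T_0+1$), and that the spatial targets compose so that the first coordinate crosses $L+n$ while the others stay below $2L$ — all while keeping the two stages on disjoint time slabs so the restart is genuinely independent of the first stage once the intermediate location $x_1$ has been fixed.
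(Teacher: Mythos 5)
Your proof is correct and follows essentially the same route as the paper, which simply invokes the standard Bezuidenhout--Grimmett gluing argument of \cite[Proposition~2.20]{liggett1999stochastic}: use $\cE_1$ to locate a fully infected translate of $[\unaryminus n,n]^d$, restart from it with the background reset to $\szero$ (legitimate by monotonicity of the monotone CPIU), apply a space-time translated copy of $\cE_2$ on the disjoint later time slab, and multiply the two probabilities. Your bookkeeping of the restriction windows, the spatial targets and the time interval (with the harmless relabelling $T=T_0+1$) is accurate, so no gap remains.
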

\begin{proof} 
The proof is identical to the one presented in Liggett's book for the result \cite[Proposition 2.20]{liggett1999stochastic} and no modifications are necessary.

\end{proof} 

To simplify the notation a bit we introduce for a macroscopic site $\hat{x}\in \IZ^d$  and $j,a,b\in\IN$ the space-time boxes 
$$
\smallbox(\hat{x},j):=\smallbox(\hat{x},j,a,b):=B_{\hat{x},a}\times [jb,(j+1)b] 
$$ and
$$
\bigbox(\hat{x},j):=\bigbox(\hat{x},j,a,b):=\big([\unaryminus 5a,5a]^{d}+2a\cdot \hat{x}\big)\times[jb,(6+j)b].
$$
Moreover, by $u_i\in \IZ^d$ we denote the unit vector in direction $i\in\{1,\dots,d\}$ and by $\Vec{U}=\{u_1,\dots,u_d,-u_1,\dots,-u_d\}$ we  denote the set of unit vectors in positive and negative directions. For any  site $x\in \IZ^d$ and $u\in \Vec{U}$ we denote by $\ e_x^u=(x,x+u)\in \Vec{E}$ the directed edge from $x$ to $x+u$. Clearly, by definition $\Vec{E}=\{e_x^u:x\in \IZ^d,u\in \Vec{U}\}.$
\begin{lemma}\label{lemma:advanced_condition}
	Suppose Condition~\ref{fi-st-condition} is satisfied. Then for every $\varepsilon>0$ there are choices of $n,a,b$ with $n<a$ such that if $(x,s)\in \smallbox(\hat{x},j,a,b)$ for some $\hat{x}\in\IZ^d$ and $j\in\IN$ then $\IP(\cE^{u_1}(n,a,b,x,s))\geq 1- \varepsilon$ with
	\begin{align*}
			\cE^{u_1}(n,a,b,x,s):=&\big\{\exists(y,t)\in \smallbox(\hat{x}+u_1,j+5,a,b)\text{ such that there are }\\ & \text{$\szero$-infection paths that stay in }\bigbox(\hat{x},j,a,b)\text{ and go from } \\&(x,s)+([\unaryminus n,n]^d\times\{0\})\text{ to every point in }(y,t)+([\unaryminus n,n]^d\times\{0\})\}.
	\end{align*}
\end{lemma}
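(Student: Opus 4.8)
The plan is to follow the classical one-step estimate of the block construction, cf.\ e.g.\ \cite[Proposition~2.22]{liggett1999stochastic} and its adaptations in \cite{steif2008critical,seiler2023contact}, with two modifications: the confining region is the space-time box $\bigbox(\hat{x},j,a,b)$, and the ``marching'' of a fully infected block takes place in the fixed coordinate direction $u_1$ on all of $\IZ^d$ (the remaining directions in $\Vec{U}$ then follow from the reflection and translation symmetry of the graphical construction, Assumption~\ref{Ass:BackgroundMaps}(2)). The central observation is that $\cE^{u_1}(n,a,b,x,s)$ is implied by the joint occurrence of a bounded number $N$ of spatially and temporally translated copies (and lattice-symmetry images) of the finite space-time events $\cE_1$ of Condition~\ref{fi-st-condition} and $\cE_3$ of Lemma~\ref{lemma:one-condition}; each of these has probability at least $1-\varepsilon'$, so a union bound, realised through the strong Markov property, gives $\IP(\cE^{u_1})\geq 1-N\varepsilon'$. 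To avoid circularity, $N$ is fixed first: one picks universal integers $C,C'$ (depending only on $d$) and counts the number of stages so that steering a fully infected block from anywhere in $B_{\hat{x},a}$ to anywhere in $B_{\hat{x}+u_1,a}$ takes at most $N$ applications of $\cE_3$ (each shifting the block by between $L+n$ and $2L+n$ in direction $u_1$ and by a controlled amount transversally, using reflected copies for re-centering), plus a few $\cE_1$-applications for the initial adjustment and for ``idling'' until the time coordinate enters $[(j+5)b,(j+6)b)$; this determines $N=N(d,C,C')$. Only then one puts $\varepsilon':=\varepsilon/N$, invokes Condition~\ref{fi-st-condition} and Lemma~\ref{lemma:one-condition} to obtain $n,L,T$, and sets $a:=CL$, $b:=C'(T+2)$, so that $n\leq L<a$ holds automatically.

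Geometrically, I would start from the fully infected block $(x,s)+[\unaryminus n,n]^d\times\{0\}$ with $(x,s)\in\smallbox(\hat{x},j,a,b)$ and run the stages iteratively: at the end of stage $i$ there is a fully infected block $x_i+[\unaryminus n,n]^d$ at time $\sigma_i$, where $\sigma_i$ is the first time after $\sigma_{i-1}+T$ at which the infected set contains such a block inside the target window of the $i$-th condition. On the good event these stopping times are finite with $\sigma_i-\sigma_{i-1}\leq 2T$, and $x_i\in[\unaryminus 5a,5a]^d+2a\hat{x}$. With $C,C'$ chosen as above, the final block lies inside $B_{\hat{x}+u_1,a}$ at a time in $[(j+5)b,(j+6)b)$, hence inside $\smallbox(\hat{x}+u_1,j+5)$, every confinement box $x_i+V_{2L+3n}$ (resp.\ $x_i+V_{L+n}$) is contained in $[\unaryminus 5a,5a]^d+2a\hat{x}$, and all stages finish before $(6+j)b$; thus the infection paths produced stay in $\bigbox(\hat{x},j,a,b)$, and gluing the $\szero$-infection paths of the individual stages (using additivity of $\bfeta$, Proposition~\ref{prop:Monotonicity}) yields exactly the paths required in the definition of $\cE^{u_1}$.

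For the probability bound I would apply the strong Markov property at the $\sigma_i$: conditionally on $\cF_{\sigma_i}$, the event of stage $i+1$ depends only on the graphical construction after $\sigma_i$ inside $x_i+V_{2L+3n}$ and is increasing in the infected set. Since the CPIU is monotone (Proposition~\ref{prop:Monotonicity}), the true background $\bfxi_{\sigma_i}$ dominates $\szero$, so the conditional probability of the stage-$(i+1)$ event is at least its value for the worst background $\szero$, namely at least $1-\varepsilon'$, uniformly in the finitely many admissible values of $x_i$; equivalently, since $F$ and $G$ are monotone in the component-wise order, every $\szero$-infection path with respect to the background restarted from $\szero$ at time $\sigma_i$ is also an infection path with respect to the true, larger background. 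Multiplying these conditional estimates gives $\IP(\cE^{u_1}(n,a,b,x,s))\geq(1-\varepsilon')^N\geq 1-N\varepsilon'=1-\varepsilon$.

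I expect the main obstacle to be the combinatorial bookkeeping rather than any new probabilistic idea: one has to choose $C,C'$ so that the marching geometry closes up \emph{exactly} --- the last block landing in $B_{\hat{x}+u_1,a}$ at a time in the layer $[(j+5)b,(j+6)b)$, all intermediate confinement boxes staying inside $[\unaryminus 5a,5a]^d+2a\hat{x}$, and all stages completed by time $(6+j)b$ --- while keeping $N$ (hence $C,C'$) fixed before $\varepsilon'$ and $n,L,T$ are chosen. The one genuinely model-specific point is the background reconciliation above; it is exactly here (as in Lemma~\ref{Lemma_lackSubadd}) that monotonicity of the whole system is used, which is why this block construction is carried out only for monotone CPIU.
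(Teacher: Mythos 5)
Your proposal is correct and follows essentially the same route as the paper: the paper likewise iterates the event of Lemma~\ref{lemma:one-condition} (together with its reflected copies, justified by the symmetry of the construction) to steer the fully infected block in direction $u_1$, restarts the background in $\szero$ at each stage via the strong Markov property, glues the resulting $\szero$-infection paths using monotonicity, and defers the geometric bookkeeping to \cite[Proposition~2.22]{liggett1999stochastic}. The only cosmetic difference is that the paper fixes the constants explicitly as $a=2L+n$ and $b=2T$ rather than your generic $a=CL$, $b=C'(T+2)$.
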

\begin{proof} The Lemma follows by applying Lemma \ref{lemma:one-condition} multiple-times to move the space-time point $(x,s)$ to $(y,t)$ such that if $(x,s)$ is the centre of an infected cube, then $(y,t)$ will also be the centre of an infected cube. To apply the Lemma \ref{lemma:one-condition} properly one should note that instead of the positive box $[L+n,2L+n]\times[0,2L)^{d-1}$ in the definition of $\cE_3$ one can use any box obtained from the reflections of the positive box on the coordinate planes in $\IZ^d$ by symmetry. Also within the proof we set $a:=2L+n$ and $b:= 2T$. For more details we refer to  \cite[Proposition 2.22]{liggett1999stochastic}  where the proof is carried out.
\end{proof}
By symmetry we can also deduce immediately:
\begin{corollary}\label{corollary:advanced_condition}
	The statement of Lemma~\ref{lemma:advanced_condition} holds for any direction $u\in \Vec{U}$
    where we have to consider the slightly modified event
	\begin{align*}
\cE^{u}(n,a,b,x,s):=&\big\{\exists(y,t)\in \smallbox(\hat{x}+u,j+5,a,b)\text{ such that there are}\\ &\text{$\szero$-infection paths that stay in }\bigbox(\hat{x},j,a,b)\text{ and go from } \\&(x,s)+([\unaryminus n,n]^d\times\{0\})\text{ to every point in }(y,t)+([\unaryminus n,n]^d\times\{0\})\}.
\end{align*}
\end{corollary}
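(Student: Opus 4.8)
The plan is to reduce an arbitrary direction $u\in\Vec{U}$ to the already-treated case $u=u_1$ by a symmetry argument, keeping the very same constants $n,a,b$ produced by Lemma~\ref{lemma:advanced_condition}. The first step would be to record that, in the CPIU setting of this subsection, the whole Poissonian graphical construction $\Delta$ is invariant in distribution under the group $\cG_d$ of signed coordinate permutations of $\IZ^d$ (and under lattice translations). Indeed, by Example~\ref{ex:MainExamples}(1) the background $\bfxi$ is an i.i.d.\ family with one common vertex-law and one common edge-law, hence invariant under every graph automorphism of $\IZ^d$ induced by an isometry fixing the origin; and the maps $\mathbf{inf}_{a,(x,y)}$, $\mathbf{rec}_{b,x}$ together with their rates are built from $\lambda$ and $r$, which do not refer to the orientation of an edge, so the law of $\Delta$ is $\cG_d$-equivariant. (The generating reflections $R_i$ of $\cG_d$ are precisely those in Assumption~\ref{Ass:BackgroundMaps}(2); the coordinate transpositions are the extra symmetry available here because $\bfxi$ is i.i.d.)

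Next I would fix $u\in\Vec{U}$ and choose $g\in\cG_d$ with $g(u_1)=u$, which exists since $\cG_d$ acts transitively on $\Vec{U}$. The second step is then a bookkeeping check: because $[-a,a]^d$, $[-5a,5a]^d$ and $[-n,n]^d$ are invariant under signed permutations and $g$ is linear, $g$ maps $\smallbox(\hat{x},j,a,b)$ onto $\smallbox(g\hat{x},j,a,b)$, $\bigbox(\hat{x},j,a,b)$ onto $\bigbox(g\hat{x},j,a,b)$, $\smallbox(\hat{x}+u_1,j+5,a,b)$ onto $\smallbox(g\hat{x}+u,j+5,a,b)$, and any cube $z+[-n,n]^d$ onto $g(z)+[-n,n]^d$; moreover $g$ carries $\szero$-infection paths contained in $\bigbox(\hat{x},j,a,b)$ bijectively onto $\szero$-infection paths contained in $\bigbox(g\hat{x},j,a,b)$, since $g$ permutes the Poisson marks compatibly and, $\szero$ being $g$-invariant, also the induced background constraints. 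Hence, for $(x,s)\in\smallbox(\hat{x},j,a,b)$, the symmetry $g$ transforms the event $\cE^{u_1}(n,a,b,g^{-1}(x),s)$ --- formulated relative to the macroscopic site $g^{-1}\hat{x}$ --- precisely into $\cE^{u}(n,a,b,x,s)$. Combining this with the distributional $\cG_d$-invariance of $\Delta$ yields
\[
\IP\bigl(\cE^{u}(n,a,b,x,s)\bigr)=\IP\bigl(\cE^{u_1}(n,a,b,g^{-1}(x),s)\bigr)\ge 1-\varepsilon ,
\]
the last inequality being Lemma~\ref{lemma:advanced_condition}, whose conclusion is uniform over macroscopic sites and time levels. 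As $\hat{x}$, $j$ and $\varepsilon>0$ were arbitrary, this is exactly the assertion of the corollary.

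I do not anticipate a genuine obstacle here; this is the standard ``isotropy of the block construction'' reduction. The one point I would state explicitly is that the argument invokes invariance of the background under permutations of the coordinate axes (in order to pass from direction $u_1$ to $u_2,\dots,u_d$), which is slightly more than the reflections and translations literally granted by Assumption~\ref{Ass:BackgroundMaps}(2). In the CPIU case this holds for free, and more generally the proof goes through verbatim for any background process that is invariant under the full symmetry group of the cubic lattice --- which covers every instance of Example~\ref{ex:MainExamples}.
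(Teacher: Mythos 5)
Your argument is correct and is essentially the paper's own: the corollary is obtained there purely "by symmetry" of the graphical construction, which is exactly the reduction of a general $u\in\Vec{U}$ to $u_1$ via lattice symmetries that you spell out. Your explicit caveat that passing to $u_2,\dots,u_d$ uses invariance under coordinate permutations (beyond the reflections $R_i$ and translations of Assumption~\ref{Ass:BackgroundMaps}(2)), which holds automatically for the CPIU considered in this subsection, is a fair and accurate refinement of the paper's one-line justification.
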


\begin{proof}[Proof of Theorem \ref{theorem:coupling_independent}] The proof is an adaptation of the proof of \cite[Theorem~2.23]{liggett1999stochastic}. So let $p<1$ be arbitrary but fixed and choose $\hat{p}$ large enough such that $G_5(\hat{p})=p$ holds, where $G_5$ is the function defined in Lemma \ref{lemma:dominance}. Given $\hat{p}$ choose $n,a,b\in\IN$ according to Lemma \ref{lemma:advanced_condition}. We will apply Corollary \ref{corollary:advanced_condition} several times to construct inductively with the help of the graphical construction a $5$-dependent oriented percolation $(\hat{W}_k^e)_{k\geq 1, e\in \Vec{E}^d}$ and a collection of (random)  space-time points  $\big(Y_{\hat{x},k}\big)_{k\geq 1,\hat{x}\in\IZ^d}$ in the extended space-time $\mathbb{Z}^d\times \mathbb{R}_+\cup\{\dagger\}$ such that the following holds: if $\hat{x}\in \bfP_k^{\hat{W}}$, then $Y_{\hat{x},k}=(x,t)\in \smallbox(\hat{x},5k)$ 
and at time $t$ there exist $\szero$-infection paths to all vertices in $x+[\unaryminus n,n]^d$ starting from $[\unaryminus n,n]^d$ at time zero. Moreover,  $\hat{x}\notin \bfP^{\hat{W}}_k$ if and only if $Y_{\hat{x},k}=\dagger$. In particular, $Y_{\hat{x},k}=\dagger$ indicates that the macroscopic space-time point $(\hat{x},k)$ is not reached by the oriented percolation starting from the origin.

For notational convenience we define for every $u\in \Vec{U}$ an auxiliary function 
\begin{equation*}
    f^{u}:\Omega\times \IZ^d\times \IR_+\to \mathbb{Z}^d\times \mathbb{R}_+\cup\{\dagger\}
\end{equation*} which evaluates to $f^{u}((x,s)):=(y,t)$ if the event $\cE^{u}(x,s):=\cE^{u}(n,a,b,x,s)$ occurs and $(y,t)$ is a space-time point satisfying the requirements of this event (in case there are more space-time points satisfying the requirements just choose one according to an arbitrary but fixed order). In case $\cE^{u}(x,s)$ does not occur, set $f^{u}(x,s)=\dagger$. 
The coupling now works as follows:
	\begin{itemize}
    \item For $k=1$ and $u\in\Vec{U}$    let $Y_{u,1}:=f^{u}(\origin,0)$ and set $\hat{W}_1^{ e_\origin^{u}}=1$ if and only if $\cE^{u}(\origin,0)$ occurs. For all other edges we set $\hat{W}_1^{e}=1$ independently with probability $p$. Moreover, for all $\hat{x}\notin \Vec{U}$ set $Y_{\hat{x},1}:=\dagger$
    
	\item Assume we have constructed our collections until some $k-1$. If $Y_{\hat{x},k-1}=(x,s)\in \smallbox(\hat{x},5(k-1))$ for some $\hat{x}\in\IZ^d$ let $\hat{W}_k^{ e_{\hat{x}}^{u}}:=\1_{\cE^{u}(x,s)}$ for all $u\in\Vec{U}$. For any $\hat{x}$ with $Y_{\hat{x},k-1}=\dagger$ we set $\hat{W}_1^{ e_{\hat{x}}^{u}}=1$ independently with probability $p$ for all $u\in\Vec{U}$. To define $Y_{\hat{x},k}$ for any $\hat{x}\in\IZ^d$, check if there exists some $u\in \Vec{U}$ such that for $\hat{y}=\hat{x}-u$ we have that $Y_{\hat{y},k-1}\neq \dagger$ holds and $\cE^{u}(Y_{\hat{y},k-1})$ occurs. If so, define $Y_{\hat{x},k}:=f^{u}(Y_{\hat{y},k-1})$, otherwise set $Y_{\hat{x},k}:=\dagger$. Note, in case there are more $\hat{y}$ with the desired property, we just choose the first one according to some fixed order on $\Vec{U}$. Furthermore, in case we have set $Y_{\hat{x},k}\neq \dagger$ it follows by our induction hypothesis that $Y_{\hat{y},k-1}\in \smallbox(\hat{y},5(k-1))$ and by the definition of our auxiliary function we have $Y_{\hat{x},k}=f^{u}(Y_{\hat{y},k-1})\in \smallbox(\hat{x},5k) $.
    \end{itemize}
    Note, by definition, the events $\cE^{u}(x,s)$ and $\cE^{u'}(y,t)$ with $(x,s)\in \smallbox(\hat{x},5k)$  and $(y,t)\in \smallbox(\hat{y},5k)$  for some $\hat{x},\hat{y}\in\IZ^d$, $u,u'\in \Vec{U}$ and $k\in\IN$ are independent if $\bigbox(\hat{x},5k)$ and $\bigbox(\hat{y},5k)$ are disjoint, which is the case if $|\hat{x}-\hat{y}|_1 >5$. Therefore, by construction, our process $(\hat{W}_k^e)_{k\geq 1, e\in \Vec{E}^d}$ is a $5$-dependent oriented percolation process with intensity at least $\hat{p}$, which satisfies the requirements we made. Applying now Lemma \ref{lemma:dominance} gives us an independent oriented bond percolation $ W$ with intensity $p$ satisfying the desired coupling.
\end{proof}
\begin{remark}\label{rem:1DimensionalslabPercolation}
    Note that for any $p<1$ and any $\ell\leq d$ we can also construct an $\ell$-dimensional independent percolation $\tilde{W}$ with density at least $p$
    which satisfies
    \begin{equation*}
        \hat{x} \in \bfP_k^{\tilde{W}}\subset \IZ^\ell\times \{\origin\}^{d-\ell}\Rightarrow \seteta_t^{[-n,n]^d,\szero}\supset x+[\unaryminus n,n]^{d}
    \end{equation*}
    for some $(x,t)\in B_{\hat{x},a}\times[5kb,(5k+1)b]$. This directly follows by Theorem~\ref{theorem:coupling_independent} if we consider the $\ell$-dimensional subfield  $(\tilde{W}_k^e)_{k\geq 1, e\in \Vec{E}^\ell}$ with $1\leq \ell\leq d$ and $\Vec{E}^\ell:=\{e=(x,y)\in \Vec{E}^d:x,y\in \IZ^\ell\times\{\origin\}^{d-\ell}\}$. In particular our result generalises the common coupling in the literature (c.f. \cite[Theorem 2.23]{liggett1999stochastic}) where the underlying percolation lives in $\IZ\times \IN$ and survival of the percolation implies survival of the process in some basically one-dimensional space-time slab $\IZ\times[\unaryminus 5a,5a]^{d-1}$. 
\end{remark}

A classical first consequence of Theorem~\ref{theorem:coupling_independent} is that the parameter regime for global and local survival is the same. Moreover, we can use this to show that if a monotone CPIU has a positive survival probability, then so does the dual process and vice versa. Recall that this is not a priori clear, since the CPIU is not self-dual in general.
\begin{lemma}\label{lem:EquivalenzOfSurivalOfDual}
    For the CPIU it holds that
    \begin{equation}\label{eq:EquivalenceGlobalAndLocal}
          \IP(\tau^{\zero}=\infty)>0 \Leftrightarrow \liminf_{t\to \infty}\IP(\zero \in \bfeta^{\zero}_{t})>0.
    \end{equation}
    This implies in particular that if $\bfxi$ is reversible, then it holds that
    \begin{equation*}
          \IP(\tau^{\zero}=\infty)>0 \Leftrightarrow  \IP(\widecheck{\tau}^{\zero}=\infty)>0
    \end{equation*}
\end{lemma}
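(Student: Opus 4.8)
The statement has two parts, and the second will be reduced to the first. In the equivalence~\eqref{eq:EquivalenceGlobalAndLocal} the implication ``$\Leftarrow$'' is immediate, since $\IP(\tau^{\origin,\szero}=\infty)=\lim_{t\to\infty}\IP(\bfeta_t^{\origin,\szero}\neq\emptyset)\geq\liminf_{t\to\infty}\IP(\origin\in\bfeta_t^{\origin,\szero})$. For ``$\Rightarrow$'' I would follow the classical route for the basic contact process (see \cite[Section~I.2]{liggett1999stochastic}, \cite{bezuidenhout1990contact}), based on the block construction of Theorem~\ref{theorem:coupling_independent}. Assuming $\IP(\tau^{\origin,\szero}=\infty)>0$, choose $p<1$ so large that the coupled independent oriented percolation $W$ on $\IZ^d\times\IN$ is supercritical; for such $W$ one has the (classical) local–survival property $\liminf_{k\to\infty}\IP(\origin\in\bfP_{2k}^{W})>0$, which in the high–density regime we are in follows from an elementary contour estimate and in general from the shape/complete–convergence theorem for supercritical oriented percolation. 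On $\{\origin\in\bfP_{2k}^{W}\}$ the coupling produces an infected translate $x+[-n,n]^d$ of the reference box at a time $t\in[10kb,(10k+1)b]$ with $x\in B_{\origin,a}$; a further use of the finite space–time machinery (in the spirit of Lemma~\ref{lemma:one-condition}) lets one steer this onto a box containing the origin, so that on $\{\origin\in\bfP_{2k}^{W}\}$ the origin itself is infected at some time in a deterministic window of bounded length, with probability bounded below uniformly in $k$. The one point where our setting differs from the monotone–environment literature is that $\min_a\lambda(a)$ may vanish, so this steering step must route the infection through favourable background states; that these occur with positive probability uses ergodicity of the background chains, and worst–case monotonicity (automatic here by Proposition~\ref{prop:Monotonicity}) makes the bound uniform over the initial background.

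This yields a deterministic sequence $t_m\to\infty$ with bounded gaps such that $\IP(\origin\in\bfeta_{t_m}^{[-n,n]^d,\szero})\geq c_0>0$ for all large $m$. To upgrade this to a bound valid for \emph{all} large $t$, note that $\{\origin\in\bfeta_{t_m}^{[-n,n]^d,\szero}\}$ is $\cF_{t_m}$–measurable, while the event that no recovery mark falls at the origin during $[t_m,t]$ is independent of $\cF_{t_m}$ and forces $\origin\in\bfeta_{t}^{[-n,n]^d,\szero}$; since the recovery clock at the origin has rate $\max_a r(a)<\infty$, this event has probability at least $e^{-(\max_a r(a))(t_{m+1}-t_m)}$, giving $\IP(\origin\in\bfeta_t^{[-n,n]^d,\szero})\geq c_0 e^{-(\max_a r(a))(t_{m+1}-t_m)}>0$ for $t\in[t_m,t_{m+1}]$. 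Finally, to pass to a single initially infected site: with positive probability $\bfeta_s^{\origin,\szero}\supset[-n,n]^d$ for some fixed $s>0$, and on that ($\cF_s$–measurable) event additivity and monotonicity in the background give $\bfeta_{s+u}^{\origin,\szero}\supset\bfeta_u^{[-n,n]^d,\szero}\circ\theta_s$ for all $u\geq0$; as the two events depend on disjoint parts of the graphical construction, $\IP(\origin\in\bfeta_{s+u}^{\origin,\szero})\geq\IP(\bfeta_s^{\origin,\szero}\supset[-n,n]^d)\,\IP(\origin\in\bfeta_u^{[-n,n]^d,\szero})$, which is bounded below for all large $u$. This proves $\liminf_t\IP(\origin\in\bfeta_t^{\origin,\szero})>0$, and hence~\eqref{eq:EquivalenceGlobalAndLocal}.

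For the second equivalence, let $\bfxi$ be reversible. By Corollary~\ref{cor:FiniteVSInfinteSurv} it suffices to prove $\nu\neq\delta_\emptyset\otimes\pi\Leftrightarrow\widecheck\nu\neq\delta_\emptyset\otimes\pi$. Suppose $\nu\neq\delta_\emptyset\otimes\pi$; by Corollary~\ref{cor:FiniteVSInfinteSurv} this means $\IP(\widecheck\tau^{\origin,\szero}=\infty)>0$. The dual $(\widecheck\bfeta,\widecheck\bfxi)$ is again a \emph{monotone} CPIU: $\widecheck\bfxi$ is distributed as $\bfxi$, and the mirrored rate function $(a,b,c)\mapsto\lambda(c,b,a)$ is still coordinatewise increasing while $r$ is unchanged. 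Hence~\eqref{eq:EquivalenceGlobalAndLocal}, applied to $\widecheck\bfeta$, gives $\liminf_t\IP(\origin\in\widecheck\bfeta_t^{\origin,\szero})>0$. By the monotone coupling of the CPIU (Proposition~\ref{prop:Monotonicity}), $\IP(\origin\in\widecheck\bfeta_t^{\origin,\szero})\leq\IP(\origin\in\widecheck\bfeta_t^{V,\pi})\to\widecheck\nu(\origin\in\cdot)$ (Lemma~\ref{lem:ExistenceUpperInv}), so $\widecheck\nu(\origin\in\cdot)>0$ and therefore $\widecheck\nu\neq\delta_\emptyset\otimes\pi$. The converse is obtained in the same way with $\bfeta$ and $\widecheck\bfeta$ interchanged, using $\widecheck{\widecheck{\bfeta}}\overset{d}{=}\bfeta$ and invoking~\eqref{eq:EquivalenceGlobalAndLocal} for $\bfeta$ itself; combined with Corollary~\ref{cor:FiniteVSInfinteSurv} this yields $\IP(\tau^{\zero}=\infty)>0\Leftrightarrow\IP(\widecheck\tau^{\zero}=\infty)>0$.

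The main obstacle is the ``$\Rightarrow$'' direction of~\eqref{eq:EquivalenceGlobalAndLocal}: although the scheme is classical, it requires the local–survival input for oriented percolation on $\IZ^d\times\IN$ in a form uniform in the level, together with careful parity/timing bookkeeping in the block construction so that the ``infected box near the origin'' event can be fused with the steering and no–recovery events into a lower bound uniform over all large $t$; the genuinely new feature, absent in the monotone–environment case, is that a vanishing $\min_a\lambda(a)$ forces the steering step to rely on the environment opening up, which is precisely where ergodicity of the background chains and worst–case monotonicity enter.
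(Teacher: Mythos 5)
Your proposal is correct and follows essentially the same route as the paper: the forward implication of \eqref{eq:EquivalenceGlobalAndLocal} via the block-construction coupling with supercritical oriented percolation (the paper simply delegates this to \cite[Proposition~7.15]{seiler2023contact} through the slab percolation of Remark~\ref{rem:1DimensionalslabPercolation}, whereas you sketch the classical restart/steering argument with the full $\IZ^d\times\IN$ coupling), and the dual-survival equivalence by combining local survival with the nontriviality of the upper invariant law and Corollary~\ref{cor:FiniteVSInfinteSurv}, applied also to the dual, which is again a monotone CPIU. The level of detail in your steering and interpolation steps matches (indeed exceeds) the paper's own two-sentence proof, and no step is flawed.
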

\begin{proof}
    The first statement can be shown analogously as in \cite[Proposition~7.15]{seiler2023contact}. Note that the referenced proof uses a coupling of the infection process with an oriented percolation on $\IZ \times[\unaryminus 5a,5a]^{d-1}$, which can also be derived from our coupling, see Remark~\ref{rem:1DimensionalslabPercolation}. Next we see that $\liminf_{t\to \infty}\IP(\zero \in \bfeta^{\zero}_{t})>0$ implies that $\nu\neq \delta_{\emptyset}\otimes \pi$, and thus the second claim follows as a consequence of Corollary~\ref{cor:FiniteVSInfinteSurv}.
\end{proof}

In preparation for further results we close this section with some properties concerning oriented percolation which are known or follow from known results.
For $x\in \IZ^d$ and $i\geq0$ we write $(x,i)\to_W \infty$ if for every $k> i$ there exists some $y$  with $(x,i)\to_W (y,k)$. Furthermore, for $\rho\in (0,1)$ and $n\geq 1$ let
\begin{align*}
    \hat{\gamma}(\rho,x)&:=\inf\big\{n\geq 0: \forall k\geq n : |\{i\in\{1,\dots, k\big\}: (0,\origin)\to_W (i,x)\to_W \infty\}|\geq \rho k \},\\
    R^{W}_{n}(x)&:=\inf\{k>R^{W}_{n-1}(x): (0,\origin)\longrightarrow_{W}(x,k)\}\quad \text{with}\quad R^{W}_{0}(x):=0.\\
    \hat{R}^{W}_{n}(x)&:=\inf\{k>\hat{R}^{W}_{n-1}(x): (0,\origin)\longrightarrow_{W}(x,k)\longrightarrow_{W}\infty\}\quad \text{with}\quad \hat{R}^{W}_{0}(x):=0.
\end{align*}

\begin{corollary}\label{corollary:macro_k_return}
    Let $d\geq 1$. There exists $p<1$ and positive constants $A, B,\alpha,\beta,\beta^*>0$ such that for every independent oriented percolation $W$ on $\IZ^d\times\IN$ with density at least $p$ it holds that
    \begin{equation}\label{sc_oriented_percolation}
    \IE[\1_{\{\tau^W<\infty\}}\exp(\alpha\tau^W)]\leq 1
\end{equation}
and
    \begin{align}
        \IP(\tau^W=\infty,R^{W}_{n}(x)\geq \beta ||x||+\beta^*n)&\leq A\exp{(-Bn)} \quad \forall n\in \IN,~ \forall x\in \IZ^d,\label{eq:R_k}\\
        \IP(\tau^W=\infty,\hat{R}^{W}_{1}(x)\geq \beta ||x||+n)&\leq A\exp{(-Bn)} \quad \forall n\in \IN,~ \forall x\in \IZ^d\setminus\{\origin\}.\label{eq:R_1'}
    \end{align}
\end{corollary}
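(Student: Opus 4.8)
\emph{Strategy.} The plan is to fix $p<1$ close enough to $1$ once and for all — large enough for the contour bound below — and to treat the three displays separately. All the arguments use only independence of the edges together with the lower bound $p$ on their opening probabilities, so the constants come out uniform over every independent oriented percolation $W$ with density at least $p$; alternatively one may first pass, via Lemma~\ref{lemma:dominance}, to a fixed reference percolation.

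For \eqref{sc_oriented_percolation} I would run a Peierls (contour) argument. On $\{\tau^W=n<\infty\}$ the open cluster of $(\origin,0)$ reaches level $n-1$ but no further, so it is a finite connected subset of $\cL^d$ of time-extent $n$; separating it from infinity exhibits a $*$-connected family of at least $cn$ closed edges near the cluster boundary. Since the number of such families of size $\ell$ is at most $C^{\ell}$ and each is entirely closed with probability at most $(1-p)^{\ell}$, one gets $\IP(n\le\tau^W<\infty)\le f(p)^{n}$ with $f(p)\to 0$ as $p\to 1$. Hence $\IE[\1_{\{\tau^W<\infty\}}e^{\alpha\tau^W}]\le\sum_{n\ge 1}e^{\alpha n}f(p)^{n}=e^{\alpha}f(p)\big/(1-e^{\alpha}f(p))$, which is finite for small $\alpha>0$ and tends to $0$ as $p\to 1$; choosing $p$ close enough to $1$ and then a small $\alpha>0$ makes the sum at most $1$.

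For \eqref{eq:R_1'} the input is the linear growth of the region that is reached‑and‑surviving from the origin in supercritical oriented percolation (the shape theorem, in a form uniform over the relevant class; see \cite{garet2014} and \cite{liggett1999stochastic}): there are $\beta,A,B>0$ such that, on $\{\tau^W=\infty\}$, the site $x$ lies in $\{y:(\origin,0)\to_W(y,k)\to_W\infty\text{ for some }k\le\beta\|x\|+n\}$ except with probability $Ae^{-Bn}$. As $\hat R^{W}_{1}(x)$ is precisely the first level $k$ with $(\origin,0)\to_W(x,k)\to_W\infty$, this is \eqref{eq:R_1'} after relabelling. For \eqref{eq:R_k} I would combine this with the renewal structure of the returns: first, $R^{W}_{n}(x)\le\hat R^{W}_{n}(x)$, since $\{k:(\origin,0)\to_W(x,k)\to_W\infty\}\subseteq\{k:(\origin,0)\to_W(x,k)\}$ and the $n$‑th smallest element of a set is at most that of any superset; second, on $\{\tau^W=\infty\}$ the site $x$ is reached‑and‑surviving at a positive density $\rho$ of levels beyond the random level $\hat\gamma(\rho,x)$, whence $\hat R^{W}_{n}(x)\le\hat\gamma(\rho,x)+\lceil n/\rho\rceil$. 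Taking $\beta^{*}:=1+1/\rho$ gives $\{\tau^W=\infty,\ \hat R^{W}_{n}(x)\ge\beta\|x\|+\beta^{*}n\}\subseteq\{\tau^W=\infty,\ \hat\gamma(\rho,x)\ge\beta\|x\|+n\}$, so \eqref{eq:R_k} follows once $\hat\gamma(\rho,x)$ is shown to have, uniformly in $x$, an exponential tail above $\beta\|x\|$ on the survival event.

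That last point is the main obstacle: the uniform‑in‑$x$ exponential control of $\hat\gamma(\rho,x)$ \emph{jointly} with the global event $\{\tau^W=\infty\}$. One reaches it by restarting the percolation at the regeneration point $(x,\hat R^{W}_{1}(x))$, which percolates to infinity and hence, by translation invariance, launches a fresh supercritical oriented percolation whose successive returns to $x$ can be controlled by a regeneration decomposition, giving positive density and exponential inter‑return tails. The delicacy is that $\{\tau^W=\infty\}$ is a tail event, so independence of disjoint space‑time regions cannot be used naively; the remedy is to phrase everything through the "$(x,k)\to_W\infty$'' events, which carry the needed regeneration structure, and to import the relevant oriented‑percolation estimates from \cite{garet2014} and the references therein, transported from a slab to the full grid $\IZ^d\times\IN$ exactly as in Theorem~\ref{theorem:coupling_independent}.
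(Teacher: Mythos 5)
Your proposal is correct in substance and, for the two return-time bounds, follows essentially the same route as the paper: the paper also deduces \eqref{eq:R_k} and \eqref{eq:R_1'} deterministically from the density-regularity time, using $\hat{R}^W_1(x)\leq \hat{\gamma}(\rho,x)/\rho$ and $R^W_n(x)\leq \frac{1}{\rho}\max(\gamma'(\rho,x),n)$ (where $\gamma'$ is the analogue of $\hat{\gamma}$ without the ``$\to_W\infty$'' requirement, and $\gamma'\leq\hat\gamma$ — the same effect as your $R_n\leq\hat R_n$ step), and then imports the key tail estimate $\IP(\tau^W=\infty,\hat\gamma(\rho,x)\geq\beta\|x\|+n)\leq Ae^{-Bn}$ directly as \cite[Lemma~3.5]{garet2014}, uniformly over the class $\cC_d(M,p)$. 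Where you differ is in what you prove versus cite: for \eqref{sc_oriented_percolation} the paper simply quotes \cite[Corollary~3.1]{garet2014}, whereas you sketch a Peierls/contour argument; this is a legitimate, more self-contained route for $p$ close to $1$, though the geometric step (``at least $cn$ closed edges separating the cluster'') would need care in $d\geq 2$. Likewise, what you single out as ``the main obstacle'' — the joint exponential control of $\hat\gamma(\rho,x)$ with $\{\tau^W=\infty\}$ — is exactly the cited Garet--Marchand lemma; your regeneration sketch for it is too vague to stand alone (the delicacy you mention about $\{\tau^W=\infty\}$ being a tail event is precisely what their essential-hitting-time machinery handles), but since you also invoke \cite{garet2014} the argument closes. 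Two small cautions: your parenthetical suggestion to reduce to a fixed reference percolation via Lemma~\ref{lemma:dominance} does not work as stated, because the events in \eqref{eq:R_k}--\eqref{eq:R_1'} are not monotone (large hitting times intersected with survival), so stochastic domination does not transfer them — the correct justification is the one you give first, namely that all constants depend only on the density lower bound $p$; and for \eqref{eq:R_1'} you quote the ``hit-and-survive within linear time'' statement as known, which is fine but is itself most cleanly obtained from the same $\hat\gamma$ bound, as the paper does.
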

\begin{proof} The fact that for $p<1$ large enough there exists some $\alpha>0$ such that \eqref{sc_oriented_percolation} holds is just \cite[Corollary 3.1]{garet2014}. Moreover, in  \cite[ Lemma 3.5]{garet2014} it is shown that there exists positive constants $A,B,\beta, \rho>0$ such that
    \begin{equation}\label{eq:survival_gamma}
        \IP(\tau^{W}=\infty ,\hat{\gamma}(\rho,x)\geq \beta ||x||+n)\leq A\exp({-Bn}) \quad \forall n\in \IN,~ \forall x\in \IZ^d.
    \end{equation}
   For the control of $R_n^W(x)$, which is just the $n$-th hitting time of $x$, we define the auxiliary variable
   \begin{equation*}
       \gamma'(\rho,x):=\inf\big\{n\geq 0: \forall k\geq n : |\{i\in\{1,\dots, k\big\}: (0,\origin)\to_W (i,x)\}|\geq \rho k \}.
   \end{equation*}
   Note, by definition $\gamma'(\rho,x)\leq \hat{\gamma}(\rho,x)$ and thus \eqref{eq:survival_gamma} also holds for $\gamma'$.
    For $\frac{n}{\rho}\geq \gamma'(\rho,x)$ we have by definition of $\gamma'$ that we hit $x$ from $\origin$ at least $n$ times until time $\frac{n}{\rho}$. Moreover, up to time $\frac{\gamma'}{\rho}$ we hit $x$ at least $\gamma'$ times. Thus
    \begin{equation*}
        R_n^W(x)\leq\frac{1}{\rho}\max(\gamma'(\rho,x),n).
    \end{equation*}
    Together with equation \eqref{eq:survival_gamma} for $\gamma'$ we get for any $\beta'>1$ that
    \begin{align*}
        \IP(\tau^W=\infty,R_n^W(x)\geq \frac{\beta}{\rho}||x|| + \frac{\beta'}{\rho} n )
        &\leq \IP(\tau^W=\infty, \gamma'(\rho,x)\geq \beta||x|| + \beta' n )
        \leq A e^{-B\beta' n }.
    \end{align*}
    Changing the constants appropriately gives \eqref{eq:R_k}. The last claim also follows directly from \eqref{eq:survival_gamma}. Just observe that by definition $\hat{R}_1^W(x)\leq \frac{\hat{\gamma}(\rho,x)}{\rho}$ for $x\neq \origin$ and therefore
    \begin{equation*}
        \IP(\tau^W=\infty,\hat{R}^{W}_{1}(x)\geq \frac{\beta}{\rho} ||x||+n)\leq \IP(\tau^W=\infty,\hat{\gamma}(\rho,x)\geq {\beta} ||x||+{\rho} n)\leq Ae^{-B\rho n}.\qedhere
    \end{equation*}
\end{proof}
We give another result concerning the density 
of independent oriented percolation on $\IZ\times \IN$.
\begin{lemma}\label{lemma:_density_up_inv}
    Given a threshold $\beta\in(0,1)$, we can choose $p<1$ large enough such that there exist constants $A,B>0$ such that for every independent oriented bond percolation $W$ on $\IZ\times \IN$  with intensity at least $p$ it holds that
    \begin{equation*}
        \IP(|\bfP_{n}^{W,2\IZ}\cap \B_{r}|<\beta r)\leq A\exp({-Br})\quad\text{for all }n\geq 0,r\geq0.
    \end{equation*}
\end{lemma}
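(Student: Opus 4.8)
The plan is to reduce the event to an upper‑tail large deviation for the number of ``missed'' sites and then settle that by a union bound, the only non‑routine ingredient being a localisation estimate. For bookkeeping, fix the threshold $\beta\in(0,1)$, and for $n\geq0$, $r\geq0$ put $N_r:=|\B_r\cap(n+2\IZ)|$, noting that $\bfP_n^{W,2\IZ}\subseteq n+2\IZ$ and $r-1\leq N_r\leq r+1$ for $r\geq1$; since the claimed bound is trivial once $r$ lies in a bounded range (enlarge $A$), only large $r$ needs attention. Because level $0$ is entirely occupied by $2\IZ$, a site $y$ of the appropriate parity lies in $\bfP_n^{W,2\IZ}$ exactly when the backward cluster of $(y,n)$ reaches level $0$; call $y$ \emph{bad} otherwise. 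Then $\{|\bfP_n^{W,2\IZ}\cap\B_r|<\beta r\}\subseteq\{\#\{\text{bad }y\in\B_r\}\geq K\}$ with $K:=\lceil(1-\beta)r/2\rceil$, using $N_r-\beta r\geq(1-\beta)r-1$. Being bad is a decreasing event, so by a monotone coupling one may assume $W$ has density exactly $p$.

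The crux will be the estimate
\[
\IP(\text{every }y\in F\text{ is bad})\leq\lambda(p)^{|F|/2}\qquad\text{for every finite }F\subseteq n+2\IZ,
\]
for a constant $\lambda(p)\in(0,1)$ independent of $n$ and $F$ with $\lambda(p)\to0$ as $p\to1$. I would obtain it by strip localisation: greedily extract $F'\subseteq F$ with pairwise distances at least $4$ and $|F'|\geq|F|/2$, and for $y\in F'$ let $A_y$ be the event that the backward cluster of $(y,n)$, recomputed \emph{inside} the width‑$3$ vertical strip $\{y-1,y,y+1\}\times\IN$, fails to reach level $0$. Restricting to a strip can only shrink a cluster, so $\{y\text{ bad}\}\subseteq A_y$; the events $(A_y)_{y\in F'}$ depend on pairwise disjoint sets of edges, hence are independent; and each $A_y$ is contained in the event that oriented bond percolation on a width‑$3$ strip started from a single vertex dies out, whose probability is a constant $\lambda(p)$ (independent of $n,y$) tending to $0$ as $p\to1$ --- for $p$ close to $1$ such a strip is supercritical, and extinction forces a closed‑edge ``wall'' across the strip, an event of probability $O(1-p)$ by a one‑dimensional Peierls argument inside the strip. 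Multiplying over $F'$ gives the displayed bound.

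Granting this, a union bound over the bad set finishes the proof:
\[
\IP\!\big(\#\{\text{bad }y\in\B_r\}\geq K\big)\leq\binom{N_r}{K}\lambda(p)^{K/2}\leq2^{\,r+1}\lambda(p)^{(1-\beta)r/4}=2\big(2\lambda(p)^{(1-\beta)/4}\big)^{r}.
\]
Choosing $p<1$ so large that $2\lambda(p)^{(1-\beta)/4}=:e^{-B}<1$, the right‑hand side is $2e^{-Br}$, and after enlarging the prefactor to absorb the finitely many small values of $r$ we get $\IP(|\bfP_n^{W,2\IZ}\cap\B_r|<\beta r)\leq Ae^{-Br}$ for all $n\geq0$, $r\geq0$, with $A,B$ depending on $\beta$ only (through $p$).

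The delicate point is the displayed estimate: one needs a bound that is exponential in $|F|$ and \emph{uniform in the time horizon $n$}. A direct FKG argument is useless, since the bad events are decreasing and therefore positively correlated in the unhelpful direction, and a naive exploration of the backward clusters need not produce an exponential rate because those clusters can be large. Strip localisation fixes this by replacing each bad event with a genuinely local, mutually independent substitute whose failure probability is governed by a one‑dimensional Peierls estimate, at the cost of only a bounded combinatorial loss per site. The remaining steps --- the monotone coupling, the binomial union bound, and the trivial treatment of small $r$ --- are standard.
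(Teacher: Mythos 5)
There is a genuine gap, and it sits exactly at the point you yourself flag as the crux. Your strip-localisation estimate is false: oriented percolation confined to a \emph{fixed finite-width} strip dies out almost surely for every $p<1$, with survival probability decaying exponentially in the time horizon. Concretely, for the width-$3$ strip $\{y-1,y,y+1\}\times\IN$ started from the single site $y$, parity forces the occupied set at every second level to be contained in $\{y\}$, so survival for $n$ levels requires success of $\sim n/2$ independent two-step events, each of probability $1-(1-p^2)^2<1$; hence $\IP(A_y)\to 1$ as $n\to\infty$ for any fixed $p<1$, rather than being bounded by a constant $\lambda(p)<1$ uniformly in $n$. The Peierls picture behind your claim is also wrong: extinction inside a strip does not force a simultaneous ``wall'' of closed edges at one time step; it is produced by an accumulation of small failures over time, which is why no $O(1-p)$ bound independent of $n$ can hold. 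Consequently the displayed estimate $\IP(\text{every }y\in F\text{ bad})\leq\lambda(p)^{|F|/2}$, uniform in $n$, is not obtained, and the subsequent binomial union bound has nothing to feed on. Attempting to repair this by letting the strip width grow with $n$ destroys the disjointness/independence of the localized events (or forces the separation of the chosen sites to grow, degrading the exponent below the required $e^{-Br}$), so the approach cannot be patched locally: one genuinely needs a two-dimensional space-time argument over the backward clusters, not a per-site local substitute.

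For comparison, the paper avoids per-site estimates altogether: using that the percolation started from $2\IZ$ at a later time dominates (a coupling giving $\bfP_{2n+2m}^{W',2\IZ}\subset\bfP_{2n}^{W,2\IZ}$), it reduces the statement uniformly in $n$ to a density bound for the \emph{limiting} object, namely the set of sites of $2\IZ$ whose forward cluster percolates, and then invokes the contour (Peierls-type, but in space-time) large deviation estimate of Durrett adapted to bond percolation to get $\IP(|\bfP_{\infty}^{W,2\IZ}\cap\B_{k}|<\beta(k+1))\leq A e^{-Bk}$ for $p$ large. Your reduction of the event to ``at least $(1-\beta)r/2$ bad sites in $\B_r$'' and the monotone coupling to density exactly $p$ are fine; what is missing is precisely such a global contour/renormalisation input in place of the strip lemma.
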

\begin{proof} To keep the proof short, we only show the statement for even $n$. However, for odd $n$ the statement follows similarly. Note that for all $m\geq 0$ we can couple two percolations $W$ and $W'$ (with the same law) such that $\bfP_{2n+2m}^{W',2\IZ}\subset\bfP_{2n}^{W,2\IZ}$ holds for all $n\geq0$ (by starting $W$ at time $2m$). This implies 
    \begin{equation*}
        \IP(|\bfP_{2n}^{W,2\IZ}\cap \B_{r}|<\beta r)\leq \IP(|\bfP_{\infty}^{W,2\IZ}\cap \B_{r}|<\beta r)
    \end{equation*}
    where 
    \begin{equation*}
        \bfP_{\infty}^{W,2\IZ}=\{z\in 2\IZ:(z,0)\to_W (y,n)\text{ for all }n\geq 0\text{ and some } y\in \IZ\}.
    \end{equation*}
    With the contour argument given in \cite[(5.6) p.\ 592]{durrett1988large} (adapted to bond percolation) it follows that there exists constants $A,B>0$ such that
    \begin{equation*}
        \IP(|\bfP_{\infty}^{W,2\IZ}\cap \B_{k}|<\beta (k+1)) \leq A\exp(-Bk)
    \end{equation*}
    holds for all $k\geq 0$, if we choose $p$ sufficiently large. This directly implies our result for integer $k$. The extension of the inequality to  continuous radii $r$ is straightforward.
\end{proof}
We conclude this section with a statement concerning the density in a one-dimensional slab around the origin for an independent oriented percolation on $\IZ^d\times \IN$ for some general dimension $d\geq1$, starting from an arbitrary point sufficiently close to the origin.
\begin{lemma}\label{Lemma:d-dim-percolation_density_x}
    For $d\geq1$ and a given threshold $\beta\in(0,1)$ there exists some $p<1$ and constants $A,B,c,\alpha>0$ depending on $p$ such that for any independent oriented percolation $W$  on $\IZ^d\times \IN$ with intensity at least $p$ we have for all $c'\leq c$ that
    \begin{align*}
        \IP(|\bfP_n^{x}\cap \B_{c'n}\cap (\IZ\times\{0\}^{d-1})|<\beta c'n|\tau^W=\infty)\leq A\exp({-Bn})\quad\forall n\geq0,~\forall x\in \B_{\alpha n} .
    \end{align*}
\end{lemma}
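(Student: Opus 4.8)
The strategy is to reduce this $d$-dimensional statement to a one-dimensional one supported on the slab $\IZ\times\{0\}^{d-1}$ and then to invoke Lemma~\ref{lemma:_density_up_inv}. First I would fix $p<1$ so close to $1$ that (i) the independent oriented bond percolation on $\IZ\times\IN$ with density $p$ has edge speed larger than some $c_0>0$, (ii) the density estimate of Lemma~\ref{lemma:_density_up_inv} holds with threshold $\beta$, and (iii) the linear-growth conclusions of Corollary~\ref{corollary:macro_k_return} are available. The remaining constants will be chosen small at the end, with $\alpha$ much smaller than an auxiliary $\varepsilon>0$, and $c,\varepsilon$ small relative to $c_0$. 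As in Lemma~\ref{lemma:_density_up_inv} it suffices to treat even $n$, and we may assume $x\neq\origin$. Every exceptional event below will be shown to have probability at most $A\exp(-Bn)$ on the survival event, so that dividing by $\IP(\tau^W=\infty)>0$ gives the conditional bound.

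\emph{Step 1 (feeding the slab at a low level at linearly many sites).} Put $m:=\lceil\varepsilon n\rceil$. Using the at-least-linear growth estimates of Corollary~\ref{corollary:macro_k_return}, translated so that the percolation is started at $x$, I would show that, off an event of probability $\le A\exp(-Bn)$, every one of the $\asymp\varepsilon n$ slab sites $y=(z,0,\dots,0)$ with $|z-x_1|\le\varepsilon n$ is reached from $(x,0)$ at some level $\ell_y\le m$ through a site of infinite forward $W$-cluster: for a fixed such $y$ one has $\|y-x\|_1\le(\varepsilon+\alpha)n$, so the relevant estimate gives a failure probability at most $A\exp(-Bn(\varepsilon(1-\beta)-\beta\alpha))$, which is exponentially small once $\alpha$ is small compared with $\varepsilon$ and $\beta$; a union bound over these $O(n)$ targets then costs only a polynomial factor. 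Note that all these sites lie within $\B_{2\varepsilon n}$ of the origin. Thus, off an exceptional event of the stated order, the $d$-dimensional cluster of $x$ contains, by level $m$, a linear stretch of slab sites near the origin, each of which propagates to infinity in $W$.

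\emph{Step 2 (from the slab to the density estimate).} Let $\tilde W$ be the restriction of $W$ to the edges contained in $\IZ\times\{0\}^{d-1}$; this is again an independent oriented bond percolation on $\IZ\times\IN$ with density at least $p$ (cf.\ Remark~\ref{rem:1DimensionalslabPercolation}), and the part of $\tilde W$ strictly above level $m$ is independent of the event of Step~1. Each of the $\asymp\varepsilon n$ slab sites produced in Step~1 has, uniformly in $p$, a probability bounded away from $0$ of seeding an infinite forward $\tilde W$-cluster through these high-level edges, and these trials use disjoint edge sets; hence, off an event of probability $\le A\exp(-Bn)$, at least one of them, say $y^\ast$ reached at some level $\ell^\ast\le m$, satisfies $(y^\ast,\ell^\ast)\to_{\tilde W}\infty$. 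Since a $\tilde W$-path inside the slab is a fortiori a $W$-path, $\bfP_n^{W,x}\cap(\IZ\times\{0\}^{d-1})$ contains the level-$n$ cluster of $(y^\ast,\ell^\ast)$ in $\tilde W$. Applying Lemma~\ref{lemma:_density_up_inv} to $\tilde W$ (the passage from the starting set $2\IZ$ to a single surviving seed being a routine additional growth step, for which the $(1-\varepsilon)n$ remaining levels leave ample room) then yields $|\bfP_n^{W,x}\cap\B_{c'n}\cap(\IZ\times\{0\}^{d-1})|\ge\beta c'n$ for every $c'\le c$, provided $c<c_0$ and $\varepsilon$ are small enough that $\B_{c'n}$ stays inside the cone that $\tilde W$ spans within the slab over the remaining levels. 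Collecting the exceptional events finishes the argument.

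The main obstacle is precisely this reduction from dimension $d$ to dimension $1$: a positive-density statement for $W$ inside a $d$-dimensional box of side $c'n$ carries no information about the $O(c'n)$-site slab sitting inside it, so one is forced to work within the embedded one-dimensional percolation $\tilde W$. The delicate point is Step~1, namely ensuring — conditioned on survival and uniformly over $x\in\B_{\alpha n}$ — that the $d$-dimensional cluster of $x$ reaches the slab near the origin at a level that is small compared with $n$ and at linearly many sites, so that the level-wise independent percolation $\tilde W$ almost surely picks up at least one surviving seed; this is exactly where the quantitative linear-growth bounds of Corollary~\ref{corollary:macro_k_return} and the independence between distinct levels of $W$ are used.
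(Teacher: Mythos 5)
Your overall strategy (reduce to the embedded one\--dimensional percolation in the slab and invoke Lemma~\ref{lemma:_density_up_inv}) is the same reduction the paper uses, but two steps of your argument have genuine gaps. The decisive one is the parenthetical claim in Step~2 that ``the passage from the starting set $2\IZ$ to a single surviving seed'' is a routine growth step. It is not: Lemma~\ref{lemma:_density_up_inv} concerns the process started from all of $2\IZ$, and survival of one seed $(y^\ast,\ell^\ast)$ by itself gives no control on the density of that seed's cluster in a window around the origin at level $n$. Bridging exactly this gap is the main content of the lemma; the paper does it by introducing the coupled region $K_n=(\bfP_n^{W,\origin}\symdiff\bfP_n^{W,2\IZ^d})^c$ and importing the estimate $\IP(\B_{\hat c n}\not\subseteq K_n\mid\tau^W=\infty)\leq \hat A e^{-\hat B n}$ (Durrett; Garet--Marchand), which says that, conditioned on survival, the single\--point cluster agrees with the full\--start cluster on a linearly growing ball, after which Lemma~\ref{lemma:_density_up_inv} applies. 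Without this (or an equivalent shape/coupling input, which you never name), your argument does not close.

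Step~1 is also quantitatively wrong as written. In the bound \eqref{eq:R_1'} of Corollary~\ref{corollary:macro_k_return} the constant $\beta$ necessarily satisfies $\beta\geq 1$, since the oriented percolation moves at most one spatial step per level and hence $\hat R^W_1(y)\geq\|y\|_1$; consequently your exponent $n\bigl(\varepsilon(1-\beta)-\beta\alpha\bigr)$ is non\--positive and the claimed failure probability is not exponentially small. Worse, slab sites at distance up to $(\varepsilon+\alpha)n$ from $x$ cannot even deterministically be reached by level $m=\lceil\varepsilon n\rceil$. (This could be repaired by shrinking the target window to radius of order $\varepsilon n/\beta$, but then the asserted independence between the Step~1 event and the slab edges above level $m$ still fails, because your Step~1 event involves infinite forward clusters and the re\--seeding from levels $\ell_y<m$ reuses already\--examined edges; one would need an FKG or exploration argument there.) Note that the paper sidesteps both issues: it reaches only the origin, allows the essential hitting time $\sigma^x(\origin)$ to be as large as $n$ (so $\alpha=1/(2\tilde\beta)$ suffices, with no smallness of the level needed), and then applies the origin case at level $2n$, with the seed\--to\--full\--start comparison supplied by the coupled\--region estimate rather than by a multiplicity\--of\--seeds argument.
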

\begin{proof}
    We start with introducing the coupled region for independent percolation, namely
    \begin{equation*}
        K_n:=\Big(\bfP_n^{W,\origin}\triangle\bfP_n^{W,2\IZ^d}\Big)^c.
    \end{equation*}
    It is known \cite[(c), p. 117]{durrett1987} or \cite[Lemma 1]{garet2014} that there exists some $\hat{c},\hat{A},\hat{B}>0$ such that
    \begin{equation*}
        \IP(\B_{\hat{c}n}\not\subseteq K_n|\tau^W=\infty)\leq \hat{A}\exp(-\hat{B}n) \quad\forall n\geq0.   
    \end{equation*}
    Let $p<1$ be large enough such that Lemma~\ref{lemma:_density_up_inv} holds for $\beta$ and equation \eqref{eq:R_1'} holds for some $\tilde{\beta}$. We choose $c=\frac{\hat{c}}{2}$ and $\alpha=\frac{1}{2\tilde{\beta}}$ and show that this choice is sufficient. For convenience, let $\B_{cn}^1:=\B_{cn}\cap (\IZ\times\{0\}^{d-1})$. Moreover, let $W^1\in \cC_p(1,M)$ be an embedded one dimensional independent percolation restricted to $\IZ\times \{0\}^{d-1}$. We first show a slightly stronger statement for $x=\origin$, namely 
    \begin{equation}\label{eq:density_origin}
        \IP(|\bfP_m^{W,\origin}\cap \B_{cn}^1|<\beta cn|\tau^W=\infty)\leq A\exp(-Bn)
    \end{equation}
    for all $m\geq n$, $0<c\leq \hat{c}<1$. This follows by the following observation
    \begin{align*}
        \IP(|\bfP_m^{W,\origin}\cap \B_{cn}^1|<\beta cn|\tau^W=\infty)&\leq \IP(|\bfP_m^{W,\origin}\cap \B_{cn}^1|<\beta cn, \B_{cn}\subseteq K_m|\tau^W=\infty)\\
        &+\IP(\B_{cn}\not\subseteq K_m|\tau^W=\infty)\\
        &\leq\frac{1}{\rho} \IP(|\bfP_m^{W,2\IZ^d}\cap \B_{cn}^1|<\beta cn)+\IP(\B_{\hat{c}m}\not\subseteq K_m|\tau^W=\infty)\\
        &\leq \frac{1}{\rho}\IP(|\bfP_m^{W^1,2\IZ}\cap \B_{cn}^1|<\beta cn)+\hat{A}\exp(-\hat{B}n).
    \end{align*}
    The first term can be controlled by Lemma~\ref{lemma:_density_up_inv} since we have chosen $p$ sufficiently large.     For general $x\in \B_{\alpha n}$ let
    \begin{equation*}
        \sigma^x(y):=\hat{R}^W_1(y-x)\circ T_x
    \end{equation*} be the first \textit{essential} hitting time of $y$ if the percolation process starts in $x$. In particular, on the event $\{\tau^x=\infty\}$ the time $\sigma^x(y)$ is finite and  the percolation starting from $(y,\sigma^x(y))$ survives. Moreover, since $x\in \B_{\alpha n}$, we have $\tilde{\beta}||x||\leq \frac{n}{2}$ by definition of $\alpha$ and therefore 
    \begin{align*}
        \IP(\sigma^x(0)\geq n,\tau^{W,x}=\infty)&=\IP(\hat{R}^W_1(0-x)\circ T_x\geq n,\tau^{W,x}=\infty)=\IP(\hat{R}^W_1(x)\geq n,\tau^{W,\origin}=\infty)\\
        &\leq\rho\cdot \IP(\hat{R}^W_1(x)\geq \tilde{\beta}||x||+ n/2|\tau^{W,\origin}=\infty).
    \end{align*}
    In particular, the last term can be controlled by \eqref{eq:R_1'}. Furthermore, by definition, $\bfP_k^{W,y}\circ \theta_{\sigma^x(y)}\subseteq \bfP^{W,x}_{k+\sigma^x(y)}$. Rewriting the last set-inclusion for $n>\sigma^x(y)$ gives $\bfP_{n-\sigma^x(y)}^{W,y}\circ \theta_{\sigma^x(y)}\subseteq \bfP^{W,x}_{n}$ on $\{\tau^x=\infty\}$, which yields     
    \begin{align*}
        \IP&(|\bfP_{2n}^{W,x}\cap \B_{cn}^1|<\beta cn|\tau^{W,x}=\infty)\\
        &\leq \IP(\sigma^x(0)\geq n|\tau^{W,x}=\infty)
        +\IP(|\bfP_{2n-\sigma^x(0)}^{W,\origin}\circ\theta_{\sigma^x(0)} \cap \B_{cn}^1|<\beta cn, \sigma^x(0)< n|\tau^{W,x}=\infty).
    \end{align*}
By \eqref{eq:density_origin} the second term can be controlled for all $c\leq \hat{c}$ and replacing $c$ by $\frac{\hat{c}}{2}\cdot 2$ gives the result for all even $n$. For odd $n$ the  result follows in a similar way.
\end{proof}

\subsection{Restarting Procedure}\label{sec:RestartingProcedure}

In this chapter we show the growth estimates \eqref{ConjectureAML}-\eqref{ConjectureFC}, which we need to apply Theorem~\ref{Conjecture1}. For that we will make use of the coupling between the infection process and an independent oriented macroscopic  percolation living on $\IZ^d\times \IN$, see Theorem~\ref{theorem:coupling_independent}.

More precise, we formulate a restarting procedure in the following way: if the infection process $\eta^{\zero,\xi}$ fully infects a seed $x+[\unaryminus n,n]^d$, then we couple the process with an independent macroscopic percolation $W$. By definition of the coupling we know that if the percolation $W$ survives so does the infection process. If it does not, but the infection process is still alive, then we restart the whole procedure at a suitable time point when again a seed $x'+[\unaryminus n,n]^d$ is fully infected such that we can again couple with a macroscopic percolation. 

At the end of the procedure we will derive a random space time point $(\sigma,Y)$ such that a macroscopic percolation $W$ starting at time $\sigma$ from point $Y$ will survive. Moreover, if our original process survives, then the whole cube $Y+[\unaryminus n,n]^d$ is infected at time $\sigma$.

In fact, this procedure does not only work for a monotone CPIU, but for a more general CPDRE. Therefore, we assume for the rest of section the following.
\begin{assumption}\label{ass:coupling_exists}
    Let $(\bfeta,\bfxi)$ be a CPDRE. Suppose that there exists a supercritical, monotone CPIU $(\underline{\bfeta},\underline{\bfxi})$ such that $(\underline{\bfeta}_t,\underline{\bfxi}_t)\leq (\bfeta_t,\bfxi_t)$ for all $t\geq 0$.
\end{assumption}
This allows us to extend the restarting procedure to the CPDRE, although it does not apply to the entire parameter regime of survival.

\begin{remark}\label{remark:dual_cpiu}
    We denote the dual process of the monotone CPIU  $(\underline{\bfeta},\underline{\bfxi})$ by $(\widecheck{\underline{\bfeta}}_t,\widecheck{\underline{\bfxi}}_t)$. By Lemma~\ref{lem:EquivalenzOfSurivalOfDual} this process is also supercritical. Moreover, we also have that our dual processes are coupled, i.e.\ $(\widecheck{\underline{\bfeta}}_t,\widecheck{\underline{\bfxi}}_t)\leq (\widecheck{\bfeta}_t,\widecheck{\bfxi}_t)$ for all $t\leq \frac{3t}{2}$. 
\end{remark}
Given $d\geq 1$ let $p<1$ be sufficiently large such that Corollary \ref{corollary:macro_k_return} holds. 
Moreover, let $a,b,n$ be the constants depending on $p$ given in Theorem \ref{theorem:coupling_independent} to establish a coupling between the supercritical monotone CPIU $(\underline{\bfeta},\underline{\bfxi})$ and an independent oriented percolation. We are looking for a space time point $(x,t)$ such that a box $x+[\unaryminus n,n]^d$ is fully infected by $\underline{\bfeta}$ at time $t$ and we can start a coupling with a macroscopic percolation process. Note, that there exists an $\delta>0$ such that
\begin{equation*}
    \IP\big(\exists x\in \IZ^d: \underline{\bfC}^{\zero,\szero}_1\supset x+[\unaryminus n,n]^d\big)>\delta.
\end{equation*}
We will now successively define random times $(N_\ell,M_\ell)_{\ell\geq 1}$, where $N_{\ell}$ indicates the time of the $\ell$-th restart and $M_{\ell}$ the time until the macroscopic percolation dies out. Assume that we have already defined $N_{\ell-1}$ and $M_{\ell-1}$ with $M_{\ell-1}<\infty$ for some $\ell\geq 2$, then we set $U_{\ell-1}:=N_{\ell-1}+1+6bM_{\ell-1}+U_{\ell-2}$. Note that we set $U_0=0$ if we just started the procedure. Now we describe the restart procedure. First, we restart the process $\underline{\bfeta}$ at time $U_{\ell-1}$ in the worst background-state, i.e.\ $\szero$, with only one infected vertex which we choose in the following way. In case, our process original process $\bfeta$ is alive at time $U_{\ell-1}$, we just choose one infected vertex for the restart (according to some arbitrary but fixed order). Otherwise we restart the process with only the origin infected. Formally, we define the process $\underline{\bfeta}_t^{(\ell)}$ via the same graphical representation as our original process starting at time $U_{\ell-1}$. If $\bfC^{\zero,\xi}_{U_{\ell-1}}\neq \emptyset$ we set $\underline{\bfC}_t^{(\ell)}:= \underline{\bfC}_t^{x,\szero}\circ \theta_{U_{\ell-1}}$  for some $x\in \bfC^{\zero,\szero}_{U_{\ell-1}}$ otherwise define $\underline{\bfC}_t^{(\ell)}:= \underline{\bfC}_t^{\origin,\szero}\circ \theta_{U_{\ell-1}}$. Next, we define the stopping time
\begin{equation*}
    N_{\ell}:=\inf\big\{k\geq 0: \underline{\bfC}_{k+1}^{(\ell)}= \emptyset \text{ or } \exists x \text{ s.t. } \underline{\bfC}_{k+1}^{(\ell)}\supset x+[\unaryminus n,n]^d \big\}.
\end{equation*}
Clearly, $N_\ell$ has sub-geometric tail by the following observation
\begin{align*}
    \IP(N_\ell=m)&= \IP\big(\{\underline{\bfC}_{m+1}^{(\ell)}= \emptyset \text{ or } \exists x \text{ s.t. } \underline{\bfC}_{m+1}^{(\ell)}\supset x+[\unaryminus n,n]^d\}\cap \{N_\ell \geq m \}\big)\\
    &=\IP\big(\underline{\bfC}_{m+1}^{(\ell)}= \emptyset \text{ or } \exists x \text{ s.t. } \underline{\bfC}_{m+1}^{(\ell)}\supset x+[\unaryminus n,n]^d \big| N_\ell \geq m \big)\cdot \IP(N_\ell \geq m )\\
    &\geq \IP\big(\exists x \text{ s.t. } \underline{\bfC}_{m+1}^{(\ell)}\supset x+[\unaryminus n,n]^d \big| N_\ell \geq m \big)\cdot \IP(N_\ell \geq m )\\
    &\geq \IP\big(\exists x\in \IZ^d: \underline{\bfC}^{\zero,\szero}_1\supset x+[\unaryminus n,n]^d\big)\cdot \IP(N_\ell \geq m )>\delta \cdot \IP(N_\ell \geq m ).
\end{align*}
Note for the second inequality we used the Markov-Property and the fact that $\{N_\ell \geq m \}$ implies $\{\underline{\bfC}_m^{(\ell)}\neq \emptyset\}$. However, we can not deduce the existence of some infected vertex $x\in \underline{\bfC}_m^{(\ell)}$ which lies in the slab $\IZ\times [\unaryminus a,a]^d$. This is one reason why we consider the macroscopic percolation on the whole graph $\IZ^d$ and not only on a slab $\IZ\times [\unaryminus a,a]^d$.

Now we distinguish between the two possible outcomes. In case $\underline{\bfC}_{N_{\ell}+1}^{(\ell)}= \emptyset$ we set $M_{\ell}=0$, since we did not reach a configuration suitable for a coupling with the macroscopic percolation. 
On the other hand, if we find some $x$ with $x+[\unaryminus n,n]^d\subset \underline{\bfC}_{N_{\ell}+1}^{(\ell)}\subset\bfC_{N_{\ell}+1+U_{\ell -1}} $ we couple the infection process with a macroscopic percolation $\big(W^{(\ell)}_k\big)_{k\geq 0}$ with $W^{(\ell)}_k:=W_k\circ T_x\circ\theta_{N_{\ell}+1+U_{\ell-1}}$ starting at time $N_{\ell}+1+U_{\ell-1}$ from the new microscopic origin $x$.  
Let $M_{\ell}$ be the extinction time of $W^{(\ell)}$,  
 which means $M_{\ell}\in\IN\cup\{\infty\}$. Then,
\begin{enumerate}
    \item If $\big(W^{(\ell)}_k\big)_{k\geq 0}$ percolates, i.e.\ $M_{\ell}=\infty$, we set $Y=x$ and the procedure stops.
    \item If $\big(W^{(\ell)}_k\big)_{k\geq 0}$ does not percolate, i.e.\ $M_{\ell}<\infty$,  we start the next iteration of our procedure.
\end{enumerate}

Note that, if $\bfC^{\zero,\szero}_{U_{\ell-1}}=\emptyset$, i.e.\ the original infection process dies out, we  may stop the procedure. However, in order to guarantee that the $(N_\ell)_{\ell \geq 1}$ are identically distributed and independent, we need to artificially resume the construction until we find some $\ell$ with $M_\ell=\infty$ as above. Furthermore, our construction will ensure that we always find a well defined starting point $(\sigma,Y)$ for our macroscopic percolation.

Let  
$L:=\inf\{\ell>0: M_\ell=\infty\}$ and note that $L$ is the number of trials until we successfully infected a spatially shifted box of the form $[\unaryminus n,n]^d$ and the the macroscopic model percolates. Obviously $L$ is a geometric distributed random variable by construction. Then, we define
\begin{equation*}
    \sigma:=U_{L-1}+N_L+1=\sum_{i=1}^{L-1}(N_i+1+6b M_i)+N_{L}+1
\end{equation*}
which is the (microscopic) time until we start a successful macroscopic percolation model. Note that on the event $\{\tau^{\zero,\xi}<\infty\}$ it holds that $\sigma>\tau^{\zero,\xi}$. Moreover, the variables $N_\ell$ and $\sigma$ depend on $\xi$ but not there distributions. For this reason we avoided superscripts.\\

Since $L$ is a geometric distribution there exists a constant $c$ such that $\IE[\exp(cL)]<\infty$. Furthermore, the probability of the event $\{t<N_1\}$  decays exponentially fast which implies that some exponential moments exist. The same holds for $M_1$ on the event $\{M_1<\infty\} $ since Corollary~\ref{corollary:macro_k_return} implies that inequality \eqref{sc_oriented_percolation} holds for some $\alpha>0$ and $M_1=\tau^{W^{(1)}}$.
Thus, we can choose a constant $c_0>0$ small enough such that 
\begin{equation*}
    \IE[\exp(c_0 (N_1+1+6bM_1\1_{\{M_1<\infty\}}))]\leq e^{c}.
\end{equation*}
With this choice we see that $\sigma$ has exponential moments, since
\begin{equation} \label{eq:exp_bounds_sigma}
\begin{split}
    \IE[\exp(c_0 \sigma)]&=\IE\bigg[\IE[\exp(c_0 (N_L+1))|L]\prod_{i=1}^{L-1} \IE[\exp(c_0 (N_i+1+6bM_i))|L]\bigg]\\
    &\leq \IE\big[\IE[\exp(c_0 (N_1+1+6bM_1\1_{\{M_1<\infty\}}))]^{L}\big]\leq  \IE[\exp(cL)]<\infty.
    \end{split}
\end{equation}
We conclude with the following observations:
\begin{lemma}\label{lemma:Y_sigma}
    For the random variables $\sigma$ and $Y$ it holds:
    \begin{enumerate}
        \item On the event $\{\tau^{\origin,\xi}<\infty\}$ we have $\sigma>\tau^{\origin,\xi}$.
        \item On the event $\{\tau^{\origin,\xi}=\infty\}$ we have $Y+[\unaryminus n,n]\subset \bfC_\sigma^{\origin,\xi}$ and the macroscopic percolation $\bW:=\big(W^{(L)}_k\big)_{k\geq 0}$ starting from $Y$ at time $\sigma$ survives.
        \item There exist constants $A$ and $B$ such that for every $t>0$ it holds that
        \begin{equation*}
            \IP(\sigma >t)\leq A\exp(-Bt)\quad \text{and}\quad
            \IP(||Y||>t)\leq A\exp(-Bt).
        \end{equation*}
        
    \end{enumerate}
\end{lemma}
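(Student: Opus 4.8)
The plan is to prove the three items essentially by unwinding the definitions of the restarting procedure and invoking the exponential-moment estimates already established. Items (1) and (2) are more or less immediate from the construction, so the bulk of the work is item (3), and even there the exponential moment bound \eqref{eq:exp_bounds_sigma} on $\sigma$ does the heavy lifting.

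\emph{Item (1).} On the event $\{\tau^{\origin,\xi}<\infty\}$ the process $\bfeta^{\origin,\xi}$ is already dead at every time $t\geq \tau^{\origin,\xi}$, so in each iteration $\ell$ the restart at time $U_{\ell-1}$ (if $U_{\ell-1}\geq \tau^{\origin,\xi}$) starts from the single infected site $\origin$ in background $\szero$, and the procedure is only artificially continued. Either way, by construction $\sigma = U_{L-1}+N_L+1$ and $L\geq 1$, so in particular $\sigma \geq N_1+1 > 0$; more to the point, the procedure terminates only once the \emph{macroscopic} percolation survives, which in particular requires the box $Y+[-n,n]^d$ to be fully infected by $\underline{\bfeta}^{(L)}$ at the microscopic time $\sigma$, and since $\underline{\bfeta}\leq \bfeta$ this forces $\bfeta^{\origin,\xi}_\sigma \neq \emptyset$. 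Hence $\sigma > \tau^{\origin,\xi}$ on $\{\tau^{\origin,\xi}<\infty\}$. I would phrase this last step carefully: it is exactly the observation already recorded in the text just before \eqref{eq:exp_bounds_sigma} that ``on the event $\{\tau^{\zero,\xi}<\infty\}$ it holds that $\sigma > \tau^{\zero,\xi}$''.

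\emph{Item (2).} On $\{\tau^{\origin,\xi}=\infty\}$ the original process never dies, so in iteration $L$ we have $\bfeta^{\origin,\xi}_{U_{L-1}}\neq\emptyset$ and the restart of $\underline{\bfeta}^{(L)}$ is from a genuinely infected vertex of $\bfeta$. The definition of $L$ and $N_L$ gives that at time $U_{L-1}+N_L+1 = \sigma$ there is some $x$ with $x+[-n,n]^d\subset \underline{\bfeta}^{(L)}_{N_L+1} \subset \bfeta^{\origin,\xi}_\sigma$, and $Y=x$ by the termination rule; moreover the macroscopic percolation $\bW=(W^{(L)}_k)_{k\geq 0}=(W_k\circ T_x\circ\theta_\sigma)_{k\geq 0}$ has $M_L=\infty$, i.e.\ percolates. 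Here I would note that the coupling of Theorem~\ref{theorem:coupling_independent} is applied to the dominated process $\underline{\bfeta}$, and the set inclusion $\underline{\bfeta}^{(L)}_{N_L+1}\subset \bfeta_\sigma$ uses Assumption~\ref{ass:coupling_exists}; the word ``survives'' just means $\tau^{\bW} = \infty$.

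\emph{Item (3).} This is the main point. The bound $\IP(\sigma > t)\leq A e^{-Bt}$ is an immediate consequence of \eqref{eq:exp_bounds_sigma} and Markov's inequality: $\IP(\sigma>t)\leq e^{-c_0 t}\IE[\exp(c_0\sigma)]$, and $\IE[\exp(c_0\sigma)]<\infty$ is exactly \eqref{eq:exp_bounds_sigma}. For $\|Y\|$ the plan is to dominate the spatial displacement of $Y$ by a sum of displacements accumulated over the $L$ iterations. At iteration $\ell$, the new microscopic origin chosen when $N_\ell$ occurs lies within a region reachable by the (dominated) infection process run for time at most $N_\ell + 1$ starting from the current origin, so its displacement relative to the previous origin is bounded by $c(N_\ell+1)$ for some constant $c$ coming from the at-most-linear growth of the process (a linear speed bound for $\underline{\bfeta}$, or simply for the maximal Richardson-type process $\widetilde{\bfeta}$; this is the content of \eqref{eq:GrowthboundMaxInf} / Lemma~\ref{lem:InfContainedExponetialSpeed} together with the fact that in the artificially-continued iterations the origin is reset and contributes nothing); additionally, while the macroscopic percolation $W^{(\ell)}$ runs for $M_\ell$ macroscopic steps before dying, the microscopic position can move by at most $\mathrm{const}\cdot a\cdot M_\ell$. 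Hence there is a constant $c'$ with $\|Y\| \leq c'\sum_{\ell=1}^{L}(N_\ell + 1 + M_\ell\1_{\{M_\ell<\infty\}}) \leq c'\sigma$ (recalling $\sigma = \sum_{i=1}^{L-1}(N_i+1+6bM_i)+N_L+1$). Then $\IP(\|Y\|>t)\leq \IP(\sigma > t/c')\leq A e^{-B t}$ after adjusting constants. The one subtlety I expect to spend the most care on is making the displacement-versus-time comparison rigorous in the iterations where the original process is dead and the construction is only continued artificially: there one must check that resetting to the origin each time does not let $Y$ wander, which it cannot, since in that regime the restart always puts the new origin back at $\origin$ before growth resumes, so only the final successful iteration $L$ contributes a displacement, bounded by $c'(N_L+1+ \text{(macroscopic drift)})\leq c'\sigma$ as well. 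Alternatively — and this is probably the cleanest write-up — one bounds $\|Y\|$ directly: $Y$ is the microscopic origin at time $\sigma$ of an infection path started from $\origin$ (or from the most recent reset point) at time $\le \sigma$, so $\|Y\|$ is at most the radius of the support of the maximal infection process run for time $\sigma$, and \eqref{eq:GrowthboundMaxInf} together with $\IP(\sigma > t)\leq Ae^{-Bt}$ closes the estimate.
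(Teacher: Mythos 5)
Your proposal is correct and, in the form you yourself prefer, is essentially the paper's proof: items (1)--(2) are read off the construction, the tail of $\sigma$ follows from the exponential moment bound \eqref{eq:exp_bounds_sigma} via Markov's inequality, and the tail of $||Y||$ is obtained by splitting on $\{\sigma> c t\}$ versus $\{\sigma\leq ct\}$ and invoking the at-most-linear-growth estimate (Corollary~\ref{corollary:AML}, resp.\ Lemma~\ref{lem:InfContainedExponetialSpeed} for the maximal process). The only caveat is that your first sketch for $||Y||$, which asserts a pathwise bound $||Y||\leq c'\sigma$, would not stand on its own---there is no deterministic speed bound for the infection, only the probabilistic linear-growth estimate---but the ``alternative'' write-up you rightly single out avoids this and coincides with the paper's argument.
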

\begin{proof}
    The first two points follow directly by our construction. The fact that the tail of $\sigma$ decays exponential follows from the fact that $\sigma$ has exponential moments, as shown in \eqref{eq:exp_bounds_sigma}. For the second inequality we exploit the at most linear growth, i.e.\ Corollary~\ref{corollary:AML}, to deduce for $t>0$
    \begin{align*}
       \IP(||Y||>t)&=\IP\big(||Y||>t,\sigma> c t\big)+\IP\big(||Y||>t, \sigma\leq ct\big)\\
       &\leq \IP\big(\sigma> c t\big)+\IP\big(||Y||>t, \sigma\leq ct\big)\\
       &\leq \IP\big(\sigma> c t\big)+\IP\big(\exists y : ||y||>t, y\in \bfH_{ct}^\xi\big)\\
       &\leq A\exp\big(-Bct\big)+\IP(\bfH_{ct}^\xi\not\subseteq B_t))\leq A^*\exp(-B^* t),
    \end{align*}
    where $c=\frac{1}{M}$, $M$ is the constant from Corollary~\ref{corollary:AML} and $A^*$ and $B^*$ are some new constants.
\end{proof}
\begin{corollary}\label{corollary:SC}
     There exist constants $A,B,M,c>0$ such that for all $\xi$ and all $x\in V$
	\begin{align}
	\IP(t< \tau^{\origin,\xi}<\infty)&\leq A \exp(-B t),\tag{\ref{ConjectureSC} }
	\end{align}
\end{corollary}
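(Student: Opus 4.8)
The plan is to derive \eqref{ConjectureSC} directly from the restarting construction, with no new work: everything needed has been packaged into Lemma~\ref{lemma:Y_sigma}. The one fact to invoke is that the random time $\sigma$ built by the restart procedure strictly dominates the extinction time on the event of extinction, together with the exponential tail of $\sigma$.

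\textbf{Key steps.} First I would recall point (1) of Lemma~\ref{lemma:Y_sigma}: on $\{\tau^{\origin,\xi}<\infty\}$ one has $\sigma>\tau^{\origin,\xi}$. (Heuristically: if the infection dies out, the procedure has not reached a percolating macroscopic model and is artificially resumed, so the successful restart time $\sigma$ lies strictly beyond $\tau^{\origin,\xi}$.) This yields, for every $t>0$, the event inclusion
\begin{equation*}
    \{t<\tau^{\origin,\xi}<\infty\}\subseteq\{\sigma>\tau^{\origin,\xi}>t\}\subseteq\{\sigma>t\},
\end{equation*}
and hence $\IP(t<\tau^{\origin,\xi}<\infty)\leq\IP(\sigma>t)$. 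Second, I would apply point (3) of Lemma~\ref{lemma:Y_sigma}, which supplies constants $A,B>0$ with $\IP(\sigma>t)\leq A\exp(-Bt)$ for all $t>0$; this bound in turn rests on the exponential moment estimate \eqref{eq:exp_bounds_sigma} for $\sigma$, which combines the geometric tail of the number of trials $L$, the sub‑geometric tails of the restart times $N_\ell$, and the exponential moment of the cluster extinction times $M_\ell=\tau^{W^{(\ell)}}$ furnished by Corollary~\ref{corollary:macro_k_return}. Chaining the two inequalities gives \eqref{ConjectureSC}, and after possibly enlarging $A$ this is the claimed bound with the same $M,c$ as elsewhere.

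\textbf{Main obstacle.} There is essentially none at this stage; the substance of the argument is the construction of $(N_\ell,M_\ell)_{\ell\ge 1}$, the coupling with the macroscopic percolation, and the tail control of $\sigma$, all already carried out. The only point worth a remark is uniformity in the initial background configuration: although $\sigma$ and the $N_\ell$ depend on $\xi$, their laws do not — each restart reinitialises the dominating CPIU in the worst state $\szero$ — so the constants $A,B$ may be taken independent of $\xi\in[N]^{V\cup E}$, which is exactly what the statement requires.
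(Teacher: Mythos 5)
Your proposal is correct and matches the paper's argument exactly: the paper also deduces \eqref{ConjectureSC} directly from points (1) and (3) of Lemma~\ref{lemma:Y_sigma}, via the same event inclusion $\{t<\tau^{\origin,\xi}<\infty\}\subseteq\{\sigma>t\}$ and the exponential tail of $\sigma$. Your added remark on uniformity in $\xi$ is consistent with how the paper treats the constants.
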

\begin{proof}
    Note \eqref{ConjectureSC} is a direct consequences of the first and third point of  Lemma~\ref{lemma:Y_sigma}.
\end{proof}
 One thing which remains to show is the at least linear growth, that is \eqref{ConjectureALL}.  
\begin{proposition}\label{prop:ALL}
There exist constants $A,B,\beta>0$ such that
\begin{equation*}
    \sup_{\xi}\IP\big(t^{\origin,\xi}(x)\geq \beta ||x||+t,\tau^{\origin,\xi}=\infty\big)\leq A \exp(-B t)
\end{equation*}    
\end{proposition}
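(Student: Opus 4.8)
The plan is to push the infection out of the restart point $(\sigma,Y)$ constructed in Subsection~\ref{sec:RestartingProcedure} along the surviving macroscopic percolation $\bW$, and then to feed this into the oriented‑percolation hitting‑time estimates of Corollary~\ref{corollary:macro_k_return}. Fix $\xi$ and $x\in V$; the case $x=\origin$ is trivial since then $t^{\origin,\xi}(x)=0$. By Lemma~\ref{lemma:Y_sigma}, on $\{\tau^{\origin,\xi}=\infty\}$ the cube $Y+[\unaryminus n,n]^d$ is contained in $\bfeta^{\origin,\xi}_\sigma$, the macroscopic percolation $\bW$ (with microscopic origin $Y$, started at time $\sigma$) survives, and both $\sigma$ and $\|Y\|$ have exponential tails, uniformly in $\xi$. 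Let $\hat{x}\in\IZ^d$ be the macroscopic site with $x\in Y+B_{\hat{x},a}$; then $\|\hat{x}\|_1\le C_2(\|x\|+\|Y\|)$ for a constant $C_2=C_2(a,d)$, since $\|x-Y-2a\hat{x}\|_\infty\le a$.

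First I would discard the events $\{\sigma>c_1t\}$ and $\{\|Y\|>c_1t\}$, which by Lemma~\ref{lemma:Y_sigma}(3) contribute at most $Ae^{-Bt}$; on the complement we may assume $\sigma\le c_1t$ and hence $\|\hat{x}\|_1\le C_2\|x\|+C_2c_1t$. Using the independence built into the restart construction, conditionally on $(\sigma,Y)$ the field $\bW$ is an independent oriented percolation of density at least $p$ (up to conditioning on $\{\tau^{\bW}=\infty\}$, which is harmless because $\IP(\tau^{W}=\infty)>0$), so estimate \eqref{eq:R_k} applies to the successive return times $R_m:=R^{\bW}_m(\hat{x})$: outside an event of probability $\le Ae^{-Bm}$ one has $R_m\le\beta_0\|\hat{x}\|_1+\beta_0^*m$. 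Each time $\hat{x}$ is wet, the block coupling of Theorem~\ref{theorem:coupling_independent} (translated to origin $Y$ and shifted by $\sigma$) yields a vertex $z_j$ with $\|z_j-x\|_\infty\le 2a$ and a microscopic time $t_j\in\sigma+[5R_jb,(5R_j+1)b]$ such that $z_j+[\unaryminus n,n]^d\subset\bfeta^{\origin,\xi}_{t_j}$; here I use additivity of $\bfeta$, the domination $\underline{\bfeta}^{\cdot,\szero}\le\bfeta^{\cdot,\xi}$ of Assumption~\ref{ass:coupling_exists}, and $\{\tau^{\origin,\xi}=\infty\}$.

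The remaining step is the ``last mile'': infecting the exact vertex $x$ from a nearby full cube. Since $\bfeta^{\origin,\xi}$ dominates, by additivity and Assumption~\ref{ass:coupling_exists}, the process $\underline{\bfeta}^{z_j+[\unaryminus n,n]^d,\szero}$ restarted at time $t_j$, and $\underline{\bfeta}$ is a monotone CPIU, the probability that $x$ becomes infected within a fixed window of length $T_0:=b$ after $t_j$ is bounded below by a constant $p_0>0$, uniformly in $\xi$, in $j$ and in the boundedly many relative positions $z_j-x$ with $\|z_j-x\|_\infty\le2a$; this is a standard finite‑volume, finite‑time estimate. Since $R_{j+1}\ge R_j+2$, the windows $[t_j,t_j+T_0]$ are pairwise disjoint, so successive conditioning on $\cF_{t_j}$ gives that all $m$ of these trials fail simultaneously with probability at most $(1-p_0)^m$. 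Taking $m=\lfloor\delta t\rfloor$, one checks that on $\{\tau^{\origin,\xi}=\infty\}$ the event $\{x\notin\bfeta^{\origin,\xi}_s\ \forall\,s\le\sigma+5(\beta_0\|\hat{x}\|_1+\beta_0^*m)b+b+T_0\}$ is contained in the union of the four exceptional events collected above (bad $\sigma$, bad $\|Y\|$, bad $R_m$, all $m$ trials fail), each of probability $\le A'e^{-B't}$. Setting $\beta:=5\beta_0bC_2$ and then choosing $c_1,\delta>0$ small enough that $c_1+5\beta_0bC_2c_1+5\beta_0^*b\delta\le\tfrac12$, one has $\sigma+5(\beta_0\|\hat{x}\|_1+\beta_0^*m)b+b+T_0\le\beta\|x\|+t$ for $t\ge2(b+T_0)$, so this set contains $\{t^{\origin,\xi}(x)\ge\beta\|x\|+t\}$; enlarging $A'$ to absorb the finitely many small values of $t$ yields the claim with constants uniform in $\xi$.

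\textbf{Main obstacle.} I expect the delicate points to be, first, the last‑mile argument — turning ``a macroscopic box near $x$ is infected'' into ``$x$ itself is infected'' with only exponentially small failure probability — which is handled by retrying at the successive returns $R_m(\hat{x})$ and exploiting the disjointness of the associated time windows for independence; and second, the bookkeeping that legitimizes applying \eqref{eq:R_k} to $\bW$, which is launched from the random space‑time point $(\sigma,Y)$ and is only guaranteed to survive — this rests on the independence structure of the restart construction together with dividing by $\IP(\tau^{W}=\infty)>0$. The constant juggling, in particular that $\|\hat{x}\|_1$ is only controlled by $\|x\|$ plus a term linear in $t$, is routine provided $\beta$ is fixed before $c_1$ and $\delta$.
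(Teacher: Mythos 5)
Your proposal is correct and follows essentially the same route as the paper's proof: launch the surviving macroscopic percolation from the restart point $(\sigma,Y)$ with exponential tail bounds from Lemma~\ref{lemma:Y_sigma}, control the successive returns to the macroscopic site $\hat{x}$ containing $x$ via \eqref{eq:R_k}, and at each return perform a uniformly positive-probability ``last mile'' infection of $x$ from the nearby seed (worst-case background), with successive conditioning giving geometric decay of the all-fail event. The only differences are cosmetic bookkeeping (window length $b$ versus one time unit, parametrizing in $t$ with $m=\lfloor\delta t\rfloor$ trials rather than in the number of returns $k$), not a different argument.
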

\begin{proof}
    Let $x$ and $\xi$ be arbitrary but fixed and $p<1$ sufficiently large such that Corollary~\ref{corollary:macro_k_return} holds.  On the event that the infection survives, i.e.\ $\{\tau^{\zero,\xi}=\infty\}$, there exists a macroscopic percolation $W\circ\theta_{\sigma}\circ T_{Y}$ with intensity at least $p$, which percolates and is dominated by the infection process. Let $\hat{x}\in \IZ^d$ be the macroscopic site containing $x$ when the centre of the macroscopic percolation is at $Y$, i.e.\ $\hat{x}$ is the unique site such that $x-Y\in (\unaryminus a,a]^d+2a\hat{x}$, and let $R_k^W:=R_k^W(\hat{x})\circ \theta_\sigma \circ T_{Y}$ be the $k$-th hitting time of $\hat{x}$ by the percolation $W$.

    By the coupling of the percolation with our original process we know that on the event $\{\tau^{\zero,\xi}=\infty\}$ there exists a box $[\unaryminus n,n]^d+z_k\subset [\unaryminus a,a]^d+2a\hat{x}+Y$ and a time $s_k\in [0,b]+R_k^W\cdot 5b +\sigma $ such that
    $[\unaryminus n,n]^d+z_k\subset \bfC^{\zero,\xi}_{s_k}$ if $R_k^W<\infty$. For convenience we define $s_k:=\infty$ if $R_k^W=\infty$ which yields $s_k<\infty$ if and only if $R_k^W<\infty$.
    
    Note, the exponential control for the macroscopic first hitting time $R_1^W$ together with the exponential controls in Lemma~\ref{lemma:Y_sigma} imply an exponential control for the first hitting time of the macroscopic site $\hat{x}$ by our original process. However, since hitting the macroscopic site $\hat{x}$ does not imply that we hit the microscopic site $x$ with our original process, we have to refine the argument. 
    
    From the exponential controls of $R_k^W$ with $k\geq 1$ we deduce exponential controls for  the  $s_k$'s with $k\geq 1$.  By definition $s_k\leq 5b R_k^W +\sigma+b$ on the event $\{\tau^{\zero,\xi}=\infty\}$ which yields for $\beta,\beta'$ that
    \begin{align*}
        \IP(\tau^{\zero,\xi}=\infty&,s_k\geq \beta||x|| + \beta'k )\\
        &\leq\IP(5b R_k^W +\sigma+b\geq \beta||x|| + \beta'k )\\
        &\leq \IP\bigg(\sigma\geq \frac{\beta||x|| +\beta'k-b}{2}\bigg)+\IP\bigg(R_k^W(\hat{x})\circ\theta_\sigma\circ T_Y \geq \frac{\beta||x|| +\beta'k-b}{10b}\bigg).
    \end{align*}
    For the first term we have exponential control in $k$ by Lemma \ref{lemma:Y_sigma}, for the second one note that
    \begin{align*}
        \IP\bigg(R_k^W(\hat{x})\circ\theta_\sigma\circ T_Y &\geq \frac{\beta||x|| +\beta'k-b}{10b}\bigg)\\
        &=\IP\bigg(\tau^W=\infty,R_k^W(\hat{x}) \geq \frac{\beta||x|| +\beta'k-b}{10b}\bigg)\\
        &\leq \IP\bigg(||Y||>\frac{\beta'k}{2\beta}\bigg)+\IP\bigg(\tau^W=\infty,R_k^W(\hat{x}) \geq \frac{\beta||\hat{x}|| +\frac{\beta'k}{2}-b}
        {10b}\bigg).
    \end{align*}
    Hereby, the first equation follows by the translation invariance of our underlying graphical construction and the definitions of $Y$ and $\sigma$. For the inequality we use that by definition $||x-Y||\geq ||\hat{x}||$ and therefore
    \begin{equation*}
        ||x||\geq ||x-Y|| -||Y||\geq ||\hat{x}|| -\frac{\beta'k}{2\beta}\quad \text{if}~~||Y||\leq \frac{\beta'k}{2\beta}.
    \end{equation*}
    Again the first summand can be controlled by Lemma \ref{lemma:Y_sigma} and the second one can be controlled by Corollary \ref{corollary:macro_k_return} if we choose $\beta$ and $\beta'$ appropriately. In particular, there exist constants $\beta,\beta',A$ and $B$ such that
    \begin{equation}\label{eq:control_sk}
        \IP(\tau^{\zero,\xi}=\infty,s_k\geq \beta||x|| + \beta'k )\leq A \exp({-Bk}).
    \end{equation}
    
    Now, given $R_i^W<\infty$ or equivalently $s_i<\infty$, let $B_i$ be the event that from the infection seed $[\unaryminus n,n]^d+z_i$ there is a $\szero$-infection path to $x$ within one unit of time, i.e.\
    \begin{equation*}
        B_i:=\{\exists y\in[\unaryminus n,n]^d+ z_i \text{ s.t. } (y,s_i)\stackrel{\szero}{\longrightarrow}(x,s_i+1)\}.
    \end{equation*}
    Since $z_i$ and $x$ lie in the same macroscopic box of length $2a$ the distance between $z_i$ and $x$ is bounded by $2ad$ and therefore $\IP(B_i)$ is uniformly bounded away from zero. In particular, there exists a $c>0$ such that $\IP(B_i)\geq c>0$ for all $i\geq 1$ with $R_i^W<\infty$. Let
    \begin{equation*}
        A_k:=\bigcap_{i=1}^{k}\{s_i<\infty\}\cap B_i^c
    \end{equation*}
    be the event, that after the first $k$ macroscopic infections of the site $\hat{x}$ there is no infection path from the corresponding infection seeds $([\unaryminus n,n]^d+ z_i)_{1\leq i\leq k}$ to $x$ within time one. Clearly, 
        \begin{equation}\label{eq:ak}
        \{t^{\origin,\xi}(x)>s_k\}\subseteq\{t^{\origin,\xi}(x)>s_{k-1}+1\}\subseteq A_{k-1}.
    \end{equation}
    Moreover, by our uniform bound on $\IP(B_i)$
    \begin{equation*}
        \IP(A_k)=\IE[\1_{\{A_{k-1}\cap\{ s_k<\infty\}\}}\IE[\1_{B_k^c}|\cF_{s_k}]]\leq (1-c)\IE[\1_{A_{k-1}}]\leq (1-c)^k.
    \end{equation*}
   Together with \eqref{eq:control_sk} and \eqref{eq:ak} we get
    \begin{align*}
    \IP\big(t^{\origin,\xi}(x)\geq \beta ||x||+\beta'k,\tau^{\origin,\xi}=\infty\big)&\leq \IP(t^{\origin,\xi}(x)>s_k) +\IP(s_k\geq \beta ||x||+\beta'k,\tau^{\origin,\xi}=\infty)\\
    &\leq (1-c)^{k-1}+A\exp({-Bk}).
    \end{align*}
    This finishes the proof since the bound does not depend on $\xi$.
\end{proof}

It remains to show that the process couples exponentially fast, i.e.\ \eqref{ConjectureFC} holds. For that we first need to show the following auxiliary result.
\begin{lemma}\label{lem:forwad_backward_intersection}
    Let $\IP(\tau^\origin=\infty)>0$, then there exist constants $A,B,\alpha>0$ such that for  any $\xi$ and $x\in \B_{\alpha t}$ we have that
    \begin{equation*}
        \IP(\bfC_{t}^{\origin,\xi}\cap \widehat{\bfC}_{t}^{x,\xi,2t}=\emptyset,\widehat{\bfC}_{t}^{x,\xi,2t}\neq \emptyset, \bfC_t^{\zero,\xi}\neq \emptyset)\leq A\exp(-Bt)\quad\text{for all }t\geq 0.
    \end{equation*}
\end{lemma}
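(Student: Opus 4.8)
The plan is to show that, on the event that both $\bfeta^{\origin,\xi}$ and $\widehat{\bfeta}^{x,\xi,2t}$ are still alive at time $t$, each of them --- up to an exponentially small correction --- dominates a supercritical macroscopic oriented percolation which, at the level corresponding to microscopic time $\approx t$, fills a $(1-\delta)$-fraction of a common ball of radius of order $t$, so that the two almost-full sets are forced to intersect. First I would make two reductions. By Lemma~\ref{lem:ControlDualProcess} one may replace $\widehat{\bfeta}_t^{x,\xi,2t}$ by $\widecheck{\bfeta}_t^{x,t/2,2t}$ at the cost of an additive error $Ae^{-Bt}$, and --- invoking the auxiliary coupling introduced after Lemma~\ref{lem:ControlDualProcess} --- treat the forward process on $[0,t]$ and the reversed one on $[t,2t]$ as independent given the two backgrounds. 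On $\{\bfeta_t^{\origin,\xi}\neq\emptyset\}$ one splits off the event $\{t<\tau^{\origin,\xi}<\infty\}$, whose probability is at most $Ae^{-Bt}$ by Corollary~\ref{corollary:SC}; and since $\widecheck{\bfeta}$ is a CPDRE dominating the supercritical monotone CPIU $\widecheck{\underline{\bfeta}}$ (Remark~\ref{remark:dual_cpiu}), Corollary~\ref{corollary:SC} applies to it as well, so on $\{\widecheck{\bfeta}_t^{x,t/2,2t}\neq\emptyset\}$ one may likewise assume the (extended) reversed process survives. It then remains to bound, uniformly in $\xi$ and in $x\in\B_{\alpha t}$, the probability that both processes survive but $\bfeta_t^{\origin,\xi}\cap\widecheck{\bfeta}_t^{x,t/2,2t}=\emptyset$.

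On the two survival events I would run the restarting procedure of Section~\ref{sec:RestartingProcedure} for $\underline{\bfeta}$ and, respectively, for $\widecheck{\underline{\bfeta}}$. This produces random space--time points $(\sigma_1,Y_1)$ and $(\widecheck{\sigma},\widecheck{Y})$ with exponentially decaying tails for $\sigma_1,\widecheck{\sigma},\|Y_1\|$ and $\|\widecheck{Y}-x\|$ (Lemma~\ref{lemma:Y_sigma}), together with surviving independent oriented percolations $W^{(1)}$ and $\widecheck{W}$ of density at least $p$ that dominate the respective infection processes in the sense of Theorem~\ref{theorem:coupling_independent}. Taking the macroscopic level $k$ with $5kb\approx t$ and combining the coupled-region estimate for oriented percolation with the density bounds of Lemma~\ref{lemma:_density_up_inv} and Lemma~\ref{Lemma:d-dim-percolation_density_x}, one finds that --- outside an exponentially small event --- $\bfP_k^{W^{(1)}}$ and $\bfP_k^{\widecheck{W}}$ each cover at least a $(1-\delta)$-fraction of the macroscopic sites of a common ball of radius of order $t$; here $\alpha$ is chosen small enough that the two percolation balls, anchored near $\origin$ and near $x$, still overlap in a ball of positive order. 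Choosing $p$ close to $1$ makes $\delta$ small, so the two almost-full sets share a macroscopic site $\hat z$; translating this back through the block construction and following the infection paths between consecutive macroscopic levels inside the corresponding space--time block $\bigbox$ produces a vertex lying in both $\bfeta_t^{\origin,\xi}$ and (the reversed realisation of) $\widehat{\bfeta}_t^{x,\xi,2t}$, which contradicts non-intersection and completes the estimate.

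The main obstacle is exactly this last coordination step: the forward and reversed percolations are anchored at different microscopic points and run in opposite time directions, while the block-construction coupling only locates the infected seeds up to the constant spatial scale $a$ and only within a time window of width of order $b$. Making the spatial overlap robust forces the threshold $\alpha$ (hence the admissible range of $x$) to be chosen small relative to the linear growth rate, and extracting a single vertex infected at the exact time $t$ --- rather than merely a box infected at some nearby time --- requires carefully selecting the macroscopic levels and exploiting the inter-level infection paths, as well as taking the percolation density close enough to $1$ that the $O(a)$-scale fuzziness cannot destroy the intersection.
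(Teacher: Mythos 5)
Your overall architecture matches the paper's: replace $\widehat{\bfeta}^{x,\xi,2t}$ by $\widecheck{\bfeta}^{x,t/2,2t}$ via Lemma~\ref{lem:ControlDualProcess}, run the restarting procedure of Section~\ref{sec:RestartingProcedure} forwards from $(\origin,0)$ and backwards from $(x,2t)$ to obtain surviving macroscopic percolations anchored at $(\sigma,Y)$ and $(\hat\sigma^x,\hat Y^x)$, and use percolation density estimates at the macroscopic level corresponding to time $\approx t$. But your final step contains a genuine gap. From a shared macroscopic site $\hat z$ in both clusters, the block construction does \emph{not} deterministically ``produce a vertex lying in both'' processes at time $t$: for the forward percolation it only guarantees a fully infected seed $y+[\unaryminus n,n]^d$ at some time in a window of width $O(b)$ strictly below $t$, and for the backward one a seed $\hat y+[\unaryminus\hat n,\hat n]^d$ at a (real) time strictly above $t$, with $y,\hat y$ only localized up to the scale $a$. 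To conclude $\bfeta_t^{\origin,\xi}\cap\widehat{\bfeta}_t^{x,\xi,2t}\neq\emptyset$ you must exhibit a microscopic infection path from the forward seed through time $t$ into the backward seed inside that space--time block, and this is a random event whose probability is bounded below by some $\delta>0$ but is \emph{not} close to $1$, no matter how close the percolation density $p$ is to $1$ (recoveries and unfavourable background states in the window around time $t$ can always sever it). Hence a single overlap point -- or even a constant number of them -- only yields a failure probability bounded by a constant $1-\delta$, not $A\exp(-Bt)$; ``taking $p$ close to $1$'' does not repair this, because the obstruction lives in the microscopic graphical representation, not in the percolation density.

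This is exactly why the paper's proof is organized differently at the end: using Lemma~\ref{Lemma:d-dim-percolation_density_x} (note it gives density $\beta$ only along the one-dimensional slab $\IZ\times\{0\}^{d-1}$, not in a full-dimensional ball as you assert), with $\beta=3/4$ and the scales matched so that $kac=\hat k\hat a\hat c=\ell(t)$, it arranges that order $t$ many \emph{disjoint} space--time boxes each contain both a forward seed centre $y_{i}$ and a backward seed centre $\hat y_{j}$; the connection events in these boxes are independent and each has probability at least $\delta$, so non-intersection forces all of them to fail, giving $(1-\delta)^{\gamma t}$. Your write-up would become correct if you replaced ``the two almost-full sets share a macroscopic site'' by ``share linearly many macroscopic sites lying in pairwise disjoint blocks'' and then ran this independence argument; as stated, the deterministic deduction from one shared site is the missing idea.
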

\begin{proof} 
Let 
\begin{equation*}
    E_0:=\Big\{\widehat{\bfeta}_{u}^{x,\xi,2t}=\widecheck{\bfeta}_{u}^{x,t/2,2t}\,\, \forall u\leq t\Big\}
\end{equation*} be the event that the two dual processes $\widehat{\bfeta}_{u}^{x,\xi,2t}$ and $\widecheck{\bfeta}_{u}^{x,t/2,2t}$  coincide from time $2t$ (backwards in time) until time $t$. By Lemma~\ref{lem:ControlDualProcess} the probability of $E^c_0$ decays exponentially in $t$ and the constants do not depend on $\xi$, thus it suffices to control
\begin{equation*}
    E:=\{\bfC_{t}^{\origin,\xi}\cap \widecheck{\bfC}_{t}^{x,t/2,2t}=\emptyset,\widecheck{\bfC}_{t}^{x,t/2,2t}\neq \emptyset, \bfC_t^{\zero,\xi}\neq \emptyset\}.
\end{equation*}
For the given threshold $\beta=\frac{3}{4}$, let $p^*<1$ and $\alpha^*$,$c^*>0$ be the constants given by Lemma~\ref{Lemma:d-dim-percolation_density_x}. For this $p^*$ there exists according to Theorem~\ref{theorem:coupling_independent} some further constants $n,a,b$ such that we can couple our underlying monotone CPIU $\underline{\bfeta}$ with a $d$-dimensional independent percolation. According to Lemma~\ref{lem:EquivalenzOfSurivalOfDual} and Remark~\ref{remark:dual_cpiu} the backwards process $\widecheck{\underline{\bfC}}^{x,t/2,2t}$ is also a supercritical CPIU  and we can also couple this process according to Theorem~\ref{theorem:coupling_independent} where we get some constants $\hat{n},\hat{a},\hat{b}$. Let us now fix $t\geq132\hat{b}+22$ and some $x\in \B_{\alpha t}$ with $\alpha=\frac{\hat{\alpha}}{22\hat{b}}$ (the awkward choice will become clear later).

We start our restart procedure from time $0$ and the origin to find a space-time-starting point $(\sigma,Y)$ for an independent percolation $W$ with density at least $p$ which gets dominated by the forward time process. We can do the same for our backward process $\widecheck{\bfC}_{u}^{x,t/2,2t}$ starting our restart procedure at site  $x$ going backwards in time from $2t$, which yields a space-time-starting point $(\hat{\sigma}^x,\hat{Y}^x)$ from where an independent percolation $\hat{W}$ with density at least $p$ starts. 
Let us define the events 
\begin{equation*}
    E_1:=\Big\{\sigma\leq \frac{t}{2}-1-b ,||Y||\leq \Big\lfloor\frac{t}{10b}\Big\rfloor\cdot \hat{\alpha} \Big\}~~\text{and}~~
    \hat{E}_1:=\Big\{\hat{\sigma}^x\leq \frac{t}{2}-1-\hat{b},|| \hat{Y}^x-x||\leq \Big\lfloor\frac{t}{10\hat{b}}\Big\rfloor\cdot \frac{\hat{\alpha}}{2} \Big\}
\end{equation*}that we find the space time points $(Y,\sigma)$ and $(Y^x,\sigma^x)$, respectively, reasonable fast and not to far from the starting points of the restart procedures. Clearly, by Lemma~\ref{lemma:Y_sigma} the complements of both events can be controlled uniformly for all $\xi$ and hence it suffices to control $\IP(E\cap E_1\cap\hat{E}_1 )$. 

Let $k:=\Big\lfloor\frac{t-1-\sigma-b}{5b}\Big\rfloor$ and $\hat{k}:=\Big\lfloor\frac{t-1-\sigma^x-\hat{b}}{5\hat{b}}\Big\rfloor$ be the remaining (random) macroscopic times for the forward, respectively, the backward percolation before they reach the microscopic times $t-1-b$ or $t+1+\hat{b}$, respectively. By definition we have 
\begin{align*}
    \Big\lfloor\frac{t}{10b}\Big\rfloor \leq k\leq \Big\lfloor\frac{t-1-b}{5b}\Big\rfloor\text{  and   }||Y||\leq\alpha k \text{   on   }E_1.
\end{align*}
Similarly, on $\hat{E}_1$ we have $ \Big\lfloor\frac{t}{10\hat{b}}\Big\rfloor \leq \hat{k}\leq \Big\lfloor\frac{t-1-\hat{b}}{5\hat{b}}\Big\rfloor$ and 
\begin{equation*}
    ||Y^x||\leq ||Y^x-x||+||x||\leq \frac{\hat{\alpha}}{2}\hat{k}+\alpha t\leq \hat{\alpha} \hat{k}
\end{equation*}  by our choice of $\alpha$ and $t$. Moreover, our restart procedure guarantees that the percolations starting from $(\sigma,Y)$ and $(\sigma^x,Y^x)$, respectively, survive. 

In order to make sure that both percolation processes cover with high probability sufficiently many sites in the same space, let us compare $ka$ and $\hat{k}\hat{a}$. Without loss of generality $ka\leq \hat{k}\hat{a}$ and we choose $c=c^*$ and $\hat{c}\leq c^*$ such that $l(t)=kac=\hat{k}\hat{a}\hat{c}$. Consider now the events
\begin{equation*}
    E_2:=\Big\{\Big|\bfP_{k}^{W,Y}\circ\theta_\sigma\cap \B_{ck}^1\Big|\geq\beta ck\Big\}~~\text{and}~~
    \hat{E}_2:=\Big\{\Big|\bfP_{\hat{k}}^{\hat{W},\hat{Y^x}}\circ\theta_{-\sigma^x}\cap \B_{\hat{c}\hat{k}}^1\Big|\geq\beta \hat{c}\hat{k}\Big\}.
\end{equation*}
By our construction we can apply Lemma~\ref{Lemma:d-dim-percolation_density_x} to show that $\IP(E_1\cap E_2^c)$ and $\IP(\hat{E}_1\cap \hat{E}_2^c)$ have exponential decaying probability in $k$, respectively in $\hat{k}$, and thus in $t$. 

It remains to control the probability of the event $E^*:=E\cap E_1 \cap \hat{E}_1\cap E_2 \cap \hat{E}_2 $. Note, by definition  there exists at least $\beta c k$ many infected macroscopic sites $\hat{x}_i\in\B^1_{ck}$ on the event $E_1\cap E_2$, which guarantees $\beta c k$ many infected cubes $y_i+[\unaryminus n,n]^d$ with space time centres 
\begin{equation*}
    (y_i,t_i)\in \B_{\hat{x}_i,a}\times [5bk, 5bk+1]\subset ([\unaryminus \ell,\ell]\times[\unaryminus a,a]^{d-1})\times [t-1-6b,t-1].
\end{equation*}
In particular, the set $[\unaryminus \ell,\ell]\times[\unaryminus a,a]^{d-1}$ is the union of $\frac{\ell}{a}$ many disjoint blocks of the form $\hat{x}_i+((2a,2a]\times[\unaryminus a,a]^{d-1})$ and at least $\frac{3}{4}$ of them contain a centre $y_i$.
Equivalently,  for the backward process we have $\beta \hat{c} \hat{k}$ many infected cubes $\hat{y}_i+[\unaryminus \hat{n},\hat{n}]^d$ with space time centres 
\begin{equation*}
    (\hat{y}_i,\hat{t}_i)\in  ([\unaryminus \ell,\ell]\times[\unaryminus \hat{a},\hat{a}]^{d-1})\times [t+1,t+1+6\hat{b}]
\end{equation*}
on the event $\hat{E}_1\cap \hat{E}_2$ and at least $\frac{3}{4}$ of the $\frac{\ell}{\hat{a}}$ many subsets $\hat{x}_i+((\unaryminus 2\hat{a},2\hat{a}]\times[\unaryminus \hat{a},\hat{a}]^{d-1})$ contain a centre $\hat{y}_i$.

Let $a^*=\max\{a,\hat{a}\}$. Then we find at least $\frac{\ell}{2a^*}$ many disjoint space-time boxes $\hat{x}_k+((\unaryminus 2a^*,2a^*]\times[\unaryminus a^*,a^*]^{d-1})\times [t-1-6b,t+1,6\hat{b}]$ containing some $y_{i_k}$ and $\hat{y}_{j_k}$. The probability that there exist an infection path from $y_{i_k}$ to $\hat{y}_{j_k}$ which stays in the above space-time box can clearly be lower bounded by some positive $\delta>0$ for all $k$. Furthermore on $E$ there must not exist any of these paths. Hence by the fact that $\frac{\ell}{2a^*}\geq \gamma t$ with $\gamma=\min\big\{\frac{c}{20b},\frac{\hat{c}}{20\hat{b}}\big\}$ we finally get
\begin{equation*}
    \IP(E)\leq (1-\delta)^{ \gamma t}
\end{equation*} which finishes the proof.
\end{proof}

\begin{proposition}\label{prop:exp_coupling} There exist constants $A,B>0$ such that
    \begin{equation*}
   \sup_{\xi} \IP(\origin\notin \overline{\bfK}^{\xi}_{t},\tau^{\origin,\xi}=\infty)\leq A \exp(-B t)\quad\text{for all }t\geq0.
\end{equation*}
\end{proposition}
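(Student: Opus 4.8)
The plan is to split the time half-line into unit intervals and, on each of them, reduce the event that the origin leaves the coupled region to the forward/backward non-coexistence event already estimated in Lemma~\ref{lem:forwad_backward_intersection}. Throughout I would use that on the common graphical construction the duality of Section~\ref{InvariantLawAndDuality} holds \emph{pathwise}: for all $s\le u$ one has $\bfC^{\eta,\xi}_{s}\cap\widehat{\bfC}^{\eta',\xi,u}_{u-s}\neq\emptyset$ if and only if $\eta\cap\widehat{\bfC}^{\eta',\xi,u}_{u}\neq\emptyset$. In particular, for $x\in V$, $x\in\bfC^{V,\xi}_{u}\Leftrightarrow\widehat{\bfC}^{x,\xi,u}_{u}\neq\emptyset$ and $x\in\bfC^{\origin,\xi}_{u}\Leftrightarrow\origin\in\widehat{\bfC}^{x,\xi,u}_{u}\Leftrightarrow\bfC^{\origin,\xi}_{u/2}\cap\widehat{\bfC}^{x,\xi,u}_{u/2}\neq\emptyset$. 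Since $\bfC^{\origin,\xi}_{u}\subseteq\bfC^{V,\xi}_{u}$ by additivity, the event $\{x\notin\bfK^\xi_{u}\}$ is contained in $\{\widehat{\bfC}^{x,\xi,u}_{u/2}\neq\emptyset,\ \bfC^{\origin,\xi}_{u/2}\cap\widehat{\bfC}^{x,\xi,u}_{u/2}=\emptyset\}$, so on $\{x\notin\bfK^\xi_{u}\}\cap\{\tau^{\origin,\xi}=\infty\}$ all three conditions $\bfC^{\origin,\xi}_{u/2}\neq\emptyset$, $\widehat{\bfC}^{x,\xi,u}_{u/2}\neq\emptyset$ and $\bfC^{\origin,\xi}_{u/2}\cap\widehat{\bfC}^{x,\xi,u}_{u/2}=\emptyset$ hold. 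Applying Lemma~\ref{lem:forwad_backward_intersection} with its parameters $t$, $2t$, $x$ taken to be $u/2$, $u$, $x$, we get constants $A,B,\alpha>0$ with
\begin{equation}\label{eq:fixed_u_couple}
\sup_{\xi}\IP\big(x\notin\bfK^\xi_{u},\ \tau^{\origin,\xi}=\infty\big)\le A\exp(-Bu/2)\qquad\text{for all }u>0\text{ and }x\in\B_{\alpha u/2}.
\end{equation}

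Next I would pass from fixed times to the supremum over $[t,\infty)$ by a discretization. With $m_{0}:=\lfloor t\rfloor$ write
\begin{equation*}
\IP\big(\origin\notin\overline{\bfK}^\xi_{t},\ \tau^{\origin,\xi}=\infty\big)\le\sum_{m\ge m_{0}}\IP\big(\exists\,s\in[m,m+1):\origin\notin\bfK^\xi_{s},\ \tau^{\origin,\xi}=\infty\big).
\end{equation*}
The structural point is that $s\mapsto\1\{\origin\notin\bfK^\xi_{s}\}$ is locally constant from the right (both $\bfC^{\origin,\xi}$ and $\bfC^{V,\xi}$ are right-continuous and $\{0,1\}$-valued at each site), that a recovery event never turns a coupled vertex into an uncoupled one, and that an infection arrow $y\to z$ can uncouple $z$ only if $y$ is itself uncoupled at that instant and $z$ is not yet infected in $\bfC^{\origin,\xi}$. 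Hence, if $\origin\notin\bfK^\xi_{s}$ for some $s\in[m,m+1)$, following the first uncoupling time of $\origin$ backwards through such arrows produces a finite chain $w=y_{k}\to y_{k-1}\to\dots\to y_{0}=\origin$ of infection arrows at decreasing times inside $(m,m+1)$ with $w\notin\bfK^\xi_{m}$; in particular $w$ lies in the set $\cR_{m}$ of vertices from which a $\xi$-infection path reaches $(\origin,m+1)$ during $[m,m+1]$. The set $\cR_{m}$ is dominated by a time-shifted copy of the maximal infection process $\widetilde{\bfeta}$ run for one time unit, so a standard large deviation bound for first-passage percolation over a bounded time interval gives $\IP(\cR_{m}\not\subseteq\B_{r})\le A\exp(-Br)$ for all $r\ge0$. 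Choosing $r=r_{m}:=\alpha m/4\le\alpha m/2$ and applying \eqref{eq:fixed_u_couple} with $u=m$ to each admissible $w\in\B_{r_{m}}$ yields
\begin{equation*}
\IP\big(\exists\,s\in[m,m+1):\origin\notin\bfK^\xi_{s},\ \tau^{\origin,\xi}=\infty\big)\le A\exp(-Br_{m})+|\B_{r_{m}}|\,A\exp(-Bm/2)\le A'\exp(-B'm).
\end{equation*}

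Summing this geometric series over $m\ge m_{0}$ then gives $\IP(\origin\notin\overline{\bfK}^\xi_{t},\tau^{\origin,\xi}=\infty)\le A''\exp(-B''t)$, and every estimate above is uniform in $\xi$, which proves the claim. The main obstacle is the structural step of the second paragraph: one has to verify carefully that uncoupling propagates only along infection arrows emanating from already-uncoupled vertices, so that inside a unit time window a break of the origin's coupling is forced by an uncoupled vertex at the left endpoint lying in a ball whose radius can be taken linear in $m$ (a sublinear radius would only yield a stretched-exponential rate, which is why linearity matters). This is the one point where the internal dynamics of the coupled region $\bfK^\xi$ must be analysed; the pathwise-duality bookkeeping, the reduction to Lemma~\ref{lem:forwad_backward_intersection}, and the summation are otherwise routine.
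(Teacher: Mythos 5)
Your proposal is correct and takes essentially the same route as the paper's proof: both decompose $\{\exists s\geq t:\origin\notin\bfK^{\xi}_s\}$ over unit time intervals, control the within-interval uncoupling of the origin by the bounded unit-time spread of infection arrows (the paper demands $\B_{\alpha k}\subset\bfK^{\xi}_k$ and bounds the escape of the maximal process $\widetilde{\bfC}_1^{\origin}$, you trace the first uncoupling back to an uncoupled vertex in a linearly growing ball), and reduce the fixed-time, fixed-vertex uncoupling event via the (pathwise) duality \eqref{eq:ConditinalDuality} to exactly the non-intersection event of Lemma~\ref{lem:forwad_backward_intersection}. The remaining differences -- working with unconditional probabilities intersected with $\{\tau^{\origin,\xi}=\infty\}$ rather than under $\PC$, and the explicit trace-back bookkeeping in place of the whole-ball coupling event -- are cosmetic.
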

\begin{proof}
     If $\IP(\tau^\origin=\infty)=0$ there is nothing to show. Otherwise let $A,B,\alpha>0$ be the constants from Lemma~\ref{lem:forwad_backward_intersection}. Then
\begin{align*}
    \PC(0\notin \overline{\bfK}^{\xi}_t)&=\PC(\exists s\geq t: 0\notin \bfK^{\xi}_s)\\
    &\leq \sum_{k=\lfloor t\rfloor}^{\infty} \PC( \B_{\alpha k}\not\subset \bfK^{\xi}_k)+\PC( \B_{\alpha k}\subset \bfK^{\xi}_k,\exists s\in[k,k+1) \text{ s.t. } \zero\notin \bfK_s^{\xi}).
\end{align*}
 We first handle the second summand in the same manner as in \cite[Proof of $(11)$, p. 1400]{garet2014}.
\begin{align*}
    \PC( \B_{\alpha k}\subset \bfK^{\xi}_k,\exists s\in[k,k+1) \text{ s.t. } 0\notin \bfK_t^{\xi})\leq \frac{1}{\IP(\tau^{\origin,\xi}=\infty)}\IP\Big(\tilde{\bfC}_1^{\origin}\not\subset \B_{\alpha k}\Big)
\end{align*}
where $\tilde{\bfC}_t$ is the maximal infection process and the exponential control in $k$ is given by Lemma~\ref{lem:InfContainedExponetialSpeed}. 
Hence it suffices to control the first term and to show that
\begin{equation*}
    \IP( \B_{\alpha k}\not\subset \bfK^{\xi}_k,\tau^{\zero,\xi}=\infty)\leq A \exp(- B k)\quad\text{for all } k\geq 0.
\end{equation*}
Since we are in $\IZ^d$ the ball $\B_{\alpha k}$ only contains polynomial many vertices, and thus we only need to show that for any $t\geq 0$ and $x\in \B_{\alpha t}$ we have
\begin{equation*}
    \IP(x\in \bfC_t^{\underline{1},\xi} \backslash\bfC_t^{\origin,\xi} , \bfC_t^{\zero,\xi}\neq \emptyset)\leq A \exp(-B t).
\end{equation*}
First of all, if $\bfeta_t^{\origin,\xi}(x)\neq \bfeta_t^{\underline{1},\xi}(x)$, then it follows by monotonicity that $\bfeta_t^{\origin,\xi}(x)=0$ and $\bfeta_t^{\underline{1},\xi}(x)=1$. Therefore, by using \eqref{eq:ConditinalDuality} we have that
\begin{align*}
    &\IP(\bfC_t^{\origin,\xi}\cap \{x\}=\emptyset,\bfC_t^{\underline{1},\xi}\cap\{x\}\neq\emptyset, \bfC_t^{\zero,\xi}\neq \emptyset)\\
    =&
    \IP(\bfC_{t-s}^{\origin,\xi}\cap \widehat{\bfC}_{s}^{x,\xi,t}=\emptyset,\bfC_{t-s}^{\underline{1},\xi}\cap \widehat{\bfC}_{s}^{x,\xi,t}\neq \emptyset, \bfC_t^{\zero,\xi}\neq \emptyset)\\
    \leq &\IP(\bfC_{t-s}^{\origin,\xi}\cap \widehat{\bfC}_{s}^{x,\xi,t}=\emptyset,\widehat{\bfC}_{s}^{x,\xi,t}\neq \emptyset, \bfC_{t-s}^{\zero,\xi}\neq \emptyset),
\end{align*}
for all $s\leq t$. Replacing $t$ by $2t$ and $s$ by $t$ we can deduce the desired control by Lemma~\ref{lem:forwad_backward_intersection}.
\end{proof}
\subsection{Verifying the Assumptions}
In this subsection, we reap the fruits of our prior work and verify the assumptions \eqref{ConjectureAML}-\eqref{ConjectureTEC} to apply Theorem~\ref{Conjecture1} and Corollary~\ref{thm:ShapeTheoremNonMonoton} to provide the proofs of Theorem~\ref{thm:CPUIShapeTheorem} and Proposition~\ref{prop:CPDREShapeTheorem}. We start by showing Proposition~\ref{prop:CPDREShapeTheorem} since Theorem~\ref{thm:CPUIShapeTheorem} follows then immediately.
\begin{proof}[Proof of Proposition~\ref{prop:CPDREShapeTheorem}]
    Let $(\bfeta,\bfxi)$ be a worst-case monotone CPDRE which satisfies \eqref{ConjectureTEC} or a CPDP. Moreover, let $(\underline{\bfeta},\underline{\bfxi})$ be a supercritical monotone CPIU which gets dominated by $(\bfeta,\bfxi)$.

    By Corollary~\ref{corollary:AML} the process $(\bfeta,\bfxi)$ satisfies \eqref{ConjectureAML} and in case $(\bfeta,\bfxi)$ is a CPDP the condition \eqref{ConjectureTEC} trivially holds. Furthermore, our coupling  implies that all our results from Section~\ref{sec:RestartingProcedure} can be applied to $(\bfeta,\bfxi)$ since Assumption~\ref{ass:coupling_exists} is satisfied. In particular, the remaining conditions \eqref{ConjectureSC}, \eqref{ConjectureALL} and \eqref{ConjectureFC} follow by Corollary~\ref{corollary:SC}, Proposition~\ref{prop:ALL} and Proposition~\ref{prop:exp_coupling}, respectively. Having established all assumptions \eqref{ConjectureAML}-\eqref{ConjectureTEC} Corollary~\ref{thm:ShapeTheoremNonMonoton} gives us that \eqref{desired_result} holds in case $(\bfeta,\bfxi)$ is a CPDP. Otherwise, if $(\bfeta,\bfxi)$ is a a worst-case monotone CPDRE Theorem~\ref{Conjecture1} implies that the desired result \eqref{desired_result} holds.
\end{proof}
\begin{proof}[Proof of Theorem~\ref{thm:CPUIShapeTheorem}]
    We have to show that \eqref{desired_result} holds for a monotone supercritical CPIU $(\bfeta,\bfxi)$. Clearly, a monotone CPIU is also worst-case monotone and satisfies \eqref{ConjectureTEC}. Therefore Proposition~\ref{prop:CPDREShapeTheorem} applies.
\end{proof}

\textbf{Acknowledgements:} We would like to thank Noemi Kurt for carefully reading the manuscript and her valuable remarks and suggestions regarding the research presented here. MS was supported by the German Research Foundation (DFG) Project ID: 531542160 within the priority programme SPP 2265.

\bibliographystyle{abbrv}
\bibliography{references}

\end{document}